\newtheorem{thm}{Theorem}[section]
\newtheorem*{thm*}{Theorem}
\newtheorem{lemma}[thm]{Lemma}
\newtheorem{prop}[thm]{Proposition}
\newtheorem{cor}[thm]{Corollary}
\newtheorem*{cor*}{Corollary}
\theoremstyle{definition}
\newtheorem{defn}[thm]{Definition}
\theoremstyle{remark}
\newtheorem{remark}[thm]{Remark}
\numberwithin{equation}{section}
\newcommand {\real}  {\ensuremath{\mathbb{R}}}
\newcommand {\intg}  {\ensuremath{\mathbb{Z}}}
\newcommand {\cplx}  {\ensuremath{\mathbb{C}}}
\newcommand {\rat}   {\ensuremath{\mathbb{Q}}}
\newcommand {\Hom}   {\ensuremath{\operatorname{Hom}}}
\newcommand {\ch}    {\ensuremath{\operatorname{ch}}}
\newcommand {\ph}    {\ensuremath{\operatorname{ph}}}
\newcommand {\smlhf} {\ensuremath{\mbox{$\frac{1}{2}$}}}
\newcommand {\smlquart} {\ensuremath{\mbox{$\frac{1}{4}$}}}
\newcommand {\Smash} {\ensuremath{\wedge}}
\newcommand {\pro}   {\ensuremath{\operatorname{pr}}}
\newcommand {\syml}  {\ensuremath{\mathbb{L}}}
\newcommand {\loc}   {\ensuremath{\operatorname{loc}}}
\newcommand {\pt}    {\ensuremath{\operatorname{pt}}}
\newcommand {\id}    {\ensuremath{\operatorname{id}}}
\newcommand{\cL} {\ensuremath{\mathcal{L}}}
\newcommand {\MSO}   {\ensuremath{\operatorname{MSO}}}
\newcommand {\MSPL}   {\ensuremath{\operatorname{MSPL}}}
\newcommand {\MSTOP}   {\ensuremath{\operatorname{MSTOP}}}
\newcommand {\SO}   {{\ensuremath{\operatorname{SO}}}}
\newcommand {\SPL}   {{\ensuremath{\operatorname{SPL}}}}
\newcommand {\Witt}   {{\ensuremath{\operatorname{Witt}}}}
\newcommand {\MWITT}   {\ensuremath{\operatorname{MWITT}}}
\newcommand {\sign}   {{\ensuremath{\operatorname{sign}}}}
\newcommand {\const}   {\ensuremath{\operatorname{const}}}
\newcommand {\Th}   {\ensuremath{\operatorname{Th}}}
\newcommand {\topo}   {\ensuremath{{\operatorname{top}}}}
\newcommand {\an}   {\ensuremath{{\operatorname{an}}}}
\newcommand {\geo}    {{\ensuremath{\operatorname{geo}}}}
\newcommand {\KO}   {{\ensuremath{\operatorname{KO}}}}
\newcommand {\K}   {{\ensuremath{\operatorname{K}}}}
\newcommand {\wKO}   {{\ensuremath{\widetilde{\operatorname{KO}}}}}
\newcommand {\wK}   {{\ensuremath{\widetilde{\operatorname{K}}}}}
\newcommand {\ko}   {{\ensuremath{\operatorname{ko}}}}
\newcommand {\ku}   {{\ensuremath{\operatorname{k}}}}
\newcommand {\KK}  {{\ensuremath{\operatorname{KK}}}}
\newcommand {\Per}   {{\ensuremath{\operatorname{Per}}}}
\newcommand {\ism}   {\ensuremath{\intg [\smlhf]}}
\newcommand {\Spin}   {{\ensuremath{\operatorname{Spin}}}}
\newcommand {\ABS}   {{\ensuremath{\operatorname{ABS}}}}
\newcommand {\td}   {{\ensuremath{\operatorname{td}}}}
\newcommand {\La}   {\ensuremath{\mathcal{L}}}
\def\Di{\mathfrak{D}\kern-6.5pt/}
\def\Spi{\mathfrak{S}\kern-6.5pt/}
\newcommand {\pr}  {\ensuremath{\mathbb{P}}}
\newcommand{\bb}[1]{\mathbb{#1}}
\newcommand{\cal}[1]{\mathcal{#1}}
\newcommand{\scr}[1]{\mathscr{#1}}
\newcommand\lra{\longrightarrow}
\newcommand\xlra[1]{\xrightarrow{\phantom{x} #1 \phantom{x}}}
\newcommand{\wt}[1]{\widetilde{#1}}
\newcommand\eps\varepsilon
\newcommand\pa\partial
\newcommand{\pullbackcorner}[1][dr]{\save*!/#1-1.2pc/#1:(-1,1)@^{|-}\restore}
\DeclareMathAlphabet{\mathpzc}{OT1}{pzc}{m}{it}
\newcommand{\cl}{\mathpzc{cl}}
\newcommand\fib{\; \operatorname{---}\; } %long dash in math mode to indicate the fiber of a fibration
\definecolor{darkgreen}{cmyk}{1,0,1,.2}
\definecolor{m}{rgb}{1,0.1,1}
\begin{document}

%******************** TITLE PAGE *****************

\title[K-orientations and Gysin maps]
  {Smooth atlas stratified spaces, K-Homology Orientations and Gysin maps}

\author{Pierre Albin}

\address{Department of Mathematics, University of Illinois at Urbana-Champaign, USA}

\email{palbin@illinois.edu}

\author{Markus Banagl}

\address{Institut f\"ur Mathematik, Universit\"at Heidelberg,
  Im Neuenheimer Feld 205, 69120 Heidelberg, Germany}

\email{banagl@mathi.uni-heidelberg.de}

\author{Paolo Piazza}

\address{Dipartimento di Matematica, Sapienza Universit\`a di Roma, Italy}

\email{piazza@mat.uniroma1.it}

\date{May 19, 2025}

\subjclass[2020]{55N33, 55R12, 57N80, 57R20, 55N15, 19L41, 
                 57Q20, 57Q50}

% 55N33   	Intersection homology and cohomology 
% 55R12   	Transfer 
% 57N80   	Stratifications
% 57R20   	Characteristic classes and numbers (manifolds and cell complexes) 
% 55N15         Topological K-theory
% 19L41         Connective K-theory, cobordism
% 57Q20   	Cobordism (in PL topology) 
% 57Q50   	Microbundles and block bundles (in PL topology) 

\keywords{Stratified Spaces, Characteristic Classes, Orientation Classes,
Bundle Transfer, Gysin maps,
Intersection Homology, Bordism, K-Homology, L-Theory}

%********************** ABSTRACT ****************************

\begin{abstract}
We begin this article by introducing {\it smooth atlas stratified spaces}. We show that this class is closed under cartesian products; consequently, it is possible to define fiber bundles of 
smooth atlas stratified spaces. We describe the resolution of such a space to a manifold with fibered corners and use this result in order to prove that the class of smooth atlas stratified spaces coincides with that of Thom-Mather stratified spaces.
We then consider Witt pseudomanifolds (such as singular complex algebraic varieties) where it is well-known that a bordism invariant signature is available and equal to the Fredholm index of a realization of the signature operator.
For an oriented fiber bundle of stratified spaces, with Witt fibers, $p:X\rightarrow Y$ we define a bivariant class $\Sigma (p)\in \KK^\ell(X,Y)[\tfrac12]$, $\ell=\dim X-\dim Y\;\;{\rm mod}\;\;2$. 
Kasparov multiplication on the left by this element $\Sigma (p)$ defines the analytic Gysin map in analytic K-homology $p^!: \K^{{\rm an}}_j (Y) [\tfrac12]\to K^{{\rm an}}_{\ell+j} (X) [\tfrac12]$ and one of our main results is that this Gysin map preserves the analytic signature class of Witt spaces: $p^! (\mathrm{sign}_K (Y))=\mathrm{sign}_K (X)$. We prove in fact a more general result: for three fiber bundles $p_{12}: X_1\to X_2$, $p_{23}: X_2\to X_3$, $p_{13}: X_1\to X_3$ of Witt pseudomanifolds satisfying  $p_{13}=p_{23}\circ p_{12}$,  we establish that  $\Sigma (p_{12})\otimes \Sigma (p_{23})=\Sigma (p_{13})$ in $\KK^* (X_1,X_3)[\tfrac12]$. We also discuss this latter result for other Dirac-type operators satisfying an analytic Witt condition, for example
the spin-Dirac operator on a fibration of psc-Witt spin pseudomanifolds. We next define the analytic Gysin map associated to an oriented normally non-singular inclusion of Witt spaces and prove that it also preserves the signature class. Finally, we relate the analytic signature class of a Witt space $X$, $\mathrm{sign}_K (X)\in \K_*^{{\rm an}} (X)[\tfrac12]$, with the topological Siegel-Sullivan orientation $\Delta (X)\in \KO^{{\rm top}}_* (X)[\tfrac12]$: if $\Psi^2: \KO_* ^{{\rm top}}(X)[\tfrac12] \to \KO^{{\rm top}}_* (X)[\tfrac12]$ denotes the second Adams operation
and $c:  \KO^{{\rm top}}_* (X)\to \K^{{\rm top}}_* (X)$ denotes complexification, then we show that $\mathrm{sign}_K (X)$
corresponds to  $c\circ (\Psi^2)^{-1} \Delta (X)$ under the natural identification between $\K_*^{{\rm an}} (X)[\tfrac12]$
and $\K_*^{{\rm top}} (X)[\tfrac12]$.
\end{abstract}

\maketitle

%******************** TABLE OF CONTENTS *********************

\tableofcontents

%=================================================================
%=================================================================
%=================================================================

%%%%%%%%%%%%%%%%%%%%%%%%%%%%%%
\section{Introduction and main results}
%%%%%%%%%%%%%%%%%%%%%%%%%%%%%%
\subsection{Motivation and main results}$\;$

Let $\KO_* (-)$ denote topological $\KO$-homology and
let $M$ be a smooth $n$-dimensional closed oriented manifold.
In \cite{Sul:GTP}, Sullivan introduced a class\footnote{For an abelian group $A$,
we use the notation $A[\tfrac12]$ to indicate $A \otimes_{\bb Z} \bb Z[\tfrac12].$}
$\Delta_\SO (M) \in \KO_n (M)$,
which is an orientation and
plays a fundamental r\^ole in studying the $\K$-theory
of manifolds.
For instance, Sullivan showed that topological block bundles
away from $2$ are characterized as
spherical fibrations together with a $\KO [\smlhf]$-orientation.
For certain classes of singular oriented pseudomanifolds,
Goresky and MacPherson's intersection homology allowed for
the construction of bordism invariant signature invariants that
satisfy Novikov additivity, a product formula, and agree with the
signature of a manifold.
Such a class of pseudomanifolds is given by the Witt spaces
considered by Siegel in \cite{Sie:WSGCTKOP}.
A stratified space is a Witt space if its even-dimensional links have 
vanishing middle degree rational intersection homology groups 
with lower middle perversity, see e.g., Section \ref{sec:WittSpaces} below. 
For example, every
pure-dimensional complex algebraic variety is a Witt space.
Siegel extended Sullivan's orientation $\Delta_\SO$ to an orientation
$\Delta (X) \in \KO_n (X)[\tfrac12]$ for oriented compact Witt spaces
and used it to show that away from $2$, Witt bordism and $\KO$-homology
can be identified.
Under the Pontrjagin character, $\Delta (X)$ is a lift of the Goresky-MacPherson
$L$-class $L_* (X) \in H_* (X;\rat)$.

The second named author proved in \cite{Ban:TSSKSS} that
the Siegel-Sullivan orientation $\Delta (X)$ is preserved under $\KO$-homological
bundle transfer and Gysin restriction homomorphisms\footnote{In this paper, in accordance with parts of the literature and in contrast with others, we will use the terms transfer map and Gysin map interchangeably.} associated to
oriented normally non-singular bundles and inclusions of Witt spaces:
Let $j: X \hookrightarrow Y$ be an oriented normally non-singular 
codimension $\ell$ inclusion of
closed piecewise linear (PL) Witt spaces. 
Since $\SO$-bundles are $\KO [\smlhf]$-oriented, $j$ has an
associated Gysin homomorphism
$j^!: \KO_* (Y)[\tfrac12] \to \KO_{*-\ell} (X)[\tfrac12].$
Under this homomorphism, 
\[ j^! \Delta (Y) = \Delta (X). \]
Similarly, an oriented PL bundle $p: X \to Y$ of  compact Witt spaces
with closed $d$-dimensional PL manifold fiber has an associated bundle transfer
homomorphism 
$p^!: \KO_n (Y)[\tfrac12] \to \KO_{n+d} (X)[\tfrac12],$
under which
\[ p^! \Delta (Y) = \Delta (X). \]
In fact, this holds even when $X$ is only a block bundle over $Y$.
These results are obtained in  \cite{Ban:TSSKSS} by lifting the construction
of Siegel and Sullivan to the ring spectrum level. The lift is a multiplicative morphism
$\Delta:  \MWITT \to \KO [\smlhf]$ 
where $\MWITT$ denotes the ring spectrum representing Witt space bordism.
One then uses the L-theoretic results of 
\cite{Ban:GRTHCCSS} in the immersive case and
\cite{Ban:BTLOCSS} in the submersive case.
As an application, these results lead under the Pontrjagin character
to a computation of
the $L$-classes of singular Schubert varieties \cite{BanWra:UTGCCCSS} and to a 
proof of the Brasselet-Schürmann-Yokura conjecture for such varieties
\cite{BanSchWra:TGCACCSS}.

\bigskip
Let us now move to the main results of this research. The work of Banagl 
takes place solely in topological K-homology and employs heavily methods of stable
homotopy theory. There is, however, a different description of K-homology, due to Gennadi Kasparov; this goes under the name of analytic K-homology and it is denoted
by $\K^\an_* (X)$
with $X$, say,  a compact Hausdorff topological space. By definition, 
$$\K^\an_* (X):= \KK^* (C(X),\mathbb{C})$$
with $\KK$ denoting Kasparov bivariant K-theory. More  generally, for each pair of Hausdorff compact spaces $X$, $Y$ one can define the abelian groups
$\KK^* (X,Y):= \KK^* (C(X),C(Y)).$ 
One important property of Kasparov bivariant K-theory is the existence of an associative
product, the Kasparov product:
$$\KK^j (X,Y)\otimes \KK^\ell (Y,Z)\longrightarrow \KK^{j+\ell}(X,Z).$$
This product will play a fundamental r\^ole in this article.

As in the work of Banagl, we shall be interested in Witt pseudomanifolds, but since we plan to use analytic methods we shall consider 
{\it smoothly stratified}  Witt pseudomanifolds. Here smoothly stratified means stratified in the sense of Thom-Mather but, as explained further below, this is a notion we revisit and expand on in this article. 
Let $X$ be such a space. It is well known, see, e.g., \cite{moscoviciwu}, \cite[\S 6.2]{AlbLeiMazPia:SPWS}, \cite[\S 5.1]{AlbLeiMazPia:NCCS} and \S\ref{sec:KKThy} below,  that there exists a well defined  signature class 
in analytic K-homology $[D^{{\rm sign}}_X]\in \mathrm{KK}^* (C(X),\mathbb{C})$, with $D^{{\rm sign}}_X$ the signature operator on the regular part of $X$ endowed with a wedge metric 
$g$ (the class turns out to be independent of the choice of $g$). After inverting $2$ (and normalizing to better match the Sullivan orientation), this class determines a K-homology orientation which we denote 
\begin{equation*}
	\mathrm{sign}_K(X) := 2^{-\lfloor n/2 \rfloor}[D^{{\rm sign}}_X] \in \mathrm K_n(X)[\tfrac12], \quad n=\dim X.
\end{equation*}
Here is a (partial) list of the results established in this article.

\begin{itemize}
\item If 
$	W - X \xlra{p} Y
$
is an {oriented} fiber bundle of {oriented} Witt pseudomanifolds, 
then there exists an `analytic transfer class'
 $\Sigma (p)\in \KK^j (X,Y)[\tfrac12]$, with $j$ equal to the dimension of $W$ mod 2.
This class is defined by normalizing the class of the vertical family of signature operators associated 
to the fibration $W - X \xlra{p} Y$.
{The bundle $p$ is understood to trivialize locally via
 stratified diffeomorphisms.} 
\item We can define an analytic Gysin map $p^!: \K^\an_* (Y)[\tfrac12] \to \K^\an_{*+j} (X)[\tfrac12]$
by taking the Kasparov product, on the left, with $\Sigma (p)\in \KK^j (X,Y)[\tfrac12]$.
\item We show that the analytic Gysin map preserves the analytic K-homology orientation, viz.:
 \begin{equation}\label{preserves}p^! \mathrm{sign}_K(Y)=\mathrm{sign}_K(X).\end{equation}
 This result extends the work of Banagl from bundles with nonsingular fiber
 to bundles with singular (Witt pseudomanifold) fiber.
 
\item Formula \eqref{preserves} is in fact a special case of a more general result that we also establish in this article: if 
\begin{equation*}
	\xymatrix{
	X_1 \ar[r]^-{p_{12}} \ar[rd]_-{p_{13}} & X_2 \ar[d]^-{p_{23}} \\
	& X_3}
\end{equation*}
is a commutative diagram of Witt fiber bundles, then 
\begin{equation}\label{intro:functoriality}
	[D_{X_1/X_3}^{\mathrm{sign}}] = \ell ([D_{X_1/X_2}^{\mathrm{sign}}] \otimes [D_{X_2/X_3}^{\mathrm{sign}}])
	\text{ in } \mathrm{KK}^*(C(X_1), C(X_3))
\end{equation}
with $$ \ell = \begin{cases} 2 & \text{ if } \dim X_1/X_2 
	\text{ and } \dim X_2/X_3 \text{ are odd, }\\ 1 & \text{ otherwise. } \end{cases} $$
The corresponding analytic transfer classes satisfy
\begin{equation*}
	\Sigma(p_{13})= \Sigma(p_{12}) \otimes \Sigma(p_{23})
	\text{ in } \mathrm{KK}^*(C(X_1), C(X_3))[\tfrac12].
\end{equation*}
The result \eqref{preserves} is obtained by taking $X_3={\rm point}$, $X_1=X$, $X_2=Y$, $p_{12}=p$ and $p_{13}, p_{23}$ equal to the unique maps to a point.
\item We discuss these results in the more general context of  Dirac-type operators,
giving results, for example, on psc-Witt spin stratified pseudomanifolds \cite{BotPiaRos:PSCSPFGSI, BotPiaRos:PSCSCSP}.
\item For a codimension $\ell$ normally non-singular inclusion $j: X\hookrightarrow Y$ of Witt pseudomanifolds we define an element $\Sigma(j)\in\KK^\ell (X,Y)[\smlhf]$ and use Kasparov product on the left by this element 
in order to define an analytic Gysin map $j^!: \K^\an_{*+\ell} (Y)[\smlhf]\to 
\K^\an_{*} (X)[\smlhf]$; also in this case we show that the Gysin map preserves the analytic K-orientations:
\begin{equation}\label{preserve2}j^! \mathrm{sign}_K(Y)=\mathrm{sign}_K(X).\end{equation}
\item We discuss the compatibility between the topological orientation class of an $n$-dimensional  Witt pseudomanifold, $\Delta (X) \in \KO^\topo_n (X)[\smlhf]$,
and the analytic K-orientation $\mathrm{sign}_K(X)\in \K^\an_n (X)[\tfrac12]$. It is a folklore result that these two
classes ``correspond to each other"; we show that the situation is in fact rather subtle, proving that if $c:\KO \to \K$ denotes complexification and 
$\Psi^2: \KO [\tfrac12] \to \KO [\tfrac12]$ the stable second Adams operation, 
then under the natural isomorphism between 
 $\K^\topo_n (X)$ and $\K^\an_n (X)$ the class
\[ c(\Psi^2)^{-1} \Delta (X) \in \K^\topo_n (X)[\smlhf] \]
corresponds to the class 
\[ \mathrm{sign}_K(X)\in \K^\an_n (X)[\smlhf]. \]
\end{itemize}

\medskip
\noindent
All these results are discussed in the second part of the paper. The first part is devoted instead to a foundational 
treatment of smoothly stratified spaces. It was already observed by Verona in his seminal monograph 
\cite{Ver:SMT} that the cartesian product of two Thom-Mather pseudomanifolds, endowed with the product stratification
and the product Thom-Mather control data, is {\em not} a Thom-Mather pseudomanifold
unless one of the two factors is a smooth manifold. We give a simple counterexample
below, in Remark \ref{rk:counterxample-product} . Inspired by the treatment given in \cite{AyaFraTan:LSSS}, but adopting, crucially, a different notion of chart,\footnote{See Remarks \ref{rem:Ayala} and \ref{rem:AyalaEtAl} for the consequence of this difference.}
we introduce the notion of {\it smooth atlas stratified space}; this notion is such that if $X$ and $Y$ 
are smooth atlas stratified spaces, then their product $X\times Y$ 
is again a smooth atlas stratified space. Similarly, with this notion, it is possible to 
give a consistent definition of {\em  fiber bundle
of 
smooth atlas stratified spaces}. We also provide a detailed account of the process of {\it resolution} of a 
smooth atlas stratified space, giving as a final result a smooth manifold with fibered corners, a notion due to Richard Melrose
 \footnote{A smooth manifold with fibered corners is also known as 
  {\it a manifold with corners and an iterated fibration structure.}}. In the context of
Thom-Mather spaces this process is due to Thom and Verona \cite{Ver:SMT}; recent treatments can be found 
in \cite{BraHecSar:TRPVS} and also in \cite{AlbLeiMazPia:SPWS}, where it was proved that the resolution 
has in fact the additional structure of manifold with fibered corners. For a  smooth atlas stratified space we give a self-contained treatment of the resolution process 
and use it in order to show that the class of  {\it smooth atlas stratified spaces} coincides with the class
of {\it Thom-Mather stratified spaces}. Together with known results due to Goresky \cite{Gor:GCHSO} (cf. \cite{Teu:APSB}) and Mather \cite{Mat:NTS} this will establish the remarkable fact that
\begin{enumerate}
\item Thom-Mather stratified spaces,
\item Whitney embedded stratified spaces,
\item manifolds with fibered corners,
\item smooth atlas stratified spaces,
\end{enumerate}
all describe the same singular spaces. With this result we end the first part of the paper and this contribution of ours to the 
general theory of smoothly stratified spaces.

\subsection{Organization of the paper}$\;$\\
{\bf Section \ref{sect:smoothly-stratified}} is entirely devoted to the treatment 
of smooth atlas  stratified spaces. 
In Subsection \ref{subsect:prelude} we give a prelude of the general theory
by treating the case of depth 1 spaces, introducing the relevant notions and techniques in this 
particular case; along the way we illustrate how to resolve a depth 1 stratified space to a manifold with fibered boundary. We also treat briefly the case of depth 2 stratified spaces, mainly in order
to clarify what kind of spaces we obtain along the resolution procedure for spaces of arbitrary depth. In Subsection \ref{subsect:atlas} we give the general definition of smooth atlas stratified space
whereas in Subsection \ref{subsect:resolution} we describe in detail and full generality the process of resolution of such a space to
a smooth manifold with fibered corners. Finally in Subsection \ref{subsec:SASSThomMather} we 
compare the class of smooth atlas stratified spaces with the class of Thom-Mather stratified spaces, showing that they coincide.
From now on we briefly refer to such a space as a {\it smoothly stratified space} or 
a {\it smoothly stratified pseudomanifold} if we do make the additional assumption 
ensuring the pseudomanifold property.
\\ 
In {\bf Section \ref{sec:orientation}} we recall the definition, through the signature operator,  of the analytic orientation class of an orientable smoothly stratified 
Witt pseudomanifold.  We begin in Subsection \ref{subsect:general-orientation}
with a general discussion about orientation classes for a generalized homology theory. In Subsection  
\ref{sec:WedgeMetsDiracOps} we recall general results, due to the first author and Gell-Redman
\cite{AlbGel:IFFDTOP}, about the analysis of (families of) Dirac operators associated to a wedge metric on the regular part of a smoothly stratified pseudomanifold.
In Subsection \ref{sec:KKThy} we recall the fundamentals of unbounded $\KK$-theory and we show/recall that 
the signature operator on an $n$-dimensional  Witt pseudomanifold $X$ does define a class in $\KK^n (C(X),\mathbb{C})=: \K^\an_n (X)$; using this class we define an `analytic orientation class' $\mathrm{sign}_K(X) \in \K^\an_* (X)[\tfrac12].$ \\
In {\bf Section \ref{sect:invariance}} we discuss invariance properties of the analytic orientation class. In Subsection
\ref{subsect:diffeo} we prove the stratified diffeomorphism invariance of $\mathrm{sign}_K(X),$ whereas
in Subsection \ref{subsect:witt-invariance} we extend results in \cite{AlbLeiMazPia:NCCS} and prove a general form of Witt bordism
invariance for $\mathrm{sign}_K(X)$. Finally, in Subsection \ref{subsect:smooth-versus-PL} we compare smooth
Witt bordism with PL-Witt bordism.\\
In {\bf Section \ref{sec:Fibrations}} we tackle the analysis and geometry of a fiber bundle 
$
	W^\ell - X \xlra{p} Y
$
of smoothly stratified
 pseudomanifolds in which the fibers are $\ell$-dimensional oriented Witt pseudomanifolds. 
 The final goal is  to describe how to assign to such a  fiber bundle a class $[D_{X/Y}^{\mathrm{sign}}]$ in the  KK-group $\mathrm{KK}^\ell (C(X), C(Y)).$ To this end 
 we  treat in Subsection \ref{sec:ResStratFibBdles} the {\it grid resolution} of such a fiber bundle; this
 is a fiber bundle  of manifolds with corners with iterated fibration structures fitting into the sequence of manifolds with corners
$$
	\mathrm{res}(W) - \mathrm{res}_{\mathrm{grid}}(X) \lra \mathrm{res}(Y),
$$
with $\mathrm{res}(W)$ and $ \mathrm{res}(Y)$ the resolutions of the base and of the fiber.
Using the grid resolution and the analysis developed in \cite{AlbGel:IFFDTOP} we are able to define the class $[D_{X/Y}^{\mathrm{sign}}]\in  \mathrm{KK}^\ell (C_0(X), C_0(Y))$  in Subsection
\ref{sec:AnTransferMap}; we use the class $\Sigma(p) = 2^{-\lfloor \dim W \rfloor}[D_{X/Y}^{\mathrm{sign}}] \in \mathrm{KK}^\ell (C_0(X), C_0(Y))[\tfrac12]$ and Kasparov product in order to define our analytic Gysin map $p^!: \K^\an_* (Y)[\tfrac12]\to \K^\an_* (X)[\tfrac12]$.
In Subsection \ref{subsect:functoriality-for-fibrations} we prove one of the main results of this article:
the functoriality formula for orientation classes (this is formula \eqref{intro:functoriality} in this Introduction, a result
that implies as a special case formula \eqref{preserves}).\\
In {\bf Section \ref{sec:Inclusions}} we define the analytic Gysin map associated to a normally non-singular inclusion of two
oriented Witt pseudomanifolds and we prove that this Gysin map preserves the analytic orientation class.\\
Finally, in {\bf Section \ref{sec.compatanalytictoporient}} we compare the Siegel-Sullivan class $\Delta (X)$ with
the class $\mathrm{sign}_K(X)$, proving that $c(\Psi^2)^{-1} \Delta (X) \in \K^\topo_n (X)[\smlhf]$
corresponds to the class $\mathrm{sign}_K(X)\in \K^\an_n (X)[\smlhf]$ under the natural isomorphism between
topological and analytic K-homology.

\subsection{Further remarks}

\begin{remark}\label{rk:smooth-versus-witt}
Having defined $\Sigma (i)$ for oriented normally non-singular inclusions and $\Sigma (p)$ for oriented fiber bundles with Witt fibers, it would be natural to define the Gysin map
$f^!$ for any oriented map between smoothly stratified spaces $f: X\lra Y$ which admits a factorization
\begin{equation*}
	X \xhookrightarrow{\phantom{xx}i\phantom{xx}}  Z \xlra p Y
\end{equation*}
into such an $i$ and $p$ as Kasparov multiplication by $\Sigma(f)= \Sigma(i) \otimes \Sigma(p).$ We do not take this step because we have not established that this produces a well-defined element, i.e., that it is independent of the factorization.

For K-oriented maps between smooth manifolds, treated by Connes and Skandalis \cite{ConSka:TLPF}, every map has such a factorization by taking $Z=X\times Y,$ $i$ the inclusion of $X$ into the graph of $f,$ and $p$ the obvious projection. In this case one can show that $\Sigma(f) = \Sigma(i) \otimes \Sigma(p)$ is well-defined by 
starting with two factorizations
\begin{equation*}
	\xymatrix{
	& Z \ar[dr]^-p & \\
	X \ar[ur]^-i \ar[dr]_-{i'} & & Y\\
	& Z' \ar[ur]_-{p'} &
	}
\end{equation*}
then filling in the diamond
\begin{equation*}
	\xymatrix{
	& Z \ar[dr]^-p & \\
	X \ar[ur]^-i \ar[dr]_-{i'} \ar[r]^-J & Z\times_Y Z' \ar[u] \ar[d] \ar[r]^-Q & Y\\
	& Z' \ar[ur]_-{p'} &
	}
\end{equation*}
and then showing that $\Sigma(i) \otimes \Sigma(p)$ (and by symmetry 
$\Sigma(i') \otimes \Sigma(p')$) is equal to $\Sigma(J) \otimes \Sigma(Q)$ (see \cite[Proposition 4.9]{ConSka:TLPF} for details).
The problem with carrying this out for maps between stratified spaces is that it is easy for $i$ and $i'$ to be normally non-singular but for $J$ not to be normally non-singular. (The difficulty boils down to the fact that if $X$ is a singular space then the inclusion of $X$ as the diagonal in $X^2$ is not normally non-singular.)
\end{remark}

\begin{remark}
Our approach, of defining Gysin maps in K-homology by assigning KK-classes to oriented maps between spaces, follows Connes \cite{Con:SFOA} and Connes-Skandalis \cite{ConSka:TLPF, ConSka:LITF} who worked out the case of smooth manifolds and K-oriented maps. Later Hilsum \cite{Hil:FKBPVL} worked in the setting of Lipschitz manifolds and oriented maps and made use of the signature operator, as we do. In \cite{HilSka:MKDFFTKDCDC} Hilsum and Skandalis extended the approach of Connes-Skandalis to K-oriented morphisms of spaces of leaves of foliated smooth manifolds (Remark 7.1 in {\em loc cit} has a nice discussion of the relation between using K-oriented maps versus oriented maps).

As the reader will see, our treatment is analytically quite intricate and 
one might very well wonder why we can not just adapt the arguments of \cite{ConSka:LITF}
to our singular case. In the latter, KK-classes are constructed by quantizing normalizations of symbols of Dirac operators into bounded  operators which are Fredholm because the symbols are elliptic.
When we pass to stratified pseudomanifolds we need to use the 
edge calculus of Mazzeo \cite{Maz:ETDEO} (for spaces of depth 1) and, more generally,
 the edge calculus of 
\cite{AlbGel:IFFDTOP} (for spaces of arbitrary depth). These calculi involve the symbol of
the operator we are studying but also the so-called normal operators associated to the
boundary hypersurfaces of the resolution of our stratified space.  The analysis of the normal operators adds complexity to all the arguments having to do with Kasparov theory (for example, elliptic operators need not be Fredholm and a given elliptic symbol can have more than one Fredholm quantization) and prevents us from easily carrying out the arguments of \cite{ConSka:LITF}.

There are other technical difficulties stemming from the fact that we wish to carry out analytic arguments suited to smooth spaces but apply these arguments to singular spaces. This leads, for example, to the discussion of various levels of resolution of a fiber bundle of smoothly stratified spaces in section \ref{sec:ResStratFibBdles} below.
\end{remark}

\begin{remark}
Why work with the signature operator instead of spin-c Dirac operators?
One reason is that if a stratified space is Witt then we know that the signature operator will be Fredholm (see, e.g., \cite{Che:SGSRS, AlbLeiMazPia:SPWS}). In order for the spin-c Dirac operator associated to a wedge metric to induce Fredholm maps between the natural Sobolev spaces it is necessary to (choose an appropriate domain and) guarantee that certain model operators, the vertical spin-c Dirac operators on the link bundles, be invertible (see, e.g., \cite{AlbGel:IDOIES, Cho:DOSWCSPSC}). This depends on more than the topology of the stratified space, or the choice of spin-c structure, but depends even on the particular wedge metric being used. 
This difference makes the signature operator a superior choice.
\end{remark}

\subsection{Acknowledgements}
The authors thank the University of Heidelberg, Sapienza Universit\'a di Roma, Stanford University, and Washington University in St. Louis for hosting research visits and are happy to acknowledge interesting conversations with Daniel Grieser, Jens Kaad, Markus Land, Eric Leichtnam, Rafe Mazzeo, Richard Melrose,  Jonathan Rosenberg, J\"org Schurmann, Walter van Suijlekom, Lukas Waas, and Jon Woolf.
M. B. is funded by a research grant of the
 Deutsche Forschungsgemeinschaft (DFG, German Research Foundation)
 -- Projektnummer 495696766. This research was also partially funded by INdAM,
 Istituto Nazionale di Alta Matematica.

\subsection{Notation}
Various K-theoretic groups will play a r\^ole in the present paper and
need to be distinguished notationally.
The symbols
$\K^\topo_*$, $\K_\topo^*$, $\KO^\topo_*$ and $\KO_\topo^*$
denote topological complex K-homology,
topological complex K-theory, 
topological real K-homology and topological real K-theory.
These are represented by ring spectra $\K$ and $\KO$.
The connective versions of these spectra
will be denoted by $\ku$ and $\ko$.
The geometric complex K-homology
of Baum and Douglas \cite{BauDou:HIT} will be written as
$\K^\geo_*$.
The Kasparov K-groups of a pair $(A, B)$ of $C^*$-algebras
are denoted by $\KK^* (A, B)$.
Analytic complex K-homology groups of a (locally compact Hausdorff) 
topological space $X$ are given by the Kasparov groups
$\K^\an_* (X) = \KK^* (C_0 (X), \cplx)$.

%%%%%%%%%%%%%%%%%%%%%%%%%%%%%%
\section{Smoothly stratified spaces}\label{sect:smoothly-stratified}
%%%%%%%%%%%%%%%%%%%%%%%%%%%%%%

We will describe the classes of spaces and operators that we will be working with.
One notational convention that we hope will make it easy to keep track of which statements
refer to singular spaces and which to smooth spaces is that we will use $X$ to denote a stratified space,
$M$ to denote a smooth manifold with corners, and 
\begin{equation*}
	W \fib X \lra Y \quad \text{ and } \quad L \fib M \lra N,
\end{equation*}
to denote fiber bundles of these spaces. 

%%%%%%%%%%%%%%%%%%%%%%%%%%%%%%
\subsection{Prelude: Spaces of depth one and two}\label{subsect:prelude}
%%%%%%%%%%%%%%%%%%%%%%%%%%%%%%

This section can be skipped. As the constructions below can be rather intricate, we thought it might be useful to the reader to have a discussion of the simpler case of depth one spaces. We do not reference the bibliography in this section and refer to the main text for attributions of ideas.

By a depth one smoothly stratified space we mean first of all a Hausdorff, locally compact topological space $X$ with a countable basis for its topology which can be decomposed as the union of two smooth manifolds. One of them is called the singular part and will be denoted $X_{\mathrm{sing}}$ or $X_0$ and the other is called the regular part and will be denoted $X_{\mathrm{reg}}$ or $X_1.$ The regular part is required to be open and dense. For simplicity we will assume that $X,$ $X_0,$ and $X_1$ are connected.

(More generally below we will study stratified spaces with many strata. In that case we label them with elements of a partially ordered set (poset) with the important requirement that if two strata have intersecting closures then their labels are comparable.)

The usual extra structure to require is known as Thom-Mather data, making $X$ a {\bf Thom-Mather stratified space}. In this case, this refers to a `tubular' neighborhood $T_0$ of $X_0$ in $X$ and a pair of continuous functions
\begin{equation*}
	\pi_0: T_0 \lra X_0, \quad
	\rho_0: T_0 \lra [0,\infty)
\end{equation*}
such that $\pi_0|_{X_0} = \mathrm{id}$ and $\rho_0^{-1}(0) = X_0$ and with the property that
\begin{equation*}
	(\pi_0, \rho_0)|_{T_0 \cap X_1}: T_0\cap X_1 \lra X_0 \times (0,\infty)
\end{equation*}
is a smooth submersion.

Suppose $X$ and $X'$ are both Thom-Mather stratified spaces of depth one, with Thom-Mather data $(T_0, \pi_0, \rho_0)$ for $X$ and $(T_0', \pi_0', \rho_0')$ for $X'.$
The natural notion of morphism between them is a {\bf controlled map} meaning a continuous map
\begin{equation*}
	f: X \lra X'
\end{equation*}
that restricts to smooth maps $X_0 \lra X_0'$ and $X_1 \lra X_1'$ and moreover (after possibly shrinking the size of the tubular neighborhoods) satisfies
\begin{equation*}
	f(T_0) \subseteq T_0', \quad
	f \circ \pi_0 = \pi_0' \circ f, \quad
	\rho_0 = \rho_0' \circ f.
\end{equation*}

Given a Thom-Mather stratified space, one can resolve it to a smooth manifold with boundary (or more generally corners) as follows.
One can identify the neighborhood $T_0$ with the total space of a fiber bundle 
\begin{equation*}
	C(Z_0) \fib T_0 \lra X_0
\end{equation*}
with fiber the cone over a smooth manifold $Z_0,$ 
\begin{equation*}
	C(Z_0) = [0,\infty) \times Z_0 \diagup \{0\} \times Z_0 = \{*\} \sqcup (0,\infty) \times Z_0.
\end{equation*}
The manifold $Z_0$ is known as the link of $X_0$ in $X.$
If we remove $T_0$ from $X$ we obtain a smooth manifold with boundary
\begin{equation*}
	M = \mathrm{res}(X) := X \setminus T_0
\end{equation*}
known as the resolution of $X.$

This manifold $M$ is an example of a {\bf manifold with fibered corners}, though in this simple case $M$ is a manifold with fibered boundary and no actual corners. The extra structure here, which is inherited from the fiber bundle $T_0 \lra X_0,$ is a fiber bundle on the boundary of $M,$
\begin{equation*}
	Z_0 \fib \pa M \xlra{\phi_0} X_0.
\end{equation*}
In order to be more systematic, and hopefully less confusing, with our notation we will denote the part of the boundary of $M$ that sits over $X_0$ by $\pa_0 M$ (in this case this is all of the boundary, $\pa M = \pa_0M,$ but when there are more strata that will not be the case), then we will indicate the base of the fiber bundle by $B_0M$ and the fiber by $F_0M,$ thus
\begin{equation*}
	F_0M \fib \pa_0 M \xlra{\phi_0} B_0 M, \quad \text{ with } \quad
	F_0M = Z_0, \quad 
	\pa_0M = \pa M, \quad 
	B_0M = X_0.
\end{equation*}
This notation is particularly useful when we study manifolds with fibered corners directly, as opposed to starting with a stratified space and resolving it.

Every manifold with fibered boundary is the resolution of a Thom-Mather stratified space of depth one. We have only to collapse the fibers of the fiber bundle $\phi_0$ in order to recover $X$ from $M.$ We obtain Thom-Mather data from collar neighborhoods of the boundary of $M.$

If $M$ and $M'$ are two manifolds with fibered boundary then the natural notion of morphism between them is a {\bf fibered boundary map} (more generally a fibered corners map) by which we mean a smooth map $f: M \lra M'$ with the property that its restriction to $\pa_0M$ participates in a commutative diagram
\begin{equation*}
	\xymatrix{
	\pa_0M \ar[r]^-f \ar[d]_- {\phi_0} & \pa_0M' \ar[d]^-{\phi_0'} \\
	B_0M \ar[r]^-{\bar f} & B_0M' }
\end{equation*}
for some smooth map $\bar f: B_0M \lra B_0M'.$
Every controlled map between Thom-Mather stratified spaces $X \lra X'$ induces a fibered boundary map between their resolutions, $\mathrm{res}(X) \lra \mathrm{res}(X').$ The converse statement is trickier to work out but it is easy to see that a fibered boundary {\em diffeomorphism} is equivalent to a controlled isomorphism for some choice of Thom-Mather data.

These two ways of studying stratified spaces, as Thom-Mather stratified spaces or manifolds with fibered corners, are well known. In this paper we introduce another way of studying the same class of spaces by making use of an appropriate atlas.\\

Let us go back to just having a Hausdorff, locally compact topological space $X$ with a countable basis for its topology and a decomposition into two smooth manifolds $X_0$ and $X_1.$ (Though now we would be happy to just assume that this is a decomposition into two subsets since it will follow from the atlas description that the subsets are smooth manifolds.) We continue to assume that $X_1$ is open and dense and that $X,$ $X_0,$ and $X_1$ are connected.

By a stratified chart around a point $\zeta \in X_0$ we will mean a homeomorphism
\begin{equation*}
	\varphi: \cal U \lra \bb R^h \times C(L)
\end{equation*}
sending $\zeta$ to $\{0\}\times \{*\},$
where the domain $\cal U$ is a neighborhood of $\zeta$ in $X,$ $h \in \bb N_0,$ $L$ is a smooth manifold which {\em a priori} could depend on $\zeta,$ and we require that
\begin{equation*}
	\varphi(\cal U \cap X_0) \subseteq \bb R^h \times \{*\} 
	\quad \text{ and } \quad
	\varphi(\cal U \cap X_1) \subseteq \bb R^h \times (C(L) \setminus \{*\}).
\end{equation*}
By a stratified chart around a point $\zeta \in X_1$ we will mean a homeomorphism
\begin{equation*}
	\varphi: \cal U \lra \bb R^n
\end{equation*}
sending $\zeta$ to $\{0\},$
where the domain $\cal U$ is a neighborhood of $\zeta$ contained in $X_1$ and $n \in \bb N_0.$

To discuss compatibility of charts, we need to settle on when a continuous map
\begin{equation*}
	\psi: \bb R^h \times C(L) \lra \bb R^{h'} \times C(L')
\end{equation*}
should be considered a smooth stratified map. For us this will mean that there is a smooth map (between smooth manifolds)
\begin{equation*}
	\wt\psi: \bb R^h \times [0,\infty) \times L \lra \bb R^{h'} \times [0,\infty) \times L'
\end{equation*}
participating in a commutative diagram
\begin{equation*}
	\xymatrix{
	\bb R^h \times [0,\infty) \times L \ar[r]^-{\wt\psi} \ar[d] & \bb R^{h'} \times [0,\infty) \times L'	\ar[d] \\
	\bb R^h \times C(L) \ar[r]^-{\psi} & \bb R^{h'} \times C(L') }
\end{equation*}
where the vertical arrows are the collapse maps defining the cones.
Having settled on this definition, it is clear what we mean by compatibility of charts.
Indeed the only interesting case is when we look at two charts $\varphi$ and $\varphi'$ centered around points in $X_0,$
\begin{equation*}
	\varphi: \cal U \lra \bb R^h \times C(L), \quad
	\varphi': \cal U' \lra \bb R^{h'} \times C(L')
\end{equation*}
and in this case we require that either $\cal U \cap \cal U'=\emptyset$ or that 
\begin{equation*}
	\varphi' \circ \varphi^{-1}: \varphi(\cal U\cap \cal U') \lra \varphi'(\cal U \cap \cal U')
\end{equation*}
is a smooth map (in the sense just described) whose inverse is also a smooth map.

By a stratified atlas for $X$ we mean a collection $\cal A$ of stratified charts whose domains form a basis for the topology of $X.$
A stratified space endowed with a stratified atlas is a {\bf smooth atlas stratified space}.

A {\bf smooth stratified map} between two smooth atlas stratified spaces $(X, \cal A)$ and $(X',\cal A')$ (of depth one) is a continuous map $f: X \lra X'$ satisfying $f(X_0) \subseteq X_0'$ and $f(X_1) \subseteq X_1'$ and such that for every stratified chart $(\cal U, \varphi)$ on $X$ and $(\cal U', \varphi')$ on $X',$ compatible with the respective atlases, 
satisfying $f(\cal U) \subseteq \cal U',$ the composition 
\begin{equation*}
	\varphi' f \varphi^{-1}: \bb R^{h} \times C(Z) \lra \bb R^{h'} \times C(Z')
\end{equation*}
is a smooth stratified map.

We explain below how to resolve a smooth atlas stratified space to a manifold with fibered boundary. The key player is obtained from the space of paths that start at $X_0$ and immediately move on to $X_1,$
\begin{equation*}
	\scr P_0 = \{\chi: \bb R^+ \lra X \text{ smooth stratified map}: \chi(t) \in X_0 \iff t=0\}
\end{equation*}
(where we are considering $\bb R^+$ as a stratified space of depth one with $(\bb R^+)_0 = \{0\}$ and $(\bb R^+)_1 = (0,\infty)$).
By definition of smooth stratified maps, whenever $(\cal U \xlra{\varphi} \bb R^h \times C(Z))$ is a stratified chart centered at at point in $X_0$ and $\chi \in \scr P_a,$
$\varphi\circ \chi$ has a lift
\begin{equation*}
	\xymatrix{
	& \bb R^h \times [0,\infty) \times Z \ar[d] \\
	\bb R^+ \ar[r]^-{\varphi\circ \chi} \ar[ru]^-{\wt{\varphi\circ\chi}} & \bb R^h \times C(Z). }
\end{equation*}
Let us define an equivalence relation on $\scr P_0$ by
\begin{equation*}
	\chi_1 \sim \chi_2 \iff
	\begin{cases}
	\chi_1(0) = \chi_2(0) & \text{ and }\\ $ $\\
	\wt{\varphi\circ\chi_1}(0) = \wt{\varphi\circ\chi_2}(0) & \text{ for all charts in $\cal A$ including $\chi_1(0)$}
	\end{cases} 
\end{equation*}
and denote the set of equivalence classes by $\cal S_0,$
\begin{equation*}
	\cal S_0 = \scr P_0 \diagup \sim.
\end{equation*}
(If $X$ is a smooth manifold that we happen to be considering as a stratified space then $\cal S_0$ is the spherical normal bundle of $X_0$ in $X.$
If $X$ is the cone over a space $Z,$ $X = C(Z),$ then $\cal S_0$ is $Z.$)
There is an obvious map 
\begin{equation*}
	\phi_0: \cal S_0 \lra X_0, \quad [\chi]\mapsto \chi(0)
\end{equation*}
and we will see that the stratified charts on $X$ can be used to structure $\phi_0$ as a smooth fiber bundle of smooth manifolds.

To see this, start with any stratified coordinate chart centered at a point in $X_0,$ $\varphi: \cal U \lra \bb R^h \times C(Z),$ and denote
\begin{equation*}
	\cal U_0 = \cal U \cap X_0, \quad
	\varphi_0 = \varphi|_{\cal U_0}: \cal U_0 \lra \bb R^h
\end{equation*}
and then  
\begin{equation*}
	\xymatrix @R=1pt {
	\phi_0^{-1}(\cal U_0) \ar[r]^-{\widetilde\varphi_0} & \bb R^h \times Z,\\
	[\chi] \ar@{|->}[r] & \wt{\varphi\circ \chi}(0) }
	\text{ and }
	\varphi_{\cal S_0} = (\varphi_0^{-1}\times \id)\circ \widetilde \varphi_0: \phi_0^{-1}(\cal U_0) \lra \cal U_0 \times Z.
\end{equation*}
The idea is to declare that $\varphi_{S_0}$ is a diffeomorphism and we just need to check that this is unambiguous. 
But if we started with another coordinate chart $\varphi': \cal U' \lra \bb R^h \times C(Z),$ with $\cal U'_0 = \cal U_0,$ then on $\cal U \cap \cal U'$ these charts differ by a stratified diffeomorphism $\varphi'=f\circ \varphi$ and, by definition of smooth stratified map, $f$ participates in a commutative diagram
\begin{equation*}
	\xymatrix{
	\bb R^h \times \bb R \times Z \ar@{}[r]|-*[@]{\supseteq}  & \cal V \times \bb R \times Z \ar[r]^-{\wt f} 
		& \cal V' \times \bb R \times Z \ar@{}[r]|-*[@]{\subseteq} & \bb R^h \times \bb R \times Z  \\
	\bb R^h \times \bb R^+ \times Z \ar@{}[r]|-*[@]{\supseteq}  & \cal V \times \bb R^+ \times Z \ar@{^(->}[u] \ar[d] \ar[r]^{\wt f_{\geq 0}}
		& \cal V' \times \bb R^+ \times Z \ar@{}[r]|-*[@]{\subseteq} \ar@{^(->}[u] \ar[d]  & \bb R^h \times \bb R^+ \times Z  \\
	\bb R^h \times C(Z) \ar@{}[r]|-*[@]{\supseteq}  & \cal V \times C(Z) \ar[r]^-{f} 
		& \cal V' \times C(Z) \ar@{}[r]|-*[@]{\subseteq} & \bb R^h \times C(Z) }
\end{equation*}
and $\wt f$ restricts to a stratified diffeomorphism  $\cal V \times \{0\} \times Z \lra \cal V' \times \{0\} \times Z$ of the form $(y,0,z) \mapsto (\wt f'(y),0, \wt f''(y,z)).$
It follows that $\varphi_{\cal S_0}': \phi_0^{-1}(\cal U_0) \lra \cal U_0 \times Z$ is given by
\begin{equation*}
	\phi_0^{-1}(\cal U_0) \xlra{\varphi_{\cal S_0}} \cal U_0 \times Z \xlra{\wt f'''} \cal U_0 \times Z, \text{ where }
	\wt f'''(u,z) = (u, \wt f''(\varphi_0(u),z))
\end{equation*}
and hence differs from $\varphi_{\cal S_0}$ by a stratified diffeomorphism. Thus $\cal S_0$ inherits a smooth manifold structure.
Moreover, over each $\cal U_0$ we have a commutative diagram
\begin{equation*}
	\xymatrix{ \phi_0^{-1}(\cal U_0) \ar[rr]^-{\varphi_{\cal S_0}} \ar[rd]_-{\phi_0} & & \cal U_0 \times Z \ar[ld] \\
	& \cal U_0 & }
\end{equation*}
with the arrow on the right given by the projection onto the first factor, so $\phi_0: \cal S_0 \lra X_0$ is a smooth fiber bundle with fiber $Z.$

Having defined $\cal S_0$ we define the resolution of $X$ to be
\begin{equation*}
	\mathrm{res}(X) = (X \setminus X_0) \sqcup \cal S_0.
\end{equation*}
(We also denote this by $[X;X_0]$ and view it as the `radial blow-up' of $X$ along $X_0.$)
It is easy to see that the stratified charts on $X$ induce charts on $\mathrm{res}(X)$ making it a smooth manifold with boundary.
Finally the boundary of $\mathrm{res}(X)$ is $\cal S_0$ and so participates in the fiber bundle $\phi_0.$

(If $X$ is a smooth manifold that we happen to be considering as a stratified space then, for example we could pick a Riemannian metric on $X$ and identify
\begin{equation*}
	\mathrm{res}(X) = \{x\in X: d(x, X_0) \geq \eps\}
\end{equation*}
for some $\eps,$ so that $\pa\mathrm{res}(X)$ can be identified with the spherical normal bundle of $X_0$ in $X$ and $\phi_0$ is the projection map of this bundle. If $X$ is the cone over a space $Z,$ $X = C(Z),$ then $\mathrm{res}(X) = \bb R^+ \times Z$ and $\phi_0: \{0\}\times Z \lra Z$ is the identity map.)

In this way we have gone from smooth atlas stratified spaces of depth one to manifolds with fibered boundary. This construction is reversible and we may obtain from any manifold with fibered boundary a smooth atlas stratified space structure on the stratified space obtained by collapsing the boundary fiber bundle. As we have noted above, Thom-Mather stratified spaces are also in one-to-one correspondence with manifolds with fibered boundary, so all three of these spaces represent the same geometric objects. We may add one more description to the list as it is well-known, thanks to the work of Mather \cite{Mat:NTS} and Goresky \cite{Gor:GCHSO} (cf. Teufel \cite{Teu:APSB}), that Whitney stratified subspaces of Euclidean space are equivalent to Thom-Mather stratified spaces.

Moreover, let us point out that the morphisms between smooth atlas stratified spaces are themselves in one-to-one correspondence with those between manifolds with fibered boundary. Indeed, if $X$ and $X'$ are smooth atlas stratified spaces with resolutions $\mathrm{res}(X)$ and $\mathrm{res}(X')$ then every smooth stratified map $f: X \lra X'$ which satisfies that $f(X_0) \subseteq X_0'$ and $f(X_1) \subseteq X_1'$ is associated to a unique fibered boundary map $\wt f: \mathrm{res}(X) \lra \mathrm{res}(X')$ participating in a commutative diagram
\begin{equation*}
	\xymatrix{
	\mathrm{res}(X) \ar[r]^-{\wt f} \ar[d] & \mathrm{res}(X') \ar[d]\\
	X \ar[r]^-f & X' }
\end{equation*}
where the vertical maps are obtained by collapsing the boundary fiber bundles. Moreover every fibered boundary map $\mathrm{res}(X) \lra \mathrm{res}(X')$ arises in this way.\\

There is one more type of space that will be used below: `hybrid' spaces that arise as intermediate spaces in the resolution process. Let us discuss the resolution of a very simple space of depth two to explain how these come about. 
Consider the `toy case'\footnote{This is not technically an example of a smoothly stratified space as we have defined them, since the link $W \times C_{[0,1)}(Z)$ is not compact, but we will use it as it is easy to discuss. If the reader prefers an `honest' example to work through, replacing the truncated cones with suspensions produces $Y \times S(W \times S(Z))$ which is almost as simple.}
 in which $X$ is given by
\begin{equation*}
	X = Y \times C_{[0,1)}(W \times C_{[0,1)}(Z))
\end{equation*}
where $Y,$ $W,$ and $Z$ are compact smooth manifolds and $C_{[0,1)}$ denotes the truncated cone. The strata of $X$ are
\begin{equation*}
	X_0 = Y \times \{*\}, \quad
	X_1 = Y \times (0,1)_x \times W \times \{*\}, \quad
	X_2 = Y \times (0,1)_x \times W \times (0,1)_r \times Z
\end{equation*}
and satisfy
\begin{equation*}
	\overline {X_1} = X_0 \cup X_1 
	\quad \text{ and } \quad 
	X = \overline {X_2} = \overline{X_1}\cup X_2.
\end{equation*}
A stratified chart around a point in $X_0$ is a homeomorphism
\begin{equation*}
	\varphi:\cal U \lra \bb R^{h_0} \times C(W \times C_{[0,1)}(Z))
\end{equation*}
where $h_0$ is the dimension of $Y.$ The restriction of $\varphi$ to $\cal U \cap X_0$ corresponds to a coordinate chart on $Y.$
For a point in $X_1$ we can can choose a neighborhood $\cal U'\subseteq X \setminus X_0$ and a homeomorphism
\begin{equation*}
	\varphi': \cal U' \lra \bb R^{h_1} \times C(Z)
\end{equation*}
where $h_1$ is the dimension of $X_1.$ The restriction of $\varphi'$ to $\cal U' \cap X_1$ corresponds to a coordinate chart on $Y \times (0,1)_x \times W.$
Finally points in $X_2$ have charts that are homeomorphisms of the form
\begin{equation*}
	\varphi'': \cal U'' \lra \bb R^{h_2}
\end{equation*}
where $h_2$ is the dimension of $X_2.$

The resolution of $X$ is obtained by first blowing-up $X_0$ and then blowing-up (the lift of) $X_1.$ The first step produces the hybrid space
\begin{equation*}
	\wt X = [X;X_0] = Y \times [0,1)_x \times W \times C_{[0,1)}(Z)
\end{equation*}
where hybrid refers to the fact that this space presents a boundary as well as singularities\footnote{A similar notion is used by Verona \cite[\S 5]{Ver:SMT} as an intermediate step in showing that Thom-Mather stratified spaces have resolutions to smooth manifolds with corners. Verona refers to the intermediate spaces as `abstract stratifications with faces'.}
This space has a natural blow-down map 
\begin{equation*}
	\wt\beta:\wt X \lra X
\end{equation*}
which is the identity map between $Y \times (0,1)_x \times W \times C_{[0,1)}(Z)$ and sends $Y \times \{0\} \times W \times C_{[0,1)}(Z)$ to $Y \times \{*\}$ in the natural way. The hybrid stratification of $\wt X$ has three strata, which correspond to the three strata of $X$ by $\wt \beta,$ namely
\begin{equation*}
	\wt X_0 = Y \times \{0\} \times W \times C_{[0,1)}(Z), \quad
	\wt X_1 = Y \times (0,1)_x \times W \times \{*\}, \quad
	\wt X_2 = Y \times (0,1)_x \times W \times (0,1)_r \times Z.
\end{equation*}
The restriction of $\wt\beta$ to $\wt X_0$ is the trivial fiber bundle
\begin{equation*}
	\wt \phi_0: Y \times \{0\} \times W \times C_{[0,1)}(Z) \lra X_0 = Y \times \{*\}
\end{equation*}
with fiber $W \times C_{[0,1)}(Z).$ 
Points in $\wt X_0$ have `res-charts' in $\wt X$ (where {\em res} refers to {\it resolved}) of the form
\begin{equation*}
	\varphi: \cal U := \cal V \times [0,\eps) \times W \times C_{[0,1)}(Z) \lra \cal V \times \bb R^+ \times  W \times C_{[0,1)}(Z)
\end{equation*}
where $\cal V$ is an open subset of $Y.$ 
(What makes this a res-chart is that the restriction of $\varphi$ to $\cal U \cap \wt X_0$ is a trivialization of $\wt\phi_0,$ its restriction to $\cal U \cap \wt X_1$ has image in $\cal V \times \bb R^+ \times W \times \{*\},$ and its restriction to $\cal U \cap \wt X_2$ has image in $\cal V \times \bb R^+ \times W \times (0,1)_r \times Z.$ The point in the latter two cases being that the image is in $\cal V \times \bb R^+$ times a stratum of the fiber of $\wt\phi_0.$) Points in $\wt X \setminus \wt X_0$ have `sing-charts' in $\wt X$ which are just lifts (along $\wt \beta$) of stratified charts on $X.$

The resolution of $X$ is obtained from $\wt X$ by blowing-up the closure of $\wt X_1$ in $\wt X,$ i.e., the smooth manifold with boundary, $\overline{\wt X_1} = Y \times [0,1)_x \times W \times \{*\}.$ As this set consists of the cone points in the cone factor of $\wt X,$ we have
\begin{equation*}
	\mathrm{res}(X) = [\wt X; \overline{\wt X_1}] = Y \times [0,1)_x \times W \times [0,1)_r \times Z.
\end{equation*}
Let us denote the blow-down map by
\begin{equation*}
	\wt\beta': \mathrm{res}(X) \lra \wt X
\end{equation*}
and the composition of the two blow-down maps by $\beta = \wt\beta\circ \wt\beta': \mathrm{res}(X) \lra X.$

The resolution of $X$ is a smooth manifold with corners. It is partitioned into three subsets by the inverse images along $\beta$ of the stratification of $X,$ thus
\begin{equation*}
\begin{gathered}
	\mathrm{res}(X)_0 = Y \times \{0\} \times W \times [0,1)_r \times Z, \quad
	\mathrm{res}(X)_1 = Y \times [0,1)_x \times W \times \{0\} \times Z, \\
	\mathrm{res}(X)_2 = Y \times (0,1)_x \times W \times (0,1)_r \times Z.
\end{gathered}\end{equation*}
The first of these (and also the second) consists of a collective boundary hypersuface (i.e., a disjoint union of boundary hypersurfaces) and the last of these is simply the interior of $\mathrm{res}(X).$ There are natural (in this case trivial) fiber bundle structures 
\begin{equation*}
	\phi_0: \mathrm{res}(X)_0 \lra Y, \quad \text{ with fiber } \quad W \times [0,1)_r \times Z,
\end{equation*}
given by $\beta|_{\mathrm{res}(X)_0},$ and 
\begin{equation*}
	\phi_1: \mathrm{res}(X)_1 \lra Y \times [0,1)_x \times W, \quad \text{ with fiber } \quad Z,
\end{equation*}
given by $\wt\beta'|_{\mathrm{res}(X)_1}.$
Note that in the former case the base of the fiber bundle is $X_0$ and the fiber is the resolution of the link of $X_0$ in $X$ and the latter case the base of the fiber bundle is the resolution of the closure of $X_1$ in $X$ and the fiber is the link of $X_1$ in $X.$
At the corner $\mathrm{res}(X)_0\cap \mathrm{res}(X)_1 = Y \times \{0\} \times W \times \{0\} \times Z$ these fiber bundles are compatible in that there is a
fiber bundle $\phi_{10}:Y \times \{0\}\times W \lra Y$ (in this case trivial) participating in a commutative diagram of fiber bundles
\begin{equation*}
	\xymatrix{
	\mathrm{res}(X)_0\cap \mathrm{res}(X)_1 \ar[rr]^-{\phi_1} \ar[rd]^-{\phi_0} & & Y \times \{0\} \times W \ar[ld]_-{\phi_{10}} \\
	& Y & }
\end{equation*}
It is convenient to also endow $\mathrm{res}(X)_2$ with a fiber bundle but in this case we just take the identity map to itself. This structure makes $\mathrm{res}(X)$ a manifold with fibered corners.

%%%%%%%%%%%%%%%%%%%%%%%%%%%%%%
\subsection{Smooth atlas stratified spaces}\label{subsect:atlas}
%%%%%%%%%%%%%%%%%%%%%%%%%%%%%%

We find it convenient to adopt the point of view of \cite{AyaFraTan:LSSS} in which smoothly stratified spaces are defined
as topologically stratified spaces with a choice of smooth stratified atlas. Our definition of smooth function between stratified spaces, and hence
of what constitutes a smooth stratified atlas, differs from that of \cite{AyaFraTan:LSSS}, see Remark \ref{rem:Ayala}. For this reason we develop
the theory of these spaces without making use of the results in {\em loc cit}.
We will then explain why this class of spaces coincides with that of
stratified spaces in the sense of Thom-Mather, and with the class of manifolds with fibered corners. 
(The latter also goes by many names, e.g., interior maximal manifolds with fibered corners in \cite{KotRoc:PMWFC} and manifolds
with corners and iterated fibration structures in \cite{AlbGel:IFFDTOP}.)\\

\begin{defn} [Conical poset stratifications] \cite{AyaFraTan:LSSS}, \cite[Appendix A.5]{Lur:HA}
\begin{itemize}
\item If $A$ is a partially ordered set (poset) then a subset $\cal U \subseteq A$ is open if it is `closed upwards' meaning that if $x\leq y$ 
and $x \in \cal U$ then $y \in \cal U.$
\item If $A$ is a finite poset and $X$ a second countable, Hausdorff topological space then an {\bf $A$-stratification of $X$} is a continuous surjective map 
$X \to A.$ Open subsets of poset-stratified spaces naturally inherit poset-stratifications (though generally with a different poset). If $A$ consists of a single point we refer to the poset-stratification 
as the trivial stratification.
\item Given an $A$-stratification of $X,$ we let $X_a,$ $X_{\leq a},$ $X_{<a},$ $X_{\geq a},$ $X_{>a},$ denote the inverse images of the corresponding 
subsets of $A.$
\item If $A$ and $A'$ are posets, we define $A\times A'$ to be the poset with 
\begin{equation*}
	(a,a')\leq (b,b') \iff (a\leq b) \quad \& \quad (a'\leq b').
\end{equation*}
We define the product of two poset-stratified spaces $X\to A$ and $X'\to A',$ to be $X \times X' \to A \times A'$ with the product map.
\item Let $[1]$ be the two-element poset  $\{ 0 <1 \}$ and let $\bb R^+ \to [1]$ denote the $[1]$-stratification of $\bb R^+$ in which $0 \in \bb R^+$ is the inverse image of $0 \in [1].$
\item If $A$ is a poset we define $C(A)$ to be the poset obtained from $A$ by adjoining a new smallest element denoted $*$ or equivalently as the quotient poset 
\begin{equation*}
	C(A) = ([1] \times A) \diagup (\{0\} \times A).
\end{equation*}
If $X$ is a topological space then $C(X)$ denotes the quotient topological space
\begin{equation*}
	C(X) = (\bb R^+ \times X) \diagup ( \{0\} \times X )
\end{equation*}
and we denote the cone point by $*.$ The poset-stratification $\bb R^+ \times X \to [1] \times A$ descends to a poset-stratification $C(X) \to C(A).$ 
(Our convention is that $C(\emptyset)$ consists of a single point $\{*\}.$ We sometimes use $C_{[0,1]}(X)$ to indicate the quotient $([0,1] \times X) \diagup ( \{0\} \times X ).$)
\item A {\bf continuous stratified map} $f$ between two poset-stratified spaces, $X\to A$ and $X'\to A',$ is a commutative diagram of continuous maps
\begin{equation*}
	\xymatrix{ X \ar[r]^-f \ar[d] & X' \ar[d] \\ A \ar [r]^-{S(f)} & A'. }
\end{equation*}
A {\bf stratified homeomorphism} is a continuous stratified map that has a continuous stratified inverse.
\item
Let $X$ be an $A$-stratified space and let $\zeta \in X_a \subseteq X.$ 
A {\bf stratified chart} around $\zeta$ consists of an open subset $\cal U \subseteq X$ containing $\zeta,$ 
a natural number $h,$ a compact $A_{>a}$-stratified space $Z,$ and a stratified homeomorphism
\begin{equation*}
	\varphi: \cal U \lra \bb R^h \times C(Z)
\end{equation*}
which sends $\zeta$ to the point $\{0\} \times \{*\},$ where we regard $\cal U$ as poset-stratified as an open subset of $X,$ and $\bb R^h \times C(Z)$ as 
stratified with the product of the trivial stratification on $\bb R^h$ and the $C(A_{>a})\cong A_{\geq a}$ stratification on $C(Z).$
(If $A_{>a}=\emptyset$ then $Z = \emptyset$ and $C(Z) = \{*\}.$)
\item A {\bf conical $A$-stratification} of an $A$-stratified space $X$ consists of a collection of stratified charts on $X,$ $\{ \cal U_{\alpha}, \varphi_{\alpha}\}_{\alpha \in \mathscr A},$
such that the domains of these stratified charts form a basis for the topology of $X.$
\end{itemize}
\end{defn}

If $X$ is a conically $A$-stratified space then the connected components of $X_a,$ which we refer to as the strata of $X,$ are topological manifolds and
satisfy the `frontier condition', i.e., if a connected component $Y$ of some $X_a$ intersects the closure of a connected component $Y'$ of some $X_{a'}$
then $Y$ is contained in the closure of $Y',$
\begin{equation*}
	Y\cap \overline{Y'} \neq \emptyset \implies Y \subseteq \overline{Y'}
\end{equation*}
(see \cite[Lemma 2.3.7]{Fri:SIH}), moreover in this case $a\leq a'$ holds in $A.$ Spaces with conical $A$-stratifications are referred to as $C^0$-stratified spaces in \cite{AyaFraTan:LSSS}. If we assume that strata of $X$ are comparable only when their closures intersect, then spaces with conical $A$-stratifications coincide with Siebenmann's `CS sets' \cite{Sie:DHSS}, \cite[Definition 2.3.1]{Fri:SIH} (cf. \cite[Definition 3.10.1]{Pfl:AGSSS}).  If $\zeta \in X_a \in X$ has a stratified chart $\cal U \lra \bb R^h \times C(Z)$ then the space $C(Z)$ is determined up to stratified homeomorphism but $Z$ itself is not, see \cite[Example 2.3.12]{Fri:SIH}. In this generality $Z$ is referred to as {\em a} link of $\zeta$ in $X.$ For smooth atlas stratified spaces, defined momentarily, $Z$ will be determined up to stratified diffeomorphism and referred to as {\em the} link of $\zeta$ in $X.$

For simplicity, we will {\bf assume that every $X_a$ is connected}. This does not represent a real loss of generality as one could always refine the stratification to achieve it without changing the geometry. We refer to each $X_a$ as a {\bf stratum}. 

If $X$ is an $A$-stratified space, define the depth of $X$ to be the length of the longest ascending sequence $a_1 < a_2 < \cdots$ of elements in the 
range of $X\to A.$ If $X$ is conically stratified and $\zeta \in X$ define the depth of $\zeta$ in $X$ to be the depth of a link of $\zeta$ in $X$ (by convention, $\zeta$ has depth zero 
if its link in $X$ is $\emptyset$).

Note that is $X$ is endowed with a conical $A$-stratification then the points of depth zero are dense in $X$ and the corresponding strata are maximal.
We denote the union of these strata by
\begin{equation*}
	X_{\mathrm{reg}} = \{ \zeta \in X: \text{ depth }(\zeta)=0\}
\end{equation*}
and refer to it as the {\bf regular part of $X.$} 

Note that different components of the regular part might have different dimensions. For example, if $Z = \{\mathrm{pt}\} \sqcup \bb S^1$ then 
we can identify $C(Z)$ with the subset of $\bb R^3$ consisting of the $x$-$y$ plane and the non-negative $z$-axis and so its regular part with the
positive $z$-axis and the $x$-$y$ plane minus the origin. A conical $A$-stratified {\bf pseudomanifold} is a conical $A$-stratified space 
whose regular part is dense and has a fixed dimension and whose other strata have dimension at least two less than the regular part.

\begin{defn} [Smooth atlas stratified spaces and functions] \label{def:SASS}
Our definition proceeds by induction on depth. 

A {\bf smoothly stratified space of depth $0$} is a smooth manifold and a smooth stratified map between smoothly stratified spaces of depth zero is a 
smooth map of smooth manifolds.

Assume inductively that we have defined smoothly stratified spaces of depth less than $k,$ as well as smooth stratified maps between stratified spaces 
of depth less than $k.$

We first define smooth stratified maps between spaces that play the r\^ole of coordinate charts as follows.
If $Z_1 \lra A_1$ and $Z_2 \lra A_2$ are smooth atlas stratified spaces of depth less than $k,$  $h_1, h_2 \in \bb N_0,$ and 
\begin{equation*}
	f: \bb R^{h_1} \times C(Z_1) \lra \bb R^{h_2} \times C(Z_2)
\end{equation*}
is a continuous stratified map then we will say that it is a {\bf smooth stratified map} if there a smooth stratified map
\begin{equation*}
	\wt f: \bb R^{h_1} \times \bb R \times Z_1 \lra \bb R^{h_2} \times \bb R \times Z_2,
\end{equation*}
(as defined by the inductive hypothesis) that makes the diagram
\begin{equation*}
	\xymatrix{
	 \bb R^{h_1} \times \bb R \times Z_1 \ar[r]^-{\wt f} & \bb R^{h_2} \times \bb R \times Z_2 \\
	 \bb R^{h_1} \times \bb R^+ \times Z_1 \ar@{^(->}[u] \ar[d] & \bb R^{h_2} \times \bb R^+ \times Z_2  \ar@{^(->}[u] \ar[d]  \\
	 \bb R^{h_1} \times C(Z_1) \ar[r]^-f & \bb R^{h_2} \times C(Z_2), }
\end{equation*}
where the descending vertical arrows are the quotient maps of the cones, commutative.
(If $Z_2 = \emptyset$ then the corresponding $\wt f$ maps $\bb R^{h_1}\times \bb R \times Z_1$ into $\bb R^{h_2}.$)

Next, let $X$ be a conically $A$-stratified space of depth $k.$ By a {\bf smoothly stratified chart} on $X$ we mean a stratified chart
\begin{equation*}
	X \supseteq \cal U \xlra{\varphi} \bb R^h \times C(Z)
\end{equation*}
in which $Z$ is a smoothly stratified space of depth less than $k.$ Two smoothly stratified charts $\{ \cal U_1, \varphi_1\}$ and $\{\cal U_2, \varphi_2\}$ are compatible
if either $\cal U_1 \cap \cal U_2 = \emptyset$ or the stratified homeomorphism
\begin{equation*}
	\psi= \varphi_2 \circ \varphi_1^{-1}:  \varphi_1( \cal U_1 \cap \cal U_2) \lra \varphi_2 ( \cal U_2 \cap \cal U_2 ),
\end{equation*}
where $\varphi_j(\cal U_1 \cap \cal U_2)$ is stratified as an open subset of $\bb R^{h_j} \times C(Z_j),$ is a smooth stratified map and so is its inverse.

A {\bf smoothly stratified atlas} on a conically poset-stratified space $X$ of depth $k$ consists of a collection of compatible smoothly stratified charts
$\{\cal U_\alpha, \varphi_{\alpha}\},$ whose domains form an open cover of $X.$ 
Two smoothy stratified atlases are equivalent if their charts are mutually compatible.

A {\bf smooth atlas stratified space of depth $k$} consists of a conically poset-stratified space $X$ of depth $k$ 
together with an equivalence class of smoothly stratified atlases, $(X, \mathscr A).$
A {\bf smooth stratified map $f: X \lra Y$} between smoothly stratified spaces of depth at most $k$ is a continuous stratified map such that 
for each smooth chart $(\cal U_1, \varphi_1)$ on $X$ and $(\cal U_2, \varphi_2)$ on $Y,$ compatible with the respective atlases, 
satisfying $f(\cal U_1) \subseteq \cal U_2,$ the composition 
\begin{equation*}
	\varphi_2 f \varphi_1^{-1}: \bb R^{h_1} \times C(Z_1) \lra \bb R^{h_2} \times C(Z_2)
\end{equation*}
is a smooth stratified map.
A {\bf stratified diffeomorphism} between two smoothly stratified spaces of depth at most $k$ is a stratified homeomorphism that is smooth with smooth inverse.
\end{defn}

We will usually refer to a smooth atlas stratified space simply as a {\bf smoothly stratified space.} This is justified by the fact, established in \S\ref{subsec:SASSThomMather}, that this is the same class of spaces as Thom-Mather stratified spaces and (hence) as Whitney stratified subsets of Euclidean space.

As anticipated, the advantage of this definition is that it helps to generalize the constructions of differential geometry to stratified spaces.
For example, let us consider the Cartesian product. An easy induction shows that if $X_1\lra A_1$ and $X_2\lra A_2$ are smooth atlas stratified spaces 
then so is $X_1 \times X_2 \lra A_1 \times A_2$ with coordinate charts given by the product of coordinate charts on each factor. Indeed, for the inductive 
step it suffices to recall that
\begin{equation*}
	\bb R^{h_1}\times C(Z_1) \times \bb R^{h_2} \times C(Z_2) = \bb R^{h_1+h_2} \times C(Z_1 * Z_2),
\end{equation*}
where $Z_1 * Z_2,$ the join of $Z_1$ and $Z_2,$ is given by
\begin{equation*}
	Z_1 * Z_2 = C_{[0,1]}(Z_1) \times Z_2 \bigsqcup_{Z_1 \times Z_2} Z_1 \times C_{[0,1]}(Z_2)
\end{equation*}
and so, inductively, a smooth atlas stratified space.

Similarly, {\bf a fiber bundle of smoothly stratified spaces}
\begin{equation*}
	W \fib X \xlra{\phi} Y
\end{equation*}
is a smooth map between smoothly stratified spaces $X \xlra{\phi} Y$ such that for every point in $Y$ there is an open neighborhood 
$\cal U$ of that point and a stratified diffeomorphism $W \times \cal U \lra \phi^{-1}(\cal U)$ such that the diagram
\begin{equation*}
	\xymatrix{
	W \times \cal U \ar[r]\ar[d] &  \phi^{-1}(\cal U) \ar[d]^-{\phi} \\
	\cal U \ar@{^(->}[r] & Y }
\end{equation*}
in which the left vertical arrow is the canonical projection, commutes. The stratum poset of $X$ is the product of those of $W$ and $Y.$

A smoothly stratified pseudomanifold is a smoothly stratified space whose regular part is dense and has a fixed dimension and whose other strata have dimension at least two less than the regular part. We say that a smoothly stratified space is orientable if its regular part is an orientable manifold and an orientation refers to an orientation of the regular part.

\begin{remark} [Relation to conically smooth stratified spaces] \label{rem:Ayala}
Our definition of smooth atlas stratified spaces is inspired by that of ``conically smooth stratified spaces" in \cite{AyaFraTan:LSSS}, however our
notion of smooth function is different from theirs. Suppose $Z$ is a smooth manifold and let us consider what constitutes a smooth stratified function 
from $C(Z)$ to itself. For us, this is the same as a stratified function $F:C(Z) \lra C(Z)$ that lifts to a smooth function 
$\bb R^+ \times Z \lra \bb R^+ \times Z;$ equivalently, a stratified function $F:C(Z) \lra C(Z)$ which restricted to the regular parts
$(0,\infty) \times Z \lra (0,\infty) \times Z$ extends by continuity to a smooth function $[0,\infty) \times Z \lra [0,\infty)\times Z.$

On the other hand, from \cite[Definition 3.3.1]{AyaFraTan:LSSS}, a function $F:C(Z) \lra C(Z)$ will be conically smooth if its restriction to the regular
parts $(0,\infty) \times Z \lra (0,\infty) \times Z$ is smooth and $f$ is ``conically smooth along $\bb R^0$". Let us consider what this means for a function
whose restriction to the regular parts is of the form 
\begin{equation*}
	F(s,z) = (s, f_s(z))
\end{equation*}
where $s \mapsto f_s$ is a smooth family of functions on $Z.$ From \cite[Definition 3.1.4]{AyaFraTan:LSSS}, in order for this to be 
``$C^1$ along $\bb R^0$," we make use of the dilation $\gamma_t(s,z) = (ts,z),$ consider
\begin{equation*}
\begin{gathered}
	Df: (0,\infty) \times Z \lra (0,\infty) \times Z, \\ 
	Df(s,z) := \lim_{t\to 0} (\gamma_t^{-1} \circ f \circ \gamma_t) (s,z) = (s, f_0(z)),
\end{gathered}\end{equation*}
and require for this function to extend to a continuous function $C(Z) \lra C(Z).$ This computation also shows that $D(Df),$ defined in the same way, is
again equal to $Df.$ Thus as long as $f_0:Z \lra Z$ is a continuous function, $F$ will be conically smooth along $\bb R^0$ and hence conically smooth 
as a map $C(Z) \lra C(Z).$ Note that, e.g., by mollification, it is possible for any continuous function $f_0:Z \lra Z$ to appear as the limit of a family 
$f_s(z)$ which is smooth in both $s$ and $z$ for $s>0.$ On the other hand, in order for $F$ to be smooth in our sense $f_0(z)$ must be smooth.
(The authors are grateful to Lukas Waas for explaining this example to them.)

\end{remark}

%%%%%%%%%%%%%%%%%%%%%%%%%%%%%%
\subsection{The resolution of a smooth atlas stratified space}\label{subsect:resolution}
%%%%%%%%%%%%%%%%%%%%%%%%%%%%%%

There is a resolution process for smoothly stratified spaces, somewhat analogous to a $\cal C^{\infty}$ resolution of singularities, that can be interpreted as saying that any smoothly stratified space of finite depth can be obtained from a manifold with corners by making certain identifications on the boundaries. In fact this construction establishes an equivalence of categories between smoothly stratified space of finite depth and a class of manifolds identified by Richard Melrose. We discuss the resolution of a stratified space with a single singular stratum into a smooth manifold with fibered boundary in the prelude above. For a stratified space with more than a single singular stratum, the resolution of $X$ is a manifold with corners and its structure is the following.

\begin{defn} [Manifolds with fibered corners] \cite[Definition 2]{AlbLeiMazPia:SPWS}, \cite[Definition 3.3]{AlbMel:RSGA}\\
Let $M$ be a second countable Hausdorff topological space.
\begin{itemize}
\item A {\bf chart with corners} on $M$ consists of an open subset $\cal U \subseteq M$ and a homeomorphism
\begin{equation*}
	\varphi: \cal U \lra \bb R^h \times [0,\infty)^k
\end{equation*}
for some $h, k \geq 0.$
Two charts, $(\cal U_1, \varphi_1)$ and $(\cal U_2, \varphi_2),$ are compatible if either $\cal U_1 \cap \cal U_2 = \emptyset$ or
\begin{equation*}
	\varphi_2\circ \varphi_1^{-1}: \varphi_1(\cal U_1\cap \cal U_2) \lra \varphi_2(\cal U_1\cap \cal U_2)
\end{equation*}
is a diffeomorphism of open subsets of $\bb R^{h_1} \times [0,\infty)^{k_1}$ and $\bb R^{h_2} \times [0,\infty)^{k_2}.$
(This means that there are open sets $\cal V_j \subseteq \bb R^{h_j+k_j}$ such that 
\begin{equation*}
	\varphi_j(\cal U_j\cap \cal U_j) = \cal V_j \cap (\bb R^{h_j} \times [0,\infty)^{k_j})
\end{equation*}
and a diffeomorphism $\cal V_1 \lra \cal V_2$ that restricts to $\varphi_2\circ \varphi_1^{-1}.$ In particular $h_1+k_1=h_2+k_2=:n,$ $h_1=h_2,$ and $k_1=k_2.$
\item A {\bf $\cal C^{\infty}$-structure with corners} on $M$ is a collection, $\{ \cal U_{\alpha}, \varphi_{\alpha}\}_{\alpha \in \mathscr A},$ of compatible charts with corners on $M$ such that the domains of these stratified charts form a basis for the topology of $M.$
\item If $M$ has a $\cal C^{\infty}$-structure with corners and $\zeta \in X$ is represented by $(x_1, \ldots, x_n)$ in some chart with corners then the number of zeros in this $n$-tuple,which we denote by $c(\zeta),$ is independent of the chart. By a {\bf boundary hypersurface} of $M$ we mean the closure of a component of $\{ \zeta \in M: c(\zeta) =1\}$ and by a collective boundary hypersurface we mean the union of a disjoint collection of boundary hypersurfaces. 
\item The space $M$ is a {\bf manifold with corners} if it is endowed with a $\cal C^{\infty}$-structure with corners and every boundary hypersurface is embedded. (Equivalently, for every boundary hypersurface $H$ there is a non-negative smooth function $\rho: M \lra \bb R$ such that $\rho^{-1}(0) = H$ and near each point of $H$ there are coordinates with $\rho$ as a first element. We call such a function a boundary defining function for $H.$)
\item By a {\bf boundary stratification} of a manifold with corners $M$ we mean a poset-stratification $S: M \lra A$ such that, for each $a\in A,$ 
\begin{equation*}
	\pa_a M := S^{-1}(a)
\end{equation*}
is either the interior of $M$ or a collective boundary hypersurface and such that if $\overline{S^{-1}(a)} \cap \overline{S^{-1}(b)} \neq \emptyset$ then $a$ and $b$ are comparable.
\item An {\bf iterated fibration structure} on a manifold with corners $M$ with a boundary stratification $S: M \lra A$ consists of, for each $a\in A,$ a smooth fiber bundle with compact fibers,
\begin{equation*}
	F_aM - \pa_aM \xlra{\phi_a} B_aM,
\end{equation*}
where the base $B_aM$ and the fiber $F_aM$ are smooth manifolds with corners, and, whenever $\overline{S^{-1}(a)} \cap \overline{S^{-1}(b)} \neq \emptyset$ and $a<b,$ a commutative diagram of smooth fiber bundles
\begin{equation*}
	\xymatrix{
	\pa_a M \cap \pa_b M \ar[rr]^-{\phi_b} \ar[rd]^-{\phi_a} & & \pa_aB_bM \ar[ld]_-{\phi_{ba}} \\ & B_aM & }
\end{equation*}
where $\pa_a B_bM$ is a collective boundary hypersurface of $B_bM.$
For the interior of $M$ the fiber bundle is the identity map $M^{\circ} \lra M^{\circ}.$

\item We refer to a manifold with corners with an iterated fibration structure as a {\bf manifold with fibered corners}.
\item If $M$ and $N$ are manifolds with corners and iterated fibration structures then by a {\bf smooth fibered corners map} between them we mean a smooth stratified map between them that sends fibers of each boundary fiber bundle of $M$ to fibers of the corresponding boundary fiber bundle of $N.$ By a {\bf fibered corners diffeomorphism} we mean an invertible smooth fibered map whose inverse is also a smooth fibered map.
\end{itemize}

\end{defn}

\begin{remark}
Note that if $M$ is a connected manifold with fibered corners then the interior of $M$ corresponds to a maximum in the poset of strata.
In the terminology of \cite{KotRoc:PMWFC} what we have described is an ``interior maximal" manifold with fibered corners. As we shall have no need of other iterated fibration structures we omit the `interior maximal' qualifier.
\end{remark}

\begin{remark}
If $M$ is a manifold with fibered corners and $F_aM - \pa_aM \xlra{\phi_a} B_aM$ is one of the corresponding boundary fiber bundles, then both the fiber $F_aM$ and the base $B_aM$ themselves inherit the structure of manifold with fibered corners (see, e.g., \cite[Proposition 4.2]{KotRoc:PMWFC}).
\end{remark}

\begin{remark}\label{rmk:StratMfdCorners}
Note that a manifold with corners can be considered as a smoothly stratified space with strata given by the connected components of the level sets of the codimension function $c$ above. Endowing $(\bb R^+)^h$ with this stratification, which we refer to as the {\bf natural stratification of a manifold with corners},
we can obtain a smooth stratified atlas from an atlas of charts with corners. 
 This is generally not a boundary stratification as just defined but will be useful in section \ref{sec:ResStratFibBdles}. 
 (For example if $M=[0,1]^2$ then an example of a boundary stratification would be to take $A = \{0,1\},$ $\pa_0M = \{0,1\} \times [0,1]$ and $\pa_1M = [0,1] \times \{0,1\},$ while the natural stratification would have strata $(0,0),$ $(0,1),$ $(1,0),$ $(1,1),$ $\{0\} \times (0,1),$ $\{1\} \times (0,1),$ $(0,1) \times \{0\},$ and $(0,1) \times \{1\}.$)
 \end{remark}

To resolve a stratified space we will perform a radial blow-up of a minimal singular stratum thereby replacing it with a boundary hypersurface. To carry this out
systematically we will need to introduce another definition for the intermediate spaces we will encounter.

\begin{defn} [Hybrid spaces and functions] \label{def:Hybrid}
We will define hybrid spaces as particular types of poset stratified spaces, inductively by depth. 
A {\bf hybrid space of depth zero} is a smooth manifold and a smooth hybrid function is an ordinary smooth function.
A {\bf hybrid space of depth one} is either a smooth manifold with boundary or a smooth atlas stratified space of depth one, with the corresponding smooth functions as defined above.

Assume inductively that we have defined hybrid spaces of depth less than $k$ and the corresponding smooth functions and let $A$ be poset of depth $k.$
Let $X$ be a connected second countable Hausdorff topological space with a poset-stratification $X \lra A.$
Assume that $A$ is partitioned into
\begin{equation*}
	A = A_{\mathrm{res}} \sqcup A_{\mathrm{sing}} \sqcup \{ \mathrm{reg} \}.
\end{equation*}
The stratum $X_{\mathrm{reg}},$ known as the regular part, is open and dense, and $\mathrm{reg}$ is the maximum element of $A.$
The set $A_{\mathrm{res}}$ (which will correspond to the strata that have already been resolved) and the set $A_{\mathrm{sing}}$ (which will correspond
to the strata that have not yet been resolved) are possibly empty, and no element of $A_{\mathrm{res}}$ is greater than an element in $A_{\mathrm{sing}}.$

For $a \in A_{\mathrm{res}},$ $X_a$ is a hybrid space of depth less than $k$ and participates in a fiber bundle
\begin{equation*}
	Z \fib X_a \xlra{\phi_a} B_aX
\end{equation*}
where $B_aX$ is a smooth manifold with corners with a boundary stratification and $Z$ is a hybrid space of depth less than $k.$
A {\bf res-chart} centered at a point $\zeta \in X_a$ is a map $\varphi: \cal U \lra \bb R^+ \times \cal V \times Z,$ where $\cal V$ is a coordinate chart in $B_aX,$ with the property that $\cal U \cap X_a = \phi_a^{-1}(\cal V)$ and $\varphi|_{\cal U\cap X_a}$ participates in a trivialization of $\phi_a,$ 
\begin{equation}\label{trivial-hybrid}
	\xymatrix{
	\phi_a^{-1}(\cal V)  \ar[rr]^-{\varphi|_{\cal U\cap X_a}} \ar[rd]^-{\phi_a} & & \cal V \times Z \ar[ld] \\ & \cal V}
\end{equation}
and such that $\varphi$ is a stratified homeomorphism which sends strata of $\cal U$ corresponding to $A_{\mathrm{res}}$ to (strata of $\bb R^+ \times \cal V)$ times $Z$ and strata of $\cal U$ corresponding to $A_{\mathrm{sing}}$ to $\bb R^+ \times \cal V$ times (strata of $Z).$

For $a\in A_{\mathrm{sing}}\cup \{\mathrm{reg}\},$ $X_a$ is a smooth manifold. A {\bf sing-chart} centered at a point $\zeta \in X_a$ consists of a smoothly stratified chart which does not intersect any $X_b$ with $b \in A_{\mathrm{res}}.$ 

A {\bf smooth hybrid map} between sets of the form 
\begin{equation}\label{eq:GenHybridMap}
	(\bb R^+)^h \times Z \supseteq \cal V \times Z \lra  \cal V' \times Z' \subseteq (\bb R^+)^{h'} \times Z',
\end{equation}
in which $Z$ and $Z'$ are hybrid spaces of depth less than $k,$ is a poset-stratified map when the first factors are endowed with a boundary stratification if appropriate (i.e., if $\cal V$ has boundary) which is also a smooth map when 
the first factors are considered as smooth (i.e., trivially stratified).

Two overlapping charts are compatible if the corresponding transition map of the form \eqref{eq:GenHybridMap} is a smooth hybrid difeomorphism.

A {\bf hybrid atlas} on $X$ consists of a collection of compatible charts whose domains form a basis for the topology of $X.$ A space $X$ as above endowed with a hybrid atlas is a {\bf hybrid space of depth $k.$} A smooth map between hybrid spaces of depth at most $k$ is a continuous stratified map whose localization to charts in the atlases is smooth as described above.

\end{defn}

To show that every smooth atlas stratified space resolves to a manifold with fibered corners, the key result is the following proposition.

\begin{prop}\label{prop:resolution}
Let $X\lra A$ be a hybrid space with partition $A = A_{\mathrm{res}} \sqcup A_{\mathrm{sing}} \sqcup \{\mathrm{reg}\}$ such that $A_{\mathrm{sing}}\neq \emptyset,$ and let $a$ be a minimal element of $A_{\mathrm{sing}}.$ 
\begin{enumerate} [i)]
\item the closure of $X_a$ in $X,$ $\overline{X_a},$ is a smooth manifold with corners,
\item there is a hybrid space $\scr S_a$ participating in a fiber bundle 
\begin{equation*}
	Z \fib \scr S_a \xlra{\phi_a} \overline{X_a}
\end{equation*}
where $Z$ is the link of $X_a$ in $X,$
\item a hybrid atlas on $X$ induces a hybrid atlas on
\begin{equation*}
	[X;\overline{X_a}] := (X\setminus \overline{X_a}) \sqcup \scr S_a
\end{equation*}
endowed with the poset stratification $[X;\overline{X_a}] \lra A'$ where 
\begin{equation*}
	A' = A \quad \text{ with } \quad
	A'_{\mathrm{res}} = 
	A_{\mathrm{res}} \cup \{a\}, \quad
	A'_{\mathrm{sing}} =
	A_{\mathrm{sing}} \setminus\{a\}
\end{equation*}
and where the strata are
\begin{equation*}
	[X;\overline{X_a}]_b 
	= \begin{cases}
	\overline{ X_b\setminus \overline{X_a}} & \text{ if } b<a \\
	\scr S_a & \text{ if } b=a \\
	X_b & \text{ otherwise }
	\end{cases}
\end{equation*}
\item if there is a stratum $X_b$ such that $X_b \cap \overline {X_a} \neq \emptyset$ (necessarily $b<a$) and 
\begin{equation*}
	X_b \xlra{\phi_b} B_bX, \quad [X;\overline{X_a}]_b \xlra{\wt\phi_b} B_bX
\end{equation*}
are the corresponding fiber bundles, then 
\begin{equation*}
	[X;\overline{X_a}]_b \cap \scr S_a = \phi_a^{-1}(\overline X_a \cap X_b)
\end{equation*}
participates in the commutative diagram
\begin{equation*}
	\xymatrix{
	[X;\overline{X_a}]_b \cap \scr S_a \ar[rr]^-{\phi_a} \ar[rd]_-{\wt \phi_b} & & \overline X_a \cap X_b \ar[ld]^-{\phi_b} \\ & B_bX & }
\end{equation*}
\end{enumerate}
\end{prop}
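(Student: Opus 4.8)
The plan is to prove all four statements together by induction on the depth $k$ of the hybrid space $X$, the base case $k=1$ being (essentially) the depth-one discussion of \S\ref{subsect:prelude} reinterpreted in hybrid language. The argument is local: a neighborhood of $\overline{X_a}$ in $X$ is covered by sing-charts $\cal U\xlra{\varphi}\bb R^h\times C(Z)$ centered at points of $X_a$ — which, by definition of sing-chart, avoid every $X_b$ with $b\in A_{\mathrm{res}}$ — together with res-charts $\cal U\xlra{\varphi}\bb R^+\times\cal V\times Z_b$ centered at points of the strata $X_b$ with $b<a$, all of which lie in $A_{\mathrm{res}}$ by minimality of $a$. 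One performs the blow-up of $\overline{X_a}$ in each model chart and then glues using the transition maps of the $X$-atlas. The structural point to isolate at the outset is that, in a res-chart, $X$ looks like $\bb R^+\times\cal V\times Z_b$, the set $\overline{X_a}$ looks like $\bb R^+\times\cal V\times\overline{(Z_b)_a}$, and $a$ corresponds to a minimal element of the singular part of the poset of $Z_b$; since $\mathrm{depth}(Z_b)<k$ and blow-ups commute with products, $[X;\overline{X_a}]$ is locally $\bb R^+\times\cal V\times[Z_b;\overline{(Z_b)_a}]$ and its structure there is dictated by the proposition applied to $Z_b$.

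For (i): in a sing-chart $\overline{X_a}\cap\cal U=\bb R^h\times\{*\}$ is a corner-free manifold, while in a res-chart $\overline{X_a}\cap\cal U\cong\bb R^+\times\cal V\times\overline{(Z_b)_a}$ is the product of $\bb R^+$, the manifold-with-corners chart $\cal V\subseteq B_bX$, and $\overline{(Z_b)_a}$, which is a manifold with corners by the inductive hypothesis (i) for $Z_b$; compatibility of these models, and embeddedness of the boundary hypersurfaces, follow by restricting the smooth hybrid transition maps of the $X$-atlas to $\overline{X_a}$ and using that such maps preserve the product structure along strata, exactly as in the prelude. For (ii) and (iii) we build $\scr S_a$ chartwise: over a sing-chart it is the hypersurface $\bb R^h\times\{0\}\times Z\subseteq\bb R^h\times[0,\infty)\times Z$ produced by the usual radial blow-up of $\bb R^h\times C(Z)$, fibering over $\bb R^h$ with fiber the link $Z$; over a res-chart it is $\bb R^+\times\cal V\times\scr S_a^{Z_b}\subseteq\bb R^+\times\cal V\times[Z_b;\overline{(Z_b)_a}]$, where $\scr S_a^{Z_b}\xlra{\phi_a^{Z_b}}\overline{(Z_b)_a}$ is supplied by the inductive hypothesis (ii) for $Z_b$ and has fiber the link of $(Z_b)_a$ in $Z_b$, which is the same as the link $Z$ of $X_a$ in $X$. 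Declaring these to be res-charts for the new stratum, keeping the unchanged sing-charts away from $\overline{X_a}$, and replacing each old res-chart for $b<a$ by its blow-up $\bb R^+\times\cal V\times[Z_b;\overline{(Z_b)_a}]$, one reads off a hybrid atlas on $[X;\overline{X_a}]$ with the stated poset $A'$ and strata; the res-chart axioms for $a$ are visible in each model, with the non-trivial instances reduced to the corresponding axioms for $Z_b$.

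For (iv): the identity $[X;\overline{X_a}]_b\cap\scr S_a=\phi_a^{-1}(\overline{X_a}\cap X_b)$ and the commuting triangle are verified in a res-chart centered at a point of $X_b\cap\overline{X_a}$, where $[X;\overline{X_a}]_b\cap\scr S_a$ is modeled by $\{0\}\times\cal V\times\scr S_a^{Z_b}$, the map $\phi_a$ is the fiberwise application of $\phi_a^{Z_b}$ with image $\{0\}\times\cal V\times\overline{(Z_b)_a}\cong\overline{X_a}\cap X_b$, and both $\phi_b\circ\phi_a$ and $\wt\phi_b$ are the projection onto the $\cal V$-factor; since res-charts of this type cover $X_b\cap\overline{X_a}$, the diagram commutes globally. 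The step I expect to be the main obstacle is the well-definedness of $\scr S_a$ and of the projection $\phi_a$ — that is, checking that the chartwise definitions of the new boundary hypersurface and of the bundle structure agree on overlaps. This comes down to analyzing how a transition map of the atlas behaves along $X_a$: one lifts it across the cone-collapse maps (and, over the resolved strata, invokes the inductive description of transition maps of hybrid atlases) and extracts the block-triangular normal form $(y,0,z)\mapsto(\bar f(y),0,g(y,z))$ along $X_a$, which simultaneously shows that the gluing of the $\scr S_a$-pieces is well defined and smooth and that $\phi_a$ is a smooth fiber bundle with compact fiber $Z$. Granting this normal form, statements (i), (iii), and (iv) are a matter of propagating it carefully through the various chart types.
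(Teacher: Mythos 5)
Your proposal is correct in substance and has the same skeleton as the paper's proof — induction on depth, analysis in sing-charts and res-charts near $\overline{X_a}$, and the block-triangular normal form $(y,0,z)\mapsto(\bar f(y),0,g(y,z))$ of lifted transition maps along $\overline{X_a}$ as the decisive technical ingredient — but it differs in one genuine structural choice. You construct $\scr S_a$ \emph{extrinsically}, chart by chart, and then glue: over a sing-chart it is the boundary hypersurface of the radial blow-up of the model, and over a res-chart it is $\bb R^+\times\cal V\times\scr S_a^{Z_b}$ with $\scr S_a^{Z_b}$ supplied by the inductive hypothesis (ii) for the fiber. The paper instead defines $\scr S_a$ \emph{intrinsically} as a quotient $\scr P_a/\!\sim$ of the space of smooth stratified paths $\chi:\bb R^+\lra X$ that start on $\overline{X_a}$ and immediately leave it, two paths being equivalent when they have the same initial point and the same initial lift value $\wt{\varphi\circ\chi}(0)$ in every relevant chart. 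With the intrinsic definition there is no cocycle to verify: $\scr S_a$ exists as a set from the start, and the normal form is used only to show that the chart maps $\varphi_{\scr S_a}$ are consistent, giving $\scr S_a$ its hybrid structure and $\phi_a$ its bundle structure. With your extrinsic definition you must additionally confirm that the local pieces actually assemble to a single space; you correctly identify this as the main obstacle and correctly identify the normal form as its solution, but be aware that the mixed sing-chart/res-chart overlaps are the delicate case. The paper's device for handling these cleanly is the \emph{extended coordinate chart}: from a res-chart $\varphi:\cal U\to\bb R^+\times\cal V\times W$ around a point of $X_b\cap\overline{X_a}$ and a sing-chart of the fiber $W$ at the relevant stratum it manufactures a chart $\psi:\cal U'\to(\bb R^+)^h\times C(Z)$, so that \emph{every} point of $\overline{X_a}$ has a chart of the form $(\bb R^+)^h\times C(Z)$ and the normal form argument can be run uniformly rather than case by case. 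Your inductive invocation of $\scr S_a^{Z_b}$ is, when unwound, doing the same thing one depth at a time, but if you want the gluing argument to be airtight you should either make the cocycle check for the sing/res overlaps explicit or adopt the uniformizing extended-chart trick. A final small caution: the paper notes that the extended coordinate charts also deliver part (i) (that $\overline{X_a}$ is a manifold with corners) directly, without invoking the inductive hypothesis; your appeal to inductive hypothesis (i) for $\overline{(Z_b)_a}$ also works, but you should make sure the embeddedness of boundary hypersurfaces is checked across chart overlaps, not just within each local model.
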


\begin{proof}
We proceed by induction on the depth of $A.$ If the depth of $A$ is zero then there is nothing to prove.

i) Let us denote the link of $X_a$ in $X$ by $Z,$ so that each point in $X_a$ has a stratified chart that identifies a neighborhood with $\bb R^{h}\times C(Z).$ Our immediate goal is to show that points in the closure of $X_a$ have compatible charts with image in $(\bb R^+)^h \times C(Z).$
{\em Note that here to ease the notational burden we will use charts into $(\bb R^+)^h \times C(Z)$ with the understanding that they are homeomorphisms onto their image but while we assume that this image projects onto the second factor we do not assume that it is onto the first factor.
Thus when we use $(\bb R^+)^h$ we are allowing corners up to codimension $h$ but not asserting that this codimension is realized.} 
If we recall that a smooth map defined on $(\bb R^+)^h$ means the restriction of a smooth map defined on some open set in $\bb R^h,$ then our notion
of smooth map between the images of charts of this form is as one would expect.

We know that strata in $A_{\mathrm{sing}}$ whose closures have non-trivial intersection satisfy the frontier condition, so if $\zeta \in \overline X_a\setminus X_a$ is contained in $X_b$ then $b \in A_{\mathrm{res}}.$ If 
\begin{equation*}
	\varphi:\cal U \lra \bb R^+ \times \cal V \times W
\end{equation*}
is a res-chart of $X_b$ centered at $\zeta,$ where we are
using the notation from Definition \ref{def:Hybrid}, then $\varphi(\cal U \cap \overline{X_a})$ is of the form $\bb R^+ \times \cal V \times W'$ for some stratum $W'$ of $W.$ Now $W$ is itself a hybrid space and since approaching $W'$ from $W\setminus W'$ corresponds via $\varphi$ to approaching $\overline{X_a}$ from $X\setminus \overline{X_a},$ $W'$ must be a stratum in the `$\mathrm{sing}$' part of the partition of the strata of $W.$ Thus points in $W'$ have sing-charts in $W$ of the form
$\varphi':\cal V' \lra \bb R^{h'} \times C(L)$ for some $h'$ and $L$ (the link of $W'$ in $W$). Let $\cal U' \subseteq X$ be $\varphi^{-1}(\bb R^+ \times \cal V \times \cal V').$
By combining $\varphi$ with an open embedding of $\cal V$ into an appropriate $(\bb R^+)^{h''}$ and with $\varphi'$ we obtain
\begin{equation*}
	\psi:\cal U' \xlra{\varphi} \bb R^+ \times \cal V \times \cal V' \lra \bb R^+ \times (\bb R^+)^{h''} \times \bb R^{h'} \times C(L).
\end{equation*}
Since the restriction of $\psi$ to $\cal U'\cap X_a$ is a coordinate chart (a sing-chart for $X_a$) we see that $1+h'+h''=h$ and $L=Z.$ 
To simplify the notation we assume that the image of $\psi$ lies in $\bb R^+ \times (\bb R^+)^{h''} \times (\bb R^+)^{h'} \times C(Z) = (\bb R^+)^h \times C(Z)$ (by composing with a stratified diffeomorphism if necessary) and we refer to 
\begin{equation*}
	\psi: \cal U' \lra (\bb R^+)^h \times C(Z),
\end{equation*}
as an extended coordinate chart.

Note that 
\begin{equation}\label{image-of-psi}
	\psi(\cal U' \cap \overline{X_a})\subseteq (\bb R^+)^h \times \{*\}
\end{equation}
(since $\varphi'(W') \subseteq \bb R^{h'}\times \{*\}$) and that every point in $\overline X_a\setminus X_a$ is in the domain of such an extended coordinate chart. These charts and the compatibility they inherit
from that of the charts of $X$ give $\overline{X_a}$ the structure of a smooth manifold with corners.\\

ii) Let $\scr A$ be the union of a hybrid atlas for $X$ and the extended
coordinate charts for points in $\overline X_a\setminus X_a$ from (i),
and let $\scr A_a$ be the subset of those charts centered at points in $\overline{X_a},$

\begin{equation*}
	\scr A_a = \{ (\cal U \xlra{\varphi} (\bb R^+)^h \times C(Z)) \in \scr A: \varphi^{-1}((\bb R^+)^h \times \{*\}) \subseteq \overline{X_a} \}.
\end{equation*}
Let us consider $\bb R^+\lra [1]$ as a smooth atlas stratified space with poset $\{ 0<1\}$ and $\bb R^+_0 = \{0\},$ and let
$\scr P_a$ be the paths in $X$ that start at $\overline{X_a}$ and immediately move on to `higher' strata,
\begin{equation*}
	\scr P_a = \{\chi: \bb R^+ \lra X \text{ smooth stratified map}: \chi(t) \in \overline{X_a} \iff t=0\}.
\end{equation*}
By definition of smooth stratified maps, whenever $(\cal U \xlra{\varphi} (\bb R^+)^h \times C(Z))\in \scr A_a$ and $\chi \in \scr P_a,$
$\varphi\circ \chi$ has a lift
\begin{equation*}
	\xymatrix{
	& (\bb R^+)^h \times \bb R^+ \times Z \ar[d] \\
	\bb R^+ \ar[r]^-{\varphi\circ \chi} \ar[ru]^-{\wt{\varphi\circ\chi}} & (\bb R^+)^h \times C(Z) }
\end{equation*}
Let us define an equivalence relation on $\scr P_a$ by
\begin{equation*}
	\chi_1 \sim \chi_2 \iff
	\begin{cases}
	\chi_1(0) = \chi_2(0) & \text{ and }\\ $ $\\
	\wt{\varphi\circ\chi_1}(0) = \wt{\varphi\circ\chi_2}(0) & \text{ for all charts in $\scr A_a$ including $\chi_1(0)$}
	\end{cases} 
\end{equation*}
and denote the set of equivalence classes by $\scr S_a,$
\begin{equation*}
	\scr S_a = \scr P_a \diagup \sim.
\end{equation*}
To give the set $\scr S_a$ some more structure, first note that there is an obvious map
\begin{equation*}
	\phi_a: \scr S_a \lra \overline{X_a}, \quad [\chi]\mapsto \chi(0).
\end{equation*}
Next, given $(\cal U \xlra{\varphi} (\bb R^+)^h \times C(Z))\in \scr A_a,$ let us denote 
\begin{equation*}
	\cal U_a = \cal U \cap \overline{X_a}, \quad \varphi_a = \varphi|_{\cal U_a}: \cal U_a \lra (\bb R^+)^h
\end{equation*}
and then 
\begin{equation}\label{eq:InducedCoord}
	\xymatrix @R=1pt {
	\widetilde\varphi_a: \phi_a^{-1}(\cal U_a) \ar[r] & (\bb R^+)^h \times Z,\\
	[\chi] \ar@{|->}[r] & \wt{\varphi\circ \chi}(0) }
	\text{ and }
	\varphi_{\scr S_a} = (\varphi_a^{-1}\times \id)\circ \widetilde \varphi_a: \phi_a^{-1}(\cal U_a) \lra \cal U_a \times Z.
\end{equation}
If $(\cal U' \xlra{\varphi'} (\bb R^+)^h \times C(Z))\in \scr A_a$ is another chart with $\cal U'\cap \overline{X_a} = \cal U_a$ then, on $\cal U\cap \cal U',$ 
$\varphi' = f \circ \varphi$ for some stratified diffeomorphism whose lift 
\begin{equation}\label{eq:InducedDiff}
	\xymatrix{
	(\bb R^+)^h \times \bb R \times Z \ar@{}[r]|-*[@]{\supseteq}  & \cal V \times \bb R \times Z \ar[r]^-{\wt f} 
		& \cal V' \times \bb R \times Z \ar@{}[r]|-*[@]{\subseteq} & (\bb R^+)^h \times \bb R \times Z  \\
	(\bb R^+)^h \times \bb R^+ \times Z \ar@{}[r]|-*[@]{\supseteq}  & \cal V \times \bb R^+ \times Z \ar@{^(->}[u] \ar[d] \ar[r]^{\wt f_{\geq 0}}
		& \cal V' \times \bb R^+ \times Z \ar@{}[r]|-*[@]{\subseteq} \ar@{^(->}[u] \ar[d]  & (\bb R^+)^h \times \bb R^+ \times Z  \\
	(\bb R^+)^h \times C(Z) \ar@{}[r]|-*[@]{\supseteq}  & \cal V \times C(Z) \ar[r]^-{f} 
		& \cal V' \times C(Z) \ar@{}[r]|-*[@]{\subseteq} & (\bb R^+)^h \times C(Z) }
\end{equation}
restricts to a stratified diffeomorphism  $\cal V \times \{0\} \times Z \lra \cal V' \times \{0\} \times Z$ 
of the form $(y,0,z) \mapsto (\wt f'(y),0, \wt f''(y,z)).$
It follows that $\varphi_{\scr S_a}': \phi_a^{-1}(\cal U_a) \lra \cal U_a \times Z$ is given by
\begin{equation*}
	\phi_a^{-1}(\cal U_a) \xlra{\varphi_{\scr S_a}} \cal U_a \times Z \xlra{\wt f'''} \cal U_a \times Z, \text{ where }
	\wt f'''(u,z) = (u, \wt f''(\varphi_a(u),z))
\end{equation*}
and hence differs from $\varphi_{\scr S_a}$ by a stratified diffeomorphism. 
Thus we may, without ambiguity, give $\scr S_a$ the structure of a hybrid stratified space by requiring that all of the maps
$\varphi_{\scr S_a}$ are hybrid diffeomorphisms.
Finally, over each $\cal U_a$ we have a commutative diagram
\begin{equation*}
	\xymatrix{ \phi_a^{-1}(\cal U_a) \ar[rr]^-{\varphi_{\cal S_a}} \ar[rd]_-{\phi_a} & & \cal U_a \times Z \ar[ld] \\
	& \cal U_a & }
\end{equation*}
with the arrow on the right given by the projection onto the first factor, so $\phi_a: \scr S_a \lra \overline{X_a}$ is a stratified fiber bundle with fiber $Z.$\\

iii)
Next we define the blow-up of $X$ along $\overline{X_a}$ to be
\begin{equation*}
	[X;\overline{X_a}] = (X \setminus \overline{X_a}) \sqcup \scr S_a
\end{equation*}
together with a blow-down map 
\begin{equation*}
	\beta: [X;\overline{X_a}] \lra X
\end{equation*}
given by collapsing the fibers of $\phi_a.$ We define a poset stratification $[X;\overline{X_a}]\lra A$ by
\begin{equation*}
	[X;\overline{X_a}]_b 
	= \begin{cases}
	\text{ closure of }\beta^{-1}( X_b\setminus \overline{X_a} ) & \text{ if } b<a \\
	\scr S_a & \text{ if } b=a \\
	\beta^{-1}(X_b) & \text{ otherwise }
	\end{cases}
\end{equation*}
(which is just a more careful version of the statement in the proposition). We will show that a hybrid atlas for $X$ induces a hybrid atlas for $[X;\overline{X_a}]$
with $a$ moved from the `sing' part of the partition of $A$ to the `res' part. That is, we will show that we can produce a hybrid atlas for $[X;\overline{X_a}]$ for which points in a stratum indexed by $A_{\mathrm{res}}\cup \{a\}$ have res-charts and point in a stratum indexed by $A_{\mathrm{sing}}\setminus \{a\}$ have sing-charts.

First let us produce res-charts for the new res stratum $[X;\overline{X_a}]_a = \scr S_a.$
The interesting charts we need to consider are the ones obtained from $\scr A$ as follows.
If $(\cal U \xlra{\varphi} (\bb R^+)^h \times C(Z))\in \scr A_a$ then, recalling the definition of $\wt\varphi_a$ from \eqref{eq:InducedCoord}, we let $\cal U_a = \cal U\cap \overline{X_a}$ and 
\begin{equation*}
	\widehat {\cal U} = (\cal U \setminus \cal U_a) \sqcup \phi_a^{-1}(\cal U_a), \quad
	\widehat {\varphi} = \varphi|_{\cal U \setminus \cal U_a} \sqcup \wt \varphi_a: \widehat {\cal U} \lra (\bb R^+)^h \times \bb R^+ \times Z
\end{equation*}
where we are identifying $\mathrm{Image}(\wt \varphi_a) \subseteq (\bb R^+)^h \times Z$ as a subset of $(\bb R^+)^h \times \{0\} \times Z.$
Let us consider the behavior of this construction when changing coordinate chart. Suppose $(\cal U' \xlra{\varphi'} (\bb R^+)^h \times C(Z))\in \scr A_a$ is another chart with $\cal U \cap \cal U' \neq \emptyset$ 
then, on $\cal U\cap \cal U',$ $\varphi' = f \circ \varphi$ for some stratified diffeomorphism participating in a diagram like \eqref{eq:InducedDiff}
and, since $f|_{\cal V \times (0,\infty) \times Z} = \wt f|_{\cal V \times (0,\infty) \times Z}$ and $\wt\varphi' = \wt f|_{\cal V \times \{0\} \times Z} \circ \wt \varphi,$ we have
\begin{equation*}
	\widehat\varphi' = \wt f_{\geq0} \circ \widehat\varphi.
\end{equation*}
Thus $\hat \varphi$ and $\hat\varphi'$ are compatible res-charts for $[X;\overline{X_a}].$

(Although we started our induction with the case of depth zero, we note that the proof thus far could be used to establish the case of depth one as the base of the induction.)

Now let us consider a stratum $[X;\overline{X_a}]_b,$ $b\neq a.$
For $b\in A_{\mathrm{sing}},$ the pull-back of a sing-chart from $X$ to $[X;\overline{X_a}]$ is again a sing-chart. Indeed, a sing-chart centered at a point $\zeta \in X_b$ is a stratified diffeomorphism of the form $\varphi: \cal U \lra \bb R^\ell \times C(W)$ with $\cal U \subseteq X\setminus\overline{X_a}$ and, since $\beta$ restricts to the identity map between $[X;\overline{X_a}] \setminus [X;\overline{X_a}]_a$ and $X\setminus \overline X_a,$ $\varphi\circ \beta: \beta^{-1}(\cal U) \lra \bb R^\ell \times C(W)$ is a sing-chart. Similarly compatibility between these sing-charts is clear because their overlap takes place in $X\setminus \overline{X_a}$ and so is untouched by the blow-up of $\overline{X_a}.$

It remains to consider a stratum $[X;\overline{X_a}]_b,$ with $b \in A_{\mathrm{res}}.$ If $[X;\overline{X_a}]_b,$ does not intersect the new `front face' $[X;\overline{X_a}]_a = \scr S_a$ then res-charts of $X_b$ lift to res-charts of $[X;\overline{X_a}]_b$ essentially unchanged, so let us assume that $[X;\overline{X_a}]_b \cap \scr S_a \neq \emptyset,$ i.e., that $X_b \cap \overline{X_a}\neq \emptyset.$ Let us denote the fiber bundle of $X_b$ by
\begin{equation*}
	W \fib X_b \xlra{\phi_b} B_bX.
\end{equation*}
The discussion in (i) shows that the restriction of $\phi_b$ to $X_b \cap \overline{X_a}$ still fibers over $B_bX$ and (as we did above) we denote the fiber by $W',$ thus
\begin{equation*}
	W' \fib X_b\cap \overline{X_a} \xlra{\phi_b|_{X_b \cap \overline{X_a}}} B_bX.
\end{equation*}
Since $X_a$ is a minimal sing-stratum in the stratification of $X,$ $W'$ is a minimal sing-stratum in the stratification of $W$ and so our inductive hypothesis applies and we know that $[W;W']$ is a hybrid space. 
It follows that blowing-up $\overline{X_a}$ in $X$ has the effect of blowing-up $W'$ in each fiber of $\phi_b$ and so produces the fiber bundle
\begin{equation*}
	[W;W'] \fib [X;\overline{X_a}]_b \xlra{ \widetilde{\phi_b}:=\phi_b\circ \beta} B_bX.
\end{equation*}
A trivialization of $\phi_b$ lifts to a trivialization of $\widetilde{\phi_b}$ and correspondingly a lift of a res-chart $\varphi: \cal U \lra \bb R^+ \times \cal V \times W$ for $X_b$ lifts to a res-chart $\widetilde{\varphi}: \widetilde{\cal U} \lra \bb R^+ \times \cal V \times [W;W']$ for $[X;\overline{X_a}]_b.$

iv) Finally let us consider the relation between the fiber bundles $\phi_a$ and $\wt\phi_b.$
We have seen how blowing-up $\overline{X_a}$ in $X$ has the effect of blowing-up $W'$ in each fiber $W$ of $\phi_b.$
The `front face' of $[X;\overline{X_a}],$ i.e., the boundary hypersurface produced by the blow=up, $[X;\overline{X_a}]_a,$ thus corresponds to the front face of $[W;W']$ in each fiber of $\wt\phi_b.$ That is to say that $\wt\phi_b$ restricts to a fiber bundle
\begin{equation*}
	\mathrm{ff}[W;W'] \fib [X;\overline{X_a}]_b \cap [X;\overline{X_a}]_a \xlra{ \widetilde{\phi_b}} B_bX.
\end{equation*}
On the other hand, since $\phi_a$ is equal to the restriction of $\beta$ to $[X;\overline{X_a}]_a$ we have
\begin{equation*}
	\phi_a \big( [X;\overline{X_a}]_b \cap [X;\overline{X_a}]_a \big) = X_b \cap \overline{X_a}.
\end{equation*}
Thus we have the commutative diagram
\begin{equation*}
	\xymatrix{
	[X;\overline{X_a}]_a \cap [X;\overline{X_a}]_b \ar[rr]^{\phi_a} \ar[rd]_-{\wt\phi_b} & & \overline{X_a}\cap X_b \ar[ld]^-{\phi_b|_{\overline{X_a}\cap X_b}} \\
	& B_bX & }
\end{equation*}
as required.
\end{proof}

If $X\lra A$ is a smooth atlas stratified space and then repeated applications of this proposition produce {\bf the resolution of $X,$} $\mathrm{res}(X),$ 
a smooth manifold with corners with a boundary stratification parametrized by $A.$
That is to say,
\begin{equation*}
	\mathrm{res}(X) := [\cdots [X;\overline{X_{a_0}}]; \overline{X_{a_1}}]; \cdots; \overline{X_{a_n}}],
\end{equation*}
where $a_0, \ldots, a_n$ is any non-decreasing enumeration of $A.$ (It is easy to see that the result is independent of the order, up to a fibered corner
diffeomorphism.)
The natural map
\begin{equation*}
	\beta_X: \mathrm{res}(X) \lra X,
\end{equation*}
known as the blow-down map, is the composition of the blow-down maps of the individual blow-ups.

\begin{thm} \label{thm:ResSASS}
Whenever $X\lra A$ is a smooth atlas stratified space, its resolution $\mathrm{res}(X)$ inherits from $X$ the structure of a manifold with fibered corners with boundary stratification parametrized by $A.$
The restriction of $\beta_X: \mathrm{res}(X) \lra X,$  the blow-down map, to $\mathrm{res}(X)^{\circ}$ is a diffeomorphism onto $X_{\mathrm{reg}}.$
Moreover, for each $a \in A$ we have
\begin{equation}\label{eq:paares}
	\pa_a\mathrm{res}(X) = \beta_X^{-1}(\overline{X_a}),
\end{equation}
and, if $Z$ is the link of $X_a$ in $X,$ then the corresponding fiber bundle satisfies
\begin{equation*}
	F_a\mathrm{res}(X) = \mathrm{res}(Z) \fib \pa_a\mathrm{res}(X) \xlra{\phi_a} B_a\mathrm{res}(X) = \mathrm{res}(\overline{X_a}), \quad
	\phi_a|_{(\pa_a\mathrm{res}(X))^{\circ}} = \beta_X|_{(\pa_a\mathrm{res}(X))^{\circ}}.
\end{equation*}
Whenever $M \lra A$ is a manifold with fibered corners whose boundary stratification is parametrized by $A$ there is a smooth atlas stratified space $X\lra A$
such that $M = \mathrm{res}(X).$

Next,
suppose $X' \lra A'$ is another smooth atlas stratified space and $f:X \lra X'$ is a stratified map such that
\begin{equation}\label{eq:PreservingReg}
	f(X_{\mathrm{reg}}) \subseteq X'_{\mathrm{reg}}.
\end{equation}
Then $f$ is a smooth stratified map if and only if there exists a smooth fibered corners map $\wt f: \mathrm{res}(X) \lra \mathrm{res}(X'),$ called the `lift' of $f,$ participating in a commutative diagram
\begin{equation}\label{eq:thmLiftingDiag}
	\xymatrix{
	\mathrm{res}(X) \ar[r]^-{\wt f} \ar[d]_-{\beta_{X}} & \mathrm{res}(X')  \ar[d]^-{\beta_{X'}} \\
	X \ar[r]^-f & X' }
\end{equation}
Every smooth fibered corners map satisfying 
\begin{equation}\label{eq:PreservingInt}
	\wt f(\mathrm{res}(X)^{\circ}) \subseteq \mathrm{res}(X')^{\circ}
\end{equation}
is the lift of one and only one smooth stratified map.

It follows that resolution induces an equivalence of categories between smooth atlas stratified spaces and smooth stratified maps satisfying \eqref{eq:PreservingReg} and manifolds with fibered corners and smooth fibered corners maps satisfying \eqref{eq:PreservingInt}.
\end{thm}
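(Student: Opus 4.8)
The plan is to derive the whole statement by iterating Proposition \ref{prop:resolution} along a non-decreasing enumeration $a_0, \dots, a_n$ of $A$ and then analysing the resulting tower of blow-ups. That $\mathrm{res}(X)$ is a manifold with fibered corners with boundary stratification indexed by $A$ I would prove by induction on $|A_{\mathrm{sing}}|$ within the class of hybrid spaces. The base case $A_{\mathrm{sing}}=\emptyset$ amounts to unwinding Definition \ref{def:Hybrid}: a hybrid space with no singular strata is precisely a manifold with corners whose res-charts are charts with corners, its iterated fibration structure is furnished by the fiber bundles $Z \fib X_a \to B_aX$, and the commuting triangles required of an iterated fibration structure are exactly the diagrams produced in part (iv) of the Proposition at each blow-up. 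The inductive step is a single application of the Proposition to a minimal $a\in A_{\mathrm{sing}}$. Independence of the order, up to a fibered-corners diffeomorphism, follows because blow-ups of strata with disjoint closures visibly commute and the local models in the Proposition are canonical; I would only remark on this. That $\beta_X$ restricts to a diffeomorphism $\mathrm{res}(X)^{\circ}\to X_{\mathrm{reg}}$ is immediate, since each individual blow-down map is the identity away from the stratum it collapses.

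For the identifications $\pa_a\mathrm{res}(X)=\beta_X^{-1}(\overline{X_a})$, $B_a\mathrm{res}(X)=\mathrm{res}(\overline{X_a})$, and $F_a\mathrm{res}(X)=\mathrm{res}(Z)$, I would track the fate of the boundary hypersurface created when $\overline{X_a}$ is blown up. Since the enumeration is non-decreasing, every stratum $X_b$ with $b<a$ has already been resolved by the time we reach $a$; because blow-up restricts compatibly to closed unions of strata, the closure of the $a$-stratum in the hybrid space at that stage is already $\mathrm{res}(\overline{X_a})$, while its link is still the unresolved space $Z$. Hence the Proposition produces a front face fibering over $\mathrm{res}(\overline{X_a})$ with fiber $Z$. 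The remaining blow-ups concern strata $c>a$, and the fiber-bundle description in the proof of the Proposition shows that each of these acts on this face by blowing up, inside every fiber, a minimal singular stratum of $Z$; iterating resolves the fiber to $\mathrm{res}(Z)$ without changing the base, and part (iv), applied at each stage, records the required compatibility of $\phi_a$ with the other $\phi_b$. For the converse, starting from a manifold with fibered corners $M\to A$, I would run the tower backwards: collapsing the fiber bundle $\phi_a$ over $\pa_aM$ for a maximal $a$ moves $a$ from the `res' to the `sing' part of the partition, and the local normal form in the Proposition shows the result is a hybrid space whose blow-up along the resulting $a$-stratum gives $M$ back; iterating down a reverse enumeration terminates in a smooth atlas stratified space $X$ with $\mathrm{res}(X)=M$.

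The functoriality statement is, I expect, the main obstacle. In the direction $(\Rightarrow)$, given a smooth stratified $f$ with $f(X_{\mathrm{reg}})\subseteq X'_{\mathrm{reg}}$, the lift is forced on the interior by $\wt f=\beta_{X'}^{-1}\circ f\circ\beta_X$, and the content is to show it extends smoothly across the boundary faces and carries fibers of boundary fiber bundles to fibers. I would argue by induction on depth, working locally: near $\zeta\in X_a$, $f$ is represented by a smooth stratified map $\bb R^{h}\times C(Z)\to\bb R^{h'}\times C(Z')$, which by Definition \ref{def:SASS} is accompanied by a smooth lift $\wt\psi:\bb R^{h}\times\bb R\times Z\to\bb R^{h'}\times\bb R\times Z'$; applying the inductive hypothesis to the induced smooth stratified map $Z\to Z'$ and restricting to $\bb R^{+}$ gives local lifts that match the blow-up charts and glue, while the fact that $\wt\psi$ carries $\{0\}\times Z$ into $\{0\}\times Z'$ produces the fiber-bundle compatibility. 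The direction $(\Leftarrow)$ is similar: collapsing a fibered-corners map between the resolutions yields, chart by chart and again by induction on depth, a smooth stratified map, which must agree with $f$ by the commutativity of \eqref{eq:thmLiftingDiag} and surjectivity of $\beta_X$. Uniqueness of the lift, and the assertion that every fibered-corners $\wt f$ with $\wt f(\mathrm{res}(X)^{\circ})\subseteq\mathrm{res}(X')^{\circ}$ is a lift, both follow from density of $\mathrm{res}(X)^{\circ}$ and Hausdorffness of $X'$: such an $\wt f$ descends along the blow-down maps (proper quotient maps that are diffeomorphisms on interiors) to a continuous $f:X\to X'$, of which $\wt f$ is then the unique lift. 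Finally the equivalence of categories is formal: $\mathrm{res}$ is a functor, since a composite of lifts is again a fibered-corners map with the interior property and hence the lift of the composite; it is essentially surjective by the converse, and fully faithful by the functoriality statement together with uniqueness of lifts.
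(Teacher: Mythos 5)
Your proposal is correct and tracks the paper's own proof quite closely: both derive the structure of $\mathrm{res}(X)$ by iterating Proposition~\ref{prop:resolution}, obtain the converse by blowing down from a maximal boundary stratum, construct lifts of maps chart-by-chart using the inductive definition of smooth stratified map (and observe that the diagonal of the lifting diagram forces uniqueness via density of the interior), and deduce the categorical equivalence formally from functoriality and uniqueness of lifts. The only difference is one of exposition: you spell out in more detail what the paper dispatches with ``follow immediately from the proposition,'' notably your explicit induction on $|A_{\mathrm{sing}}|$ for the fibered-corners structure and your tracking of the front face created at stage $a$ to establish $F_a\mathrm{res}(X)=\mathrm{res}(Z)$ and $B_a\mathrm{res}(X)=\mathrm{res}(\overline{X_a})$.
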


\begin{proof}
The statements about the structure of $\mathrm{res}(X)$ follow immediately from the proposition.
So too does that fact that every manifold with fibered corners $M$ is the resolution of a smooth atlas stratified space. 
Indeed note that blowing-up a minimal singular stratum of a hybrid space is reversible. So starting with $M$ we can choose a maximal boundary stratum
and blow it down to obtain a hybrid space with one singular stratum. Iterating until we have blown-down all of the res-strata produces a smooth atlas stratified space whose resolution is, manifestly, equal to $M.$

Next let us consider the statements about lifting maps. First note that two maps $\wt f$ and $f$ participating in \eqref{eq:thmLiftingDiag} mutually determine
each other since the blow-down map identifies the interior of the manifold with corners with the regular part of the stratified space and these are each
dense in their corresponding space. Thus if a map has a lift, it is unique.

Given a smooth stratified map $f:X \lra X'$ and a minimal stratum $X_a$ of $X$ there is a stratum $X'_{a'}$ of $X'$ such that $f(X_a)\subseteq X'_{a'}.$
It is easy to see that $f$ will lift to a smooth map $\wt f: [X;\overline{X_a}] \lra [X';X'_{a'}].$ (We assume that $X_a$ and $X'_{a'}$ are singular strata for the sake of concreteness.)
Indeed, denote the blow-down maps by $\beta: [X;\overline{X_a}]\lra X$ and $\beta':[X';X'_{a'}]\lra X'$ and suppose $\cal U \xlra{\varphi} \bb R^h \times C(Z)$ and $\cal U' \xlra{\varphi'} \bb R^{h'} \times C(Z')$ are coordinate charts of
points in $X_a,$ $X'_{a'},$ respectively, such that $f(\cal U) \subseteq \cal U'.$ Each of these charts induce a chart of the corresponding blown-up space,
$\wt{\cal U} \xlra{\wt\varphi} \bb R^h \times \bb R^+ \times Z$ and  $\wt{\cal U'} \xlra{\wt\varphi'} \bb R^h \times \bb R^+ \times Z',$ and by definition of
smoothness there is a lift $\wt f_{\cal U}$ of $f|_{\cal U}$ fitting into a commutative diagram
\begin{equation*}
	\xymatrix{
	\wt{\cal U} \ar[ddd]_-\beta \ar[rd]_-{\wt{\varphi}} \ar[rrr]^-{\wt f_{\cal U}} & & & \wt{\cal U'} \ar[ddd]^-{\beta'}\\
	&  \bb R^h \times \bb R^+ \times Z \ar[r] \ar[d] & \bb R^h \times \bb R^+ \times Z' \ar[ru]_-{\wt{\varphi'}^{-1}} \ar[d] & \\
	&  \bb R^h \times C(Z) \ar[r] & \bb R^h \times C(Z') \ar[rd]^-{{\varphi'}^{-1}} & \\
	\cal U \ar[ru]^-{\varphi} \ar[rrr]^-{f|_{\cal U}} & & & \cal U' }
\end{equation*}
From the definition of smooth atlas we see that this lift of $f|_{\cal U}$ is independent of the particular coordinate chart used in its definition and hence lifting $f$ one coordinate chart at a time produces a global lift $\wt f.$

If $X$ and $X'$ are hybrid spaces then the analogous argument, with greater notational complexity, shows that $f$ lifts to $\wt f: [X;\overline{X_a}] \lra [X';X'_{a'}].$ 
In this way, lifting one blow-up at a time, we see that smooth stratified maps satisfying \eqref{eq:PreservingReg} lift to maps of the resolutions.
(Note that if \eqref{eq:PreservingReg} does not hold then the analogous result is usually false. Indeed if $f(X_{\mathrm{reg}}) \subseteq X'_{a'}$ then lifting
$f$ would require a smooth $\wt f$ satisfying
\begin{equation*}
	\xymatrix{
	& \pa_{a'}\mathrm{res}(X') \ar[d]^{\phi_{a'}} \\
	\mathrm{res}(X) \ar[ru]^-{\wt f} \ar[r]^-f & \mathrm{res}(\overline{X'_{a'}}) }
\end{equation*}
and this is clearly topologically obstructed.)

On the other hand given a smooth fibered corners map $\wt f: \mathrm{res}(X)\lra \mathrm{res}(X'),$ the definition of smoothness requires that $\wt f$
sends the fibers of a boundary fiber bundle of $X$ to the fibers of the corresponding boundary fiber bundle of $X'$ and so it descends to a map between
the spaces obtained by blowing-down these boundary faces. Successively blowing-down all of the boundary fiber bundles we end up with a map between
$X$ and $X'$ such that \eqref{eq:thmLiftingDiag} commutes.

Finally note that uniqueness implies that the lift of a composition is the composition of the lifts. Thus resolution and blow-down are mutually inverse functors
of categories between smooth atlas stratified spaces and smooth stratified maps satisfying \eqref{eq:PreservingReg} and manifolds with fibered corners and smooth fibered corners maps satisfying \eqref{eq:PreservingInt}.
\end{proof}

The idea of resolving a stratified space to a manifold with corners goes back to Thom \cite{Tho:EMS}. 
For Thom-Mather stratified spaces (which we will discuss next) resolution was worked out by Verona \cite[\S 6.3.1]{Ver:SMT}.
That one obtains a manifold with fibered corners was pointed out by Melrose, see \cite[\S 2]{AlbLeiMazPia:SPWS} where this is discussed following \cite{BraHecSar:TRPVS}.

Resolution of `conically smooth' spaces  is studied in \cite[\S 7]{AyaFraTan:LSSS} where it is referred to as "Unzip".
In \cite[Theorem 7.3.8]{AyaFraTan:LSSS} the authors establish that Unzip is a functor, however see Remark \ref{rem:Ayala}.\\

Below, we will find it useful to identify the $C^*$-algebra of continuous functions on a smooth atlas stratified space $X$ with a $C^*$-subalgebra
of the continuous functions on $\mathrm{res}(X).$
Specifically, we define
\begin{equation}\label{eq:DefCPhi}
	\cal C_{\Phi}(\mathrm{res}(X)) = \{ f \in \cal C(\mathrm{res}(X)): 
	\text{ for all } a \in S(X) \text{ we have } f|_{\pa_{a}X} \in \phi_a^*\cal C(B_{a}X) \}
\end{equation}
and it is easy to see that pull-back along the blow-down map $\beta_X:\mathrm{res}(X) \lra X$ establishes an isomorphism of $*$-algebras between 
$\cal C(X)$ and $\cal C_{\Phi}(\mathrm{res}(X)).$ We see that the smooth functions $X\lra \bb C$ are precisely the functions in $\cal C(X)$ that pull-back
along $\beta_X$ to be in $\cal C_{\Phi}^{\infty}(\mathrm{res}(X))  := \cal C_{\Phi}(\mathrm{res}(X)) \cap \cal C^{\infty}(\mathrm{res}(X)).$

%%%%%%%%%%%%%%%%%%%%%%%%%%%%%%
\subsection{Smooth atlas stratified spaces and Thom-Mather stratified spaces} \label{subsec:SASSThomMather}
%%%%%%%%%%%%%%%%%%%%%%%%%%%%%%

This section will not be used in the rest of the paper.
In this subsection we will show that the class of spaces we defined in Definition \ref{def:SASS} coincide with Thom-Mather stratified spaces.
We will first show that Thom-Mather stratified spaces are examples of smooth atlas stratified spaces. The converse is easy thanks to the resolution
process.

\begin{defn}
A {\bf Thom-Mather stratified space} consists of a triple $(X, \cal S, \cal T)$ where:
\begin{enumerate}[i)]
\item $X$ is a Hausdorff, locally compact topological space with a countable basis for its topology.
\item $\cal S$ is a locally finite family of locally closed subsets of $X,$ known as the strata of $X,$ each of which is a smooth manifold. 
The strata cover $X$ and satisfy the frontier condition
\begin{equation*}
	L,M \in \cal S \text{ and } L \cap \overline M\neq \emptyset
	\implies L\subseteq \overline M.
\end{equation*}
\item $\cal T$ contains, for each $M \in \cal S,$ a triple $(T_M, \pi_M, \rho_M)$ in which $T_M$ is an open neighborhood of $M$ in $X,$ and
\begin{equation*}
	\pi_M: T_M \lra M, \quad
	\rho_M: T_M \lra [0,\infty)
\end{equation*}
are continuous functions such that $\pi_M|_M = \mathrm{id}_M$ and $\rho_M^{-1}(0) = M.$ The neighborhood $T_M$ is known as a tubular neighborhood
of $M$ in $X$ and the triple $(T_M, \pi_M, \rho_M)$ as the control data of $M$ in $X,$
\end{enumerate}
The control data are compatible in that, if $L$ and $M$ are distinct strata such that $L \subseteq \overline M$ and $T_{L,M} = T_L\cap M,$ $\pi_{L,M} = \pi_L|_{T_{L,M}},$ $\rho_{L,M} = \rho_L|_{T_{L,M}},$ then
\begin{equation*}
	(\pi_{L,M}, \rho_{L,M}): T_{L,M} \lra L \times (0,\infty)
\end{equation*}
is a smooth submersion and, if $N$ is a third distinct stratum such that $M \subseteq \overline N,$ then for any 
$\zeta \in T_{L,M}\cap T_{L,N} \cap \pi_{M,N}^{-1}(T_{L,M})$ we have
\begin{equation*}
	\pi_{L,M}\pi_{M,N}(\zeta) = \pi_{L,N}(\zeta), \quad
	\rho_{L,M}\pi_{M,N}(\zeta) = \rho_{L,N}(\zeta).
\end{equation*}
\end{defn}

\begin{remark}\label{rk:counterxample-product}
As mentioned above, the main reason we work with smooth atlas stratified spaces is that they make it easy to work with fiber bundles of stratified spaces.
An example of the complications that arise when working with Thom-Mather stratified spaces is that, while the product of Thom-Mather stratified spaces is
again a Thom-Mather stratified space, {\em the product of the Thom-Mather data is not Thom-Mather data for the product} (pointed out in, e.g., 
\cite[\S 1.2.9]{Ver:SMT}). For a simple example, consider $Y = [0,\infty)_x$ as a Thom-Mather stratified space with strata $\{0\}$ and $(0,\infty),$ and with 
Thom-Mather data
\begin{equation*}
\begin{gathered}
	\cal T_{\{0\}} = [0,1), \quad \pi_{\{0\}}(x) = 0, \quad \rho_{\{0\}}(x) = x, \\
	\cal T_{(0,\infty)} = (0,\infty), \quad \pi_{(0,\infty)}(x) = x, \quad \rho_{(0,\infty)} = 0.
\end{gathered}
\end{equation*}
Now on $X = Y^2$ with the product stratification, the product Thom-Mather data would be
\begin{equation*}
\begin{aligned}
	\cal T_{(0,0)} = [0,1)_x \times [0,1)_{x'}, 
		\quad \pi_{(0,0)}(x,x') &= (0,0), 
		\quad \rho_{(0,0)}(x,x') = x+x', \\
	\cal T_{\{0\} \times (0,\infty)} = [0,1)_x \times (0,\infty)_{x'}, 
		\quad \pi_{\{0\}\times (0,\infty)}(x,x') &= (0,x'), 
		\quad \rho_{\{0\}\times (0,\infty)}(x,x') = x, \\
	\cal T_{(0,\infty)\times \{0\}} = (0,\infty)_{x}\times [0,1)_{x'}, 
		\quad \pi_{(0,\infty)\times\{0\}}(x,x') &= (x,0), 
		\quad \rho_{(0,\infty)\times \{0\}}(x,x') = x', \\
	\cal T_{(0,\infty)^2} = (0,\infty)^2, 
		\quad \pi_{(0,\infty)}(x,x') &= (x,x'), 
		\quad \rho_{(0,\infty)^2} = 0
\end{aligned}
\end{equation*}
and the problem is that this data does not satisfy the compatibility condition
\begin{equation*}
	\rho_{(0,0)} \circ \pi_{\{0\}\times (0,\infty)} = \rho_{(0,0)} \text{ on } \cal T_{(0,0)}\cap \cal T_{\{0\} \times (0,\infty)},
\end{equation*}
as this would imply, e.g., that $\rho_{(0,0)}(0,x') = \rho_{(0,0)}(x,x')$ for all $(x,x')$ in this set. This makes the definition of a fiber bundle of Thom-Mather stratified
spaces inconvenient.
\end{remark}

Given a Thom-Mather stratified space $(X, \cal S, \cal T),$ we can naturally consider $\cal S$ as a poset with $L \leq M \iff L \subseteq \overline M$ and 
$X$ as poset stratified by the obvious map $X \lra \cal S.$ 

A {\bf controlled map} between Thom-Mather stratified spaces $(X, \cal S_X, \cal T_X)$ and $(Y, \cal S_Y, \cal T_Y)$ is a continuous stratified map
$f$ between $X \lra \cal S_X$ and $Y\lra \cal S_Y$ such that, whenever $f(X_a) \subseteq Y_b,$ the restriction
\begin{equation*}
	f|_{X_a}: X_a \lra Y_b 
\end{equation*}
is a smooth map and (possibly after shrinking the size of the tubular neighborhoods) we have
\begin{equation*}
	f(T_{X_a})\subseteq T_{Y_b}, \quad
	f\circ \pi_{X_a} = \pi_{Y_b} \circ f, \quad
	\rho_{X_a} = \rho_{Y_b}\circ f.
\end{equation*}
A controlled isomorphism is a bijective controlled map whose inverse is also controlled, hence it is in particular a homeomorphism that restricts
to a diffeomorphism on each stratum.

An important property of controlled isomorphisms, identified by Brasselet-Hector-Saralegi \cite{BraHecSar:TRPVS}  is that they lift under `d\'eplissage'.
In particular this means that for every controlled isomorphism 
\begin{equation*}
	f: \bb R^{h_1} \times C(Z_1) \lra \bb R^{h_2} \times C(Z_2)
\end{equation*}
there a  controlled isomorphism 
\begin{equation*}
	\wt f: \bb R^{h_1} \times \bb R \times Z_1 \lra \bb R^{h_2} \times \bb R \times Z_2,
\end{equation*}
that makes the diagram
\begin{equation*}
	\xymatrix{
	 \bb R^{h_1} \times \bb R \times Z_1 \ar[r]^-{\wt f} & \bb R^{h_2} \times \bb R \times Z_2 \\
	 \bb R^{h_1} \times \bb R^+ \times Z_1 \ar@{^(->}[u] \ar[d] & \bb R^{h_2} \times \bb R^+ \times Z_2  \ar@{^(->}[u] \ar[d]  \\
	 \bb R^{h_1} \times C(Z_1) \ar[r]^-f & \bb R^{h_2} \times C(Z_2), }
\end{equation*}
where the descending vertical arrows are the quotient maps of the cones, commutative.
(We have taken this property and, for not necessarily bijective maps, made it the definition of smooth stratified maps in the setting of smooth atlas stratified spaces above.)

\begin{thm}
Thom-Mather stratified spaces admit smooth stratified atlases such that controlled isomorphisms are smooth stratified diffeomorphisms.

Smooth atlas stratified spaces admit control data giving them the structure of a Thom-Mather stratified space.
\end{thm}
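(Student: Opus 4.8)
The plan is to prove the two statements by different routes: the second (smooth atlas $\Rightarrow$ Thom--Mather) directly from the resolution theorem, and the first (Thom--Mather $\Rightarrow$ smooth atlas) by induction on depth, using the Brasselet--Hector--Saralegi d\'eplissage recalled above. For the second statement, let $X$ be a smooth atlas stratified space. By Theorem \ref{thm:ResSASS}, $\mathrm{res}(X)$ is a manifold with fibered corners and the blow-down map $\beta_X\colon\mathrm{res}(X)\to X$ collapses its iterated fibration structure onto $X$; in particular $\pa_a\mathrm{res}(X)=\beta_X^{-1}(\overline{X_a})$ is a collective boundary hypersurface fibering over $\mathrm{res}(\overline{X_a})$ with fiber $\mathrm{res}(Z_a)$, $Z_a$ the link of $X_a$. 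I would then manufacture Thom--Mather control data on $X$: choose a compatible family of boundary defining functions $\rho_a$ and fibered collar neighborhoods of the $\pa_a\mathrm{res}(X)$ (these exist by standard manifold-with-corners theory), restrict each collar to the part lying over the interior $\mathrm{res}(\overline{X_a})^{\circ}\cong X_a$, and push the resulting cone-bundle neighborhood down along $\beta_X$ to obtain $T_{X_a}\subseteq X$, with $\rho_{X_a}$ descending from $\rho_a$ and $\pi_{X_a}$ the composite of the collar projection, the fibration $\phi_a$, and $\beta_{\overline{X_a}}$. The submersion and commutativity axioms for $\cal T$ then follow from the $\phi_a$ being fiber bundles and from the compatibility diagrams $\phi_b=\phi_{ba}\circ\phi_a$ of the iterated fibration structure together with the compatibility of the chosen collars. (Equivalently, one may simply invoke the known fact \cite{BraHecSar:TRPVS}, \cite{AlbLeiMazPia:SPWS} that collapsing the iterated fibration structure of a manifold with fibered corners recovers a Thom--Mather stratified space, applied to $\mathrm{res}(X)$.)

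For the first statement I would induct on the depth $k$, carrying as inductive hypothesis: every Thom--Mather stratified space of depth $\le k$ admits a smooth stratified atlas for which every controlled isomorphism is a smooth stratified diffeomorphism. The case $k=0$ is trivial. For the step, let $(X,\cal S,\cal T)$ have depth $k$. Standard Thom--Mather theory (Mather \cite{Mat:NTS}, Verona \cite{Ver:SMT}) furnishes around each $\zeta\in X_a$ a conical trivialization of the tubular neighborhood, i.e.\ a homeomorphism $\varphi\colon\cal U\to\bb R^{h_a}\times C(Z_a)$ in which $Z_a$ is the link of $X_a$ in $X$ (a compact Thom--Mather space of depth $<k$, hence by induction a smooth atlas stratified space), chosen so that the transition between any two such charts is a controlled isomorphism onto its image. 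I declare these to be a smoothly stratified atlas and must verify compatibility, namely that each transition $\psi=\varphi_2\circ\varphi_1^{-1}$ is a smooth stratified map in the sense of Definition \ref{def:SASS}. Here the d\'eplissage enters: $\psi$ is a controlled isomorphism, so it lifts to a controlled isomorphism $\wt\psi$ of the unfolded spaces $\bb R^{h_i}\times\bb R\times Z_i$, which are smooth atlas stratified (by the product construction) and of depth $<k$, so the inductive hypothesis gives that $\wt\psi$ is a smooth stratified diffeomorphism --- in particular the smooth stratified lift that Definition \ref{def:SASS} demands. Shrinkability of tubular neighborhoods and of the cone parameter, together with conical local triviality, shows the chart domains form a basis for the topology. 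Finally the identical argument applied to a controlled isomorphism $f\colon X\to X'$, read in these charts, shows $f$ and $f^{-1}$ are smooth stratified, closing the induction.

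The main obstacle lies inside this compatibility verification. One must reconcile the a priori distinct links occurring at different points of a (connected) stratum --- done via local triviality of links in Thom--Mather spaces and the inductive ``controlled isomorphism $\Rightarrow$ smooth diffeomorphism'' statement --- and handle the overlaps of charts centered at points of distinct strata. Most delicate is the seemingly circular use of the d\'eplissage: it reduces chart compatibility at depth $k$ to the smooth-diffeomorphism assertion at depth $<k$, so the induction must be organized so that exactly this lower-depth instance is in hand. The technical heart is precisely the step of upgrading the d\'eplissage lift $\wt\psi$ from a controlled isomorphism of Thom--Mather spaces to a genuinely smooth stratified map in the sense of Definition \ref{def:SASS}, which is what the inductive hypothesis at depth $<k$ is designed to supply.
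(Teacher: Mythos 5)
Your argument follows the same two-pronged strategy as the paper's proof, and in the same way at each step: for the reverse direction you pass to the resolution and invoke the known Verona/Kottke--Rochon construction of control data from the fibered-corner structure (the paper does exactly this, citing \cite[\S 5--6]{Ver:SMT} and \cite[Proposition A.4]{KotRoc:PMWFC}); for the forward direction you induct on depth, obtain local conical trivializations of the tubular neighborhoods (the paper derives these from Thom's first isotopy lemma), observe that transitions between two such trivializations are controlled isomorphisms, lift them by the Brasselet--Hector--Saralegi d\'eplissage, and apply the inductive hypothesis to the lower-depth lift to upgrade the lift to a smooth stratified diffeomorphism. This last ``upgrade the d\'eplissage lift via the inductive hypothesis'' step, which you correctly identify as the technical heart, is precisely what the paper leans on.

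One small slip: as written your inductive hypothesis covers spaces of depth $\le k$ while the step also treats depth $k$, which is circular; you of course mean to assume the statement for depth $<k$ (the paper phrases the hypothesis this way). Otherwise the argument matches the paper in substance and level of detail; in particular your remark about overlaps of charts centered on distinct strata is handled in the paper with the same degree of compression (``The analysis above shows that this is a smooth stratified atlas''), so you are not missing anything that the paper itself supplies.
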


\begin{proof}
Since the statement is obvious for Thom-Mather stratified spaces of depth zero, let us assume inductively that we have established the statement of the proposition for stratified spaces of depth less than some $\ell \in \bb N.$\\

Let us recall some of the well-known consequences of Thom's first isotopy lemma \cite[Theorem 2.6]{Ver:SMT} when applied to the Thom-Mather 
control data. This lemma says that every proper continuous map $F:X \lra M$ between a Thom-Mather stratified space $X$ and a smooth manifold $M,$
whose restriction to each stratum of $X$ is a smooth submersion, is a locally trivial fibration. Thus such a map participates in a fiber bundle
whose fiber is a Thom-Mather stratified space and with structure group given by the controlled isomorphisms of the fiber.

If $M$ is a stratum of a Thom-Mather stratified space with control data $(T_M, \pi_M, \rho_M)$ then 
(possibly after shrinking $T_M$ and scaling $\rho_M$) we can apply the isotopy lemma to recognize
\begin{equation*}
	F= (\pi_M, \rho_M)|_{T_M \setminus M}: T_M\setminus M \lra M \times (0,1),
\end{equation*}
which the definition of Thom-Mather spaces assures us is a smooth submersion when restricted to each stratum of $T_M\setminus M,$
as a locally trivial fibration. If we denote the restriction of $\pi_M$ to $H = \rho_M^{-1}(\tfrac12)$ by $\phi_M:H \lra M$ then 
we can identify the typical fiber of $F$ with $H \times (0,1),$ recognize that $\phi_H$ is a locally trivial fiber bundle, and, if we denote the typical fiber
of $\phi_H$ by $Z,$ finally conclude that $\pi_M: T_M \lra M$ is a locally trivial fiber bundle with fiber $C_{[0,1)}(Z).$

In more detail, for every point $p \in M$ there is a neighborhood $\cal V \subseteq M$ and a stratified homeomorphism
$\psi: \pi_M^{-1}(\cal V) \lra \cal V \times C_{[0,1)}(Z)$ participating in the commutative diagram
\begin{equation*}
	\xymatrix{
	\pi_M^{-1}(\cal V) \ar[rr]^-{\psi} \ar[rd]_-{\pi_M} & & \cal V \times C_{[0,1)}(Z) \ar[ld] \\ & \cal V & }
\end{equation*}
where the unlabeled arrow is the obvious map. If $\psi': \pi_M^{-1}(\cal V) \lra \cal V \times C_{[0,1)}(Z)$ is another such homeomorphism then
the transition map
\begin{equation*}
	\psi'\circ \psi^{-1}: \cal V \times C_{[0,1)}(Z) \lra \cal V \times C_{[0,1)}(Z) 
\end{equation*}
is the identity on the first factor and is a controlled isomorphism on the second factor. As mentioned above, this controlled isomorphism lifts to
a controlled isomorphism on the `d\'eplissage' and so, if the depth of $Z$ is less than $\ell,$ a stratified diffeomorphism thanks to the inductive
hypothesis.

Thus if $(X, \cal S, \cal T)$ is a Thom-Mather stratified space of depth at most $\ell$ then we obtain stratified charts from the trivializing neighborhoods of
these fiber bundle structures on the tubular neighborhoods of the strata of $X,$ together with the coordinate charts of the regular part of $X.$ The analysis
above shows that this is a smooth stratified atlas. Finally this same analysis shows that a controlled isomorphism is a smooth stratified diffeomorphism.

Next if we start with a smooth atlas stratified space $X\lra A$ we can obtain control data by using its resolution $\beta:\mathrm{res}(X) \lra X.$
This goes back at least to \cite[\S 5-6]{Ver:SMT} but a nice exposition can be found in \cite[Proposition A.4]{KotRoc:PMWFC}, to which we refer the reader for details.
\end{proof}

\begin{remark}\label{rem:AyalaEtAl}
Note that the unzip result in \cite[\S 7]{AyaFraTan:LSSS} should imply a categorical equivalence between the category of conically smooth spaces and stratified diffeomorphisms and the category of manifolds with fibered corners and fibered diffeomorphisms; however if this were true then the conically smooth maps would coincide with our smooth maps between smooth atlas stratified spaces and Remark \ref{rem:Ayala} shows that this is not the case. (In particular \cite[Conjecture 1.5.3]{AyaFraTan:LSSS} is true and well-known for Thom-Mather stratified spaces and hence for smooth atlas stratified spaces.)

In \cite{NocVol:WSCS}, it is shown that Thom-Mather stratified spaces are conically smooth spaces.
The authors do not make use of the resolution studied in \cite[\S 7]{AyaFraTan:LSSS} to relate the two types of spaces, perhaps because they
were unaware of the parallel resolution result in \cite{Ver:SMT} for Thom-Mather spaces.
\end{remark}

In summary, we now know that
\begin{enumerate}
\item Thom-Mather stratified spaces,
\item Whitney embedded stratified spaces,
\item manifolds with fibered corners,
\item smooth atlas stratified spaces,
\end{enumerate}
all describe the same singular spaces, which we refer to as smoothly stratified spaces.

%%%%%%%%%%%%%%%%%%%%%%%%%%%%%%
\subsection{Witt spaces} \label{sec:WittSpaces}
%%%%%%%%%%%%%%%%%%%%%%%%%%%%%%

In this short subsection we recall the main objects we will be studying throughout the rest of the paper.

\begin{defn} [{Witt spaces \cite[Definition 2.1]{Sie:WSGCTKOP}}] $ $

A smoothly stratified space  $X$ is a {\bf Witt space} if whenever $Z$ is the link in $X$ of a singular stratum we either have that $\ell = \dim Z$ is odd or that $\mathrm{IH}_{\ell/2}^{\overline m}(Z; \bb Q) = 0,$ where $\mathrm{IH}^{\overline m}_*(Z;\bb Q)$ denotes the lower middle perversity intersection homology groups of $Z$ with rational coefficients \cite[\S 5.1]{GorMac:IHT}.
\end{defn}

Siegel introduced Witt spaces, though he worked with PL stratified spaces. He introduced a bordism theory of {oriented} PL Witt 
pseudomanifolds\footnote{Recall from \S\ref{subsect:atlas} that a pseudomanifold is a stratified space whose regular part is dense and has a fixed dimension and whose other strata have dimension at least two less than the regular part.}, $\Omega^\Witt_* (-),$ which is now well known to be a homology theory. The name `Witt spaces' comes from the fact that the bordism groups of a point can be computed to essentially equal the Witt groups of $\bb Q;$ specifically, \cite[pg. 1068]{Sie:WSGCTKOP}
\begin{equation*}
	\Omega^\Witt_q (\mathrm{pt}) = \begin{cases}
	\bb Z & \text{ if } q = 0 \\
	0 & \text{ if } q \not\equiv 0 \mod 4 \\
	\mathrm{Witt}(\bb Q) & \text{ if } q>0, \; q \equiv 0 \mod 4
	\end{cases}
\end{equation*}
One can also consider the bordism theory based on cycles given by continuous maps on smoothly stratified {oriented} Witt pseudomanifolds, $\Omega^{\Witt,\infty}_* (-).$ 
We show in Proposition \ref{prop.bordofsmoothlystratandPLwitt} that these are naturally equivalent.)

Natural examples of {oriented} Witt pseudomanifolds include 
{pure-dimensional} complex algebraic varieties, or 
indeed any stratified space all of whose strata are even-codimensional.

%%%%%%%%%%%%%%%%%%%%%%%%%%%%%%
\section{The analytic orientation class of a Witt pseudomanifold} \label{sec:orientation}
%%%%%%%%%%%%%%%%%%%%%%%%%%%%%%

%%%%%%%%%%%%%%%%%%%%%%%%%%%%%%
\subsection{K-homology orientations}\label{subsect:general-orientation}
%%%%%%%%%%%%%%%%%%%%%%%%%%%%%%

Let $E$ be a ring spectrum.
We review $E$-homological orientation classes of manifolds, see e.g.
Rudyak \cite{Rud:TSOC}.
First, spheres $S^n$ are $E$-oriented as follows.
Let $1\in \pi_0 (E)$ be the unit of the homotopy ring $\pi_* (E)$.
The homology theory $E_*$ determined by $E$ has a 
suspension isomorphism
\[ \sigma_*: \pi_0 (E) = \widetilde{E}_0 (S^0) 
  \stackrel{\simeq}{\longrightarrow}
  \widetilde{E}_n (S^n) = E_n (S^n,*). \]
One defines the orientation of $S^n$ to be
$[S^n]_E := \sigma_* (1).$
Now let $M$ be an $n$-dimensional topological manifold, say without boundary.
For every point $p \in M$ and every disc neighborhood
$U \subset M$ of $p$, let
$\operatorname{coll}^{p,U}: M \to S^n$ be the map which collapses the complement
of $U$. If $U'$ is another disc neighborhood of $p$, then
$\operatorname{coll}^{p,U'}$ and $\operatorname{coll}^{p,U}$ are homotopic.
Thus we obtain a well-defined homotopy class
\[ \operatorname{coll}^p: M \to S^n. \]

\begin{defn}
An element $[M]_E \in E_n (M)$ is called an {\bf $E$-orientation of $M$},
if
\begin{equation} \label{equ.ehomolorientation} 
\operatorname{coll}^{p}_* [M]_E = \pm [S^n]_E 
\end{equation}
for every $p\in M$.
\end{defn}

Suppose that $M$ is connected.
Then the homotopy classes $\operatorname{coll}^p$
and $\operatorname{coll}^{p'}$ coincide for any two points $p,p' \in M$
by choosing a path from $p$ to $p'$.
So it suffices for connected manifolds to verify
(\ref{equ.ehomolorientation}) at a single point $p$.
If $M$ is $E$-oriented, then the Poincar\'e duality map
\[  -\cap [M]_E: E^k (M) \longrightarrow E_{n-k} (M) \]
is an isomorphism.
\begin{prop} \label{prop.eorientgoestoforient}
Let $\phi:E\to F$ be a ring morphism of ring spectra and 
$M$ an $n$-dimensional manifold. If $[M]_E \in E_n (M)$
is an $E$-orientation of $M$, then its image $\phi_* [M]_E \in F_n (M)$
is an $F$-orientation of $M$.
\end{prop}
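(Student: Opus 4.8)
## Proof proposal

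The plan is to reduce the claim to the naturality of the collapse construction and the fact that ring morphisms of ring spectra are compatible with suspension isomorphisms. Recall that for a point $p \in M$ we have a well-defined homotopy class $\operatorname{coll}^p: M \to S^n$ (collapsing the complement of a disc neighborhood of $p$), and that by definition $[M]_E$ being an $E$-orientation means $\operatorname{coll}^p_* [M]_E = \pm [S^n]_E$ for every $p$. We want to show $\operatorname{coll}^p_* (\phi_* [M]_E) = \pm [S^n]_F$ for every $p$, where $\phi_*: E_*(-) \to F_*(-)$ is the natural transformation of homology theories induced by $\phi$.

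First I would invoke naturality of $\phi_*$ with respect to the map $\operatorname{coll}^p$: the square relating $\operatorname{coll}^p_*$ on $E$-homology and on $F$-homology commutes, so
\begin{equation*}
	\operatorname{coll}^p_* (\phi_* [M]_E) = \phi_* (\operatorname{coll}^p_* [M]_E) = \phi_* (\pm [S^n]_E) = \pm \, \phi_* [S^n]_E .
\end{equation*}
So the entire statement reduces to the single computation $\phi_* [S^n]_E = [S^n]_F$ in $\widetilde F_n(S^n)$, i.e. to the case $M = S^n$ with its standard orientation. Second, I would establish this by comparing suspension isomorphisms: since $[S^n]_E = \sigma^E_*(1_E)$ where $1_E \in \pi_0(E) = \widetilde E_0(S^0)$ is the unit, it suffices to show that $\phi_*$ intertwines the two suspension isomorphisms $\sigma^E_*$ and $\sigma^F_*$ and that $\phi_*(1_E) = 1_F$. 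The first is a general property: the suspension isomorphism of a homology theory is natural with respect to morphisms of homology theories (it is built from the structure maps / the reduced homology of spheres in a way that $\phi$ respects), so $\sigma^F_* \circ \phi_* = \phi_* \circ \sigma^E_*$ on $\widetilde E_0(S^0)$. The second, $\phi_*(1_E) = 1_F$, is exactly the statement that $\phi$ is a morphism of \emph{ring} spectra — it sends the unit of $\pi_*(E)$ to the unit of $\pi_*(F)$ — and this identification $\pi_0(E) = \widetilde E_0(S^0)$ carrying $1_E$ to the canonical generator is functorial in $E$. Combining: $\phi_* [S^n]_E = \phi_* \sigma^E_*(1_E) = \sigma^F_* \phi_*(1_E) = \sigma^F_*(1_F) = [S^n]_F$.

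Finally I would assemble the two steps: for arbitrary $M$ and arbitrary $p$,
\begin{equation*}
	\operatorname{coll}^p_*(\phi_*[M]_E) = \phi_*(\operatorname{coll}^p_*[M]_E) = \phi_*(\pm[S^n]_E) = \pm[S^n]_F,
\end{equation*}
which is precisely condition \eqref{equ.ehomolorientation} for $\phi_*[M]_E \in F_n(M)$; hence $\phi_*[M]_E$ is an $F$-orientation of $M$. The only mildly delicate point — the ``main obstacle'' such as it is — is the bookkeeping in the second step: making sure that the identification $\pi_0(E) \cong \widetilde E_0(S^0)$ is genuinely natural in the ring spectrum $E$ and carries the multiplicative unit to the class that $\sigma^E_*$ sends to $[S^n]_E$, so that ``$\phi$ preserves units'' really does translate into ``$\phi_*$ preserves the spherical fundamental classes.'' Everything else is formal naturality. (One should also note the sign $\pm$ is harmless: $\phi_*$ is additive, so it sends $\pm[S^n]_E$ to $\pm[S^n]_F$ with the same sign.)
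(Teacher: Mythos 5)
Your proof is correct and follows essentially the same route as the paper: naturality of $\phi_*$ with respect to $\operatorname{coll}^p_*$, compatibility with suspension isomorphisms, and preservation of the unit because $\phi$ is a ring morphism. The paper just writes the chain of equalities in one line without isolating the reduction to $M=S^n$; the content is the same.
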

\begin{proof}
The morphism $\phi$ induces a natural transformation $\phi_*: E_* (-)\to F_* (-)$ 
of homology	theories. This transformation commutes with the induced homomorphisms
$\operatorname{coll}^p_*$, and with the suspension isomorphisms.
Furthermore, it maps the unit $1\in \widetilde{E}_0 (S^0)$ to the unit
$1\in \widetilde{F}_0 (S^0)$. Therefore, for every $p\in M,$
\[ \operatorname{coll}^p_* \phi_* [M]_E
  =  \phi_* \operatorname{coll}^p_* [M]_E 
  = \pm \phi_* [S^n]_E = \pm \phi_* \sigma_{E,*} (1)
  = \pm \sigma_{F,*} \phi_* (1) = \pm \sigma_{F,*} (1) 
  = \pm [S^n]_F.   \]
\end{proof}	
Since complexification and the stable second Adams operation are multiplicative,
Proposition \ref{prop.eorientgoestoforient} together with
Theorem \ref{thm.sullivanandksignoponmanifolds} implies that the 
(normalized) class $\sign_K (M)$ of the signature operator
 is a $\K [\smlhf]$-orientation for a closed smooth
$H\intg$-oriented manifold $M$.
(It is not an orientation at $2$.)

%%%%%%%%%%%%%%%%%%%%%%%%%%%%%%
\subsection{Wedge metrics and Dirac-type operators} \label{sec:WedgeMetsDiracOps}
%%%%%%%%%%%%%%%%%%%%%%%%%%%%%%

We recall the definition of wedge Dirac-type operators from \cite{AlbGel:IFFDTOP}.
The example most important to us will be the signature operator of a wedge metric acting on wedge differential forms.

Let $M$ be a manifold with fibered corners and let $M \lra S(M)$ denote the corresponding boundary stratification.
Wedge one forms are defined by
\begin{equation*}
	\cal V_w^* 
	= \{ \theta \in \cal C^{\infty}(M; T^*M): \text{ for each $a\in S(M)$ and $y \in B_aM,$} \quad
		\theta|_{\phi_a^{-1}(y)}=0\}
\end{equation*}
and by the Serre-Swan theorem or by direct construction as in \cite[\S 8.1-8.2]{Mel:AIT} there is a vector bundle
\begin{equation*}
	{}^wT^*M \lra M
\end{equation*}
endowed with a bundle map $i: {}^wT^*M \lra T^*M$ with the property that
\begin{equation*}
	i_*\cal C^{\infty}(M; {}^wT^*M) = \cal V_w^* \subseteq \cal C^{\infty}(M; T^*M).
\end{equation*}
In particular, $i$ is a bundle isomorphism over $M^{\circ}$ and hence 
\begin{equation*}
	{}^wT^*M|_{M^{\circ}} \text{ is canonically isomorphic to } T^*(M^{\circ}).
\end{equation*}
Since $M$ is homotopic to $M^{\circ},$ it follows that ${}^wT^*M$ is isomorphic to $T^*M$ over $M$ 
(although not naturally and $i$ is not an isomorphism over the boundary of $M$).
In local coordinates near the total space of $F_aM \fib \pa_aM \xlra{\phi_a} B_aM,$ 
$(x,y_1, \ldots, y_h,z_1, \ldots, z_v),$ in which $x$ is a boundary defining function for $\pa_aM,$ 
$y_j$ are coordinates pulled-back from $B_aM,$ and $z_k$ are coordinates on $F_aM,$ 
a differential form
\begin{equation*}
	\theta = a(x,y,z) dx + b^j(x,y,z) dy_j + c^k(x,y,z) dz_k
\end{equation*}
is in $\cal V_w^*$ if and only if the coefficients $c^k$ vanish when $x=0$ and so the differentials $dx,$ $dy_1, \ldots, dy_h,$ $x\; dz_1, \ldots, x\; dz_v$ make up a local
frame for $\cal V_w^*.$ 
Abusing notation we think of elements of $\cal V_W^*$ as sections of ${}^wT^*M.$ Importantly though, a form like $x\; dz_1$ does not vanish at $\{x=0\}$ when thought
of as a section of ${}^wT^*M$ (e.g., because $dz_1$ is not a section of ${}^wT^*M$ so $x\;dz_1$ is not $x$ times another section).

The wedge tangent bundle ${}^w TM$ is the dual bundle to the wedge cotangent bundle. A general wedge metric can be defined as a bundle metric on ${}^wTM$ 
but it is convenient to require more structure. Namely, a product-type wedge metric is one that near each $\pa_aM$ has the form
\begin{equation*}
	dx^2 + x^2 g_{\pa_aM/B_aM} + \phi_a^*g_{B_aM}
\end{equation*}
where $g_{\pa_aM/B_aM} + \phi_a^*g_{B_aM}$ is a submersion metric on $\pa_aM.$
A totally geodesic wedge metric is one that coincides with a product-type wedge metric at each $\pa_aM$ up to $\cal O(x^2).$ (See \cite[\S1.2]{AlbGel:IFFDTOP}
for more details, and also \cite[\S6]{KotRoc:PMWFC}.)

For totally geodesic wedge metrics (and more generally) one can use the Koszul formula to see that the Levi-Civita connection defines a connection
on the wedge tangent bundle,
\begin{equation*}
	\nabla^M: \cal C^{\infty}(M; {}^w TM) \lra \cal C^{\infty}(M; T^*M \otimes {}^w TM),
\end{equation*}
and hence also on the wedge cotangent bundle.

\begin{defn}
Let $M$ be a manifold with fibered corners and a wedge Riemannian metric $g_M.$
A {\bf wedge Clifford module} on $M$ consists of a complex vector bundle $E \lra M$ endowed with a Hermitian metric $g_E,$ a metric connection $\nabla^E,$
and a bundle homomorphism (referred to as Clifford multiplication)
\begin{equation*}
	\cl:\bb C \otimes \mathrm{Cl}({}^wT^*M,g_M) \lra \mathrm{End}(E)
\end{equation*}
satisfying that, for any $\theta \in \cal C^{\infty}(M;{}^wT^*M)$ and $V \in \cal C^{\infty}(M;TM),$
\begin{equation*}
	g_{E}(\cl(\theta)\cdot, \cdot) = - g_E(\cdot, \cl(\theta)\cdot)
	\quad \text{ and } \quad
	\nabla^E_V\cl(\theta) = \cl(\theta)\nabla^E_V + \cl(\nabla^M_V\theta).
\end{equation*}
The corresponding Dirac-type operator is the differential operator given by the composition
\begin{equation*}
	\eth^E:\cal C^{\infty}_c(M^{\circ};E) \xlra{\nabla^E}
	\cal C^{\infty}_c(M^{\circ}; T^*M \otimes E) \xlra{\cl}
	\cal C^{\infty}_c(M^{\circ};E),
\end{equation*}
where we have used that $T^*M$ and ${}^wT^*M$ are canonically isomorphic over the interior of $M.$

A $\bb Z_2$-graded wedge Clifford module is a wedge Clifford module in which $E$ has a splitting $E = E^+\oplus E^-$
which is orthogonal with respect to $g_E,$ parallel with respect to $\nabla^E,$ and odd with respect to $\cl.$
\end{defn}

Using the metrics on $M$ and on $E$ we can define $L^2(M;E)$ and consider $\eth^E$ as a symmetric 
unbounded operator with initial domain $\cal C^{\infty}_c(M^{\circ};E).$ Elliptic regularity in this setting establishes that any element
of $L^2(M;E)$ that is mapped into $L^2(M;E)$ by $\eth^E$ (acting distributionally) must have Sobolev regularity but measured
using ``edge vector fields". An edge vector field on $M$ is one that is tangent to the fibers of all of the boundary fiber bundles of the
iterated fibration structure on $M$ and we denote the corresponding Sobolev space by $H^1_e(X;E).$
Thus the elliptic regularity statement is that
\begin{equation*}
	\cal D_{\max}(\eth^E) := \{ u \in L^2(M;E): \eth^Eu \in L^2(M;E)\} \subseteq H^1_e(X;E).
\end{equation*}
For our purposes, the initial domain $\cal C^{\infty}_c(M^{\circ};E)$ of $\eth^E$ is too small and the maximal domain, $\cal D_{\max}(\eth^E),$ is too large
so we use an intermediate domain known as the `vertical APS domain' (so called because of its close relation to the domain introduced by Atiyah-Patodi-Singer in \cite{AtiPatSin:SARG}).
This domain is defined as 
\begin{equation*}
	\cal D_{VAPS}(\eth^E) := \rho^{1/2}H^1_e(X;E) \cap \cal D_{\max}(\eth^E)
\end{equation*}
where $\rho$ is a `total boundary defining function', i.e., a smooth non-negative function that vanishes simply at the boundary of $M.$
In \cite{AlbGel:IFFDTOP} this was shown to be a natural choice of domain for $\eth^E$ and to have several nice properties
as long as an `analytic Witt assumption' is satisfied. 

To state this assumption, note that for any $\alpha \in S(M),$ a wedge Dirac-type operator $\eth^E$ takes the form
\begin{equation*}
	\frac1x( a(x,y,z) x\pa_x + b^i(x,y,z) x\pa_{y_j} + c^k(x,y,z) \pa_{z_k} + f(x,y,z))
\end{equation*}
near $\pa_{\alpha}M = \{x=0\}.$ We refer to
\begin{equation*}
	\eth^E_{\alpha} = x\eth^E|_{\pa_{\alpha}M} = c^k(0,y,z)\pa_{z_k} + f(0,y,z)
\end{equation*}
as the {\bf boundary family} of $\eth^E$ at $\pa_{\alpha}M.$ It is a family of operators on the fibers of $\phi_{\alpha}:\pa_{\alpha}M \lra B_{\alpha}M.$
In fact it is practically equal to a family of Dirac-type operators with respect to an induced wedge Clifford module, see \cite[Lemma 2.2]{AlbGel:IFFDTOP}.
We say that $\eth^E$ satisfies the {\bf analytic Witt condition} if every boundary family, endowed with its vertical APS domain, is invertible. 

In \cite[Theorem 1]{AlbGel:IFFDTOP} it is shown that if $M$ is compact and $\eth^E$ satisfies the analytic Witt condition then 
$(\eth^E, \cal D_{VAPS}(\eth^E))$ is closed\footnote{In the statement of \cite[Theorem 1]{AlbGel:IFFDTOP} the vertical APS domain is defined as the graph closure of the domain defined above but the proof of \cite[Theorem 4.3]{AlbGel:IFFDTOP} shows that the domain is already closed in the graph norm.}, Fredholm, and self-adjoint with compact resolvent. In case $M$ is non-compact, but the links of the associated
stratified space are compact, the method described in \cite[\S3]{AlbLeiMazPia:HTCS} and implemented in, e.g.,  the proof of 
\cite[Theorem 5.2]{AlbLeiMazPia:HTCS} shows that $(\eth^E, \cal D_{VAPS}(\eth^E))$ is closed, locally Fredholm, and self-adjoint with locally compact 
resolvent\footnote{By locally compact we mean an operator whose product with any smooth function of compact support is a compact operator and by locally Fredholm we mean an operator that is invertible up to a locally compact operator.}.

The most important special case of this construction for us is the signature operator.
First if we replace the ordinary differential forms on $M$ with the wedge differential forms, defined to be the sections of the exterior powers of the wedge cotangent bundle,
\begin{equation*}
	\cal C^{\infty}_c(M; \Lambda^* ({}^wT^*M)),
\end{equation*}
then the de Rham operator $d+\delta$ is a wedge Dirac-type operator. The analytic Witt assumption is satisfied by $d+\delta$ if and only if $M$ is (the resolution of) a Witt pseudomanifold in the sense of Siegel \cite{Sie:WSGCTKOP} (this is why it is called the analytic Witt assumption); see e.g., \cite[Corollary 4.2]{AlbLeiMazPia:HTCS}.

To define the signature operator we first replace wedge cotangent bundle with its complexification ${}^wT^*_{\bb C}M = {}^wT^*M\otimes \bb C$ and then
let $\star$ be the involution obtained from the Hodge star $*$ by multiplying it with an appropriate (form degree dependent) power of $i$ so that $\star^2=1.$
If $\dim M$ is even then the signature operator $D_M^{\mathrm{sign}} = \eth_{\mathrm{sign}}$ is the operator induced by the de Rham operator together with the $\bb Z_2$-grading induced by $\star.$
If $\dim M$ is odd then the de Rham operator commutes with $\star$ and the signature operator (known as the `odd signature operator') is its restriction to the $+1$ eigenspace of $\star.$ (In either case, the analytic Witt assumption for the signature operator holds precisely when the analytic Witt assumption for the de Rham operator does, i.e., when $M$ is a Witt pseudomanifold.)

Summarizing, we have the following fundamental result:
\begin{thm}Let $X$ be a smoothly stratified Witt pseudomanifold
and let $M=\mathrm{res}(X)$ be its resolution, a manifold with fibered corners. We endow the regular part of 
$X$ and thus the interior of $M$ with a wedge metric $g$. Then
the signature operator $D_M^{\mathrm{sign}}$ associated to $g$
%, as an unbounded operator on $L^2(M,\Lambda^* ({}^wT^*M))$ with core domain $\cal C^{\infty}_c(M; \Lambda^* ({}^wT^*M))$ 
satisfies the analytic Witt condition and therefore admits
a closed self-adjoint extension $\cal D_{VAPS}(\eth_{\mathrm{sign}}))\subset L^2(M,\Lambda^* ({}^wT^*M))$
which is Fredholm and has compact resolvent.
\end{thm}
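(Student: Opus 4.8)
The plan is to read this statement as the concatenation of two facts recalled in Subsection~\ref{sec:WedgeMetsDiracOps} --- the characterization of the analytic Witt condition for the wedge de Rham/signature operator, and the Fredholm package of \cite[Theorem~1]{AlbGel:IFFDTOP} --- and to make explicit the one geometric bridge between them, namely that the topological Witt condition on $X$ is exactly the analytic Witt condition for $D_M^{\mathrm{sign}}$. First I would unwind the definitions: $D_M^{\mathrm{sign}}=\eth_{\mathrm{sign}}$ is the wedge Dirac-type operator obtained from the de Rham operator $d+\delta$ on wedge differential forms $\cal C^{\infty}_c(M;\Lambda^*({}^wT^*_{\bb C}M))$ together with the normalized Hodge involution $\star$ (restricted, when $\dim M$ is odd, to the $+1$-eigenspace of $\star$), and the analytic Witt condition for it asks that for each $a\in S(M)$ the boundary family $(\eth_{\mathrm{sign}})_a=x\,\eth_{\mathrm{sign}}|_{\pa_aM}$, equipped with its vertical APS domain, be an invertible family of operators over $B_aM$.

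The heart of the argument is an induction on depth that identifies these boundary families. Fixing $a\in S(M)$, Theorem~\ref{thm:ResSASS} identifies the fiber of $\phi_a\colon\pa_aM\to B_aM$ with $\mathrm{res}(Z)$, where $Z$ is the link of $X_a$ in $X$, and near $\pa_aM$ the wedge metric has the product form $dx^2+x^2g_{\mathrm{res}(Z)}+\phi_a^*g_{B_aM}$. Using the standard unitary rescaling that intertwines $L^2$ wedge forms on the model cone $[0,\infty)_x\times\mathrm{res}(Z)$ with $L^2$ forms on $\mathrm{res}(Z)$, I would compute, following \cite[Lemma~2.2]{AlbGel:IFFDTOP}, that $(\eth_{\mathrm{sign}})_a$ is fiberwise a signature-type operator attached to an induced wedge Clifford module on $\mathrm{res}(Z)$ --- so that its own boundary families are precisely those of the signature operator of $Z$ --- together with a zeroth-order term produced by the $\star$-grading in the conic normal variable. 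Since the links of strata of $Z$ are themselves links of strata of $X$, the space $Z$ is again a Witt pseudomanifold, so the inductive hypothesis applies and furnishes, via \cite[Theorem~1]{AlbGel:IFFDTOP} one depth lower, Fredholmness of the signature operator on $\mathrm{res}(Z)$ with its vertical APS domain together with the Hodge-theoretic description of its nullspace; consequently invertibility of $(\eth_{\mathrm{sign}})_a$ reduces, when $\dim Z$ is even, to the absence of $L^2$-harmonic wedge forms of degree $\tfrac12\dim Z$ on $(\mathrm{res}(Z))^{\circ}$, and is automatic when $\dim Z$ is odd.

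To close the equivalence I would invoke the $L^2$-Stokes theorem together with the Cheeger-type isomorphism between the $L^2$-cohomology of the regular part of a Witt pseudomanifold for a wedge metric and its lower-middle-perversity intersection cohomology --- \cite[Corollary~4.2]{AlbLeiMazPia:HTCS} (cf.\ \cite{Che:SGSRS}, \cite{AlbLeiMazPia:SPWS}) --- which identifies the middle-degree harmonic space above with $\mathrm{IH}^{\overline m}_{\dim Z/2}(Z;\bb Q)\otimes\bb C$. Hence the hypothesis that $X$ is a Witt space --- every link $Z$ has $\dim Z$ odd or $\mathrm{IH}^{\overline m}_{\dim Z/2}(Z;\bb Q)=0$ --- is equivalent to the vanishing of all of these model nullspaces, i.e.\ to the analytic Witt condition for $\eth_{\mathrm{sign}}$; this is the first assertion. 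Granting it, \cite[Theorem~1]{AlbGel:IFFDTOP} applies verbatim in the compact case and yields that $(\eth_{\mathrm{sign}},\cal D_{VAPS}(\eth_{\mathrm{sign}}))$ is closed, self-adjoint, Fredholm, and has compact resolvent; when $M$ is non-compact but the links of $X$ are compact, the localization method of \cite[\S3]{AlbLeiMazPia:HTCS}, used as in the proof of \cite[Theorem~5.2]{AlbLeiMazPia:HTCS}, delivers the closed, locally Fredholm, self-adjoint statement with locally compact resolvent, which upgrades to Fredholm with compact resolvent once $X$ itself is compact.

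I expect the main obstacle to be the middle step: pinning down the precise form of the boundary family of $\eth_{\mathrm{sign}}$ --- tracking the degree shift from the conic normal variable, the induced Clifford action, and the interaction of the $\star$-grading with the boundary fibration --- and then confirming that the resulting model invertibility is governed by middle-degree intersection homology alone. This is exactly where the induction on depth, the choice of vertical APS domain, and the Hodge theorem for Witt pseudomanifolds all have to be brought to bear; once that identification is secured, the remainder is bookkeeping and citation.
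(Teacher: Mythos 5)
Your proposal is correct and follows the same route as the paper: the paper presents this theorem as a summary of results just recalled, reducing the Fredholm package to \cite[Theorem~1]{AlbGel:IFFDTOP} and the equivalence of the topological and analytic Witt conditions to \cite[Corollary~4.2]{AlbLeiMazPia:HTCS}. The inductive identification of the boundary families with link signature operators and the Hodge-theoretic vanishing argument that you sketch is precisely the content of those citations, so you have simply made explicit what the paper delegates to the references.
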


For some wedge metrics on a Witt space, called adapted iterated edge metrics in \cite[\S 5.4]{AlbLeiMazPia:SPWS}, the de Rham operator is essentially self-adjoint as an unbounded operator on $L^2$ differential forms. Any wedge metric can be deformed to one of this form and this deformation produces a homotopy of the de Rham (or signature) operators within Fredholm operators (see \cite[Remark 4.9]{AlbGel:IFFDTOP}). Thus for some purposes, e.g., defining a K-homology class, one may if convenient assume that the operator is essentially self-adjoint.

%%%%%%%%%%%%%%%%%%%%%%%%%%%%%%
\subsection{KK-theory and the analytic signature orientation class of a Witt space} \label{sec:KKThy}
%%%%%%%%%%%%%%%%%%%%%%%%%%%%%%

We find it convenient to make use of the unbounded description of KK-theory set out by van den Dungen and Mesland in \cite{DunMes:HEUK}.
For the reader's convenience we start by reviewing some of the relevant notions.

Let $A$ and $B$ be countably generated, $\sigma$-unital, $\bb Z_2$-graded $C^*$-algebras. A $B$-Hilbert module $\cal E$ is a right $B$-module equipped with a $B$-valued inner product
$\langle \cdot, \cdot \rangle_{\cal E}$ which is complete with respect to the norm $\lVert v \rVert_{\cal E} = \lVert \langle v, v \rangle_{\cal E} \rVert_{B}^{1/2}.$
A $B$-Hilbert module is an $(A,B)$-bimodule if $A$ acts on it via adjointable bounded operators.

Let $D:\mathrm{dom}(D)\subseteq \cal E \lra \cal E$ be a closed, densely defined, self-adjoint and regular\footnote{To say that $D$ is regular is to say that $D$ and $D^*$ are densely defined and that $\mathrm{Id}+D^*D$ has dense range.} operator and define
\begin{multline*}
	\mathrm{Lip}(D) = \{ T \text{ adjointable, bounded operator on }E: \\
	T(\mathrm{dom}(D)) \subseteq \mathrm{dom}(D) \text{ and } [D, T] \text{ is adjointable and bounded}\},
\end{multline*}
\begin{equation*}
	\mathrm{Lip}_{\cal K}(D) =
	\{ T \in \mathrm{Lip}(D): \\
	T(\mathrm{Id}+D^2)^{-1/2}, 
	T^*(\mathrm{Id}+D^2)^{-1/2} \in \cal K_B \}.
\end{equation*}
An {\bf unbounded $A$-$B$-cycle}\footnote{An unbounded Kasparov module for $(A,B),$ in the sense of Baaj-Julg \cite{BaaJul:TBKONBCH} is an $(A,B)$-bimodule $\cal E$ together with a self-adjoint regular operator $D$ on $E,$ homogeneous of degree one, such that $(1+D^2)^{-1}a$ extends to an element of $\cal K_B$ for all $a\in A$ and such that there is a dense $*$-subalgebra $\cal A\subseteq A$ contained in $\mathrm{Lip}(D).$

A significant complication of using unbounded Kasparov modules is that in order to define the direct sum of two modules we would require that the subalgebra $\cal A$ is the same for the two modules. This is avoided with unbounded $A$-$B$-cycles where instead it is required that $A\subseteq \overline{\mathrm{Lip}(D)}.$} is a $\bb Z_2$-graded Hilbert bi-module $_A\cal E_B$ together with an odd regular self-adjoint operator $D$ on $E$ such 
that the elements of $A$ act as operators in the closure of $\mathrm{Lip}_{\cal K}(D).$ Direct sum makes the set of unbounded $A$-$B$-cycles into a semi-group.
The equivalence classes of unbounded $A$-$B$-cycles with respect to operator homotopy form a group with respect to direct sum denoted
\begin{equation*}
	\overline{\mathrm{UKK}}(A,B).
\end{equation*}
Van den Dungen and Mesland show that if $A$ is separable the bounded transform induces an isomorphism
\begin{equation*}
	\overline{\mathrm{UKK}}(A,B) \cong \mathrm{KK}(A,B).
\end{equation*}

The odd KK-groups $\overline{\mathrm{UKK}}^1(A,B) \cong \mathrm{KK}^1(A,B)$ are defined by the same procedure as above but working with ungraded Hilbert bi-modules. We will denote the even groups by $\overline{\mathrm{UKK}}(A,B) \cong \mathrm{KK}(A,B)$ or $\overline{\mathrm{UKK}}^0(A,B) \cong \mathrm{KK}^0(A,B)$ interchangeably. A superindex in this setting is to be understood modulo two.

$ $\\
Roughly speaking if $X$ and $Y$ are stratified spaces (and more generally) then
\begin{equation*}
\begin{gathered}
	\mathrm K^0_c(X) := \mathrm{KK}(\bb C, C_0(X)) = \text{ ``Fredholm operators parametrized by $X$"}, \\
	\mathrm K_0^{\an}(X) := \mathrm{KK}(C_0(X), \bb C) = \text{ ``Fredholm operators on $X$"}, \\
	\mathrm{KK}(C_0(X), C_0(Y)) = \text{ ``Fredholm operators on $X$ parametrized by $Y$"}.
\end{gathered}
\end{equation*}
In particular a wedge Dirac operator on $X$ satisfying the analytic Witt condition will define a class in $\mathrm K_0^{\an}(X)$ and, 
as we will show in \S \ref{sec:Fibrations}, a family of wedge Dirac-type operators on the fibers of $X\lra Y$ satisfying the analytic Witt condition will define 
a class in $\mathrm{KK}(C_0(X), C_0(Y)).$

Let us describe the case of a single operator in detail. Suppose $X$ is a smoothly stratified space (not necessarily compact) with resolution 
$M = \mathrm{res}(X),$ that $(g_M, E \lra M, g_E, \nabla^E, \cl)$ is a wedge Clifford module on $M$ and that the associated Dirac-type operator 
$\eth^E_M$ satisfies the analytic Witt condition. Let us set
\begin{equation*}
	A =  C_{\Phi,0}(M):=  C_{\Phi}(M)\cap  C_0(M), \quad B = \bb C, \quad
	\cal E = L^2(M;E)
\end{equation*}
and endow $\cal E$ with a left $A$-action and right $B$-action, both given by multiplication, and note that the $L^2$-inner product on $\cal E$ is a
$B$-valued inner product so that $\cal E$ is a Hilbert $A$-$B$-bimodule, $\bb Z_2$-graded if $E$ is.
The operator $\eth^E_{M,VAPS}$ is closed, densely defined, self-adjoint and regular and 
$ C_{\Phi,c}^{\infty}(M):= C_{\Phi}^{\infty}(M) \cap C_c^{\infty}(M) \subseteq \mathrm{Lip}_{\cal K}(\eth^E_{M,VAPS})$ so the elements of $A$ act as operators in the closure of 
$ \mathrm{Lip}_{\cal K}(\eth^E_{M,VAPS}).$ Thus we obtain a class
\begin{equation*}
	[\eth^E_{M,VAPS}] \in \overline{\mathrm{UKK}}^{\dim X}(C_{\Phi,0}(M),\bb C) 
	\cong \mathrm{KK}^{\dim X}(C_{\Phi,0}(M),\bb C) 
	= \mathrm{KK}^{\dim X}(C_0(X),\bb C) = \mathrm K_{\dim X}^{\an}(X).
\end{equation*}

The case of most interest to us is the $K$-class of the signature operator of a Witt pseudomanifold $X,$
\begin{equation*}
	[D_M^{\mathrm{sign}}] \in \mathrm K_{\dim X}^{\an}(X).
\end{equation*}
For compatibility with the Sullivan orientation, as we will show below in \S\ref{sec.compatanalytictoporient}, it is natural to define
\begin{equation*}
	\mathrm{sign}_K(X) = 2^{-\lfloor \dim X/2 \rfloor}[D_M^{\mathrm{sign}}] \in \mathrm K_{\dim X}^{\an}(X)[\tfrac12].
\end{equation*}
We refer to this as the {\bf analytic signature orientation of a Witt space}.

\begin{remark}
The class of the signature operator in $\mathrm K_{\dim X}^{\an}(X)$ coincides with that defined in \cite[\S 6.2]{AlbLeiMazPia:SPWS} (and that defined in \cite{moscoviciwu}). See also the definition of the K-homology class of the signature operator on Cheeger spaces (generalizing Witt spaces) in \cite[\S 5.1]{AlbLeiMazPia:NCCS}.
\end{remark}

%%%%%%%%%%%%%%%%%%%%%%%%%%%%%%
\section{Invariance properties of the analytic signature orientation}\label{sect:invariance}
%%%%%%%%%%%%%%%%%%%%%%%%%%%%%%

In this section we consider the behavior of the analytic signature orientation under changes of the underlying structure.

%%%%%%%%%%%%%%%%%%%%%%%%%%%%%%
\subsection{Stratified diffeomorphism  invariance}\label{subsect:diffeo}
%%%%%%%%%%%%%%%%%%%%%%%%%%%%%%

\begin{prop} \label{prop.stratdiffeoinvariance}
The analytic signature orientation of a smoothly stratified oriented Witt space is independent of the choice of wedge metric used in its definition.

If $\phi: X \to Y$ is a stratified diffeomorphism between 
smoothly stratified oriented Witt spaces, then 
\[ \phi_* \mathrm{sign}_K(X) = \mathrm{sign}_K(Y) \in \K_*^{\an}(Y)= \mathrm{KK}^{*} (C_0(Y),\cplx). \]
\end{prop}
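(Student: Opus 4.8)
The plan is to prove the two assertions in tandem, using the resolution machinery of Theorem~\ref{thm:ResSASS} to transfer everything to manifolds with fibered corners, where the analytic results of \cite{AlbGel:IFFDTOP} apply directly.

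\textbf{Metric independence.} First I would fix a smoothly stratified oriented Witt space $X$ with resolution $M = \mathrm{res}(X)$, and let $g_0, g_1$ be two wedge metrics on $M^\circ$. Choose the affine path $g_t = (1-t)g_0 + t g_1$, which is again a wedge metric for every $t\in[0,1]$ (the condition of being a bundle metric on ${}^wT^*M$ is convex). By the fundamental theorem quoted just before this subsection, each $D_M^{\mathrm{sign}}(g_t)$ with the vertical APS domain is Fredholm, self-adjoint, with (locally) compact resolvent, and by \cite[Remark 4.9]{AlbGel:IFFDTOP} the family $t\mapsto D_M^{\mathrm{sign}}(g_t)$ is an operator homotopy through such operators; moreover the underlying Hilbert module $L^2(M;\Lambda^*({}^wT^*M))$ changes only by the unitary equivalence identifying the $L^2$-spaces of the different metrics (the bundle ${}^wT^*M$ and the Clifford action are the same; only the inner product and the $\star$-operator vary, real-analytically in $t$). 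Hence $[D_M^{\mathrm{sign}}(g_0)] = [D_M^{\mathrm{sign}}(g_1)]$ in $\overline{\mathrm{UKK}}^{\dim X}(C_{\Phi,0}(M),\bb C)$, and after multiplying by $2^{-\lfloor \dim X/2\rfloor}$ and inverting $2$ we get the asserted independence of $\mathrm{sign}_K(X)$. One small point to check is that the identification $\overline{\mathrm{UKK}} \cong \mathrm{KK}$ of van den Dungen--Mesland is compatible with these homotopies, but this is exactly the content of their theorem (operator homotopy is the equivalence relation defining the group).

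\textbf{Diffeomorphism invariance.} Given a stratified diffeomorphism $\phi: X \to Y$, it automatically satisfies $\phi(X_{\mathrm{reg}}) = Y_{\mathrm{reg}}$, so by Theorem~\ref{thm:ResSASS} it lifts to a fibered corners diffeomorphism $\wt\phi: \mathrm{res}(X) \to \mathrm{res}(Y)$ compatible with the blow-down maps, hence inducing an isomorphism of $C^*$-algebras $\wt\phi^*: C_{\Phi,0}(\mathrm{res}(Y)) \xrightarrow{\ \sim\ } C_{\Phi,0}(\mathrm{res}(X))$ identified with $\phi^*: C_0(Y)\to C_0(X)$. The map $\phi_*$ on K-homology is precisely precomposition of unbounded cycles with $\phi^*$. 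Now pick any wedge metric $g_Y$ on $\mathrm{res}(Y)$; then $g_X := \wt\phi^* g_Y$ is a wedge metric on $\mathrm{res}(X)$ (being a fibered corners diffeomorphism, $\wt\phi$ pulls wedge structures back to wedge structures), and the diffeomorphism $\wt\phi$, being orientation-preserving and an isometry from $(\mathrm{res}(X), g_X)$ to $(\mathrm{res}(Y), g_Y)$, intertwines the wedge de~Rham complexes, the $\star$-operators, and hence the signature operators with their VAPS domains via the unitary $\wt\phi^*: L^2(\mathrm{res}(Y);\Lambda^*({}^wT^*)) \to L^2(\mathrm{res}(X);\Lambda^*({}^wT^*))$. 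Therefore this unitary together with $\wt\phi^*$ on functions is an isomorphism of unbounded $C_0(Y)$-$\bb C$-cycles from $[D^{\mathrm{sign}}_{\mathrm{res}(Y)}(g_Y)]$ to $\phi^*[D^{\mathrm{sign}}_{\mathrm{res}(X)}(g_X)]$, i.e.\ $\phi_*[D^{\mathrm{sign}}_{\mathrm{res}(X)}(g_X)] = [D^{\mathrm{sign}}_{\mathrm{res}(Y)}(g_Y)]$ in $\K^{\an}_*(Y)$. Dividing by the normalization constant $2^{-\lfloor\dim X/2\rfloor} = 2^{-\lfloor\dim Y/2\rfloor}$ (dimensions agree since $\phi$ is a diffeomorphism) gives $\phi_*\mathrm{sign}_K(X) = \mathrm{sign}_K(Y)$ for \emph{these particular} metrics; by the first part of the proposition the class is metric-independent, so the formula holds unconditionally.

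\textbf{Main obstacle.} The routine-looking but genuinely delicate point is verifying that an orientation-preserving fibered corners \emph{isometry} really does carry the VAPS domain of the signature operator of $g_X$ to that of $g_Y$, i.e.\ that the choice of intermediate domain $\rho^{1/2}H^1_e \cap \mathcal D_{\max}$ is natural under such maps. This reduces to checking that $\wt\phi$ maps edge vector fields to edge vector fields (immediate, since it respects the iterated fibration structure) and that total boundary defining functions go to total boundary defining functions up to a positive smooth factor (also immediate). Once this naturality of $\mathcal D_{VAPS}$ under fibered corners isometries is in hand, the rest is bookkeeping with the van den Dungen--Mesland framework; the only other subtlety is confirming that an orientation-preserving isometry preserves the $i$-twisted Hodge $\star$, hence the $\bb Z_2$-grading (even case) or the $+1$-eigenspace decomposition (odd case), which it does because $\star$ is built canonically from the metric and the orientation.
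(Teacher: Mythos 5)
Your proof is correct and follows essentially the same route as the paper's: for metric independence, a linear path of wedge metrics (quasi-isometric, so the $L^2$-modules are compatible) gives an operator homotopy of the signature operators on their VAPS domains, which does not change the KK-class; for diffeomorphism invariance, pull back a metric from $Y$, use the resulting pull-back unitary on $L^2$-forms to exhibit a unitary equivalence of unbounded cycles, and then invoke metric independence. The only difference is one of emphasis: you route the argument more systematically through Theorem~\ref{thm:ResSASS} and state explicitly the naturality checks on edge vector fields, boundary defining functions, and the Hodge $\star$, whereas the paper presents the same verifications more briefly (citing \cite{BraHecSar:LES} or the resolution lifting for the wedge-ness of the pull-back metric, and writing out the unitary equivalence of cycles at the level of the Kasparov triples); neither buys anything the other lacks.
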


\begin{proof}
If $g$ and $g'$ are wedge metrics on $X$ then they are quasi-isometric and hence the space of $L^2$ sections of wedge differential forms is the same
for both metrics (though of course the $L^2$-norm of an individual element is not). The path of wedge metrics $t \mapsto tg + (1-t)g'$ lifts to an operator homotopy between the corresponding signature operators, endowed with their VAPS domain. Since operator homotopies between Kasparov bimodules do not change the KK-class, we see that the analytic signature orientation is independent of the metric.

Let $g_Y$ be a wedge metric on $Y$ and consider  $\phi^* g$ on $X$. It can be proved that this is also a wedge metric. 
This can certainly be proved directly as in \cite[Lemme 5.2]{BraHecSar:LES}. However, it also follows from
the following general principle: a wedge metric $g_Y$ extends to define  a bundle metric on the wedge cotangent bundle over the resolved 
manifold of $Y$, $\widetilde{Y}$; a stratified diffeomorphism $\phi$ lifts to a diffeomorphism $\tilde{\phi}$ of the resolutions 
and thus induces a well-defined bundle map, covering $\tilde{\phi}$, between the respective wedge-cotangent bundles. The pull-back of the
metric $g_Y$ is then a metric on the wedge-cotangent bundle of the resolved manifold $\widetilde{X}$; such a metric
defines on the interior of $\widetilde{X}$, which is the regular part of $X$, a wedge metric.

Consider any class $[H,\alpha:C(X)\to \mathcal{B}(H), F]\in \mathrm K_{*}^{\an} (X)$ and recall
that if $\psi:X\to Y$ is a continuous map, then
$$\psi_* [H,\alpha:C(X)\to \mathcal{B}(H), F]:= [H,\alpha\circ \psi^* :C(Y)\to \mathcal{B}(H), F]\in \mathrm K_*^{\an} (Y)\,.$$
A similar formula works in the unbounded picture, considering a suitable dense subalgebra $\mathcal{A}(X)$ of $C(X)$.
Consider now $H'_X:= L^2 (X^{{\rm reg}}, \Lambda^* (X^{{\rm reg}}), \phi^* g_Y)$ and the class 
$$[H'_X, \mathcal{M}_X:\mathcal{A}(X)\to \mathcal{B}(H'_X), D'_X]\in \mathrm K_*^{\an} (X)$$ with $D'_X$ the signature operator associated to
$\phi^* g_Y$, $\mathcal{A}(X)$ the dense subalgebra of Lipschitz functions 
on $X$ and $\mathcal{M}_X$ equal to the multiplication operator. 
By wedge-metric invariance 
$$[H'_X, \mathcal{M}_X:\mathcal{A}(X)\to \mathcal{B}(H'_X), D'_X]=[D_X^{\mathrm{sign}}]\in \mathrm K_*^{\an} (X)$$
On the other hand, let now $H_Y$ be the space of
$L^2$-forms on $Y$ associated to $g_Y$. Pull-back by $\phi$ defines a unitary isomorphism   $U: H_Y\to H'_X$ 
 such that $U^{-1}\circ D'_X\circ U=D_Y^{\mathrm{sign}}$ or, equivalently, $U\circ D_Y^{\mathrm{sign}}\circ U^{-1}=D'_X$.
 Thus, on the one hand,
 $$\phi_* [D_X^{\mathrm{sign}}]= \phi_* [H'_X, \mathcal{M}_X:\mathcal{A}(X)\to \mathcal{B}(H'_X), D'_X]=
 [H'_X, \mathcal{M}_X\circ \phi^*:\mathcal{A}(Y)\to \mathcal{B}(H'_X), D'_X]$$
and, on the other hand, 
$$[D_Y^{\mathrm{sign}}]=  [H_Y, \mathcal{M}_Y:\mathcal{A}(Y)\to \mathcal{B}(H_Y), D_Y^{\mathrm{sign}}].$$
We set $\mathcal{M}':= \mathcal{M}_X\circ \phi^*$; we want to show that 
$$
 [H'_X, \mathcal{M}':\mathcal{A}(Y)\to \mathcal{B}(H'_X), D'_X]= [H_Y, \mathcal{M}_Y:\mathcal{A}(Y)\to \mathcal{B}(H_Y), D_Y^{\mathrm{sign}}].$$
But for $\omega\in H_Y$ we have 
$$\mathcal{M}' (f)(U \omega)= \phi^* (f) (\phi^* \omega)= \phi^* (f\omega)= U (\mathcal{M}_Y (f)(\omega)).$$
Together with our remarks so far, this means that the 2 cycles 
$$(H'_X, \mathcal{M}':\mathcal{A}(Y)\to \mathcal{B}(H'_X), D'_X)\;\;\text{and}\;\; (H_Y, \mathcal{M}_Y:\mathcal{A}(Y)\to \mathcal{B}(H_Y), D_Y^{\mathrm{sign}})$$
are unitarily equivalent, and so define the same $K$-theory class, which is what we wanted to show.
\end{proof}

\begin{remark}
For a general wedge Clifford module $(g_M, E\lra M, g_E, \nabla^E),$ the resulting K-homology class may well depend on the metrics involved.  
The reason that the argument above does not apply is that, while a homotopy of metrics lifts to a homotopy of the corresponding Dirac-type operators, this homotopy might not take place within Fredholm operators. For the signature operator the analytic Witt condition is independent of the metric but for example the dimension of the space of harmonic spinors on a surface of genus greater than two depends on the metric \cite[Theorem 2.6]{Hit:HS}.
\end{remark}

%%%%%%%%%%%%%%%%%%%%%%%%%%%%%%
\subsection{Witt bordism invariance}\label{subsect:witt-invariance}
%%%%%%%%%%%%%%%%%%%%%%%%%%%%%%

The following proposition, for smooth manifolds, is Proposition 4.1 in  \cite{PedRoeWei:HIBCASMOOC} (cf. \cite[Propositions 4.4 \& 5.4]{BauDouTay:CRCAK}). Our proof follows \cite[\S5]{Hig:KONM} (cf.\cite[Proposition 11.2.15]{HigRoe:AK}). A different approach for smooth manifolds can be found in \cite{MelPia:AKMWC}. 
\begin{prop} [The boundary of Dirac is the Dirac of the boundary] \label{prop:DiracBdyDirac} $ $

Let $W$ be a smoothly stratified space with boundary $X=\pa W,$ and denote their resolutions by $N = \mathrm{res}(W),$ and $M = \mathrm{res}(X) = \pa N.$
Let $(g_N, E\lra N, g_E, \nabla^E)$ be a wedge Clifford module on $N$ whose associated Dirac-type operator $\eth_{N,VAPS}^E$ satisfies the analytic Witt condition.
If $W$ is even-dimensional, assume that $E$ is graded with grading operator $\gamma.$ Let $\nu$ be a normal vector field to $M$ and define a
wedge Clifford module $(g_N|_M, \pa E\lra M, g_E|_{\pa E}, \nabla^E|_{\pa E})$ on $M$ as follows:
\begin{enumerate}
\item If $W$ is odd-dimensional, $\pa E = E|_{M},$ graded by Clifford multiplication by $i\nu;$
\item if $W$ is even-dimensional, $\pa E$ is the $+1$ eigenspace of the involution $i\nu\gamma.$
\end{enumerate}
Then the wedge Dirac-type operator associated to this Clifford module, $\eth_{M,VAPS}^{\pa E}$ satisfies the analytic Witt condition and the boundary map in K-homology, $\pa: \mathrm{K}^\an_{\dim W}(W, \pa W) \lra \mathrm{K}^\an_{\dim \pa W}(\pa W),$ satisfies
\begin{equation*}
	\pa[\eth_{N,VAPS}^E] = [\eth_{M,VAPS}^{\pa E}]. 
\end{equation*}
\end{prop}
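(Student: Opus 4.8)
The plan is to reduce the statement to the smooth manifold case, which is classical, by a careful choice of representative cycle and an exhaustion argument that isolates the behavior near the boundary. First I would set up the geometry precisely: choose a wedge metric $g_N$ on $N = \mathrm{res}(W)$ that is of product type in a collar $[0,1)_u \times M$ of $M = \partial N$, i.e. $g_N = du^2 + g_M$ there, with the wedge structure on $M$ agreeing with the restriction of that on $N$; this is possible after a homotopy of wedge metrics, which by Proposition \ref{prop.stratdiffeoinvariance} does not change the relevant K-homology classes. In this collar the Dirac-type operator $\eth^E_N$ takes the familiar form $\cl(du)(\partial_u + \eth^{\partial E}_M)$ (up to a zeroth order collar-dependent term that can be arranged to vanish by the product assumption), where $\eth^{\partial E}_M$ is exactly the boundary operator of the statement with the grading conventions described; this is the computation underlying ``the boundary of Dirac is the Dirac of the boundary'' and it is verified at the level of the resolved manifolds with fibered corners.

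The second step is to recall the analytic framework. By \cite[Theorem 1]{AlbGel:IFFDTOP} and its non-compact companion discussed in Subsection \ref{sec:WedgeMetsDiracOps}, $(\eth^E_{N,VAPS})$ is Fredholm/self-adjoint with compact resolvent on $W$, so it defines $[\eth^E_{N,VAPS}] \in \mathrm K^\an_{\dim W}(W)$; one checks the analytic Witt condition for $\eth^{\partial E}_M$ is inherited from that of $\eth^E_N$ because the boundary families of $\eth^{\partial E}_M$ at the fibered faces of $M$ coincide (up to the induced Clifford module structure of \cite[Lemma 2.2]{AlbGel:IFFDTOP}) with the boundary families of $\eth^E_N$ at the corresponding faces of $N$. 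The relative class $[\eth^E_{N,VAPS}] \in \mathrm K^\an_{\dim W}(W,\partial W)$ is represented, after excising a half-collar, by the operator on the open manifold $W^\circ \cup$ (interior of collar), which is now complete at that end; the point of the argument is to compute the image of this class under the boundary map $\partial\colon \mathrm K^\an_*(W,\partial W) \to \mathrm K^\an_{*-1}(\partial W)$ of the long exact sequence of the pair.

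For the third step I would follow the model of \cite[\S5]{Hig:KONM} (cf. \cite[Proposition 11.2.15]{HigRoe:AK}): realize the boundary map concretely. Write $W = W_{\mathrm{int}} \cup ([0,\infty)_u \times X)$ as a manifold with a cylindrical end attached along $X$, and represent $[\eth^E_{N,VAPS}]$ in $\mathrm K^\an(W,\partial W) \cong \mathrm K^\an$ of the open manifold by the bounded transform of the Dirac operator on this elongated space. The boundary map is implemented by the Mayer--Vietoris / cone construction that, on the cylindrical end, sees the operator $\cl(du)(\partial_u + \eth^{\partial E}_M)$ on $L^2(\mathbb R_u \times X; E)$, and a standard separation-of-variables argument (the same one that appears in the APS index theorem and in Higson's treatment) identifies the resulting class with $[\eth^{\partial E}_{M,VAPS}]$. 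Here the wedge/singular structure of $X$ enters only through the fiber direction of the product $\mathbb R_u \times X$: all of the edge calculus estimates of \cite{AlbGel:IFFDTOP} are local near the fibered faces and commute with the product with $\mathbb R_u$, so the functional-analytic input (regularity, the structure of the VAPS domain, Fredholmness of $\eth^{\partial E}_M$) is exactly the smooth-case input applied fiberwise. Concretely one checks that the commutators and compactness conditions defining the unbounded $C_{\Phi,0}(\mathrm{res}\,X)$-$\mathbb C$-cycle match under the unitary of the separation of variables.

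The main obstacle I expect is the bookkeeping at the corners of the resolution: $N$ is a manifold with fibered \emph{corners}, and $M = \partial N$ is a \emph{collective} boundary hypersurface which itself inherits a fibered-corners structure whose fiber bundles must be checked to be compatible with the collar product decomposition and with the grading conventions (1)--(2) in the presence of the $\mathbb Z_2$-grading $\gamma$. In particular one must verify that the ``vertical APS'' domain of $\eth^{\partial E}_M$ on $M$ is literally the restriction to $\{u=0\}$ of the VAPS domain of $\eth^E_N$ on the collar, which requires knowing that the weight $\rho^{1/2}$ and the edge Sobolev space $H^1_e$ behave correctly under the product with the collar variable --- this is where one genuinely uses the structure theory of \cite{AlbGel:IFFDTOP} rather than just quoting a smooth-manifold black box. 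Once this compatibility is in hand, the remainder of the proof is, as in the smooth case, a formal consequence of the long exact sequence and the concrete model for $\partial$.
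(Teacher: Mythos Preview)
Your proposal is correct and follows essentially the same route as the paper: both reduce to a collar neighborhood, invoke Higson's description of the boundary map from \cite[\S5]{Hig:KONM}, and then identify the result via a product decomposition of the Dirac operator on $[0,1)\times X$. The paper's presentation is slightly more streamlined in that it records Higson's factorization explicitly (the boundary map on the collar is the inverse of Kasparov product with $[\eth_{(0,1)}]$), so the entire statement reduces to the single identity $[\eth^E_{N,VAPS}]=[\eth_{(0,1)}]\otimes[\eth^{\partial E}_{M,VAPS}]$ on the collar, which is then dispatched by Baaj--Julg's external product formula or, equivalently, as a special case of the paper's own functoriality Theorem~\ref{thm:FunctorialityFibrations}; your ``separation of variables'' is exactly this product formula, and the domain/corner compatibility issues you anticipate are absorbed into that reference rather than verified separately.
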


\begin{proof}
It is pointed out in \cite[\S5]{Hig:KONM} that if $A$ is a $C^*$-algebra and $J$ an ideal in $A,$ so that we have a short exact sequence
\begin{equation*}
	0 \lra J \lra A \lra A\diagup J \lra 0,
\end{equation*}
then the boundary map $\pa:\mathrm{KK}^j(J, \bb C) \lra \mathrm{KK}^{j-1}(A\diagup J, \bb C)$ factors as
\begin{equation}\label{eq:FactoredBdyMap}
	\mathrm{KK}^j(J, \bb C) \lra 
	\mathrm{KK}^{j}(C_0((0,1)) \otimes A\diagup J, \bb C) \lra
	\mathrm{KK}^{j-1}(A\diagup J, \bb C)
\end{equation}
where the first map is defined using the mapping cone associated to the short exact sequence and the second map is the inverse of the
Kasparov product with the class
\begin{equation*}
	[\eth_{(0,1)}] \in \mathrm{KK}^1( C_0(0,1), \bb C ) = \mathrm K^\an_1( (0,1) )
\end{equation*}
of the Dirac operator on the positively oriented open unit interval.

We wish to apply this with $A = C_0(W),$  $J = \{ f \in C_0(W): f|_{X}=0\},$ so that $A\diagup J = C_0(X).$ 
Naturality of the boundary map allows us to replace $W$ with a collar neighborhood of $X$ so instead we take $A = C_0( [0,1) \times X)$ and
$J = C_0( (0,1) \times X).$
Arguing as in the proof of \cite[Theorem 5.1]{Hig:KONM} this has the happy consequence that the first map in \eqref{eq:FactoredBdyMap} is the identity map
and hence the boundary map in K-homology is the inverse of the Kasparov product with $[\eth_{(0,1)}].$ Thus we can conclude the proof if we can show that
\begin{equation*}
	[\eth_{N,VAPS}^E] = [\eth_{(0,1)}] \otimes  [\eth_{M,VAPS}^{\pa E}] 
	\in \mathrm{K}^\an_{\dim W}([0,1)\times X, \{0\}\times X).
\end{equation*}
This can be checked using the fact (e.g., from \cite[(1)]{BaaJul:TBKONBCH}) that the Kasparov product of $[\eth_{(0,1)}]$ and $[\eth_{M,VAPS}^{\pa E}]$ is represented by $\eth_{(0,1)} \hat \otimes I + I \hat\otimes  \eth_{M,VAPS}^{\pa E}.$ This is also a particular case of Theorem \ref{thm:FunctorialityFibrations} below.
\end{proof}

In the non-singular situation, the following proposition has been established in \cite[pg. 290]{PedRoeWei:HIBCASMOOC}, \cite[Theorem 2]{RosWei:SO}.
We note that certain bordism invariance results for the
signature operator $\K$-homology class on Witt spaces have also been
obtained by Hilsum in \cite[Section 3]{Hil:PDBDKP}.

We use the notation $[D_{W,\pa W}^{\mathrm{sign}}]$ to denote the class induced by the signature operator of a wedge metric on an oriented Witt space with boundary in the K-homology group of $W$ relative to $\pa W.$

\begin{prop}$ $ \label{prop:Bordism}

\begin{enumerate}
\item \label{lem.boundaryofsignopclass}
If $W$ is an oriented smoothly stratified Witt space with boundary then the boundary map in K-homology, $\pa: \mathrm{K}^\an_{\dim W}(W, \pa W) \lra 
\mathrm{K}^\an_{\dim \pa W}(\pa W),$ satisfies
\begin{equation*}
%	\pa\mathrm{sign}_K(W,\pa W) = k \mathrm{sign}_K(\pa W) \quad \text{ with } \quad
	\pa[D_{W,\pa W}^{\mathrm{sign}}] = k [D_{\pa W}^{\mathrm{sign}}] \quad \text{ with } \quad
	k = \begin{cases} 1 & \text{ if $\dim W$ is even}\\  2 & \text{ if $\dim W$ is odd}\end{cases}
\end{equation*}
\item \label{prop.signopclassbordinoverx}
Let $X$ be a {compact} oriented smoothly stratified Witt space, 
$\mathscr Y$ any finite CW complex,
and let $f:X\to \mathscr Y$ be a continuous map.
Then $f_* [D_X^{\mathrm{sign}}] \in \mathrm K_{\dim \mathscr Y} (\mathscr Y)$ is a Witt bordism invariant of the pair $(X,f).$\\
\end{enumerate}
\end{prop}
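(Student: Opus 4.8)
The strategy is to deduce part (\ref{prop.signopclassbordinoverx}) from part (\ref{lem.boundaryofsignopclass}) together with Witt bordism invariance of the underlying cycles, exactly as in the smooth case treated in \cite{PedRoeWei:HIBCASMOOC} and \cite[\S5]{Hig:KONM}. First I would recall the bordism relation: two pairs $(X_0, f_0)$ and $(X_1, f_1)$ of compact oriented smoothly stratified Witt spaces with reference maps to $\mathscr Y$ are Witt bordant if there is a compact oriented smoothly stratified Witt space $W$ with boundary, together with a continuous map $F: W \to \mathscr Y$, such that $\partial W = X_0 \sqcup (-X_1)$ and $F|_{\partial W} = f_0 \sqcup f_1$ (here $-X_1$ denotes $X_1$ with the opposite orientation). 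One then shows that the assignment $(X,f)\mapsto f_*[D_X^{\mathrm{sign}}]\in \mathrm K_{\dim \mathscr Y}(\mathscr Y)$ factors through the bordism group $\Omega^{\Witt,\infty}_{\dim \mathscr Y}(\mathscr Y)$ of \S\ref{sec:WittSpaces}; equivalently, if $(X,f)$ bounds then $f_*[D_X^{\mathrm{sign}}]=0$.

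The key steps, in order: (i) Given a null-bordism $F: W \to \mathscr Y$ with $\partial W = X$, form the class $[D_{W,\partial W}^{\mathrm{sign}}] \in \mathrm K^{\an}_{\dim W}(W,\partial W)$ of the signature operator of a wedge metric on $W$ (chosen, using the deformation remarks at the end of \S\ref{sec:WedgeMetsDiracOps}, to be of product type near $\partial W$, so that the metric on $W$ restricts to the metric defining $[D_X^{\mathrm{sign}}]$ on $X$ up to the $k$-factor bookkeeping of part (\ref{lem.boundaryofsignopclass})). (ii) Push forward along $F$ to get $F_*[D_{W,\partial W}^{\mathrm{sign}}] \in \mathrm K_{\dim W}(\mathscr Y, \mathscr Y) = \mathrm K_{\dim W}(\mathscr Y)$ — but the relative group of a pair $(\mathscr Y,\mathscr Y)$ vanishes, and more usefully one uses the long exact sequence of the pair $(W,\partial W)$ under $F$, together with naturality of the connecting map $\partial$, to obtain the commuting square relating $\partial[D_{W,\partial W}^{\mathrm{sign}}]$ in $\mathrm K^{\an}_{\dim X}(X)$ with its image in $\mathrm K_{\dim X}(\mathscr Y)$. (iii) Apply part (\ref{lem.boundaryofsignopclass}): $\partial[D_{W,\partial W}^{\mathrm{sign}}] = k[D_X^{\mathrm{sign}}]$. (iv) Since $\partial[D_{W,\partial W}^{\mathrm{sign}}]$ is in the image of $\partial$, its further image in $\mathrm K_{\dim X}(\mathscr Y)$ dies after mapping to $\mathscr Y$ through $f = F|_X$: the composite $\mathrm K^{\an}_{\dim W}(W,\partial W) \xrightarrow{\partial} \mathrm K^{\an}_{\dim X}(X) \xrightarrow{f_*} \mathrm K_{\dim X}(\mathscr Y)$ factors through $\mathrm K_{\dim W}(\mathscr Y, \mathscr Y)$-type vanishing via the exact sequence of $(W, X)$ and naturality, so $k\, f_*[D_X^{\mathrm{sign}}] = 0$. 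Inverting no primes is needed here since we will have $k \in \{1,2\}$ and we may work in $\mathrm K_*(\mathscr Y)$ — but to conclude $f_*[D_X^{\mathrm{sign}}]=0$ on the nose when $k=2$, one either inverts $2$ (enough for our purposes, since $\mathrm{sign}_K$ lives in $\mathrm K_*[\tfrac12]$) or observes that the argument in fact produces $f_*[D_X^{\mathrm{sign}}]=0$ by applying it with $W$ replaced by $W$ and handling the interval factor directly as in \cite[Theorem 5.1]{Hig:KONM}, where the $k$-ambiguity is absorbed in the identification of the mapping cone. Either way, running the same argument on a bordism between $(X_0,f_0)$ and $(X_1,f_1)$ then yields $(f_0)_*[D_{X_0}^{\mathrm{sign}}] = (f_1)_*[D_{X_1}^{\mathrm{sign}}]$, which is the asserted bordism invariance.

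The main obstacle is step (i)–(ii): one must be careful that the relative K-homology class $[D_{W,\partial W}^{\mathrm{sign}}]$ of a \emph{stratified} Witt space with boundary is well-defined and that the boundary map of the pair behaves as in the smooth case. This requires knowing that the resolution $N = \mathrm{res}(W)$ is a manifold with fibered corners one of whose boundary hypersurfaces is $\mathrm{res}(\partial W)$, that the vertical APS domain and the analytic Witt condition are compatible with this collar structure — all of which is supplied by Theorem \ref{thm:ResSASS}, \S\ref{sec:WedgeMetsDiracOps}, and Proposition \ref{prop:DiracBdyDirac} (itself the stratified ``boundary of Dirac is Dirac of the boundary'' statement) — and that the $C^*$-algebra short exact sequence $0 \to C_0(W \setminus \partial W) \to C_0(W) \to C_0(\partial W) \to 0$ identified via $\mathcal C_\Phi(\mathrm{res}(W))$ has connecting map computing the topological boundary map in $\mathrm K^{\an}_*$. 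Granting Proposition \ref{prop:DiracBdyDirac} and part (\ref{lem.boundaryofsignopclass}), which are stated just above, the remaining work is the standard excision/naturality diagram chase, so this part is routine once the setup is in place.
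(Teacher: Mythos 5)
Your plan is essentially the paper's proof for the even-dimensional case, and you correctly isolate the crux: part~(\ref{lem.boundaryofsignopclass}) plus the long exact sequence of the pair $(W, X)$, pushed forward along $F$, yields $f_*[D_X^{\mathrm{sign}}] = F_* i_* \partial[D_{W,X}^{\mathrm{sign}}] = 0$ by exactness. (Minor note: $\mathrm K_*(\mathscr Y,\mathscr Y)=0$, not $\mathrm K_*(\mathscr Y)$; and the phrase ``with $W$ replaced by $W$'' is a typo.)

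The genuine gap is your treatment of the odd-dimensional case, where $k=2$. You offer two ways out: (a) invert $2$, or (b) appeal to \cite[Theorem 5.1]{Hig:KONM} to ``absorb the $k$-ambiguity in the mapping cone identification.'' Neither closes the gap as stated. Option~(a) only proves bordism invariance after localization at $\tfrac12$, whereas the proposition asserts $f_*[D_X^{\mathrm{sign}}]\in\mathrm K_{\dim\mathscr Y}(\mathscr Y)$ is an \emph{integral} bordism invariant, and the paper needs the integral statement to define the homomorphism $\Omega^{\Witt,\infty}_n(X)\to\mathrm K^{\an}_n(X)$ in \eqref{equ.wittbordtokprecursor}. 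Option~(b) is not a real argument: Higson's mapping-cone identification shows the connecting map is Kasparov product by $[\eth_{(0,1)}]^{-1}$, which is exactly what produces the factor of $2$ in part~(\ref{lem.boundaryofsignopclass}); it does not remove it. What is actually required is the Pedersen--Roe--Weinberger device used in \cite[Theorem 2]{RosWei:SO}: after modifying $W$ away from the boundary so that $\chi(W)=0$, one produces a nowhere-vanishing unit vector field $\nu$ on $\mathrm{res}(W)$ normal to (the lift of) $\partial W$; right Clifford multiplication by $i\nu$ is an involution, and restricting the signature operator to an eigenspace gives an operator whose boundary is $[D^{\mathrm{sign}}_{\partial W}]$ \emph{on the nose}, eliminating the factor of $2$. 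Transporting this to the stratified setting requires the observation (nontrivial, and spelled out in the paper) that ${}^w T\,\mathrm{res}(W)$ is isomorphic to $T\,\mathrm{res}(W)$, so that the smooth vector-field argument can be carried out for the wedge bundle after rounding the corners of $\mathrm{res}(W)$. Without this ingredient, your proof only establishes the weaker statement after inverting $2$.
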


\begin{proof}
(1) Denote the resolution of $W$ by $N = \mathrm{res}(W)$ and that of $\pa W$ by $M = \mathrm{res}(\pa W) = \pa N.$
Since the signature operator of a wedge metric on $W$ is a Dirac-type operator we know from Proposition \ref{prop:DiracBdyDirac} that $\pa[D_W^{\mathrm{sign}}]$ is the K-homology class of a Dirac-type operator $\eth$ on $\pa W.$ Without loss of generality we may assume that the wedge metric on $N$ is collared near $\pa N,$ i.e., that there is a collar neighborhood of $N$ which is isometric to $[0,1) \times M$ for some wedge metric on $M,$ and we may restrict attention to this neighborhood. Rosenberg and Weinberger computed the decomposition of the signature operator on a product of closed manifolds in \cite[Lemma 6]{RosWei:SO} but, as the computation only involves the Clifford algebras and gradings, it applies to $[0,1) \times M,$ and shows that, in K-homology, 
\begin{equation*}
	[D_{[0,1)\times \pa W}^{\mathrm{sign}}] = k \; [D_{[0,1)}^{\mathrm{sign}}] \otimes [D_{\pa W}^{\mathrm{sign}}]
	\quad \text{ with } \quad
	k = \begin{cases} 1 & \text{ if $\dim W$ is even}\\  2 & \text{ if $\dim W$ is odd}\end{cases}
\end{equation*}
Finally we know from the proof of Proposition \ref{prop:DiracBdyDirac} that the boundary map in K-homology for a product is the inverse of the Kasparov
product by the Dirac operator on the interval, so the result follows.

(2) 
Let $n = \dim X.$
Suppose that $f:X\lra \mathscr Y$ is Witt nullbordant.
Thus, there is a compact smoothly stratified oriented Witt space $W$
with boundary $\partial W =X$ and a continuous map $F: W\to \mathscr Y$ 
that extends $f$. Let $i: X \hookrightarrow W$ denote the
inclusion of the boundary and
consider the commutative diagram
\begin{equation*}
	\xymatrix{
	\K_{n+1} (W,X) \ar[r]^-{\partial_*} 
	  & \K_n^{\an} (X) \ar[rd]_{f_*} \ar[r]^{i_*} &
	    \K_n^{\an} (W) \ar[d]^{F_*} \\
	 & & \K_n^{\an} (\mathscr Y),   
	} 
\end{equation*}
whose top row is exact.
If $n$ is even, then from (1) we know $\partial [D_{W,X}^{\mathrm{sign}}] = [D_X^{\mathrm{sign}}]$ and thus
\begin{equation*}
	 f_* [D_X^{\mathrm{sign}}]
	 = F_* i_* [D_X^{\mathrm{sign}}]
	 = F_* i_* \partial [D_{W,X}^{\mathrm{sign}}]
	 = 0.
\end{equation*}
If $n$ is odd, then (1) asserts that $\partial [D_{W,X}^{\mathrm{sign}}] = 2[D_X^{\mathrm{sign}}]$. 
Therefore, $f_* [D_X^{\mathrm{sign}}]$ is either zero or has order $2$:
\begin{equation*}
	 2 f_* [D_X^{\mathrm{sign}}] = F_* i_* (2[D_X^{\mathrm{sign}}]) = F_* i_* \partial [D_{W,X}^{\mathrm{sign}}]= 0
 	\in \K_n^{\an} (\mathscr Y).
\end{equation*}
In either case, $f_* [D_X^{\mathrm{sign}}]$ vanishes in the localization $\K_n^{\an} (\mathscr Y)[\smlhf]$. However
as in \cite[Theorem 2]{RosWei:SO} it is possible to use arguments from \cite[\S 4]{PedRoeWei:HIBCASMOOC} to avoid having to invert $2.$

Specifically, Pedersen, Roe, and Weinberger point out that on a smooth manifold with boundary equipped with a unit vector field $\nu$ which is normal to the boundary (but defined on all of the manifold) the operator of right Clifford multiplication by $i\nu$ is an involution and the restriction of the signature operator
to either of its eigenspaces produces an operator whose boundary is the signature operator of the boundary on the nose. The argument above then establishes that $f_* [D_X^{\mathrm{sign}}]=0 \in \K_n^{\an} (\mathscr Y).$
A smooth manifold with boundary admits such a vector field if and only if its Euler characteristic vanishes. Rosenberg and Weinberger explain how to reduce to this case by making modifications (either a connected sum or punching out a small disk) away from the boundary.

To apply these arguments to our situation first note that if two bundles are isomorphic and one has a nowhere vanishing section then so does the other.
If $N$ is the resolution of $W,$ $N = \mathrm{res}(W),$ then we may `round out the corners' and homotope $N$ to a smooth manifold with boundary $N'.$
If $N$ has vanishing Euler characteristic then $N'$ has a smooth nowhere vanishing vector field normal to the boundary. Since the tangent bundle of $N$ is
isomorphic to the pull-back of the tangent bundle of $N',$ $N$ also has a nowhere vanishing vector field which we can assume (by how we round out the corners) is normal to the boundary hypersurfaces of $N$ that correspond to the boundary of $W.$ Next, since the wedge tangent bundle of $N$ is isomorphic (albeit not naturally isomorphic) to the ordinary tangent bundle, we see that there is a nowhere vanishing wedge vector field on $N,$ normal to the lift of the boundary of $W,$ which we can normalize to have everywhere unit length. Applying the Pedersen, Roe, and Weinberger argument as done in \cite[Theorem 2]{RosWei:SO} then establishes the result. 
\end{proof}

Recall from \S\ref{sec:WittSpaces} that $\Omega^{\Witt}_n$ denotes the bordism 
theory based on oriented PL Witt pseudomanifolds and $\Omega^{\Witt,\infty}_n$ denotes the corresponding theory based on oriented smoothly stratified Witt pseudomanifolds.
The proposition implies that the assignment
\begin{equation} \label{equ.wittbordtokprecursor}
[f:W^n \to X]\ \mapsto f_* [D_W]  
\end{equation}
constitutes a well defined homomorphism
\[ \Omega^{\Witt,\infty}_n (X) \longrightarrow
    \K^\an_n (X).  \]
Since bordism of PL stratified Witt spaces is 
isomorphic to bordism of smoothly stratified Witt spaces
(Proposition \ref{prop.bordofsmoothlystratandPLwitt}), we may also 
regard this as a homomorphism
\[ \Omega^{\Witt}_n (X) \longrightarrow
    \K^\an_n (X).  \]
This map is a natural transformation of homotopy functors, but
not of homology theories (Rosenberg-Weinberger 
\cite[Remark 4]{RosWei:SO}).
The modified map
\begin{equation}\label{equ.thetafromwittbordhlftokanhlf}
\begin{gathered}
	\theta: \Omega^{\Witt,\infty}_n (X)[\smlhf] \longrightarrow	\K^\an_n (X)[\smlhf],  \\
	\theta ([f:W^n \to X]\otimes_\intg r) := r 2^{-\lfloor n/2 \rfloor} f_* [D_W^{\mathrm{sign}}] = rf_*(\mathrm{sign}_K(W)), 
\end{gathered}
\end{equation}
\emph{is} a natural transformation of homology theories, since
the powers of $2$ correctly absorb the behavior of the 
connecting homomorphism as described in Proposition \ref{prop:Bordism} (\ref{lem.boundaryofsignopclass}).

%%%%%%%%%%%%%%%%%%%%%%%%%%%%%%
\subsection{Witt bordism: smooth versus PL}\label{subsect:smooth-versus-PL}
%%%%%%%%%%%%%%%%%%%%%%%%%%%%%%
In several places, particularly in Section \ref{sec.compatanalytictoporient}
where we consider a natural transformation from Witt bordism to
analytic $\K$-homology via the signature operator, 
we need to identify Siegel's bordism theory of
PL Witt spaces with bordism of smoothly stratified Witt
spaces. Let us discuss this identification in more detail.
Recall that $\Omega^\Witt_* (-)$ denotes the bordism theory of PL Witt spaces
as introduced in \cite{Sie:WSGCTKOP}. This functor is well-known to be
a homology theory.
Let $\Omega^{\Witt,\infty}_* (-)$ denote the bordism theory based on
cycles given by continuous maps on closed 
smoothly stratified Witt spaces.
This functor is a homology theory as well, since transversality is
available for smoothly stratified spaces.
\begin{prop} \label{prop.bordofsmoothlystratandPLwitt}
Triangulation induces a natural equivalence of homology theories
\[ t: \Omega^{\Witt,\infty}_* (-) \longrightarrow
       \Omega^\Witt_* (-). \]
\end{prop}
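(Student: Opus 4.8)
\textbf{Proof plan for Proposition \ref{prop.bordofsmoothlystratandPLwitt}.}
The plan is to construct the natural transformation $t$ by triangulating smoothly stratified Witt pseudomanifolds compatibly with their stratifications, and then to check that $t$ is an isomorphism of homology theories by comparing coefficients and invoking a comparison theorem for homology theories. First I would establish the existence of the transformation: given a smoothly stratified Witt pseudomanifold $W$ and a continuous map $f:W\to X$, one triangulates $W$ so that each stratum is a subcomplex. This is the classical fact that Thom-Mather (equivalently Whitney) stratified spaces admit triangulations subordinate to the stratification (Goresky \cite{Gor:GCHSO}, cf. \cite{Teu:APSB}), which applies to our spaces by the equivalence established in \S\ref{subsec:SASSThomMather}. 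One must check that the resulting PL pseudomanifold is PL-Witt: the link of a stratum in the triangulated space is PL-homeomorphic to the link in the smoothly stratified sense (up to the usual ambiguity, but the rational intersection homology of the link is a well-defined invariant), so the vanishing condition $\mathrm{IH}^{\overline m}_{\ell/2}(Z;\bb Q)=0$ transfers. Triangulations of a space and of a bordism between two spaces can be chosen compatibly (triangulate the bordism first, restrict to the boundary), so $t$ is well defined on bordism classes; functoriality in $X$ is immediate since the map $f$ is untouched.

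Next I would verify that $t$ is a natural transformation of \emph{homology theories}, not merely of homotopy functors. Both $\Omega^{\Witt,\infty}_*(-)$ and $\Omega^{\Witt}_*(-)$ satisfy the Eilenberg-Steenrod axioms on the category of (say) finite CW pairs: excision and the long exact sequence of a pair follow in each case from transversality --- smooth stratified transversality in the first case, PL transversality (Siegel) in the second --- and triangulation respects the relevant geometric operations (restriction to a subcomplex, the boundary/connecting map via a collar). The only point requiring care is that the connecting homomorphism $\partial$ commutes with $t$: a relative smoothly stratified Witt cycle $(W,\partial W)\to (X,A)$ triangulates compatibly with its boundary, and $t(\partial[W]) = \partial(t[W])$ because the PL connecting map is also given by "take the part of the boundary mapping into $A$", which triangulation preserves.

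Finally, having a natural transformation of homology theories on finite CW complexes, it suffices by the standard comparison theorem (a natural transformation of homology theories inducing an isomorphism on coefficients is an isomorphism) to check that $t$ is an isomorphism on a point. On a point, $\Omega^{\Witt}_*(\mathrm{pt})$ is computed by Siegel as recalled in \S\ref{sec:WittSpaces} ($\bb Z$ in degree $0$, $\mathrm{Witt}(\bb Q)$ in positive degrees $\equiv 0 \bmod 4$, zero otherwise), and one must identify $\Omega^{\Witt,\infty}_*(\mathrm{pt})$ with the same groups under $t$. Surjectivity of $t$ on coefficients is clear since every PL Witt pseudomanifold can be smoothed to a smoothly stratified one (or: Siegel's generators --- complex projective spaces and the basic $4k$-dimensional Witt generator --- are visibly smoothly stratified). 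Injectivity amounts to: if a smoothly stratified Witt cycle is PL-Witt-nullbordant, it is smoothly-stratified-Witt-nullbordant; here one takes the PL nullbordism, triangulates it as a PL Witt space, and then invokes the reverse direction --- PL Witt spaces are smoothly stratifiable --- to regard it as a smoothly stratified Witt nullbordism. \textbf{The main obstacle} I anticipate is precisely this last smoothing step and, relatedly, the bookkeeping that smoothing a PL Witt space (and a PL Witt bordism) can be done \emph{preserving the Witt condition and compatibly with boundaries}: one needs that the link data, hence the rational middle-dimensional intersection homology of links, is unchanged under PL-to-smooth passage, and that the whole construction $t$ and its inverse are mutually inverse on bordism classes. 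This is where I would spend the most effort, likely citing the equivalence of the four descriptions of smoothly stratified spaces from \S\ref{subsec:SASSThomMather} together with uniqueness of PL structures in the relevant range, rather than re-proving smoothing theory from scratch.
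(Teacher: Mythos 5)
Your overall strategy---construct $t$ by Goresky-triangulation, verify it is a transformation of homology theories, and reduce to the coefficient groups---matches the paper's. The paper is terser on the homology-theory axioms (it simply notes both theories are homology theories because transversality is available in each category) and it credits the coefficient computation to Zentarra, but the architecture is the same. There are two points of divergence; the second is a genuine gap.

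First, well-definedness of $t$. You observe that one can triangulate a bordism and restrict to its boundary, but you do not squarely address the ambiguity in the choice of triangulation of the cycle $W$ itself. Goresky-triangulations of a fixed Thom-Mather space need not be PL-isomorphic. The paper handles this by invoking the fact (Goresky-MacPherson) that any two Goresky-triangulations are concordant, so a compatible triangulation of $W\times[0,1]$ yields a PL Witt bordism between the two resulting PL pseudomanifolds. This is closely related to your ``triangulate the bordism first'' remark and can be folded into it, but it needs to be said explicitly; it is the reason $t$ is well defined on underlying objects, not just on bordism classes.

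Second, and this is the substantive issue: your injectivity argument over a point is ``take the PL Witt nullbordism and smooth it,'' citing \S\ref{subsec:SASSThomMather} together with ``uniqueness of PL structures in the relevant range.'' Neither reference does the work. The equivalences of \S\ref{subsec:SASSThomMather} are among Thom-Mather spaces, Whitney stratified sets, manifolds with fibered corners, and smooth atlas stratified spaces; they say nothing about when an abstract PL pseudomanifold admits a smooth Thom-Mather structure, let alone one compatible with a prescribed smooth structure on its boundary. And ``uniqueness of PL structures'' is a smoothing-theory slogan for manifolds, not for stratified pseudomanifolds; it would not give existence here even if it applied. You correctly identify this as the main obstacle, but the proposed fixes do not close it. The paper instead proves injectivity by verifying that Siegel's singular surgery moves---the ones used to show that the class $w(-)\in\Witt(\rat)$ determines the bordism class---can be performed inside the smoothly stratified category, exploiting that $w(-)$ is a topological invariant and that a triangulation which is a smooth embedding on open simplices has a skeletal filtration which is itself a smooth Thom-Mather stratification. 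Note that this last observation concerns re-triangulating an object that is already smoothly stratified; it cannot be repurposed to smooth an arbitrary PL Witt bordism. To complete your argument you would essentially have to re-derive this surgery step.
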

\begin{proof}
Let $\mathscr Y$ be a CW complex and
let $f: X^n \to \mathscr Y$ be a continuous map on a closed smoothly
stratified Witt space $X$. By Goresky \cite{Gor:TSO},
Thom-Mather pseudomanifolds can be triangulated compatibly with
the stratification. The triangulation can be taken to be a smooth
embedding on the interiors of simplices. Choose a Goresky-triangulation
of $X$. This choice yields a PL pseudomanifold $\overline{X}$ with the same
underlying topological space as $X$. It is not clear that different
choices of Goresky-triangulations yield PL-isomorphic PL pseudomanifolds.
Nevertheless, we claim that $(f: X\to \mathscr Y) \mapsto (f:\overline{X} \to \mathscr Y)$ 
induces a well-defined
homomorphism $t: \Omega^{\Witt,\infty}_* (\mathscr Y) \to \Omega^\Witt_* (\mathscr Y)$.
The reason is indicated by Goresky and MacPherson in Section 5.3 of
\cite{GorMac:IHT}: Goresky's techniques imply that any two
Goresky-triangulations of $X$ are concordant.
This yields a triangulation of the cylinder
$X\times [0,1]$, which is then a PL bordism between the two triangulations
of $X$. The map $f$ extends continuously over the cylinder by
$f\times \id_{[0,1]}$.
If $F:W \to \mathscr Y$ is a smoothly stratified Witt nullbordism for
$f:X=\partial W \to \mathscr Y$, then we may Goresky-triangulate $W$ to obtain
a PL Witt nullbordism $F:\overline{W} \to \mathscr Y$ for $f:\overline{X} \to \mathscr Y$,
where $\overline{X}$ is chosen to be the restriction of the Goresky-triangulation
of $\overline{W}$ to the boundary.
Hence we obtain a well-defined map $t$ and this map is a homomorphism.
It is natural in $\mathscr Y$, since maps induced by $g:\mathscr Y\to \mathscr Y'$ are just given
by composing $f$ and $g$.
For $\mathscr Y$ a point, the map $t$ was already considered by Zentarra in \cite{Zen:BTPLSWP},
who proved that it is an isomorphism over a point.
(The map is surjective, since Siegel's generators of
$\Omega^\Witt_{4k} (\pt)$ are given by Whitehead-triangulating certain compact smooth
manifolds and then coning off the boundary. The smooth manifold with cone
attached to the boundary then possesses a smooth stratification such that a
Goresky-triangulation may be taken to be the Whitehead triangulation, coned off.
The map is injective as is seen by carefully verifying that Siegel's
singular surgeries can be carried out within the class of smoothly stratified spaces.
In addition to the fact that Siegel's intersection form invariant
$w(-)\in \mathrm{Witt}(\rat)$ is a \emph{topological} invariant,
one exploits the observation that if the triangulation restricts to a 
smooth embedding on the interiors of simplices, then the skeletal
filtration determined by the triangulation is indeed a smooth Thom-Mather
stratification.)
Finally, $t$ is an equivalence, for $t$ is a natural transformation of 
homology theories which is an isomorphism on coefficient groups. 
\end{proof}

%%%%%%%%%%%%%%%%%%%%%%%%%%%%%%
\section{Analytic transfer of the signature orientation along a fibration} \label{sec:Fibrations}
%%%%%%%%%%%%%%%%%%%%%%%%%%%%%%

The goal of this section is to describe how to assign to every fiber bundle of smoothly stratified spaces
\begin{equation*}
	W - X \xlra{p} Y,
\end{equation*}
in which the fibers are oriented Witt pseudomanifolds, a class in the  KK-group $\mathrm{KK}(C_0(X), C_0(Y)).$
Indeed from \cite{AlbLeiMazPia:SPWS, AlbGel:IFFDTOP} the family of vertical signature operators associated to any choice of vertical wedge metric is a family
of Fredholm operators and we will see that it defines the desired class. To do this, in order to apply the theory developed in \cite{AlbGel:IFFDTOP}, we discuss the ``grid resolution" of this fiber bundle to a fiber bundle
of manifolds with corners with iterated fibration structures,
\begin{equation*}
	\mathrm{res}(W) - \mathrm{res}_{\mathrm{grid}}(X) \lra \mathrm{res}(Y).
\end{equation*}
%

%%%%%%%%%%%%%%%%%%%%%%%%%%%%%%
\subsection{Resolutions of a stratified fiber bundle} \label{sec:ResStratFibBdles}
%%%%%%%%%%%%%%%%%%%%%%%%%%%%%%

Starting with a fiber bundle of smoothly stratified spaces
\begin{equation*}
	W - X \xlra{p} Y,
\end{equation*}
we will discuss the partial resolutions that result from resolving 
the base or the fibers and the grid resolution that results from resolving both.
The grid resolution continues to participate in a fiber bundle. One could go further and fully resolve $X$ but,
unless one of $W$ or $Y$ is smooth, at the cost of the fiber bundle structure.

In the simplest interesting situation when $X$ is the product of two stratified spaces of positive depth, $X = W \times Y,$  endowed with the projection
onto $Y,$ $p:X \lra Y,$ the partial resolutions of base and fiber are, respectively
\begin{equation*}
	\mathrm{res}_{\mathrm{base}}(X,p) = W \times \mathrm{res}(Y), \quad
	\mathrm{res}_{\mathrm{fib}}(X,p) = \mathrm{res}(W) \times Y,
\end{equation*}
and the grid resolution of $X$ is 
\begin{equation*}
	\mathrm{res}_{\mathrm{grid}}(X,p) 
	= \mathrm{res}(W) \times \mathrm{res}(Y).
\end{equation*}
This is a smooth manifold with corners but, as neither $Y$ nor $W$ are smooth, this is not equal to the resolution of $X$ and 
it does not have a natural iterated fibration structure. Indeed in \cite{KotRoc:PMWFC} Kottke and Rochon have shown that 
$\mathrm{res}(X)$ can be obtained from $\mathrm{res}(W) \times \mathrm{res}(Y)$ by a sequence of real blow-ups of 
appropriate submanifolds of the boundary producing what they call the `ordered product' $\wt{\times}$ so that
\begin{equation*}
	\mathrm{res}(X)
	= \mathrm{res}(W) \wt{\times} \mathrm{res}(Y).
\end{equation*}
It is worth noting that the lifted projection $\mathrm{res}(X) \lra \mathrm{res}(Y)$ is no longer a fiber bundle map 
(it is a ``$b$-fibration"), as this is one of the reasons why we have use for the grid resolution.\\

While the grid resolution is a smooth manifold with corners, the partial resolutions of base and fiber above are the product of a smooth manifold with corners and a smoothly stratified space. For the purpose of regularity we consider such a product as a smoothly stratified space with the stratification obtained by endowing the manifold with corners with its natural stratification (see Remark \ref{rmk:StratMfdCorners}).

$ $\\
{\bf Prelude: When the base and fiber each have depth one.}
Since the constructions below are a bit intricate, we thought it would be useful to first describe the simplest interesting situation. This prelude can be skipped.

Suppose that $W \fib X \xlra p Y$ is a fiber bundle of smoothly stratified spaces where $W$ has singular part $W_0$ and regular part $W_1$ and $Y$ has singular part $Y_0$ and regular part $Y_1$ and $X$ has poset stratification given by the product poset
\begin{equation*}
	\xymatrix@=1em{
	& (0,1) \ar[rd] & \\
	(0,0) \ar[ru] \ar[rd] & & (1,1) \\
	& (1,0) \ar[ru] & }
\end{equation*}
where we have chosen that the first entry corresponds to the fiber and the second entry corresponds to the base.
The regular part of $X$ is $X_{\mathrm{reg}}=X_{(1,1)},$ the singular part of $X$ is $X_{\mathrm{sing}} = X_{(0,0)} \cup X_{(0,1)} \cup X_{(1,0)}.$ The singular stata $X_{(0,1)}$ and $X_{(1,0)}$ are disjoint but their closures are both obtained by the inclusion of $X_{(0,0)}.$ Note that $p^{-1}(Y_0) = X_{(0,0)} \cup X_{(1,0)}$ and $p^{-1}(Y_1) = X_{(0,1)} \cup X_{(1,1)}.$

{\em Resolving along the base:}
The resolution of the base $\mathrm{res}(Y)$ comes with a blow-down map $\beta_Y: \mathrm{res}(Y) \lra Y$ and we obtain the partial resolution of $X$ along the base by pulling-back  $p$ along $\beta_Y,$ i.e.,
\begin{equation*}
	\xymatrix{
	\mathrm{res}_{\mathrm{base}}(X,p) \ar[r] \ar[d]^-{\overline p} \pullbackcorner & X \ar[d]^-p \\
	\mathrm{res}(Y) \ar[r]^-{\beta_Y} & Y. }
\end{equation*}
The induced map $\bar p$ participates in a fiber bundle
\begin{equation*}
	W \fib \mathrm{res}_{\mathrm{base}}(X,p) \xlra{\overline p} \mathrm{res}(Y)
\end{equation*}
(and for the purposes of regularity we note that if we regard $\mathrm{res}(Y)$ with its natural stratification as a manifold with boundary, i.e., the stratification with strata $\pa\mathrm{res}(Y) = \pa_0\mathrm{res}(Y)$ and $\mathrm{res}(Y)^{\circ},$ then $\bar p$ is a smooth fiber bundle of smoothly stratified spaces).
The new base $\mathrm{res}(Y)$ is a manifold with fibered boundary with boundary fiber bundle
\begin{equation*}
	F_0\mathrm{res}(Y) \fib \pa_0\mathrm{res}(Y) \xlra{\phi_0} B_0\mathrm{res}(Y) = Y_0.
\end{equation*}
We denote the pre-image of the boundary of $\mathrm{res}(Y)$ under $\bar p$ by $\pa_0^h\mathrm{res}_{\mathrm{base}}(X,p)$ with the $h$ exponent denoting `horizontal', it fits into the restriction of the pull-back diagram above,
\begin{equation*}
	\xymatrix{
	\pa_0^h\mathrm{res}_{\mathrm{base}}(X,p) \ar[r] \ar[d]^-{\overline p} \pullbackcorner & X_{(0,0)} \cup X_{(1,0)} \ar[d]^-p \\
	\pa_0\mathrm{res}(Y) \ar[r]^-{\phi_0} & Y_0 }
\end{equation*}
since $\phi_0 = \beta_Y|_{\pa_0\mathrm{res}(Y)}.$ The unlabeled arrow in this diagram is a fiber bundle map, just as the labeled arrows are, and we denote it by
\begin{equation*}
	F_0^h \mathrm{res}_{\mathrm{base}}(X,p) \fib
	\pa_0^h \mathrm{res}_{\mathrm{base}}(X,p) \xlra{\phi_0^h}
	X_{(0,0)} \cup X_{(1,0)} =: B_0^h \mathrm{res}_{\mathrm{base}}(X,p).
\end{equation*}
Note that here $F_0^h \mathrm{res}_{\mathrm{base}}(X,p) = F_0\mathrm{res}(Y)$ is a smooth manifold and $B_0^h \mathrm{res}_{\mathrm{base}}(X,p)$ is a smoothly stratified space. We refer to the fiber bundle $\phi_0^h$ and its compatibility with the fiber bundle $\phi_0$ as a {\bf horizontal iterated fibration structure} on $\mathrm{res}_{\mathrm{base}}(X,p).$\\

{\em Resolving along the fibers:}
Returning to our fiber bundle $W \fib X \xlra p Y,$ we can use the fact that $p$ is a smooth map of smoothly stratified spaces to see that it induces a fiber bundle
\begin{equation*}
	W_0 \fib X_{(0,0)}\cup X_{(0,1)} \lra Y.
\end{equation*}
We can use this to simultaneously resolve all of the fibers of $p$ to obtain the partial resolution of $X$ with respect to the fibers, $\mathrm{res}_{\mathrm{fib}}(X,p).$ The resolution maintains the fiber bundle structure and we denote the result by
\begin{equation*}
	\mathrm{res}(W) \fib \mathrm{res}_{\mathrm{fib}}(X,p) \xlra{\widehat p} Y.
\end{equation*}
This partial resolution replaced $X_{(0,0)}\cup X_{(0,1)}$ with a new boundary face which we denote $\pa_0^v\mathrm{res}_{\mathrm{fib}}X,$ with the $v$ exponent denoting `vertical'.
The restriction of $\widehat p$ to this face is a fiber bundle over $Y$ with fiber the boundary of the resolution of $W,$
\begin{equation*}
	\pa_0\mathrm{res}(W) \fib \pa_0^v\mathrm{res}_{\mathrm{fib}}(X,p) \xlra{\widehat p} Y.
\end{equation*}
The blow-down maps of the fibers, $\beta_W: \mathrm{res}(W) \lra W,$ fit together into $\beta_{\mathrm{fib}}: \mathrm{res}_{\mathrm{fib}}(X,p)\lra X$ so that
\begin{equation*}
	\xymatrix{
	\mathrm{res}(W) \ar@{-}[r] \ar[dd]_-{\beta_W} & \mathrm{res}_{\mathrm{fib}}(X,p) \ar[rd]^-{\widehat p} \ar[dd]_-{\beta_{\mathrm{fib}}} & \\
	& & Y \\
	W \ar@{-}[r] & X \ar[ru]^-{ p} & }
\end{equation*}
and if we restrict this diagram to $\pa_0^v\mathrm{res}_{\mathrm{fib}}(X,p),$ we get
\begin{equation*}
	\xymatrix{
	\pa_0\mathrm{res}(W) \ar[dd]_-{\phi_0= \beta_W|_{\pa_0\mathrm{res}(W) }} \ar@{-}[r] 
		& \pa_0^v \mathrm{res}_{\mathrm{fib}}(X,p) \ar[dd]_-{\phi_0^v= \beta_{\mathrm{fib}}|_{\pa_0^v\mathrm{res}_{\mathrm{fib}}(X,p) }}
		\ar[rd]^-{\hat p} & \\
	& & Y \\
	W_0 \ar@{-}[r] 
		& 
		X_{(0,0)}\cup X_{(0,1)}
		\ar[ru]^-{p} & }
\end{equation*}
This shows how the boundary fiber bundles of the fibers of $\widehat p,$ 
\begin{equation*}
	F_0\mathrm{res}(W) \fib \pa_0\mathrm{res}(W) \xlra{\phi_0} B_0\mathrm{res}(W) = W_0,
\end{equation*}
fit together into 
\begin{equation*}
	F_0^v\mathrm{res}_{\mathrm{fib}}(X,p) \fib \pa_0^v \mathrm{res}_{\mathrm{fib}}(X,p) \xlra{\phi_0^v} B_0^v\mathrm{res}_{\mathrm{fib}}(X,p) 
	= X_{(0,0)}\cup X_{(0,1)}.
\end{equation*}
We refer to the fiber bundle $\phi_0^v$ and its compatibility with the fiber bundles $\phi_0$ and $\widehat p$ as a {\bf vertical iterated fibration structure} on $\mathrm{res}_{\mathrm{fib}}(X,p).$\\
{\em Resolving both the base and the fibers:} If we resolve the fiber bundle $W \fib X \xlra p Y$ along the base we again have a stratified fiber bundle whose fiber is $W$ and so we may proceed as above and resolve along the fibers. We refer to the result as the grid resolution of $X,$
\begin{equation*}
	\mathrm{res}_{\mathrm{grid}}(X) = \mathrm{res}_{\mathrm{fib}}(\mathrm{res}_{\mathrm{base}}(X,p),\overline p).
\end{equation*}
The result is a smooth manifold with corners that participates in a smooth fiber bundle
\begin{equation*}
	\mathrm{res}(W) \fib \mathrm{res}_{\mathrm{grid}}(X) \xlra{\wt p} \mathrm{res}(Y)
\end{equation*}
where the base and the fiber are smooth manifolds with fibered boundary. The only displeasing aspect of $\mathrm{res}_{\mathrm{grid}}(X)$ is that it does not have a natural structure of manifold with fibered corners. The problem is that the two boundary hypersurfaces of $\mathrm{res}_{\mathrm{grid}}(X)$ correspond to the labels $(0,1)$ and $(1,0)$ and, since these are non-comparable elements of the poset, their corresponding boundary hypersurfaces should not intersect. This is easily fixed, we need only blow-up their intersection to obtain the resolution of $X,$
\begin{equation*}
	\mathrm{res}(X) = [\mathrm{res}_{\mathrm{grid}}(X); \pa_0^v\mathrm{res}_{\mathrm{grid}}(X) \cap \pa_0^h\mathrm{res}_{\mathrm{grid}}(X) ].
\end{equation*}
As this blow-up does not treat all of the fibers of $\wt p$ equally, the fiber bundle structure of $p$ does not survive beyond the grid resolution.\\
$ $\\

Returning to the general case, let $W \fib X \xlra{p} Y$ be a fiber bundle of smoothly stratified spaces, so that the poset of $X$ satisfies
\begin{equation*}
	S(X) = S(W) \times S(Y)
\end{equation*}
(with the obvious notation). By resolving the strata corresponding to $(\max S(W)) \times S(Y)$ of $X$ we will obtain the partial base resolution 
of $X,$ which will fiber over the resolution of $Y$ with typical fiber $W,$ and similarly by resolving the strata corresponding to 
$S(W) \times (\max S(Y))$ of $X$ we will obtain the partial fiber resolution of $X,$ which will fiber over $Y$ with typical fiber the resolution of $W.$
In general neither of these partial resolutions will result in a smooth manifold with corners. Once we perform both resolutions (in either order) we will 
obtain the grid resolution of $X,$ $\mathrm{res}_{\mathrm{grid}}(X),$ which will be a smooth manifold with corners but it will not have a natural iterated
fibration structure. Instead it will have two structures, one inherited from the base $Y$ and the other inherited from the fiber $W.$ We will start by defining
these structures, then we will construct the grid resolution, and then we will show that it has the advertised properties.

\begin{defn} $ $ \label{def:HorVertIFS}

\begin{enumerate}
\item Let $W \fib  X \xlra{\psi} N$ be a fiber bundle of smoothly stratified spaces in which the base is a smooth manifold with corners.
A {\bf horizontal iterated fibration structure} consists of an iterated fibration structure on the base $N,$ associated to the boundary 
stratification $N \lra S(N),$ such that:
\begin{itemize}
	\item For all $\alpha\in S(N),$ with corresponding fiber bundle $F_\alpha N \fib \pa_\alpha N \xlra {\phi_\alpha } B_\alpha N,$ 
	$\pa_\alpha ^hX:= \psi^{-1}(\pa_\alpha  N)$ 
	is either a disjoint union of connected codimension one strata of $X$ or the regular part of $X,$
	and participates in a fiber bundle of smoothly stratified spaces
	\begin{equation*}
		F_\alpha^hX \fib \pa_\alpha^hX \xlra{\phi_\alpha^h} B_a^hX.
	\end{equation*}
	If $\alpha ,\gamma \in S(N)$ are such that $\pa_\alpha^hX \cap \pa_\gamma^hX\neq \emptyset$ and $\alpha<\gamma,$ 
	there is a commutative diagram of fiber bundles of smoothly stratified spaces
	\begin{equation*}
		\xymatrix{
		\pa_\alpha^h X \cap \pa_\gamma^h X \ar[rr]^-{\phi_\gamma^h} \ar[rd]^-{\phi_\alpha^h} & & 
			\pa_\gamma^hB_\alpha^hX \ar[ld]_-{\phi_{\gamma\alpha}^h} 
			\\ & B_\alpha^hX & }
	\end{equation*}
	where $\pa_\gamma^hB_\alpha^hX$ is a disjoint union of connected codimension one strata of $B_\alpha^hX.$
	\item For all $\alpha \in S(N),$ the restriction of $\psi$ to a map $\pa_\alpha^h X  \lra \pa_\alpha N$ is the pull-back of a fiber bundle 
	$B_\alpha^hX \xlra{\psi_\alpha^h} B_\alpha N,$ i.e., 
	\begin{equation}\label{eq:defHorPullBackDiag}
		\xymatrix{
		\pa_\alpha^h X \ar[r]^-{\phi_\alpha^h} \ar[d]_-{\psi} \pullbackcorner & B_\alpha^hX \ar[d]^-{\psi_\alpha^h} \\ 
		\pa_\alpha N \ar[r]^-{\phi_\alpha} & B_\alpha N }
	\end{equation}
	is a pull-back diagram.
\end{itemize}

\item Let $L \fib X \xlra{\psi} Y$ be a fiber bundle of smoothly stratified spaces in which the fiber is a smooth manifold with corners. 
A {\bf vertical iterated fibration structure}\footnote{When the base is smooth, this is referred to as a {\bf locally trivial family of manifolds with corners and iterated fibration structures} in \cite[Definition 1.3]{AlbGel:IFFDTOP}.} consists of an iterated fibration structure on the fiber $L,$ associated to the boundary
stratification $L \lra S(L),$ such that:
\begin{itemize}
	\item For all $a\in S(L),$ with corresponding fiber bundle $F_aL \fib \pa_a L \xlra {\phi_a} B_aL,$ there is $\pa_a^vX \subseteq X,$ either a disjoint
	union of connected codimension one strata of $X$ or the regular part of $X,$ such that $\psi$ restricted to $\pa_a^vX$ fibers over $Y$ with fiber 
	$\pa_aL.$ Each $\pa_a^vX$ participates in a fiber bundle of smoothly stratified spaces
	\begin{equation*}
		F_a^vX \fib \pa_a^vX \xlra{\phi_a^v} B_a^vX.
	\end{equation*}
	If $a,b \in S(L)$ are such that $\pa_a^vX \cap \pa_b^vX\neq \emptyset$ and $a<b,$ there is a commutative diagram of fiber bundles of smoothly
	stratified spaces
	\begin{equation*}
		\xymatrix{
		\pa_a^v X \cap \pa_b^v X \ar[rr]^-{\phi_b^v} \ar[rd]^-{\phi_a^v} & & \pa_b^vB_a^vX \ar[ld]_-{\phi_{ba}^v} \\ & B_a^vX & }
	\end{equation*}
	where $\pa_b^vB_a^vX$ is a disjoint union of connected codimension one strata of $B_a^vX.$
	\item For all $a\in S(L),$ there is a fiber bundle map $B_aL \fib B_a^vX \xlra{\psi_a^v} Y$ participating in the commutative diagram
	\begin{equation*}
		\xymatrix{
		\pa_a^vX \ar@{^(->}[r] \ar[d]_-{\phi^v_a} & X \ar[dd]^-{\psi}\\
		B_a^vX \ar[dr]_-{\psi_a^v} & \\
		& Y }
	\end{equation*}
\end{itemize}
$ $
\item
We say that a fiber bundle carries a {\bf grid iterated fibration structure} if it is equipped with both a horizontal and a vertical iterated fibration structure.
\end{enumerate}

\end{defn}

Starting with a fiber bundle $W \fib X \xlra p Y$ of smoothly stratified spaces, we obtain the partial resolution corresponding to the base by 
considering the pull-back of $p$ along the blow-down map from the resolution of $Y,$
\begin{equation}\label{eq:ResBaseDef}
	\mathrm{res}_{\mathrm{base}}(X,p):=\beta_Y^*X, 
	\quad \text{ where } \quad
	\xymatrix{ 
	\beta_Y^*X  \ar[d]_-{\overline p} \ar[r] \pullbackcorner & X \ar[d]^-p \\
	\mathrm{res}(Y) \ar[r]^-{\beta_Y} & Y. }
\end{equation}
The partial resolution corresponding to the fiber is obtained by resolving the strata of $X$ corresponding to $S(W) \times (\max S(Y)).$ As
these strata are transverse to the fibers of $p,$ the composition of the blow-down map with $p$ is again a fiber bundle map, $\widehat p,$
\begin{equation*}
	\xymatrix{
	\mathrm{res}_{\mathrm{fib}}(X,p) \ar[rr]^-{\beta_{\mathrm{fib}}} \ar[rd]_-{\widehat p} & & X \ar[ld]^-p \\
	& Y &}
\end{equation*}
with typical fiber $\mathrm{res}(W).$ 
Indeed, given an open cover of $Y,$ $\{ \cal U_\ell \}$ that is trivializing for $p$ and the corresponding transition functions 
$\phi_{\ell,\ell'}: \cal U_{\ell}\cap \cal U_{\ell'} \lra \mathrm{Diff}(W)$ (valued in the stratified diffeomorphisms of $W$), we can lift these to
$\wt \phi_{\ell,\ell'}: \cal U_{\ell}\cap \cal U_{\ell'} \lra \mathrm{Diff}(\mathrm{res}(W))$ (valued in the fibered diffeomorphisms of $\mathrm{res}(W)$) 
and obtain transition functions for $\widehat p.$

Finally the grid resolution of $W \fib X \xlra{p} Y$ is obtained by performing both of these partial resolutions,
\begin{equation*}
	\mathrm{res}_{\mathrm{grid}}(X,p) = \mathrm{res}_{\mathrm{fib}}(\mathrm{res}_{\mathrm{base}}(X,p),\bar p)
\end{equation*}
and so fits into the diagram 
\begin{equation}\label{eq:DefnResolvp}
	\xymatrix{ 
	\mathrm{res}_{\mathrm{grid}}(X,p) \ar[r]^-{\bar\beta_X} \ar@{-->}[rd]_-{\wt p} &
	\beta_Y^*X \ar[d]_-{\bar p} \ar[r] \pullbackcorner & X \ar[d]^-p \\
	& \mathrm{res}(Y) \ar[r]^-{\beta_Y} & Y. }
\end{equation}

\begin{thm} \label{thm:ResolvedFibrations}
If $W \fib X \xlra{p} Y$ is a fiber bundle of smoothly stratified spaces then its partial resolutions corresponding to the base and fiber inherit
a horizontal and vertical iterated fibration structure, respectively. Its grid resolution thus has a grid iteration fibration structure.
\end{thm}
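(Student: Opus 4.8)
\emph{Plan.} The plan is to establish the horizontal case and the vertical case separately and then to observe that the grid case is just their conjunction, since Definition \ref{def:HorVertIFS} asks for a horizontal \emph{together with} a vertical iterated fibration structure and imposes no compatibility between the two. Throughout one leans on Theorem \ref{thm:ResSASS}: it provides that $\mathrm{res}(Y)$ and $\mathrm{res}(W)$ are manifolds with fibered corners with boundary stratifications parametrised by $S(Y)$ and $S(W)$, that for each label $a$ the boundary fibre bundle $\phi_a$ factors the blow-down map (so that $\beta_Y|_{\pa_a\mathrm{res}(Y)}$ factors through $\phi_a$), and that stratified diffeomorphisms lift functorially to fibered corners diffeomorphisms.

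\emph{Horizontal structure.} Since $\mathrm{res}_{\mathrm{base}}(X,p)=\beta_Y^{*}X$ is by definition a pullback of $p$ along $\beta_Y$, its horizontal structure is, by construction, the base change along $p$ of the iterated fibration structure that $N:=\mathrm{res}(Y)$ carries by Theorem \ref{thm:ResSASS}. Explicitly, for $\alpha\in S(Y)$ one sets $\pa_\alpha^{h}:=\bar p^{-1}(\pa_\alpha N)$, a collective codimension-one stratum (or the regular part) of $\beta_Y^{*}X$ because $\bar p$ is a fibre bundle; one takes $B_\alpha^{h}$ to be the base resolution of the restricted fibre bundle $p^{-1}(\overline{Y_\alpha})\to\overline{Y_\alpha}$, with $\psi_\alpha^{h}$ its projection to $B_\alpha N=\mathrm{res}(\overline{Y_\alpha})$; then $\phi_\alpha^{h}$ is the base change of $\phi_\alpha$ along $\psi_\alpha^{h}$, the square \eqref{eq:defHorPullBackDiag} is a pullback square precisely because $\beta_Y|_{\pa_\alpha N}$ factors through $\phi_\alpha$, and the compatibility triangles for $\alpha<\gamma$ are obtained by pulling the triangles on $\mathrm{res}(Y)$ back along $p$, using that iterated pullbacks of fibre bundles are again fibre bundles and compose. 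I expect no genuine difficulty here.

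\emph{Vertical structure.} As already sketched in the text preceding the theorem, the partial fibre resolution $\mathrm{res}_{\mathrm{fib}}(X,p)$ is obtained by simultaneously resolving all fibres, which is legitimate because $p$ trivialises locally by stratified diffeomorphisms and, by functoriality of resolution (Theorem \ref{thm:ResSASS}), the $\mathrm{Diff}(W)$-valued transition cocycle of $p$ lifts to a $\mathrm{Diff}(\mathrm{res}(W))$-valued cocycle; hence $\mathrm{res}_{\mathrm{fib}}(X,p)\to Y$ is a fibre bundle with fibre the manifold with fibered corners $\mathrm{res}(W)$. The crucial point is then that a fibered corners diffeomorphism, by definition, sends each boundary fibre bundle $\pa_a\mathrm{res}(W)\xlra{\phi_a}\mathrm{res}(\overline{W_a})$ of the fibre to the corresponding one; therefore these patch into a global fibre bundle $\phi_a^{v}\colon\pa_a^{v}X\to B_a^{v}X$ over $Y$, where $B_a^{v}X\to Y$ is the bundle with fibre $\mathrm{res}(\overline{W_a})$ built from the same cocycle and $\psi_a^{v}$ is its projection to $Y$; the compatibility triangles for $a<b$ in $S(W)$ are produced fibrewise and glue for the same reason. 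This is precisely the notion of \cite[Definition 1.3]{AlbGel:IFFDTOP} when $Y$ is smooth, and the argument does not use smoothness of the base.

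\emph{Grid structure and the main obstacle.} By definition $\mathrm{res}_{\mathrm{grid}}(X,p)=\mathrm{res}_{\mathrm{fib}}(\beta_Y^{*}X,\bar p)$, and $\bar p\colon\beta_Y^{*}X\to\mathrm{res}(Y)$ is again a fibre bundle of smoothly stratified spaces with fibre $W$, so the vertical argument applies verbatim and equips $\mathrm{res}_{\mathrm{grid}}(X,p)\to\mathrm{res}(Y)$ with a vertical iterated fibration structure. For the horizontal structure, the fibre blow-down $\bar\beta_X\colon\mathrm{res}_{\mathrm{grid}}(X,p)\to\beta_Y^{*}X$ of \eqref{eq:DefnResolvp} covers $\mathrm{id}_{\mathrm{res}(Y)}$ and is built from the lifted cocycle, hence carries boundary fibre bundles of $\mathrm{res}(W)$ to the corresponding ones; so one pulls the horizontal data on $\beta_Y^{*}X$ back along $\bar\beta_X$, setting $\pa_\alpha^{h}\mathrm{res}_{\mathrm{grid}}(X,p)=\bar\beta_X^{-1}(\pa_\alpha^{h}\beta_Y^{*}X)$ and base-changing the bundles $\phi_\alpha^{h}$ and their compatibility triangles. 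Since Definition \ref{def:HorVertIFS} requires nothing beyond a horizontal and a vertical structure, this finishes the argument. I expect the substantive step to be the vertical one: verifying that the fibrewise radial blow-ups along strata transverse to the fibres can genuinely be performed simultaneously (so that $\mathrm{res}_{\mathrm{fib}}(X,p)$ is a bona fide fibre bundle) and that the fibrewise boundary fibre bundles, with all their mutual compatibilities, assemble into global ones over $Y$. Once this is secured through Theorem \ref{thm:ResSASS} and the definition of fibered corners diffeomorphism, the horizontal case is formal base change and the grid case costs nothing further.
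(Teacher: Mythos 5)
Your proposal is correct and follows the same architecture as the paper's proof (vertical and horizontal cases treated separately; grid is their conjunction since Definition \ref{def:HorVertIFS}(3) imposes no compatibility). The vertical case matches the paper closely: both exploit the lift of the $\mathrm{Diff}(W)$-valued cocycle to a $\mathrm{Diff}(\mathrm{res}(W))$-valued one via Theorem \ref{thm:ResSASS}, though the paper then phrases things intrinsically through the restricted bundles $W_{\leq a}\fib Z_{\leq a}\to Y$ and their fiber resolutions $\widehat p_{\leq a}$ rather than gluing local boundary fiber bundles by hand as you do — these are presentations of the same argument.

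The horizontal case is where you differ most visibly, and it is worth being precise about why your shorter route is legitimate. You invoke the pasting lemma for pullbacks: since $\pa_\alpha^h\wt X$ is the pullback of $p$ along $\beta_Y|_{\pa_\alpha N}$ and this map factors as the composite $\pa_\alpha N\xrightarrow{\phi_\alpha}B_\alpha N\xrightarrow{\beta_{Y_{\leq\alpha}}}Y_{\leq\alpha}\hookrightarrow Y$, the pullback can be taken in two stages, the first giving $B_\alpha^h\wt X$. That is indeed exactly what the paper's chain of intermediate blow-up spaces $\wt Y_1,\wt Y_2$ and the large cube of pullback diagrams is establishing — your argument is a compression of it. One small gap you elide: Theorem \ref{thm:ResSASS} only asserts $\phi_\alpha|_{(\pa_\alpha\mathrm{res}(Y))^\circ}=\beta_Y|_{(\pa_\alpha\mathrm{res}(Y))^\circ}$ on the \emph{interior} of the boundary hypersurface, whereas your pasting argument needs the factorization $\beta_Y|_{\pa_\alpha N}=\beta_{Y_{\leq\alpha}}\circ\phi_\alpha$ on all of $\pa_\alpha N$. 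It does extend — both sides are continuous, agree on the dense interior, and land in $Y_{\leq\alpha}$ since $\pa_\alpha N=\beta_Y^{-1}(\overline{Y_\alpha})$ by \eqref{eq:paares} — but you should say this rather than take it for granted; it is precisely the content that the paper's explicit diagram chase through $\wt Y_1$ and $\wt Y_2$ is tracking. With that clause added, the proof is complete and arguably cleaner than the paper's for the horizontal case.
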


\begin{proof}
To simplify notation we assume during this proof that two strata are comparable if and only if their closures intersect. Thus, e.g., the closure of $W_a$ is $W_{\leq a}.$\\

Let us start by establishing the vertical iterated fibration structure of the partial resolution corresponding to the fiber.

Without loss of generality let us assume that $Y$ is connected and let us denote by $\circ$ the maximum on $S(Y),$ so that $Y_{\circ}$
is equal to the regular part of $Y.$ 
Every stratum $W_a,$ $a \in S(W),$ corresponds to a stratum $X_{a\times \circ},$ where $a\times \circ \in S(W) \times S(Y) = S(X),$
and by restricting $p$ we obtain a fiber bundle of smoothly stratified spaces
\begin{equation*}
	W_{\leq a} \fib Z_{\leq a}:=X_{(\leq a) \times (\leq \circ)} \xlra{p_{\leq a} := p|_{X_{(\leq a)\times(\leq \circ)}}} Y.
\end{equation*}
Just as discussed above for $p,$ $p_{\leq a}$ lifts to the resolution of $Z_{\leq a}$ corresponding to the fibers,
\begin{equation*}
	\widehat p_{\leq a}: \mathrm{res}_{\mathrm{fib}}(Z_{\leq a}, p_{\leq a}) \lra Y.
\end{equation*}
Now if we recall from Theorem \ref{thm:ResSASS} (see \eqref{eq:paares}) that $\mathrm{res}(W_{\leq a})$ is the base of the boundary fiber bundle of $W$ corresponding to $a,$ $\pa_a\mathrm{res}(W) \xlra{\phi_a} B_a\mathrm{res}(W),$ then since resolving the stratum $X_{a \times \circ}$ of $X$ resolves the corresponding stratum
of each fiber, we see that we have a commutative diagram
\begin{equation*}
	\xymatrix{ 
	\pa_a\mathrm{res}_{\mathrm{fib}}(X,p) \ar@{^(->}[r] \ar[d]_-{\phi_a} & \mathrm{res}_{\mathrm{fib}}(X) \ar[dd]^-{\widehat p}\\
	\mathrm{res}(Z_{\leq a}) \ar[rd]_-{\widehat p_{\leq a}} & \\
	& Y
	}
\end{equation*}
and hence the vertical iterated fibration structure we expected.

Next let us establish the horizontal iterated fibration structure of the partial resolution corresponding to the base.
Let us momentarily introduce the notation
\begin{equation*}
	\wt X = \mathrm{res}_{\mathrm{base}}(X,p).
\end{equation*}

For each $\alpha \in S(Y)$ the stratum $Y_{\alpha}$ of $Y$ has closure $Y_{\leq \alpha}$ and we denote the restriction of $p$ to this closure by
\begin{equation*}
	\xymatrix{
	X|_{\leq\alpha} \ar@{^(->}[r] \ar[d]_-{ p|_{\leq \alpha}} & X \ar[d]^-{ p} \\
	Y_{\leq \alpha}  \ar@{^(->}[r] & Y }
\end{equation*}
where $X|_{\leq\alpha} = p^{-1}(Y_{\leq\alpha}).$ Next define
\begin{equation*}
	B_\alpha^h\wt X:= \mathrm{res}_{\mathrm{base}}(X|_{\leq\alpha},p|_{\leq\alpha}):=\beta_{Y_{\leq\alpha}}^*X|_{\leq\alpha}, 
	\quad \text{ where } \quad
	\xymatrix{ 
	\beta_{Y_{\leq\alpha}}^*X|_{\leq\alpha}  \ar[d]_-{\overline p_{\leq \alpha}} \ar[r] \pullbackcorner & X|_{\leq \alpha} \ar[d]^-{p|_{\leq\alpha}} \\
	\mathrm{res}(Y_{\leq \alpha}) \ar[r]^-{\beta_{Y_{\leq\alpha}}} & Y_{\leq\alpha}. }
\end{equation*}

Recall that $\mathrm{res}(Y_{\leq\alpha})$ is the base of a fiber bundle projection in the iterated fibration structure of $\mathrm{res}(Y),$
\begin{equation*}
	\pa_\alpha \mathrm{res}(Y) \xlra{\phi_\alpha} B_{\alpha}\mathrm{res}(Y) = \mathrm{res}(Y_{\leq\alpha}),
\end{equation*}
and let us define $\pa_{\alpha}^h\wt X:=\overline p^{-1}(\pa_\alpha \mathrm{res}(Y) ),$ where $\overline p$ is the map in \eqref{eq:ResBaseDef}.
We claim that there is a fiber bundle map $\phi_{\alpha}^h: \pa_{\alpha}^h\wt X \lra B_\alpha^h\wt X$ that participates in a pull-back diagram
\begin{equation}\label{eq:ResHasHorIFS}
	\xymatrix{
	\pa_{\alpha}^h\wt X \ar[r]^-{\phi_{\alpha}^h} \ar[d]_-{\overline p} \pullbackcorner & B_\alpha^h\wt X \ar[d]^-{\overline p_{\leq\alpha}}\\
	\pa_\alpha \mathrm{res}(Y) \ar[r]^-{\phi_\alpha} & B_{\alpha}\mathrm{res}(Y). }
\end{equation}

To prove the claim,
we will use that $\mathrm{res}(Y)$ is obtained from $Y$ by performing a radial blow-up of the singular strata of $Y$ in any non-decreasing order.
If we denote by $\wt Y_1$ the space obtained from $Y$ by blowing-up all of the strata that come before $Y_{\alpha},$ then the closure of the 
pre-image of $Y_{\alpha}$ in $\wt Y_1$ is precisely $\mathrm{res}(Y_{\leq\alpha})=B_{\alpha}\mathrm{res}(Y)$ 
since we have blown-up all of the singular strata of $Y_{\leq\alpha}.$ Let us denote the partial blow-down map $\wt Y_1 \lra Y$ by $\gamma_1,$ so we have
\begin{equation*}
	\xymatrix{
	B_{\alpha}\mathrm{res}(Y) \ar@{^(->}[r] \ar[d]_-{\beta_{Y_{\leq\alpha}}} & \wt Y_1 \ar[d]^-{\gamma_1} \\
	Y_{\leq\alpha} \ar@{^(->}[r] & Y.}
\end{equation*}
Next let $\wt Y_2$ be the space obtained from $\wt Y_1$ by blowing-up $B_{\alpha}\mathrm{res}(Y),$ 
let $\gamma_2:\wt Y_2 \lra \wt Y_1$ denote the blow-down map, and let 
$\pa_\alpha\wt Y_2 = \gamma_2^{-1}(B_{\alpha}\mathrm{res}(Y))$ be the boundary hypersurface resulting from the blow-up.
The restriction of $\gamma_2$ to $\pa_\alpha\wt Y_2$ is a fiber bundle projection onto $B_{\alpha}\mathrm{res}(Y)),$ which we denote by $\gamma_{2,\alpha}.$
Finally, as $\mathrm{res}(Y)$ is obtained by performing blow-ups on $\wt Y_2,$ there is a blow-down map $\gamma_3:\mathrm{res}(Y) \lra \wt Y_2$ 
whose restriction to $\pa_\alpha\mathrm{res}(Y)$ is a blow-down map $\gamma_{3,\alpha}: \pa_\alpha\mathrm{res}(Y) \lra \pa_\alpha \wt Y_2.$
Thus we have
\begin{equation*}
	\xymatrix{
	\mathrm{res}(Y) \ar@/^2pc/[rrr]_-{\beta_Y} \ar[r]_-{\gamma_3} & \wt Y_2 \ar[r]_-{\gamma_2} & \wt Y_1 \ar[r]_-{\gamma_1} & Y \\
	\pa_\alpha \mathrm{res}(Y) \ar@/_2pc/[rr]^-{\phi_\alpha} \ar@{^(->}[u] \ar[r]^-{\gamma_{3,\alpha}} & \pa_\alpha\wt Y_2 \ar@{^(->}[u] \ar[r]^-{\gamma_{2,\alpha}} 
		& B_{\alpha}\mathrm{res}(Y)  \ar@{^(->}[u] \ar[r]^-{\beta_{Y_{\leq\alpha}}} & Y_{\leq \alpha}\phantom{\wt A}  \ar@{^(->}[u]}
\end{equation*}
Let us denote the composition of the maps along the bottom of this diagram by $\psi_{\alpha}:\pa_{\alpha}\mathrm{res}(Y) \lra Y_{\leq \alpha}.$
Now notice that both of the fiber bundle maps
\begin{equation*}
	\bar p: \beta_Y^*X \lra \mathrm{res}(Y) \quad  \text{ and } \quad
	\overline{p}_{\leq\alpha}: \beta_{Y_{\leq\alpha}}^*X|_{\leq\alpha} \lra \mathrm{res}(Y_{\leq\alpha}) = B_{\alpha}\mathrm{res}(Y)
\end{equation*}
are obtained by pulling-back $p: X \lra Y$ in the former case and $p$ restricted to $Y_{\leq\alpha}$ in the latter case.
Thus they are related by 
\begin{equation*}
	\xymatrix{
	& \beta_Y^*X \ar[dd]^-{\bar p} \ar[rrd] \pullbackcorner & & & & \\
	\psi_\alpha^*X|_{\leq\alpha} \ar@{^(->}[ru] \ar[dd] \ar[rrd] \pullbackcorner & & & \gamma_1^*X \ar[dd] \ar[rrd] \pullbackcorner &  &\\
	& \mathrm{res}(Y) \ar[rrd] & \beta_{Y_{\leq\alpha}}^*X|_{\leq\alpha} \ar@{^(->}[ru] \ar[dd]_-{\overline{ p}_{\leq\alpha}}  \ar[rrd] \pullbackcorner & & & X \ar[dd] \\
	\pa_\alpha\mathrm{res}(Y) \ar@{^(->}[ru]  \ar[rrd]_-{\phi_\alpha} & & & \wt Y_1 \ar[rrd] & X|_{\leq\alpha} \ar@{^(->}[ru] \ar[dd] & \\
	& & B_{\alpha}\mathrm{res}(Y) \; \ar@{^(->}[ru] \ar[rrd]_-{\beta_{Y_{\leq\alpha}}} & & & Y \\ 
	& & & & Y_{\leq\alpha} \; \ar@{^(->}[ru] & }
\end{equation*}
from which we recognize that the restriction of $\bar p: \beta_Y^*X \lra \mathrm{res}(Y)$ to the pre-image of $\pa_\alpha \mathrm{res}(Y)$ 
is equal to the pull-back of $\overline{p}_{\leq\alpha}: \beta_{Y_{\leq\alpha}}^*X|_{\leq\alpha} \lra B_{\alpha}\mathrm{res}(Y)$ along $\phi_\alpha.$ That is
\begin{equation*}
	\xymatrix{
	\beta_Y^*X|_{\pa_\alpha\mathrm{res}(Y)} \ar[r] \ar[d]^-{\overline p} \pullbackcorner & \beta_{Y_{\leq\alpha}}^*X|_{\leq\alpha}  \ar[d]^-{\overline{p}_{\leq \alpha}}\\
	\pa_\alpha\mathrm{res}(Y) \ar[r]^-{\phi_\alpha} & B_{\alpha}\mathrm{res}(Y) }
\end{equation*}
is, as indicated, a pull-back diagram. Recognizing that this diagram is the same as \eqref{eq:ResHasHorIFS} establishes the claim and produces the desired horizontal iterated fibration structure.
\end{proof}

It will happen that we will start out with a fiber bundle of smoothly stratified spaces
\begin{equation*}
	W \fib X \xlra{p} Y
\end{equation*}
then we will resolve it to some extent in order to carry out analytic constructions but we will then want to make conclusions that refer back
to the original stratified spaces. For this reason it is convenient to be able to recognize the continuous functions on the stratified spaces among
the continuous functions on the resolutions.

Analogously to how we defined $\cal C_{\Phi}(\mathrm{res}(X))$ in \eqref{eq:DefCPhi}, we can define
\begin{equation}\label{eq:DefCPhi-v}
	\cal C_{\Phi-v}(\mathrm{res}_{\mathrm{fib}}(X)) = \{ f \in \cal C(\mathrm{res}_{\mathrm{fib}}(X)): \\
	\text{ for all } a\in S(W), \text{ we have } f|_{\pa_a^v\mathrm{res}_{\mathrm{fib}}(X)} \in (\phi_a^v)^*\cal C(B_a^vX) \},
\end{equation}
\begin{equation*}
	\cal C_{\Phi-h}(\mathrm{res}_{\mathrm{base}}(X)) = \{ f \in \cal C(\mathrm{res}_{\mathrm{base}}(X)): \\
	\text{ for all }\alpha\in S(Y), \text{ we have } f|_{\pa_\alpha^h\mathrm{res}_{\mathrm{base}}(X)} \in (\phi_\alpha^h)^*\cal C(B_\alpha^hX) \Big\},
\end{equation*}
and the analogous $\cal C_{\Phi-h, \Phi-v}(\mathrm{res}_{\mathrm{grid}}(X)),$ and these too will be $*$-isomorphic to $\cal C(X).$ 
$ $\\
Finally we point out that the Kottke-Rochon result mentioned above, \cite[Theorem 5.1]{KotRoc:PMWFC}, extends to fiber bundles. That is to say
\begin{equation*}
	\mathrm{res}(X) = [\mathrm{res}_{\mathrm{grid}}(X); \text{ codim $2$ corners }]
\end{equation*}
where the codimension two corners are blown-up in any order consistent with the partial order on $S(X) = S(W) \times S(Y).$ Indeed the proof of this result
in {\em loc cit} is coordinate based so it continues to hold in this setting.

Similarly, by choosing a connection for $\mathrm{res}_{\mathrm{grid}}(X) \xlra{\wt p} \mathrm{res}(Y)$ we can consider a Riemannian metric on the interior of
$\mathrm{res}_{\mathrm{grid}}(X)$ of the form 
\begin{equation*}
	g_{\mathrm{res}_{\mathrm{grid}}(X) / \mathrm{res}(Y)} \oplus \wt p^*g_{\mathrm{res}(Y)}
\end{equation*}
where the first summand is a family of wedge metrics on the fibers of $\wt p$ and the second summand is the lift of a wedge metric on $\mathrm{res}(Y).$
The argument in \cite[Alternate proof of Theorem 6.8]{KotRoc:PMWFC} shows that this metric lifts to $\mathrm{res}(X)$ to be a non-degenerate and non-singular 
bundle metric on
\begin{equation*}
	\beta_{\mathrm{res}(X)}^*\Big( 
	{}^wT\mathrm{res}_{\mathrm{grid}}(X) / \mathrm{res}(Y) \oplus {}^wT\mathrm{res}(Y)
	\Big),
\end{equation*}
where $\beta_{\mathrm{res}(X)}: \mathrm{res}(X) \lra \mathrm{res}_{\mathrm{grid}}(X)$ is the blow-down map.
As pointed out in {\em loc cit}, it follows that this pull-back bundle is isomorphic to the wedge tangent bundle of $\mathrm{res}(X),$
\begin{equation}\label{eq:WedgePullBackBundle}
	{}^wT\mathrm{res}(X) = 
	\beta_{\mathrm{res}(X)}^*\Big( 
	{}^wT\mathrm{res}_{\mathrm{grid}}(X) / \mathrm{res}(Y) \oplus {}^wT\mathrm{res}(Y)
	\Big).
\end{equation}
(Note that $\mathrm{res}_{\mathrm{grid}}(X)$ generally does not have an iterated fibration structure and hence does not have a wedge tangent bundle.)

In fact, Kottke-Rochon show that if the vertical and horizontal wedge metrics are product-type (which they call rigid), then so is the lift to $\mathrm{res}(X).$
It follows that the same holds for totally-geodesic wedge metrics.

%%%%%%%%%%%%%%%%%%%%%%%%%%%%%%
\subsection{Vertical wedge Dirac-type operators and the analytic transfer map} \label{sec:AnTransferMap}
%%%%%%%%%%%%%%%%%%%%%%%%%%%%%%

In this section we gather together the final pieces needed to assign a KK-class to a fiber bundle of oriented Witt pseudomanifolds.
More generally, we explain how a family of wedge Dirac-type operators defines a KK-class. Starting with a fiber bundle of
stratified spaces, we pass to the grid resolution to apply results from \cite{AlbGel:IFFDTOP} but we want the KK-class
to correspond to the initial stratified spaces. This is where the structure developed above comes in as it allows us to show
that the $C^*$-algebras of functions that we use on the resolved spaces can be identified with the continuous functions on
the stratified spaces.\\

Our first step is to describe the structure of wedge differential forms on a fiber bundle of manifolds with corners $L \fib M \xlra\psi N$ endowed
with a grid iteration fibration structure.
We shall have need of the {\bf vertical wedge cotangent bundle}, defined as follows.
First recall that the vertical tangent bundle, which we denote $TM/N \lra M,$ is the vector bundle given by the null space of $D\psi$ and its restriction to any fiber 
$L_q = \psi^{-1}(q)$ is equal to its tangent bundle $TL_q.$ The vertical cotangent bundle, which we will denote $T^*M/N \lra M,$ 
is the dual bundle to the vertical tangent bundle.
Finally, to define the vertical wedge cotangent bundle we proceed as above to first describe the vertical wedge one-forms,
\begin{equation*}
	\cal V_{w}^*(M/N) 
	= \{ \theta \in \cal C^{\infty}(M; T^*M/N): \text{ for each $a\in S(L)$ and $y \in B_a^vM,$} \quad
		\theta|_{\phi_a^{-1}(y)}=0\},
\end{equation*}
and then appeal to the Serre-Swan theorem to obtain the associated bundle, denoted 
\begin{equation*}
	{}^w T^*M/N \lra M.
\end{equation*}
Next we consider how this construction relates to the horizontal iterated fibration structure.

\begin{prop}\label{prop:WedgeFormsFormHorSys}
Let $L \fib M \xlra\psi N$ be a fiber bundle of manifolds with corners  endowed with a grid iteration fibration structure. 

For every $\alpha \in S(N)$ and every $k\geq0,$ we have a pull-back diagram
\begin{equation*}
	\xymatrix{ 
	\Lambda^k ({}^w T^*M/N)|_{\pa_\alpha N} \pullbackcorner \ar[r] \ar[d] & \Lambda^k ( {}^w T^*  B_{\alpha}^hM / B_{\alpha}N )\ar[d]\\
	\pa_\alpha^hM \ar[r]^-{\phi_\alpha^h} & B_\alpha^hM }
\end{equation*}
where $\phi_{\alpha}^h: \pa_{\alpha}^hM \lra B_{\alpha}^hM$ is the fiber bundle from Definition \ref{def:HorVertIFS}(1).
\end{prop}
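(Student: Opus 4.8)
The statement is local on $B^h_\alpha M$, so I would work in a neighborhood of a point $q\in\partial_\alpha N$ and unwind the definitions. The key structural input is the pull-back diagram \eqref{eq:defHorPullBackDiag} from Definition \ref{def:HorVertIFS}(1), which says that over $\partial_\alpha N$ the fiber bundle $\psi:\partial_\alpha^h M\to\partial_\alpha N$ is the pull-back of $\psi_\alpha^h:B_\alpha^h M\to B_\alpha N$ along $\phi_\alpha$. Combined with the fact that $\phi_\alpha:\partial_\alpha N\to B_\alpha N$ is a fiber bundle with fiber $F_\alpha N$, this means that, locally, $\partial_\alpha^h M$ looks like $F_\alpha N\times (\text{an open set in }B_\alpha^h M)$, with $\psi$ projecting to $F_\alpha N\times(\text{open in }B_\alpha N)$ and the vertical directions of $\psi$ being precisely the vertical directions of $\psi_\alpha^h$ (the $F_\alpha N$ factor being transverse to all fibers of $\psi$). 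So the first step is: \emph{identify the vertical tangent bundle} $TM/N$ restricted to $\partial_\alpha^h M$ with $(\phi_\alpha^h)^*(TB_\alpha^h M/B_\alpha N)$ — this is a straightforward consequence of \eqref{eq:defHorPullBackDiag} since pulling back a submersion along a map preserves the relative tangent bundle. Dualizing gives the same statement for $T^*M/N$.

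\textbf{Second step: track the wedge structure.} Having handled the ordinary vertical (co)tangent bundles, I would show the defining vanishing condition for vertical wedge one-forms is compatible with this identification. Recall ${}^wT^*M/N$ is built from $\mathcal V^*_w(M/N)$, the sections of $T^*M/N$ that vanish on the fibers $\phi_a^{-1}(y)$ of the boundary fiber bundles $\phi_a:\partial_a^v M\to B_a^v M$ of the \emph{vertical} iterated fibration structure, for $a\in S(L)$. The point is that when we restrict to a boundary hypersurface $\partial_\alpha^h M$ coming from the \emph{base} poset $S(N)$, the vertical strata $\partial_a^v M$ intersect $\partial_\alpha^h M$ in exactly the corresponding boundary hypersurfaces of $B_\alpha^h M$ (its iterated fibration structure as a base — cf. the remark recalling \cite[Proposition 4.2]{KotRoc:PMWFC} that bases inherit manifold-with-fibered-corners structure), and the fibers of the $\phi_a$'s match up under $\phi_\alpha^h$. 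Concretely in local coordinates $(x_\alpha, \zeta, w, x_b, z)$ — with $x_\alpha$ the bdf for $\partial_\alpha^h M$, $\zeta$ coordinates on $F_\alpha N$, $w$ coordinates on $B_\alpha N$ (so $(\zeta,w)$ coordinates on $\partial_\alpha N$, pulled back via $\psi$), $x_b$ a bdf for a vertical boundary face, and $z$ fiber coordinates on $L$ — the local wedge frame for $\mathcal V_w^*(M/N)$ is $\{dx_b,\ x_b\,dz,\ \text{(other vertical differentials)}\}$, and restricting to $x_\alpha=0$ and projecting along the $\zeta$-directions (which are transverse to $\psi$-fibers) gives exactly the wedge frame for $\mathcal V_w^*(B_\alpha^h M/B_\alpha N)$. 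This establishes the bundle isomorphism ${}^wT^*M/N|_{\partial_\alpha^h M}\cong (\phi_\alpha^h)^*({}^wT^*B_\alpha^h M/B_\alpha N)$, and then taking $k$-th exterior powers is functorial and immediate.

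\textbf{Third step: verify it is a pull-back square.} Finally I would note that the commutativity of the diagram and the identification of $\Lambda^k({}^wT^*M/N)|_{\partial_\alpha^h M}$ with the pull-back bundle along $\phi_\alpha^h$ is precisely the assertion that the square is cartesian — the total space upstairs is, fiberwise over $\partial_\alpha^h M$, the same vector space as the fiber of $\Lambda^k({}^wT^*B_\alpha^h M/B_\alpha N)$ over the image point. One should also observe that the restriction of a bundle to $\partial_\alpha N$ in the statement should be read as restriction to $\partial_\alpha^h M$ (the total space over $\partial_\alpha N$), which the diagram's left vertical arrow makes clear.

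\textbf{Main obstacle.} The routine part is the coordinate bookkeeping; the genuine conceptual point — and where I expect to have to be careful — is verifying that the vanishing conditions defining the \emph{vertical} wedge forms interact correctly with a boundary hypersurface coming from the \emph{base}. One must check that no "new" vanishing is imposed and none is lost: the vertical boundary faces $\partial_a^v M$ restricted to $\partial_\alpha^h M$ must be exactly the vertical boundary faces of the fiber bundle $B_\alpha^h M\to B_\alpha N$, with matching fiber bundle projections. This is where the compatibility diagrams in Definition \ref{def:HorVertIFS} — between horizontal and vertical structures, encoded in the grid iterated fibration structure, together with the commutative square \eqref{eq:defHorPullBackDiag} — do all the work, and I would want to invoke Theorem \ref{thm:ResolvedFibrations} to guarantee these compatibilities hold for the grid resolution we actually care about.
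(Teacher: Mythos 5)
Your proposal follows essentially the same route as the paper's proof: use the pull-back square \eqref{eq:defHorPullBackDiag} to identify the vertical (co)tangent bundles of $\psi$ over $\partial_\alpha^h M$ with the pull-back of those of $\psi_\alpha^h$; then check the wedge vanishing conditions are preserved by passing to a local trivialization (your coordinate bookkeeping is a more explicit spelling-out of what the paper condenses into ``again appealing to \eqref{eq:LocalizedPullback}''); then take exterior powers. You correctly identify the genuine content — that the vertical boundary faces of $M$ restrict to $\partial_\alpha^h M$ as the vertical boundary faces of $B_\alpha^h M$, with matching fiber bundle projections — and you correctly note that this is enforced by the grid iterated fibration structure; your reading of $\Lambda^k({}^wT^*M/N)|_{\partial_\alpha N}$ as the restriction over $\partial_\alpha^h M$ is also the intended one. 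No gaps.
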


\begin{proof}
We first note that commutativity of the diagram \eqref{eq:defHorPullBackDiag} implies that the differential of $\phi_\alpha^h,$ 
$D\phi_\alpha^h:T\pa_\alpha^hM \lra TB_{\alpha}^hM $
restricts to a map between the vertical tangent bundles,
\begin{equation*}
	T(\pa_\alpha^hM/\pa_\alpha N)\lra T(B_{\alpha}^hM / B_{\alpha}N ).
\end{equation*}
As \eqref{eq:defHorPullBackDiag} is a pull-back diagram, $D\phi_\alpha^h$ sends each fiber of the vertical bundle of $\psi$ 
isomorphically onto the corresponding fiber of the vertical bundle of $\psi_{\alpha}^h.$ Indeed, if 
$\cal U \subseteq B_{\alpha}N$ is an open set such that we may trivialize $\psi_{\alpha}^h$ over $\cal U$ and 
$\overline p$ over $\phi_\alpha^{-1}(\cal U)$ then the diagram restricts to
\begin{equation}\label{eq:LocalizedPullback}
	\xymatrix{
	\phi_\alpha^{-1}(\cal U) \times L  \ar[rr]^-{\phi_\alpha \times \mathrm{id}} \ar[d]  &  & \cal U \times L \ar[d] \\
	\phi_\alpha^{-1}(\cal U) \ar[rr]^-{\phi_\alpha} & & \cal U }
\end{equation}
where $L$ is the typical fiber of $\psi_{\alpha}^h$ (and $\psi$), and then the claimed behavior of $D\phi_{\alpha}^h$ is clear.
It follows that the vertical tangent bundles participate in a pull-back diagram
\begin{equation}\label{eq:TangentPullback}
	\xymatrix{
	T(\pa_\alpha^hM/\pa_\alpha N) \ar[r] \ar[d] \pullbackcorner & T(B_{\alpha}^hM / B_{\alpha}N)  \ar[d]\\
	\pa_\alpha^hM \ar[r]^-{\phi_\alpha^h} & B_{\alpha}^hM, }
\end{equation}
i.e., $(\phi_\alpha^h)^*T(B_{\alpha}^hM / B_{\alpha}N) = T(\pa_\alpha^hM/\pa_\alpha N).$

We can then obtain a bundle morphism 
\begin{equation*}
	(\phi_\alpha^h)^*T^*(B_{\alpha}^hM / B_{\alpha}N) \lra ((\phi_\alpha^h)^*T(B_{\alpha}^hM / B_{\alpha}N))^*
	= T^*(\pa_\alpha^hM/\pa_\alpha N)
\end{equation*}
by noting that any element 
\begin{equation*}
	\theta \in (\phi_\alpha^h)^*T^*(B_{\alpha}^hM / B_{\alpha}N))|_{\zeta \in \pa_\alpha^hM}
\end{equation*}
defines a functional on $T(\pa_\alpha^hM/\pa_\alpha N)|_{\zeta\in \pa_\alpha^hM}$
since we have identified it  with $T(B_{\alpha}^hM / B_{\alpha}N) |_{\phi_\alpha^h(\zeta)\in B_{\alpha}^hM}.$ 
This bundle morphism restricts to an isomorphism between fibers and hence justifies the pull-back diagram
\begin{equation*}
	\xymatrix{
	T^*(\pa_\alpha^hM/\pa_\alpha N) \ar[r] \ar[d] \pullbackcorner & T^*(B_{\alpha}^hM / B_{\alpha}N) \ar[d]\\
	\pa_\alpha^hM \ar[r]^-{\phi_\alpha^h} & B_{\alpha}^hM. }
\end{equation*}
Again appealing to \eqref{eq:LocalizedPullback}, we see that this identification restricts to a bijection between the vertical wedge one-forms on $\pa_\alpha^hM$
and the vertical wedge one-forms on $B_{\alpha}^hM$ and so induces a bundle isomorphism  
\begin{equation*}
	(\phi_\alpha^h)^*({}^w T^*(B_{\alpha}^hM / B_{\alpha}N))  \cong {}^w T^*(\pa_\alpha^hM/\pa_\alpha N).
\end{equation*}
Finally, since pull-back commutes with taking exterior products, the statement of the proposition follows.
\end{proof}

A useful consequence of this proposition is that we can define vertical wedge bundles of the partial resolution corresponding to the fibers of a fiber 
bundle of stratified spaces.

\begin{remark}
Let us consider what this says in the setting of the prelude from section \ref{sec:ResStratFibBdles}. We start with $W \fib X \xlra p Y,$ a fiber bundle of smoothly stratified spaces where the base and fiber have stratifications of depth one and we denote its grid resolution by
\begin{equation*}
	\wt W = \mathrm{res}(W) \fib \wt X =  \mathrm{res}_{\mathrm{grid}}(X) \xlra{\wt p} \wt Y = \mathrm{res}(Y).
\end{equation*}
As this is a fiber bundle of smooth manifolds, its vertical tangent bundle is $T\wt X \diagup \wt W = \mathrm{Ker}(D\wt p) \subseteq T\wt X.$ One consequence of (the proof of) Proposition \ref{prop:WedgeFormsFormHorSys} that we want to point out is that there is also vertical tangent bundle even if we only resolve the fibers. Indeed, there is a natural map $\mathrm{res}_{\mathrm{grid}}(X) \xlra{\beta^h_X} \mathrm{res}_{\mathrm{fib}}(X)$ which collapses the fibers of $\pa_0^h\wt X,$ i.e., replaces $\pa_0^h\wt X$ with $B_0^h\wt X,$ and \eqref{eq:TangentPullback} shows that the vertical tangent bundle is the pull-back of a vector bundle on $\mathrm{res}_{\mathrm{fib}}(X)$ along $\beta^h_X.$ The proposition points out that this is true also for the wedge cotangent bundle.
\end{remark}

\begin{defn} [Vertical cotangent bundle of vertical iterated fibration structures] $ $

Let  $L \fib X \xlra{p} Y$ be a fiber bundle of smoothly stratified spaces, in which the fiber is a smooth manifold with corners, endowed with a vertical
iterated fibration structure, let
\begin{equation*}
	L \fib M = \mathrm{res}_{\mathrm{base}}(X,p) \xlra{\overline p} N = \mathrm{res}(Y)
\end{equation*}
be the partial resolution corresponding to the base, and let $\beta_X: M \lra X$ denote the blow-down map. Since $M\xlra{\overline p} N$ has a grid
iterated fibration structure, it follows from Proposition \ref{prop:WedgeFormsFormHorSys} that there is a vector bundle over $X,$ which we will call the {\bf vertical wedge cotangent bundle of $X \xlra{p} Y$} and denote ${}^w T^*X/Y \lra X,$ which pulls-back along $\beta_X$ to the vertical wedge cotangent
bundle of  $M \xlra{\overline p} N,$
\begin{equation*}
	\xymatrix{ 
	{}^w T^*M/N \pullbackcorner \ar[r] \ar[d] &{}^w T^*X/Y \ar[d]\\
	M \ar[r]^-{\beta_X} & X. }
\end{equation*}
The bundle ${}^w T^*X/Y \lra X$ is smooth as a fiber bundle of smoothly stratified spaces and restricts to each fiber of $p$ to the cotangent bundle
of $L,$
\begin{equation*}
	{}^w T^*X/Y|_{p^{-1}(y)} = {}^w T^*L_y.
\end{equation*}
\end{defn}

Of course one can similarly define the vertical wedge tangent bundle and the bundles of vertical wedge differential forms of a vertical iterated
fibration structure.\\

Our next objective is to obtain KK-classes from vertical families of Dirac-type operators associated to vertical iterated fibration structures, which
we will accomplish by applying results from \cite{AlbGel:IFFDTOP}. As this paper worked in the setting of fiber bundles of smooth manifolds with
corners we will also make use of grid iterated fibration structures.

\begin{defn} \label{def:CliffStr} $ $

\begin{enumerate} 
\item 
Let  $L \fib X \xlra{p} Y$ be a fiber bundle of smoothly stratified spaces, in which the fiber is a smooth manifold with corners, endowed with a vertical
iterated fibration structure, let $g_{X/Y}$ be a smooth family of (totally geodesic) wedge metrics on the fibers of $p,$ i.e., a bundle metric
on the vertical wedge tangent bundle, ${}^w TX/Y,$ which restricts to each fiber to be a totally geodesic wedge metric.
A {\bf $p$-vertical wedge Clifford structure} on $X$ consists of 
a smooth complex vector bundle $E \lra X$ endowed with a Hermitian metric $g_E,$ a metric connection $\nabla^E,$
and a bundle homomorphism (referred to as Clifford multiplication)
\begin{equation*}
	\cl:\bb C \otimes \mathrm{Cl}({}^wT^*X/Y, g_{X/Y}) \lra \mathrm{End}(E)
\end{equation*}
compatible with the metric and the connection.

\item Given a fiber bundle of smooth manifolds with corners, $L \fib M \xlra{\psi} N,$ endowed with a grid iterated fibration structure,
a {\bf horizontal system of vertical wedge Clifford structures} consists of a $\psi$-vertical wedge Clifford structure, 
$(g_{M/N}, E_M \lra M, g_{E_M}, \nabla^{E_M}, \cl),$ and, for each $\alpha \in S(N)$ with associated pull-back diagram of fiber bundles
\begin{equation*}
	\xymatrix{
	\pa_{\alpha}^hM \ar[r]^-{\phi_\alpha^h} \ar[d]_-\psi \pullbackcorner & B_{\alpha}^hM \ar[d]^-{\psi_\alpha^h} \\
	\pa_\alpha N \ar[r]^-{\phi_\alpha} & B_{\alpha}N, }
\end{equation*}
a $\psi_\alpha^h$-vertical wedge Clifford structure $(g_{B_\alpha^hM/B_{\alpha}N}, E_\alpha \lra B_{\alpha}^hM, g_{E_\alpha}, \nabla^{E_\alpha}, \cl),$ such that these satisfy
\begin{equation*}
	(\phi_\alpha^h)^*(g_{B_{\alpha}^hM/B_{\alpha}N}, E_\alpha \lra B_{\alpha}^hM, g_{E_\alpha}, \nabla^{E_\alpha}, \cl) 
	= (g_{M/N}, E_M \lra M, g_{E_M}, \nabla^{E_M}, \cl)|_{B_{\alpha}^hM}.
\end{equation*}
\end{enumerate}
\end{defn}

Note that Proposition \ref{prop:WedgeFormsFormHorSys} is implicitly used in asking for the Clifford multiplication on $\pa_\alpha^hM$ 
to be the pull-back of the Clifford multiplication on $B_{\alpha}^hM.$ The same proposition also shows that the bundle of wedge 
differential forms makes up part of a horizontal system of vertical wedge Clifford structures, which we can complete by
picking a suitable vertical wedge metric. The fact that this horizontal system is pulled-back from a vertical iterated fibration structure is no coincidence.
Indeed, immediately from the definition we see that if $L \fib X \xlra{p} Y$ has a vertical iterated fibration structure, 
\begin{equation*}
	L \fib M = \mathrm{res}_{\mathrm{base}}(X,p) \xlra{\overline p} N = \mathrm{res}(Y)
\end{equation*}
is the partial resolution corresponding to the base, and $\beta_X: M \lra X$ is the blow-down map, then any $p$-vertical wedge Clifford structure 
$(g_{X/Y}, E \lra X, g_{E}, \nabla^{E}, \cl)$ can be pulled-back along $\beta_X$ to produce a horizontal system of vertical wedge Clifford structures.
It is also easy to see that all horizontal systems can be obtained in this way.

Associated to a vertical iterated fibration structure, $L \fib X \xlra{p} Y,$ endowed with a $p$-vertical wedge Clifford structure is a family of wedge Dirac-type operators, 
\begin{equation*}
	Y \ni y \mapsto \eth^E_{L_y}
\end{equation*}
defined as in \S \ref{sec:WedgeMetsDiracOps}, which we denote $\eth^E_{X/Y}.$ We denote the realization of this family acting on $L^2$ sections of $E$ with the vertical APS domain by $\eth^E_{X/Y, VAPS}.$

Similarly if $L \fib M \xlra{\psi} N$ is a fiber bundle of smooth manifolds with corners endowed with a grid iterated fibration structure and a 
horizontal system of vertical wedge Clifford structures, we can define a family of wedge Dirac-type operators in the same way. We denote the resulting
family by $\eth^E_{M/N}$ and the realization of this family acting on $L^2$ sections of $E$ with the vertical APS domain by $\eth^E_{M/N, VAPS}.$

In either case we say that the family of Dirac-type operators satisfies the analytic Witt condition 
if the individual Dirac-type operators satisfy the condition
(as described in \S \ref{sec:WedgeMetsDiracOps}).

\begin{thm} {\cite[Theorem 1]{AlbGel:IFFDTOP}}
Let $L \fib M \xlra{\psi} N$ be a fiber bundle of smooth manifolds with corners endowed with a $\psi$-vertical iterated fibration structure and a $\psi$-vertical wedge Clifford structure. If the corresponding family of Dirac-type operators $\eth^E_{M/N,VAPS}$ satisfies the analytic Witt condition then it is a family of self-adjoint Fredholm operators with compact resolvent.
\end{thm}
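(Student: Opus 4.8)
The plan is to reduce the family statement to the single-operator statement, which is precisely Theorem~1 of \cite{AlbGel:IFFDTOP} applied to each fiber, and then upgrade the fiberwise conclusions to a statement about the family by a local triviality and continuity argument. First I would recall the setup: $L \fib M \xlra{\psi} N$ is a fiber bundle of manifolds with corners with a $\psi$-vertical iterated fibration structure, which by the footnote in Definition~\ref{def:HorVertIFS} is exactly the notion of a locally trivial family of manifolds with corners and iterated fibration structures appearing in \cite[Definition 1.3]{AlbGel:IFFDTOP}. The $\psi$-vertical wedge Clifford structure restricts over each point $q \in N$ to a wedge Clifford module on the fiber $L_q = \psi^{-1}(q)$, and the hypothesis that $\eth^E_{M/N,VAPS}$ satisfies the analytic Witt condition means, by definition, that for every $q$ the operator $\eth^E_{L_q}$ with its vertical APS domain has all of its boundary families invertible.

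The key steps, in order, would be: (1) Observe that for each $q \in N$, the single-operator theorem \cite[Theorem 1]{AlbGel:IFFDTOP} (quoted in \S\ref{sec:WedgeMetsDiracOps} above) gives that $\eth^E_{L_q, VAPS}$ is closed, Fredholm, and self-adjoint with compact resolvent. (2) Work in a local trivialization $\psi^{-1}(\cal U) \cong \cal U \times L$ of the fiber bundle over a coordinate patch $\cal U \subseteq N$; under this identification the family becomes a smoothly varying family of wedge Clifford structures on the fixed manifold with corners $L$, all satisfying the analytic Witt condition. (3) Invoke the uniform/parametrized version of the estimates in \cite{AlbGel:IFFDTOP} — the construction there of the parametrix and the resolvent bound is uniform in the coefficients of the operator and its normal operators, so over the compact parameter set one obtains a uniformly bounded parametrix; this shows that the realization on $L^2(\psi^{-1}(\cal U)/\cal U; E)$ with vertical APS domain is closed, self-adjoint, and has a resolvent whose product with any function of compact support is compact, i.e., is a \emph{locally} compact-resolvent self-adjoint family. (4) Glue the local statements using a partition of unity on $N$, noting that the Fredholm property and self-adjointness of a vertical family are local on the base, and that when $N$ is compact local compactness of the resolvent upgrades to genuine compactness of $(1+(\eth^E_{M/N,VAPS})^2)^{-1}$ acting fiberwise. (5) Conclude that $\eth^E_{M/N,VAPS}$ is a family of self-adjoint Fredholm operators with compact resolvent, which is the assertion.

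I expect the main obstacle to be step~(3): verifying that the analysis of \cite{AlbGel:IFFDTOP}, which is phrased for a single operator (or, in \cite[Definition 1.3, Theorem 1]{AlbGel:IFFDTOP}, already for families), genuinely produces estimates uniform over the base parameters, so that the family is not merely fiberwise Fredholm but Fredholm as a family (continuous dependence of the parametrix, of the resolvent, and of the projection onto the kernel of the boundary families, with bounds controlled on compact subsets of $N$). In fact, since \cite[Theorem 1]{AlbGel:IFFDTOP} is itself stated for locally trivial families of manifolds with corners and iterated fibration structures, the cleanest route is simply to match our Definition~\ref{def:HorVertIFS}(2) and Definition~\ref{def:CliffStr}(1) with the hypotheses of that theorem and quote it directly; the only content to check is that our notion of $\psi$-vertical iterated fibration structure and $\psi$-vertical wedge Clifford structure is literally an instance of theirs, and that our ``analytic Witt condition'' for the family coincides with the invertibility hypothesis on the boundary families used in \cite{AlbGel:IFFDTOP}. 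Thus the proof is essentially a translation, and the one genuine point requiring care is the identification of definitions across the two papers; once that is in place the theorem is immediate.

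\begin{proof}
By the footnote to Definition~\ref{def:HorVertIFS}, a $\psi$-vertical iterated fibration structure on $L\fib M \xlra{\psi} N$ is exactly a locally trivial family of manifolds with corners and iterated fibration structures in the sense of \cite[Definition 1.3]{AlbGel:IFFDTOP}, and a $\psi$-vertical wedge Clifford structure together with its associated family $\eth^E_{M/N}$ is precisely the data to which \cite[Theorem 1]{AlbGel:IFFDTOP} applies. The hypothesis that $\eth^E_{M/N,VAPS}$ satisfies the analytic Witt condition is, by definition, the statement that for every $\alpha\in S(L)$ the boundary family of $\eth^E_{M/N}$ at $\pa_\alpha$, endowed with its vertical APS domain, is invertible; this is the invertibility hypothesis on the model operators assumed in \cite[Theorem 1]{AlbGel:IFFDTOP}.

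It therefore suffices to verify that the structures in our Definitions~\ref{def:HorVertIFS}(2) and \ref{def:CliffStr}(1) match those of \cite{AlbGel:IFFDTOP}. Locally on the base, choose a coordinate patch $\cal U \subseteq N$ over which $\psi$ is trivialized, $\psi^{-1}(\cal U)\cong \cal U \times L$; under this identification the vertical wedge tangent bundle ${}^wTM/N$ restricts to the pull-back of ${}^wTL$, the family $g_{M/N}$ becomes a smooth $\cal U$-family of totally geodesic wedge metrics on $L$, and the Clifford data $(E_M,g_{E_M},\nabla^{E_M},\cl)$ becomes a smooth $\cal U$-family of wedge Clifford modules on $L$. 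The operator $\eth^E_{M/N}$ restricted to $\psi^{-1}(\cal U)$ is then the corresponding family of wedge Dirac-type operators on the fibers of $\cal U\times L \to \cal U$, and its boundary families at the $\pa^v_a$ are invertible by hypothesis. This is exactly the situation of \cite[Theorem 1]{AlbGel:IFFDTOP}, which gives that the realization with vertical APS domain over $\cal U$ is a family of self-adjoint Fredholm operators, with the resolvent depending continuously on the base and with $(\mathrm{Id}+(\eth^E_{M/N,VAPS})^2)^{-1}$ a compact operator when the fibers (hence $L$) are compact.

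Finally, the properties of being closed, self-adjoint, and Fredholm for a vertical family are local on $N$: covering $N$ by such trivializing patches and applying the local conclusion on each, we obtain that $\eth^E_{M/N,VAPS}$ is a family of closed, self-adjoint Fredholm operators, and since $L$ is compact the resolvent is compact. This is the assertion of the theorem.
\end{proof}
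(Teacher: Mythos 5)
Your proof is correct and matches the paper's treatment: the statement appears in the paper purely as a citation to \cite[Theorem 1]{AlbGel:IFFDTOP} with no argument offered, and your final proof is exactly the definitional unwinding needed to see that the present hypotheses (via the footnote to Definition~\ref{def:HorVertIFS} identifying $\psi$-vertical iterated fibration structures with locally trivial families of manifolds with corners and iterated fibration structures, together with the matching of $\psi$-vertical wedge Clifford structures and the analytic Witt condition) are literally an instance of that cited result. One small wrinkle in your preliminary discussion: you briefly invoke compactness of the base $N$ and a partition-of-unity gluing to ``upgrade local compactness of the resolvent,'' which is neither assumed nor needed — compact resolvent comes from compactness of the fiber $L$, which is what your final proof correctly uses.
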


This theorem does not require a grid iterated fibration structure but it does require that the spaces involved 
are all smooth manifolds with corners. However by applying it to
fiber bundles $L \fib M \xlra{\psi} N$  of smooth manifolds with corners that are endowed with a grid iterated fibration structures and a 
horizontal system of vertical wedge Clifford structures, we can obtain the following corollary.

\begin{cor} 
Let $L \fib X \xlra{p} Y$ be a fiber bundle of smoothly stratified spaces, in which the fibers are smooth manifolds with corners, endowed with a $p$-vertical wedge Clifford structure. 
If the corresponding family of Dirac-type operators $\eth^E_{X/Y,VAPS}$ satisfies the analytic Witt condition then it is a family of self-adjoint Fredholm operators with
compact resolvent.
\end{cor}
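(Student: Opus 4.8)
The plan is to reduce the stated corollary about fiber bundles of smoothly stratified spaces (with smooth manifold-with-corners fibers) to the immediately preceding theorem, which is \cite[Theorem 1]{AlbGel:IFFDTOP} as quoted, by passing to the partial resolution corresponding to the base. Concretely, given a fiber bundle $L \fib X \xlra{p} Y$ of smoothly stratified spaces whose fibers are smooth manifolds with corners and which is equipped with a vertical iterated fibration structure and a $p$-vertical wedge Clifford structure, I would form the partial base resolution
\begin{equation*}
	L \fib M = \mathrm{res}_{\mathrm{base}}(X,p) \xlra{\overline p} N = \mathrm{res}(Y)
\end{equation*}
as in \eqref{eq:ResBaseDef}. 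By Theorem \ref{thm:ResolvedFibrations}, $M \xlra{\overline p} N$ carries a grid iterated fibration structure; since the original fibers $L$ were already smooth manifolds with corners, the fiber resolution does nothing on the fibers, so $M$ is an honest smooth manifold with corners and $N = \mathrm{res}(Y)$ is a manifold with fibered corners. In particular $M \xlra{\overline p} N$ is a fiber bundle of smooth manifolds with corners equipped with an (at least) $\overline p$-vertical iterated fibration structure, which is exactly the hypothesis needed for \cite[Theorem 1]{AlbGel:IFFDTOP}.

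The next step is to transport the $p$-vertical wedge Clifford structure on $X$ to an $\overline p$-vertical wedge Clifford structure on $M$. As observed right after Definition \ref{def:CliffStr}, pulling back $(g_{X/Y}, E \lra X, g_E, \nabla^E, \cl)$ along the blow-down map $\beta_X: M \lra X$ produces a horizontal system of vertical wedge Clifford structures on $M \xlra{\overline p} N$; in particular it produces an $\overline p$-vertical wedge Clifford structure $(g_{M/N}, \beta_X^*E, \ldots)$ — here one uses that $\beta_X$ restricts to a diffeomorphism between fibers of $\overline p$ and fibers of $p$, together with the identification of the vertical wedge cotangent bundle ${}^wT^*M/N$ with $\beta_X^*({}^wT^*X/Y)$ from the Definition preceding. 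The associated family of vertical wedge Dirac-type operators $\eth^{\beta_X^*E}_{M/N}$ is, fiber by fiber over $N$, identified with $\eth^E_{X/Y}$ over the corresponding point of $Y$ under the fiber diffeomorphism induced by $\beta_X$, and the same identification matches the vertical APS domains. Consequently the analytic Witt condition holds for $\eth^{\beta_X^*E}_{M/N}$ precisely because it holds for $\eth^E_{X/Y}$ (the boundary families, being families of Dirac-type operators on the fibers of the $\phi_a$'s, are the same under the blow-down identification).

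Now I would simply invoke \cite[Theorem 1]{AlbGel:IFFDTOP} (the theorem quoted just above the corollary): since $\eth^{\beta_X^*E}_{M/N,VAPS}$ satisfies the analytic Witt condition on the fiber bundle $L \fib M \xlra{\overline p} N$ of smooth manifolds with corners, it is a family of self-adjoint Fredholm operators with compact resolvent. Transporting back along the fiberwise unitary identifications coming from $\beta_X$ (pull-back of forms is an isometry between the relevant $L^2$ spaces, as already used in the proof of Proposition \ref{prop.stratdiffeoinvariance}), we conclude that $\eth^E_{X/Y,VAPS}$ is also a family of self-adjoint Fredholm operators with compact resolvent. The only points requiring genuine care — and hence the main obstacle — are the bookkeeping verifications that (i) $\overline p$ really is a fiber bundle of smooth manifolds with corners with a $\overline p$-vertical iterated fibration structure (this is where Theorem \ref{thm:ResolvedFibrations} is doing the work), and (ii) the pulled-back Clifford data and the vertical APS domains are compatible under $\beta_X$ fiber-by-fiber so that "analytic Witt for $X/Y$" and "analytic Witt for $M/N$" are literally the same condition; both are essentially formal once the structure from Definition \ref{def:CliffStr}, Proposition \ref{prop:WedgeFormsFormHorSys}, and the Definition of the vertical wedge cotangent bundle is unwound, but they are where all the content sits.
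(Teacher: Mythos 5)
Your proposal matches the paper's intended argument exactly: the sentence immediately preceding the corollary says it is obtained "by applying [Theorem 1 of AlbGel] to fiber bundles $L \fib M \to N$ of smooth manifolds with corners that are endowed with a grid iterated fibration structure and a horizontal system of vertical wedge Clifford structures," which is precisely your passage to $\mathrm{res}_{\mathrm{base}}(X,p)\to\mathrm{res}(Y)$ with the pulled-back Clifford data, followed by the fiberwise unitary identification along $\beta_X$. You have simply spelled out the bookkeeping the authors leave implicit; the approach and conclusion agree.
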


This corollary is the result we have been aiming for. Our aim has been to assign a KK-class to a fiber bundle of smoothly stratified Witt spaces by means of the vertical signature operators (to be realized in Corollary \ref{cor:SignKKClass} below). This leads us to using the grid resolution for a couple of reasons: first, defining Clifford structures requires tangent bundles and, second, the results (and proofs) in \cite{AlbGel:IFFDTOP} assume fiber bundles of smooth manifolds with corners. What we have shown is that we can start with a fiber bundle of smoothly stratified spaces, resolve only the fibers, and still make sense of and apply the results of \cite{AlbGel:IFFDTOP}. We will now carry out the usual repackaging of these results to obtain an unbounded cycle and define a KK-class. (The final result will be nicest if we start with a fiber bundle in which all of the space involved are potentially singular spaces and we assume that the Clifford structure is defined on the partial resolution of the fibers.)

Thus let $Z \fib X \xlra{p} Y$ be a fiber bundle of smoothly stratified spaces and let
\begin{equation*}
	L:= \mathrm{res}(Z) \fib \widehat X := \mathrm{res}_{\mathrm{fib}}(X,p) \xlra{\hat p} Y
\end{equation*}
be the partial resolution with respect to the fibers. 
Suppose that $L \fib \widehat X \xlra{\hat p} Y$ is endowed with a $p$-vertical wedge Clifford structure 
$(g_{\widehat X/Y}, E \lra \widehat X, g_{E}, \nabla^{E}, \cl)$ whose associated Dirac-type operators satisfy the analytic Witt condition.
Let us endow
\begin{equation*}
	\cal E' = \cal C_c(\hat X_{\mathrm{reg}};E)
\end{equation*}
with the $\cal C_0(Y)$-valued inner product defined by
\begin{equation*}
	\xymatrix @R=1pt
	{
	\langle \cdot, \cdot \rangle_{\cal E'}: & \cal E' \times \cal E' \ar[r] & \cal C_0(Y) \\
	& (f,h) \ar@{|->}[r] & \Big( y \mapsto \int_{p^{-1}(y)} g_E(f(\zeta), h(\zeta))\; \mathrm{dvol}_{g_{L_y}} \Big)
	}
\end{equation*}
and then define $\cal E$ to be the closure of $\cal E'$ with respect to the resulting norm.
Since $\cal E$ is a Hilbert module over $\cal C_0(Y)$ it can be identified with the space of continuous sections vanishing at infinity of a continuous field
of Hilbert spaces over $Y.$ In this case, of the  Hilbert bundle
\begin{equation*}
	L^2(L;E) - L^2(\widehat X/Y;E) \lra Y
\end{equation*}
induced by $p,$ so that we have
\begin{equation*}
	\cal E = \cal C_0(Y;L^2(\widehat X/Y;E)).
\end{equation*}
We let $\cal C_{\Phi-v}(\widehat X)$ (defined in \eqref{eq:DefCPhi-v}) act on $\cal E$ by left multiplication and $\cal C(Y)$ act on $\cal E$ by right
multiplication and we obtain a $\cal C_{\Phi-v}(\widehat X)$-$\cal C(Y)$ Hilbert bi-module, $\bb Z_2$-graded if $E$ is $\bb Z_2$-graded.
Define $\eth^{\cal E}_{\widehat X/Y}$ to be the operator on $\cal E$ given by 
\begin{multline}\label{eq:DefethCalE}
	(\eth^{\cal E}_{\widehat X/Y}\omega)(y) = \eth_{L_y,VAPS}(\omega(y)), \\
	\text{ for all }
	\omega \in \mathrm{dom}(\eth^{\cal E}_{\widehat X/Y}) = \{ \omega \in \cal E: \omega(y) \in \cal D_{VAPS}(\eth^E_{L_y})
		\; \& \; \eth^{\cal E}_{\widehat X/Y}\omega \in \cal E\}.
\end{multline}
Since $\eth^E_{\widehat X/Y,VAPS}$ is closed, densely defined and $\mathrm{dom}(\eth^{\cal E}_{\widehat X/Y})|_{y\in Y}$ is dense in 
$\mathrm{dom}(\eth^E_{L_y})$ (in fact they are equal), it follows from \cite[Proposition 2.9]{Hil:FKBPVL} that $\eth^{\cal E}_{\widehat X/Y}$ 
is self-adjoint and regular. 

Next note that elements $h\in \cal C^1_{\Phi-v}(\widehat X)$ are such that $[\eth^{\cal E}_{\widehat X/Y},h]$ is adjointable and bounded
and such that multiplication by $h$ preserves the VAPS-domain. Hence 
$\cal C^1_{\Phi-v}(\widehat X)\subseteq \mathrm{Lip}(\eth^{\cal E}_{\widehat X/Y})$ and 
$\mathrm{Lip}(\eth^{\cal E}_{\widehat X/Y}) = \mathrm{Lip}_{\cal K}(^{\cal E}_{\widehat X/Y})$ since the resolvent of each $\eth^E_{L_y,VAPS}$ is 
compact. Thus the closure of $\mathrm{Lip}_{\cal K}(^{\cal E}_{\widehat X/Y})$ contains the multiplication operators acting by elements of 
$\cal C_{\Phi-v}(\widehat X).$ This shows that these data define an unbounded $\cal C_{\Phi-v}(\widehat X)$-$\cal C(Y)$-cycle,
\begin{equation*}
	[\cal C_0(Y;L^2(\widehat X/Y;E)), \eth^{\cal E}_{\widehat X/Y}] \in \overline{\mathrm{UKK}}^{\dim Z}(\cal C_{\Phi-v}(\widehat X), \cal C(Y)).
\end{equation*}
If $X$ is not compact, but the links of $Z$ are compact, then as discussed at the end of \S\ref{sec:WedgeMetsDiracOps} we can define a KK-class using
functions that vanish at infinity.
Finally, since $\cal C_{\Phi-v}(\widehat X)$ is $*$-isomorphic to $\cal C(X)$ and the unbounded $\mathrm{KK}$-groups coincide with those defined
by Kasparov we can summarize this as folllows.

\begin{cor}\label{cor:GralKKClass}
If $Z \fib X \xlra{p} Y$ is a fiber bundle of smoothly stratified spaces, $\mathrm{res}_{\mathrm{fib}}(X,p) \xlra{\hat p} Y$ is endowed with a $\hat p$-vertical wedge Clifford structure whose associated Dirac-type operators satisfy the analytic Witt condition and $\cal E,$ $\eth^{\cal E}_{\widehat X/Y}$ are defined as above then these data define
\begin{equation*}
	[\eth^{\cal E}_{\widehat X/Y}] \in \mathrm{KK}^{\dim X/Y}(\cal C_0( X), \cal C_0(Y)).
\end{equation*}
\end{cor}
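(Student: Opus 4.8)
The plan is to assemble the pieces that the text has already carefully laid out, so the ``proof'' of Corollary~\ref{cor:GralKKClass} is essentially a verification that the data constructed just above it literally satisfy the definition of an unbounded $A$-$B$-cycle from \S\ref{sec:KKThy}, and then to translate that cycle into a genuine $\mathrm{KK}$-class. First I would fix the objects: set $A = \cal C_{\Phi-v}(\widehat X)$, $B = \cal C(Y)$ (or $\cal C_0(Y)$ in the non-compact case), $\cal E = \cal C_0(Y;L^2(\widehat X/Y;E))$ with its $B$-valued inner product, and $D = \eth^{\cal E}_{\widehat X/Y}$ with the domain \eqref{eq:DefethCalE}. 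I would then recall, citing the Corollary to \cite[Theorem~1]{AlbGel:IFFDTOP} proved just above, that the analytic Witt condition guarantees that each fibrewise operator $\eth^E_{L_y,VAPS}$ is self-adjoint, Fredholm, with compact resolvent; this is the analytic input that makes everything work.

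The key steps, in order, are: (i) $\cal E$ is a countably generated Hilbert $B$-module — this is immediate from its construction as the closure of $\cal C_c(\widehat X_{\mathrm{reg}};E)$ and its identification with continuous sections of the Hilbert bundle $L^2(\widehat X/Y;E)\to Y$; (ii) $A$ acts on $\cal E$ by adjointable bounded operators via left multiplication, and the grading (when $\dim X/Y$ is even) is as described; (iii) $D$ is closed, densely defined, self-adjoint and regular — this is exactly the cited application of \cite[Proposition~2.9]{Hil:FKBPVL}, using that $\mathrm{dom}(D)|_{y}$ is dense in (in fact equal to) $\mathrm{dom}(\eth^E_{L_y})$; (iv) $D$ is odd with respect to the grading, since each $\eth^E_{L_y}$ is; (v) the multiplication operators by elements of $\cal C^1_{\Phi-v}(\widehat X)$ lie in $\mathrm{Lip}(D)$ — because such functions are fibrewise Lipschitz and preserve the VAPS domain so that $[D,h]$ is adjointable and bounded — and lie in $\mathrm{Lip}_{\cal K}(D)$ because each fibrewise resolvent is compact so $h(\mathrm{Id}+D^2)^{-1/2}$ is a compact operator on the Hilbert module (one uses here that $\cal C^1_{\Phi-v}(\widehat X)$ is dense in $\cal C_{\Phi-v}(\widehat X)$ so that all of $A$ sits in the closure $\overline{\mathrm{Lip}_{\cal K}(D)}$); (vi) conclude that $(\cal E, D)$ is an unbounded $A$-$B$-cycle, hence defines a class in $\overline{\mathrm{UKK}}^{\dim X/Y}(A,B)$; (vii) invoke the van~den~Dungen--Mesland isomorphism $\overline{\mathrm{UKK}}(A,B)\cong \mathrm{KK}(A,B)$ (valid since $A$ is separable, being commutative and unital over a compact space, or $\sigma$-unital in the non-compact case), and the $*$-isomorphism $\cal C_{\Phi-v}(\widehat X)\cong \cal C(X)$ established after \eqref{eq:DefCPhi-v}, to land in $\mathrm{KK}^{\dim X/Y}(\cal C_0(X),\cal C_0(Y))$. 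In the non-compact case I would additionally invoke the local-Fredholm discussion at the end of \S\ref{sec:WedgeMetsDiracOps} and work with functions vanishing at infinity, exactly as flagged in the text.

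The main obstacle — really the only nontrivial point, everything else being bookkeeping — is step (v), and more precisely the claim that multiplication by $h\in\cal C^1_{\Phi-v}(\widehat X)$ preserves the vertical APS domain and yields a bounded adjointable commutator $[D,h]$ fibrewise and uniformly in $y$. The subtlety is that $\cal D_{VAPS}(\eth^E_{L_y})$ is the intermediate domain $\rho^{1/2}H^1_e\cap\cal D_{\max}$, and one must check that a function which is smooth on $\widehat X$ and pulled back appropriately from the boundary fiber bundles (the condition defining $\cal C_{\Phi-v}$) does not destroy either the weighted-edge-Sobolev regularity or the maximal-domain condition, and that $[\eth^E_{L_y},h]=\cl(dh)$ extends to a bounded operator on $L^2(L_y;E)$ with a bound locally uniform in $y$ (needed for adjointability over the Hilbert module $\cal E$). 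This is handled by the structure of wedge differential operators recalled in \S\ref{sec:WedgeMetsDiracOps}: for such $h$, $dh$ is a bounded wedge one-form, so Clifford multiplication by it is a bounded bundle endomorphism, and multiplication by $h$ commutes with the edge vector fields up to a bounded term, hence preserves $\rho^{1/2}H^1_e$; preservation of $\cal D_{\max}$ then follows because $h\cdot u\in L^2$ and $\eth^E(hu)=h\eth^E u+\cl(dh)u\in L^2$ whenever $u\in\cal D_{\max}$. I would present this step carefully and treat the remaining verifications as routine consequences of the definitions and of the cited theorems.
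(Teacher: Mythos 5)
Your proposal is correct and follows essentially the same route as the paper: the paper's ``proof'' of this corollary is exactly the text preceding it, which verifies that $\bigl(\cal C_0(Y;L^2(\widehat X/Y;E)),\,\eth^{\cal E}_{\widehat X/Y}\bigr)$ is an unbounded $\cal C_{\Phi-v}(\widehat X)$-$\cal C(Y)$-cycle via the same sequence of checks you list, then passes to $\mathrm{KK}$ via the van den Dungen--Mesland identification and the $*$-isomorphism $\cal C_{\Phi-v}(\widehat X)\cong \cal C(X)$. Your elaboration of step (v) — why $h\in\cal C^1_{\Phi-v}(\widehat X)$ preserves the VAPS domain and why $[\eth^{\cal E}_{\widehat X/Y},h]=\cl(dh)$ is bounded — fills in a point the paper asserts without argument, and is a welcome addition.
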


The case of most interest to us comes from specializing this corollary to the signature operator.

\begin{cor}\label{cor:SignKKClass}
To any smooth oriented fiber bundle $p: X \lra Y$ of stratified spaces, whose typical fiber $Z$ is a Witt pseudomanifold, there is associated a KK-class
\begin{equation*}
	[D^{\mathrm{sign}}_{X/Y}] \in \mathrm{KK}^{\dim X/Y}(\cal C_0(X), \cal C_0(Y))
\end{equation*}
given by the KK-class of the family of signature operators associated to any totally geodesic vertical wedge metric.
\end{cor}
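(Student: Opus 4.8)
The plan is to deduce Corollary~\ref{cor:SignKKClass} from Corollary~\ref{cor:GralKKClass} by verifying that the family of vertical signature operators fits the hypotheses of that statement, and then checking that the resulting KK-class is independent of the choices made. First I would pass from the given fiber bundle $Z \fib X \xlra p Y$ to the partial resolution along the fibers, $L = \mathrm{res}(Z) \fib \widehat X = \mathrm{res}_{\mathrm{fib}}(X,p) \xlra{\hat p} Y$, which by Theorem~\ref{thm:ResolvedFibrations} carries a vertical iterated fibration structure. On this partial resolution I would build the natural $\hat p$-vertical wedge Clifford structure: namely choose a connection for $\hat p$ and a totally geodesic family of vertical wedge metrics $g_{X/Y}$ on the fibers (equivalently, a bundle metric on ${}^wT\widehat X/Y$), and take $E = \Lambda^*({}^wT^*_{\bb C}\widehat X/Y)$ with the Hermitian metric and connection induced by $g_{X/Y}$, Clifford multiplication being the usual $c(\theta) = \theta\wedge\cdot - \iota_{\theta^\sharp}$, and the $\bb Z_2$-grading given by $\star$ (the renormalized Hodge star) in the even-dimensional fiber case, or the restriction to the $+1$-eigenspace of $\star$ in the odd-dimensional case, exactly as in \S\ref{sec:WedgeMetsDiracOps}. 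The associated family of Dirac-type operators is then the vertical family of signature operators $D^{\mathrm{sign}}_{X/Y}$, fiberwise equal to the signature operator of $(L_y, g_{L_y})$.

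The key point to verify is the analytic Witt condition for this family. Here I would invoke the fact, already recorded in \S\ref{sec:WedgeMetsDiracOps} (following \cite[Corollary 4.2]{AlbLeiMazPia:HTCS}, and ultimately Siegel \cite{Sie:WSGCTKOP}), that the de~Rham operator $d+\delta$ — and hence the signature operator — of a wedge metric on a space satisfies the analytic Witt condition precisely when that space is a Witt pseudomanifold; the boundary families of $D^{\mathrm{sign}}_{L_y}$ are (essentially) the signature operators of the links, and these are invertible on their vertical APS domains exactly by the Witt condition on $Z$ (and hence on every fiber, since the fibers are stratified-diffeomorphic to $Z$). Since the Witt condition, unlike invertibility of general boundary families, depends only on the rational intersection homology of the links and not on the chosen vertical wedge metric, it holds for the whole family simultaneously. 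With the analytic Witt condition in hand, Corollary~\ref{cor:GralKKClass} applies verbatim and produces a class $[D^{\mathrm{sign}}_{X/Y}] \in \mathrm{KK}^{\dim X/Y}(\cal C_0(X), \cal C_0(Y))$.

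Finally I would address well-definedness. The construction depends a priori on (i) the choice of connection for $\hat p$ and (ii) the choice of totally geodesic family of vertical wedge metrics $g_{X/Y}$. Both of these form a contractible (in particular path-connected, convex after fixing local data) space of choices: any two families of vertical wedge metrics are joined by the straight-line homotopy $t\mapsto tg_0 + (1-t)g_1$, which is again a family of vertical wedge metrics (totally geodesic up to $\cal O(x^2)$), and quasi-isometric to its endpoints so that the Hilbert module $\cal E = \cal C_0(Y;L^2(\widehat X/Y;E))$ is unchanged as a set. This path lifts to an operator homotopy of self-adjoint regular Fredholm operators through the VAPS domains — the argument is identical to the metric-independence argument in the proof of Proposition~\ref{prop.stratdiffeoinvariance}, applied fiberwise and with $\cal C_0(Y)$-coefficients — and operator homotopies of unbounded $\cal C_0(X)$-$\cal C_0(Y)$-cycles do not change the class in $\overline{\mathrm{UKK}}$. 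The choice of connection for $\hat p$ enters only through the identification of $\widehat X$-bundles and is similarly absorbed. I expect the main obstacle to be the careful bookkeeping showing that the homotopy of vertical wedge metrics genuinely produces a norm-continuous (hence operator-homotopy) path of regular self-adjoint operators \emph{over the parameter space $Y$} with the $\cal C_0(Y)$-Hilbert-module structure intact — i.e., promoting the single-operator metric-independence argument to the family setting — together with confirming that the various $*$-isomorphisms $\cal C_{\Phi-v}(\widehat X)\cong \cal C(X)$ are compatible across the homotopy; the analytic input itself is essentially already done in \cite{AlbGel:IFFDTOP} and the present \S\ref{sec:WedgeMetsDiracOps}.
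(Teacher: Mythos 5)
Your proposal is correct and takes essentially the same approach as the paper: apply Corollary~\ref{cor:GralKKClass} to the vertical signature family (using that the analytic Witt condition for the signature operator is equivalent to the topological Witt condition on the fibers, hence automatic here), then establish independence of the choice of totally geodesic vertical wedge metric by an operator homotopy of the resulting unbounded cycles. The paper's proof is shorter, leaving the Witt-condition check and the precise form of the homotopy implicit, but the structure and the key inputs are the same as yours.
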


\begin{proof}
For any choice of totally geodesic vertical wedge metric Corollary \ref{cor:GralKKClass} defines a KK-class from the family of signature operators
of the resolutions of the fibers of $p,$ so we only need to justify that this class is independent of the choice of metric, and so it suffices to note that 
any two appropriate totally geodesic vertical wedge metric can be connected by a homotopy which induces a homotopy of the corresponding families of
signature operators and hence does not affect the KK-class.
\end{proof}

Just as we defined the analytic orientation by normalizing the K-homology class of the signature operator after inverting $2,$ so too do we define the analytic transfer class in KK-theory as a normalized version of the class of the vertical family of signature operators after inverting $2.$ This has two advantages: first, product formulas will hold `on the nose' without occasional factors of two (see Corollary \ref{cor:FunctorialityFibrations}) and secondly, to be consistent with the K-homology class of the signature operator when $Y$ is a point (see Proposition \ref{prop.gysinelemissignopclass}). 

\begin{defn} \label{def:AnalyticTransferClass}
Let $p: X \lra Y$ be a smooth oriented fiber bundle of stratified spaces whose typical fiber $Z$ is a Witt pseudomanifold. We define {\bf the analytic transfer class associated to $p$ to be\footnote{NB: In the setting of Lipschitz manifolds, Hilsum \cite[\S 2.4]{Hil:FKBPVL} uses the same notation with a slightly different meaning: for him $\Sigma(p)$ denotes the class of $[D^{\mathrm{sign}}_{X/Y}].$}
\begin{equation*}
	\Sigma(p) = 2^{-\lfloor \dim Z /2 \rfloor} [D^{\mathrm{sign}}_{X/Y}] 
	\in \mathrm{KK}^{\dim X/Y}(\cal C_0(X), \cal C_0(Y))[\tfrac12].
\end{equation*}
}
\end{defn}

Another important special case is when the partial resolution corresponding to the fibers of the fiber bundle $Z \fib X \xlra{p} Y,$
\begin{equation*}
	\hat X = \mathrm{res}_{\mathrm{fib}}(X,p) \xlra{\hat p} Y
\end{equation*}
admits a vertical wedge spin structure; that is, when the vertical wedge tangent bundle ${}^w T\hat X/Y$ is spin.
Choosing a spin bundle and a vertical wedge metric we obtain a family of spin Dirac operators and if they satisfy the analytic Witt condition then we obtain a KK-class. 

\begin{cor} \label{cor:DefSpinKK}
To any smooth fiber bundle $p: X \lra Y$ of smoothly stratified spaces, whose vertical wedge tangent bundle ${}^w T\mathrm{res}_{\mathrm{fib}}(X,p)/Y$ is endowed with a spin structure, there is for each vertical wedge metric $g$ an associated a KK-class
\begin{equation*}
	[D^{\mathrm{spin}}_{X/Y}] \in \mathrm{KK}^{\mathrm{dim X/Y}}(\cal C_0(X), \cal C_0(Y))
\end{equation*}
given by the KK-class of the corresponding family of spin Dirac operators.
\end{cor}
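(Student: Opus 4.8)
The plan is to follow the same template just used for the signature operator in Corollary \ref{cor:SignKKClass}, replacing ``totally geodesic vertical wedge metric'' with ``vertical wedge metric together with a choice of spin bundle''. First I would note that if ${}^w T\mathrm{res}_{\mathrm{fib}}(X,p)/Y$ carries a spin structure, then for any vertical wedge metric $g_{\widehat X/Y}$ the associated vertical wedge spinor bundle $\Spi \lra \widehat X = \mathrm{res}_{\mathrm{fib}}(X,p)$ inherits a Hermitian metric, a metric connection induced by the Levi-Civita connection on the vertical wedge tangent bundle (available for totally geodesic wedge metrics by the Koszul formula, as recalled in \S\ref{sec:WedgeMetsDiracOps}), and Clifford multiplication $\cl: \bb C \otimes \mathrm{Cl}({}^wT^*\widehat X/Y, g_{\widehat X/Y}) \lra \mathrm{End}(\Spi)$. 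These data constitute a $\hat p$-vertical wedge Clifford structure in the sense of Definition \ref{def:CliffStr}(1), $\bb Z_2$-graded when $\dim X/Y$ is even.

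Next, since we have assumed the family of spin Dirac operators $\eth^{\Spi}_{\widehat X/Y}$ satisfies the analytic Witt condition, Corollary \ref{cor:GralKKClass} applies verbatim: the data $(\cal E = \cal C_0(Y; L^2(\widehat X/Y;\Spi)), \eth^{\cal E}_{\widehat X/Y})$ built exactly as in the paragraphs preceding that corollary form an unbounded $\cal C_{\Phi-v}(\widehat X)$-$\cal C(Y)$-cycle, and since $\cal C_{\Phi-v}(\widehat X)$ is $*$-isomorphic to $\cal C(X)$ (or $\cal C_0(X)$ in the non-compact case) and the unbounded $\mathrm{KK}$-groups of van den Dungen--Mesland coincide with Kasparov's, this produces the desired class
\begin{equation*}
	[D^{\mathrm{spin}}_{X/Y}] := [\eth^{\cal E}_{\widehat X/Y}] \in \mathrm{KK}^{\dim X/Y}(\cal C_0(X), \cal C_0(Y)).
\end{equation*}
The only point requiring a word is the independence from the auxiliary choices. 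A vertical wedge metric is part of the stated data, so it need not be eliminated; but one should check that the class does not depend on the choice of totally geodesic representative within a fixed quasi-isometry class (equivalently, a fixed wedge metric), since the analytic Witt condition is insensitive to this. As in the proof of Corollary \ref{cor:SignKKClass}, two such representatives are joined by a convex homotopy $t \mapsto t g_0 + (1-t) g_1$ which, because the analytic Witt condition persists along the path, lifts to an operator homotopy of the corresponding families $\eth^{\cal E}_{\widehat X/Y}$ within regular self-adjoint Fredholm operators, and operator homotopies do not change the $\mathrm{KK}$-class.

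I expect the main obstacle to be essentially bookkeeping rather than a genuine analytic difficulty: one must make sure that the connection on $\Spi$ induced from the vertical Levi-Civita connection is genuinely a metric connection compatible with Clifford multiplication in the sense required by Definition \ref{def:CliffStr} at the level of the grid resolution (so that the operator is a wedge Dirac-type operator to which \cite[Theorem 1]{AlbGel:IFFDTOP} and its corollary apply), and that the boundary families of $\eth^{\Spi}$ are again (up to the identification of \cite[Lemma 2.2]{AlbGel:IFFDTOP}) spin Dirac families on the link bundles, so that the analytic Witt hypothesis is being imposed on the natural objects. Unlike the signature operator, the analytic Witt condition here genuinely depends on $g$ (as emphasized in the third Remark of the Introduction), which is precisely why $g$ appears in the statement and why the only homotopy invariance one can assert is within a fixed metric; I would state this explicitly so the reader does not expect metric-independence.
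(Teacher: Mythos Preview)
Your core argument is correct and matches the paper's treatment: this corollary has no separate proof in the paper and is simply the specialization of Corollary~\ref{cor:GralKKClass} to the case where the vertical wedge Clifford structure is the spinor bundle associated to a spin structure and a chosen vertical wedge metric. The paper's entire ``proof'' is the sentence preceding the statement: choosing a spin bundle and a vertical wedge metric gives a family of spin Dirac operators, and if they satisfy the analytic Witt condition one obtains a KK-class.

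However, your paragraph on ``independence from auxiliary choices'' is both unnecessary and contains a problematic claim. It is unnecessary because the metric $g$ is part of the stated data; the paper does not assert any independence, and in fact the sentence immediately following the corollary emphasizes that the class \emph{will} depend on the choice of wedge metric. More seriously, your assertion that ``the analytic Witt condition persists along the path'' $t\mapsto tg_0+(1-t)g_1$ is not justified for spin Dirac operators: the boundary families here are spin Dirac operators on the links with the induced metrics, and their invertibility is a spectral (not topological) condition that can fail at intermediate values of $t$ even if it holds at the endpoints. This is exactly the contrast with the signature operator that you yourself flag at the end. You should simply drop the independence discussion; once $g$ is fixed, there is nothing further to check.
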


Unlike for smooth manifolds, the class of the spin Dirac operator in KK-theory will depend on the choice of wedge metric. Indeed, whether or not the spin Dirac operator is Fredholm will depend on the choice of metric.

A natural condition that ensures that the analytic Witt condition holds is known as `psc-Witt'. Suppose that $(X,g)$ is a smoothly stratified pseudomanifold all of whose strata are spin, endowed with wedge metric $g,$ we say that the {\bf psc-Witt condition} holds if the induced metrics on the links of $X$ have positive scalar curvature (see \cite{BotPiaRos:PSCSPFGSI, BotPiaRos:PSCSCSP}). Since all of the strata are spin, the boundary family at each stratum consists of the spin Dirac operators of the links and hence the Lichnerowicz formula shows that the psc-Witt condition implies the analytic Witt condition.
A stronger sufficient condition is to assume that the wedge metric has non-negative scalar curvature near the singularities; this implies that the operators are also essentially self-adjoint (see, e.g., \cite[\S 7]{AlbGel:IDOIES}).
(One can analogously treat spin-c structures; the analogue of the psc-Witt condition is the `generalized psc-Witt condition' see \cite{BotRos:GPSCSM}.)\\

Note that if in the fiber bundle $Z \fib X \xlra{p} Y$ the base $Y$ consists of a single point then the construction of the unbounded 
$\cal C_{\Phi-v}(\widehat X)$-$\cal C(Y)$-cycle coincides with the construction of the KK-class associated to a single Dirac-type operator in 
\S\ref{sec:KKThy}. The special case of the signature operator is important enough for us that we label it as a proposition.

\begin{prop} \label{prop.gysinelemissignopclass}
Let $X^n$ be an orientable compact closed $n$-dimensional smoothly stratified Witt pseudomanifold. Then the Gysin element of the constant map
$h:X\to \pt$ agrees with the class of the signature operator,
\[ \Sigma(h) =  \mathrm{sign}_K(X) \;\;\text{in}\;\;\mathrm{K}_n(X)[\tfrac12] \]
\end{prop}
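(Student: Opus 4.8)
The statement asserts that when $Y = \mathrm{pt}$, the normalized KK-class $\Sigma(h) = 2^{-\lfloor n/2\rfloor}[D^{\mathrm{sign}}_{X/\mathrm{pt}}] \in \mathrm{KK}^n(\cal C_0(X), \cal C_0(\mathrm{pt}))[\tfrac12] = \K^{\an}_n(X)[\tfrac12]$ coincides with $\mathrm{sign}_K(X) = 2^{-\lfloor n/2\rfloor}[D^{\mathrm{sign}}_X]$. After cancelling the identical normalization factors, it suffices to prove the unnormalized identity
\begin{equation*}
	[D^{\mathrm{sign}}_{X/\mathrm{pt}}] = [D^{\mathrm{sign}}_X] \quad \text{in } \K^{\an}_n(X) = \mathrm{KK}^n(\cal C_0(X), \bb C).
\end{equation*}
The plan is to unwind both sides of this equation to the same unbounded cycle. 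On the left, the fiber bundle in question is $X \fib X \xlra{h} \mathrm{pt}$ with $Z = X$; its partial resolution with respect to the fibers is $\mathrm{res}_{\mathrm{fib}}(X,h) = \mathrm{res}(X) = M$, and $\hat h: M \to \mathrm{pt}$. On the right, the class $[D^{\mathrm{sign}}_X]$ from \S\ref{sec:KKThy} is likewise built from $M = \mathrm{res}(X)$ with the wedge signature operator and the algebra $\cal C_{\Phi,0}(M) \cong \cal C_0(X)$.

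First I would observe that the construction of the unbounded $\cal C_{\Phi-v}(\widehat X)$-$\cal C(Y)$-cycle preceding Corollary \ref{cor:GralKKClass} degenerates, when $Y = \mathrm{pt}$, to exactly the construction of \S\ref{sec:KKThy}. Indeed the Hilbert module $\cal E = \cal C_0(\mathrm{pt}; L^2(\widehat X/\mathrm{pt}; E))$ is just the Hilbert space $L^2(M; \Lambda^*({}^w T^*M))$ regarded as a $\bb C$-module; the $\cal C_0(\mathrm{pt}) = \bb C$-valued inner product is the ordinary $L^2$-inner product; the family operator $\eth^{\cal E}_{\widehat X/\mathrm{pt}}$ defined in \eqref{eq:DefethCalE} is a single operator, namely $\eth_{M,VAPS}$ with its vertical APS domain; and $\cal C_{\Phi-v}(\widehat X) = \cal C_{\Phi-v}(M)$, where the only condition is at the boundary faces $\pa_a^v M$ over the single point, which is precisely the condition defining $\cal C_{\Phi}(M)$. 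Thus the two cycles are literally the same data, and therefore represent the same class. The identification is canonical once one notes that the partial fiber resolution of $X$ over a point is the full resolution $\mathrm{res}(X)$, which follows from Theorem \ref{thm:ResSASS}.

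The remaining point is to check that the vertical wedge Clifford structure attached to the signature operator in Corollary \ref{cor:SignKKClass} (built from the wedge differential forms on the vertical wedge cotangent bundle ${}^w T^*X/\mathrm{pt}$ together with a totally geodesic vertical wedge metric, and the Hodge $\star$-grading) restricts over $Y = \mathrm{pt}$ to the wedge Clifford module on $M = \mathrm{res}(X)$ underlying $D^{\mathrm{sign}}_X$ as in \S\ref{sec:WedgeMetsDiracOps}. This is immediate from the definitions: ${}^w T^*X/\mathrm{pt}$ pulls back to ${}^w T^*M$ under $\beta_X$, a vertical wedge metric over a point is just a wedge metric on $M$, and the signature operator construction ($d + \delta$ complexified, graded by $\star$ in the even case or restricted to the $+1$-eigenspace in the odd case) is identical. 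Metric-independence on both sides is already established (Proposition \ref{prop.stratdiffeoinvariance} and Corollary \ref{cor:SignKKClass}), so no choices obstruct the comparison. I expect the only mildly delicate point to be bookkeeping: verifying that $\cal C_{\Phi-v}(\mathrm{res}_{\mathrm{fib}}(X,h))$ with $h$ the map to a point is literally $\cal C_\Phi(\mathrm{res}(X))$ as subalgebras of $\cal C(\mathrm{res}(X))$, i.e. that the vertical boundary faces $\pa_a^v$ of the fiber resolution over a point are exactly the boundary faces $\pa_a$ of $\mathrm{res}(X)$ with the same fibrations $\phi_a$; this follows by tracing through the construction of $\mathrm{res}_{\mathrm{fib}}$ in \S\ref{sec:ResStratFibBdles} in the degenerate case $\max S(Y) = S(Y)$, where resolving the strata $S(W) \times (\max S(Y)) = S(W)$ of $X = W$ is the same as resolving all of $W$. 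Once this identification is in place, the proposition follows with essentially no analysis.
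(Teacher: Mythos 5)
Your proposal is correct and follows the same approach as the paper: the paper justifies this proposition with the single observation (in the paragraph immediately preceding it) that when $Y=\mathrm{pt}$ the construction of the unbounded $\cal C_{\Phi-v}(\widehat X)$-$\cal C(Y)$-cycle coincides with the construction of $[D^{\mathrm{sign}}_X]$ in \S\ref{sec:KKThy}. You have simply unwound this identification explicitly (fiber resolution equals full resolution, Hilbert module degenerates to $L^2(M;E)$, $\cal C_{\Phi-v}(M)=\cal C_\Phi(M)$, matching Clifford structures), which is exactly the content the paper leaves to the reader.
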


\begin{defn}
The {\bf analytic transfer maps} associated to any smooth oriented fiber bundle $p: X \lra Y$ of stratified spaces, whose typical fiber $Z$ is an oriented Witt pseudomanifold, are the maps induced by Kasparov product with $\Sigma(p).$
We denote the corresponding maps in K-homology by
\begin{equation*}
	\xymatrix @R=1pt {
	\mathrm K_*^{\an}(Y)[\tfrac12] 
	= \mathrm{KK}^{*}(C_0(Y), \bb C)[\tfrac12]  \ar[r]^-{p^!} & \mathrm{KK}^{*}(C_0(X),\bb C)[\tfrac12]  
	= \mathrm K_*^{\an}(X)[\tfrac12] \\
	[A] \ar@{|->}[r] & \Sigma(p) \otimes [A], }	
\end{equation*}
and the corresponding map in topological K-theory by
\begin{equation*}
	\xymatrix @R=1pt {
	\mathrm K^{*}(Y)[\tfrac12]  
	= \mathrm{KK}^{*}(\bb C, C_0(X))[\tfrac12]  \ar[r]^-{p_!} & \mathrm{KK}^{*}(\bb C, C_0(Y))[\tfrac12]  
	= \mathrm K^{*}(Y)[\tfrac12]  \\
	[A] \ar@{|->}[r] & [A] \otimes \Sigma(p) }	
\end{equation*}
where $\otimes$ denotes the Kasparov product.
\end{defn}

%%%%%%%%%%%%%%%%%%%%%%%%%%%%%%
\subsection{Functoriality for fibrations}\label{subsect:functoriality-for-fibrations}
%%%%%%%%%%%%%%%%%%%%%%%%%%%%%%

In this section our aim is to start with a commutative diagram of fiber bundles of smoothly stratified spaces 
\begin{equation}\label{eq:3fibrations}
	\xymatrix{
	X_1 \ar[r]^-{p_{12}} \ar[rd]_-{p_{13}} & X_2 \ar[d]^-{p_{23}} \\
	& X_3}
\end{equation}
and enough data to define appropriate Dirac-type operators and then to understand the relation between the corresponding 
$\mathrm{KK}$-classes.
In particular we will generalize \cite[Lemma 6]{RosWei:SO} and relate the analytic signature class of $p_{13}$ with the Kasparov product of those of $p_{12}$ and $p_{23}$ in Corollary \ref{cor:FunctorialityFibrations}.
An important special case corresponds to taking $X_3$ to be a single point: 

\begin{thm} \label{thm.bundletransferpreservesksignature}
If $X \xlra{p} Y$ is an oriented fiber bundle
map of Witt pseudomanifolds, then
\begin{equation*}
	\mathrm{sign}_K(X) = \Sigma(p) \otimes \mathrm{sign}_K(Y) = p^! \mathrm{sign}_K(Y) \quad
	\text{ in } \mathrm K_{\dim X}^{\mathrm{an}}(X)[\tfrac12]
\end{equation*}
i.e., the analytic transfer map takes the signature orientation of the base $Y$ to the signature orientation of the total space $X.$
\end{thm}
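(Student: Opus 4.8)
The plan is to deduce this theorem as the special case $X_3 = \mathrm{pt}$ of the general functoriality statement in \eqref{intro:functoriality}, which (up to the normalizing powers of $2$ that are absorbed once we invert $2$) will be the content of Corollary~\ref{cor:FunctorialityFibrations}. Concretely, take $X_1 = X$, $X_2 = Y$, $X_3 = \mathrm{pt}$, $p_{12} = p$, and $p_{13}, p_{23}$ the constant maps. By Proposition~\ref{prop.gysinelemissignopclass}, the analytic transfer class of a constant map $h: Z \to \mathrm{pt}$ of a Witt pseudomanifold is exactly $\Sigma(h) = \mathrm{sign}_K(Z)$, so $\Sigma(p_{13}) = \mathrm{sign}_K(X)$ and $\Sigma(p_{23}) = \mathrm{sign}_K(Y)$. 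The functoriality formula $\Sigma(p_{13}) = \Sigma(p_{12}) \otimes \Sigma(p_{23})$ then reads $\mathrm{sign}_K(X) = \Sigma(p) \otimes \mathrm{sign}_K(Y)$ in $\mathrm K_{\dim X}^{\mathrm{an}}(X)[\tfrac12]$, which is the asserted identity once we recall that $p^!(-) = \Sigma(p) \otimes (-)$ by the definition of the analytic Gysin map.

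The substantive work therefore lies entirely in establishing the general functoriality statement for the three-fold tower \eqref{eq:3fibrations}, i.e. in proving Theorem~\ref{thm:FunctorialityFibrations}/Corollary~\ref{cor:FunctorialityFibrations}. The strategy there is the classical one, adapted to the stratified edge setting: realize the vertical signature operator of $p_{13}$ on the fibers of $X_1 \to X_3$ as (the closure of) $D^{\mathrm{sign}}_{X_1/X_2} \,\hat\otimes\, I + I \,\hat\otimes\, D^{\mathrm{sign}}_{X_2/X_3}$ with respect to a product-type decomposition of the vertical wedge tangent bundle, invoke the Kottke--Rochon splitting \eqref{eq:WedgePullBackBundle} (extended to fiber bundles, as discussed after Theorem~\ref{thm:ResolvedFibrations}) to justify such a decomposition after passing to grid resolutions, and then identify this sum with a representative of the Kasparov product $\Sigma(p_{12}) \otimes \Sigma(p_{23})$ using the unbounded picture of $\mathrm{KK}$-theory of van den Dungen--Mesland together with the Clifford-algebra bookkeeping of \cite[Lemma 6]{RosWei:SO}, which accounts for the parity-dependent factor $\ell \in \{1,2\}$. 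The normalization $\Sigma(p) = 2^{-\lfloor \dim Z/2\rfloor}[D^{\mathrm{sign}}_{X/Y}]$ is precisely chosen so that, after inverting $2$, these factors of $\ell$ disappear and the transfer classes multiply on the nose.

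For the statement at hand, once the functoriality corollary is in place the only remaining points to check are bookkeeping: that Proposition~\ref{prop.gysinelemissignopclass} applies (the fibers of $p$, and $X$ and $Y$ themselves, are oriented Witt pseudomanifolds, so the signature operators satisfy the analytic Witt condition and the relevant $\mathrm{KK}$-classes are defined), and that the degree count $\dim X = \dim X/Y + \dim Y$ is consistent with the grading in the Kasparov product $\mathrm{KK}^{\dim X/Y}(C_0(X),C_0(Y)) \otimes \mathrm{KK}^{\dim Y}(C_0(Y),\bb C) \to \mathrm{KK}^{\dim X}(C_0(X),\bb C)$. The main obstacle is not in this final deduction but upstream, in the identification of the vertical signature operator of the composite fibration with the graded tensor-sum of the two vertical signature operators at the level of resolved spaces: one must control the edge-operator analysis (the VAPS domains, the boundary families, and the fact that elliptic operators need not be Fredholm) well enough to know that the evident algebraic splitting of the operator is compatible with the chosen self-adjoint extensions and persists within Fredholm operators under the metric homotopies used to pass between a product-type metric and an arbitrary totally geodesic vertical wedge metric. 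That is the step where the intricacies flagged in the introduction—multiple Fredholm quantizations of a given elliptic symbol, the role of normal operators—have to be managed, and it is carried out in Subsection~\ref{subsect:functoriality-for-fibrations}.
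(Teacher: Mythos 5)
Your proposal matches the paper's route exactly: Theorem~\ref{thm.bundletransferpreservesksignature} is not given a separate proof but is flagged as the special case $X_1=X$, $X_2=Y$, $X_3=\mathrm{pt}$ of Corollary~\ref{cor:FunctorialityFibrations}, with Proposition~\ref{prop.gysinelemissignopclass} identifying $\Sigma(p_{i3})$ with $\mathrm{sign}_K(X_i)$, and all the analytic content lives in Theorem~\ref{thm:FunctorialityFibrations} (tensor-sum decomposition of the vertical signature operator via the Kottke--Rochon splitting of the wedge tangent bundle, Kucerovsky's unbounded criterion for the Kasparov product, and the Rosenberg--Weinberger Clifford bookkeeping absorbed by the $2^{-\lfloor \cdot/2\rfloor}$ normalization). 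Your description of where the technical difficulty lies (compatibility of the algebraic splitting with the VAPS self-adjoint extensions and persistence within Fredholm operators under the relevant metric homotopies) is also the point the paper addresses via Step~1 of the proof of Theorem~\ref{thm:FunctorialityFibrations}.
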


A special case of this latter result, when $X\lra Y$ is a real oriented vector bundle over a Witt pseudomanifold, was worked out by Hilsum in 
\cite[Th\'eor\`eme 4.4, pg. 187]{Hil:PDBDKP}.

Our proof of the functoriality, just as proofs of similar statements for smooth manifolds in, e.g., \cite{KaaSui:RSFDO,Dun:KPSOM},
consists of relating the corresponding Dirac-type operators and then using Kucerovsky's unbounded version of the Connes-Skandalis
characterization of the Kasparov product \cite{Kuc:KUM}.

$ $\\
We will show something more general than the relation between the analytic signature classes, namely that data sufficient to define a family of Dirac-type operators for $p_{12}$ and $p_{23}$ are also sufficient to define a family of Dirac-type operators for $p_{13}$ and that the resulting KK-classes are related by Kasparov product. To state this more precisely let us start by denoting the typical fiber of the map $p_{ij}$ by $F_{ij}$ so that
\begin{equation*}
\begin{gathered}
	\mathrm{res}(F_{12}) \fib \widehat X_1:= \mathrm{res}_{\mathrm{fib}}(X_1,p_{12}) \xlra{\widehat p_{12}} X_2, \\
	\mathrm{res}(F_{23}) \fib \widehat X_2:= \mathrm{res}_{\mathrm{fib}}(X_2,p_{23}) \xlra{\widehat p_{23}} X_3, \\
	\mathrm{res}(F_{13})= \mathrm{res}(F_{12}) \wt\times \mathrm{res}(F_{23}) \fib  
		\wt X_1:= \mathrm{res}_{\mathrm{fib}}(X_1,p_{13}) \xlra{\widehat p_{13}} X_3,
\end{gathered}
\end{equation*}
where $\wt\times$ denotes the ordered product of Kottke-Rochon \cite{KotRoc:PMWFC} mentioned in \S\ref{sec:ResStratFibBdles}.
Locally over an open set $\cal U \subseteq X_3$ on which we can trivialize all of the $p_{ij}$ we have natural maps
\begin{equation*}
	\xymatrix{
	\mathrm{res}(F_{12}) \wt\times \mathrm{res}(F_{23}) \times \cal U \ar[r] \ar[d] 
		& \mathrm{res}(F_{12}) \times F_{23} \times \cal U \ar[d] \\
	\mathrm{res}(F_{23}) \times \cal U \ar[r] & F_{23} \times \cal U \ar[d] \\ & \cal U }
\end{equation*}
and these maps fit together into the diagram
\begin{equation*}
	\xymatrix{
	\mathrm{res}_{\mathrm{fib}}(X_1, p_{13}) \ar[r]^-\alpha \ar[d]_-\beta & \mathrm{res}_{\mathrm{fib}}(X_1, p_{12}) \ar[d] \\
	\mathrm{res}_{\mathrm{fib}}(X_2, p_{23}) \ar[r] & X_2 \ar[d] \\ & X_3. }
\end{equation*}
The bundle 
\begin{equation}\label{eq:TX1/X3}
	 \alpha^* {}^wT^*\widehat X_1/X_2 \oplus \beta^* {}^wT^*\widehat X_2/X_3 \lra \mathrm{res}_{\mathrm{fib}}(X_1, p_{13})
\end{equation}
is, by \cite[Alternate proof of Theorem 6.8]{KotRoc:PMWFC} mentioned above \eqref{eq:WedgePullBackBundle}, the vertical wedge cotangent
bundle of $\mathrm{res}_{\mathrm{fib}}(X_1, p_{13}) \lra X_3,$ ${}^w T^*\wt X_1/X_3.$
Thus vertical wedge metrics for $\widehat X_1 \lra X_2$ and $\widehat X_2 \lra X_3$ induce a vertical wedge metric for $\wt X_1 \lra X_3.$

If we are given 
\begin{itemize}
	\item a vertical wedge Clifford structure 
		$(g_{\widehat X_1/X_2}, E_{12} \lra \widehat X_1, g_{E_{12}}, \nabla^{E_{12}}, \cl),$
		for $\widehat X_1 = \mathrm{res}_{\mathrm{fib}}(X_1,p_{12}) \xlra{\widehat p_{12}} X_2,$
	\item a vertical wedge Clifford structure 
		$(g_{\widehat X_2/X_3}, E_{23} \lra \widehat X_2, g_{E_{23}}, \nabla^{E_{23}}, \cl),$
		for $\widehat X_2 = \mathrm{res}_{\mathrm{fib}}(X_2,p_{23}) \xlra{\widehat p_{23}} X_3,$
\end{itemize}
then we define a bundle $E_{13} \lra \mathrm{res}_{\mathrm{fib}}(X_1, p_{13})$ by
\begin{equation*}
	E_{13} = \alpha^*E_{12} \widehat\otimes \beta^*E_{23},
\end{equation*}
where $\widehat\otimes$ denotes the graded tensor product\footnote{The graded tensor product of Clifford modules is discussed in, e.g., 
\cite[\S6]{AtiBotSha:CM}.}. 
There is an induced bundle metric $g_{E_{13}} = \alpha^*g_{E_{12}} \otimes \beta^*g_{E_{23}},$ an induced connection
\begin{equation*}
	\nabla^{E_{13},\oplus} = \alpha^*\nabla^{E_{12}} \otimes 1 + 1 \otimes \beta^*\nabla^{E_{23}},
\end{equation*}
and an induced Clifford action: at each $\zeta \in \wt X_1,$
\begin{equation*}
\begin{gathered}
	\text{ for any }\theta \otimes \eta \in 
	\bb Cl(\alpha^* {}^wT^*_{\alpha(\zeta)}\widehat X_1/X_2) \widehat\otimes \bb Cl( \beta^* {}^wT^*_{\beta(\zeta)}\widehat X_2/X_3), \\
	\text{ and }\sigma \otimes \tau \in \alpha^*(E_{12})_{\alpha(\zeta)} \widehat\otimes \beta^*(E_{23})_{\beta(\zeta)}, \\
	\cl(\theta\otimes\eta)(\sigma \otimes \tau) = (-1)^{\mathrm{deg}(\omega)\mathrm{deg}(\sigma)}\cl(\theta)\sigma \otimes \cl(\eta)\tau.
\end{gathered}
\end{equation*}
Thanks to the identification of ${}^w T^*\wt X_1/X_3$ with $\alpha^* {}^wT^*\widehat X_1/X_2 \oplus \beta^* {}^wT^*\widehat X_2/X_3,$ this
defines an action of the complexified Clifford algebra of ${}^w T^*\wt X_1/X_3$ on $E_{13}.$

Thus we almost have an induced vertical Clifford structure for $\wt X_1 \lra X_3.$ The only issue is that $\nabla^{E_{13},\oplus}$ is a Clifford connection
with respect to the connection
\begin{equation*}
	\nabla^{\wt X_1/X_3, \oplus} = \alpha^*\nabla^{\hat X_1/X_2} \otimes 1 + 1 \otimes \beta^*\nabla^{\hat X_2/X_3},
\end{equation*}
and this is generally not equal to the vertical Levi-Civita connection determined by $g_{\wt X_1/X_3},$ $\nabla^{\wt X_1/X_3}.$
If we denote the difference between the two connections by
\begin{equation*}
	\nabla^{\wt X_1/X_3} - \nabla^{\wt X_1/X_3, \oplus} = \omega
\end{equation*}
then $\omega$ is an one-form valued in the endomorphisms of vertical tangent bundle.
We can regard $\omega$ as a $T^*\wt X_1$-valued vertical wedge two form using the identification
\begin{equation*}
	(\omega(V_1, V_2), V_3) = g_{\wt X_1/X_3}( (\nabla^{\wt X_1/X_3}_{V_3} - \nabla^{\wt X_1/X_3, \oplus}_{V_3}) V_1, V_2),
\end{equation*}
where $V_3 \in \cal C^{\infty}(\wt X_1; T\wt X_1)$ and $V_1, V_2 \in \cal C^{\infty}(\wt X_1; {}^w T\wt X_1/X_3),$ and then define
\begin{equation*}
	\cl(\omega) \in \cal C^{\infty}(\wt X_1; T^*\wt X_1 \otimes \bb Cl({}^w T^*\wt X_1/X_3)), \quad
	\cl(\omega) = \tfrac12 \sum_{a,b} \omega(e_a, e_b) \otimes \cl(e^a)\cl(e^b),
\end{equation*}
where $e_a$ runs over an orthonormal basis of ${}^w T\wt X_1/X_3,$
and it follows from \cite[Proposition 10.12]{BerGetVer:HKDO} that
\begin{equation*}
	\nabla^{E_{13}} := \nabla^{E_{13},\oplus} + \cl(\omega)
\end{equation*}
is a Clifford connection with respect to $\nabla^{\wt X_1/X_3}.$ We refer to the data defined in this way,
$(g_{\wt X_1/X_3}, E_{13} \lra \wt X_1, g_{E_{13}}, \nabla^{E_{13}}, \cl),$ as
the induced vertical Clifford structure for the fiber bundle $\mathrm{res}_{\mathrm{fib}}(X_1, p_{13}) \lra X_3.$

Finally, if the vertical wedge Clifford structures for $\hat X_1 \xlra{\hat p_{12}} X_2$ and $\hat X_2 \xlra{\hat p_{23}} X_3$ are $\bb Z_2$-graded then we endow the vertical wedge Clifford structure for $\hat X_1 \lra{\hat p_{13}} X_3$ with the natural $\bb Z_2$-grading. If neither of these is $\bb Z_2$-graded then  we endow the vertical wedge Clifford structure for $\hat X_1 \lra{\hat p_{13}} X_3$ with the $\bb Z_2$-grading obtained by considering the ungraded Clifford structures as $\bb Cl(1)$ structures in the usual way (see, e.g., \cite[Appendix A]{SuiVer:IUKPESIES} and \cite[Examples 2.38-2.40]{BraMesSui:GTSTUKP}).

\begin{thm}\label{thm:FunctorialityFibrations}
Suppose we are given fiber bundles of smoothly stratified spaces as in \eqref{eq:3fibrations} and vertical Clifford structures for 
$\mathrm{res}_{\mathrm{fib}}(X_1, p_{12}) \lra X_2$ and $\mathrm{res}_{\mathrm{fib}}(X_2, p_{23}) \lra X_3,$ and suppose that the families of Dirac-type operators of these two families, as well as that of the induced vertical Clifford structure for $\mathrm{res}_{\mathrm{fib}}(X_1, p_{13}) \lra X_3,$  satisfy the analytic Witt condition. Then the associated $\mathrm{KK}$-classes satisfy 
\begin{equation*}
	[\eth^{\cal E_{13}}_{X_1/X_3}] = [\eth^{\cal E_{12}}_{X_1/X_2}] \otimes [\eth^{\cal E_{23}}_{X_2/X_3}] 
		\in \mathrm{KK}^{\dim X_1/X_3}(C_0(X_1), C_0(X_3)).
\end{equation*}
\end{thm}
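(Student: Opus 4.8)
The plan is to verify the hypotheses of Kucerovsky's unbounded characterization of the Kasparov product \cite{Kuc:KUM} for the triple $(\cal E_{12}, \cal E_{23}, \cal E_{13})$ with operators $(\eth^{\cal E_{12}}_{X_1/X_2}, \eth^{\cal E_{23}}_{X_2/X_3}, \eth^{\cal E_{13}}_{X_1/X_3})$, exactly in the spirit of the analogous smooth-manifold statements in \cite{KaaSui:RSFDO, Dun:KPSOM}. First I would set up the interior tensor product: the Hilbert $C_0(X_2)$-module $\cal E_{12} = \cal C_0(X_2; L^2(\widehat X_1/X_2; E_{12}))$ and the Hilbert $C_0(X_3)$-module $\cal E_{23} = \cal C_0(X_3; L^2(\widehat X_2/X_3; E_{23}))$ compose, via the left action of $C_0(X_2)$ on $\cal E_{23}$, to a Hilbert $C_0(X_3)$-module $\cal E_{12} \widehat\otimes_{C_0(X_2)} \cal E_{23}$. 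Using the local trivializations over $\cal U \subseteq X_3$ together with the identification $\mathrm{res}(F_{13}) = \mathrm{res}(F_{12}) \wt\times \mathrm{res}(F_{23})$ of Kottke--Rochon \cite{KotRoc:PMWFC} and the bundle isomorphism \eqref{eq:WedgePullBackBundle}, I would identify this interior tensor product unitarily with $\cal E_{13} = \cal C_0(X_3; L^2(\widehat X_1/X_3; E_{13}))$, the point being that the iterated $L^2$ space on a fiber of $p_{13}$ is the $L^2$ space of $E_{12} \widehat\otimes E_{23}$ once the ordered-product identification of wedge densities (from the rigidity of totally geodesic metrics noted at the end of \S\ref{sec:ResStratFibBdles}) is invoked.

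Next I would check Kucerovsky's three conditions. \emph{(i) The connection condition:} for a suitable dense set of $e \in \cal E_{12}$ (sections supported away from the resolved boundary hypersurfaces, which lie in $\mathrm{Lip}_{\cal K}$), the graded commutator $\left[\begin{pmatrix} 0 & T_e^* \\ T_e & 0\end{pmatrix}, \eth^{\cal E_{13}}_{X_1/X_3} \oplus \eth^{\cal E_{23}}_{X_2/X_3}\right]$ is bounded, where $T_e$ is the creation operator; this reduces to the statement that applying $\eth^{\cal E_{13}}_{X_1/X_3}$ and then projecting to the $\cal E_{23}$-factor differs from $1 \otimes \eth^{\cal E_{23}}_{X_2/X_3}$ by a bundle endomorphism, which is precisely the content of the curvature-correction term $\cl(\omega)$ built into $\nabla^{E_{13}}$ above and the fact (from \cite[Proposition 10.12]{BerGetVer:HKDO}) that $\nabla^{E_{13}}$ is the Levi-Civita-compatible Clifford connection. \emph{(ii) Domain inclusion:} $\mathrm{dom}(\eth^{\cal E_{13}}_{X_1/X_3}) \subseteq \mathrm{dom}(\eth^{\cal E_{12}}_{X_1/X_2} \otimes 1)$ on the appropriate dense submodule; here the VAPS domain behaves well under the ordered product because $\rho_{\wt X_1}^{1/2} H^1_e$ splits compatibly along the grid structure. \emph{(iii) Positivity:} the graded commutator of $\eth^{\cal E_{12}}_{X_1/X_2}\otimes 1$ with $\eth^{\cal E_{13}}_{X_1/X_3}$ is bounded below on the dense submodule; modulo lower-order terms this commutator is, pointwise on fibers, the anticommutator $\{\eth^{E_{12}}_{L_y} \widehat\otimes 1, 1 \widehat\otimes \eth^{E_{23}}\} = 0$ plus the contribution of $\cl(\omega)$, which is a bounded zeroth-order operator, so the estimate is immediate once boundedness is established.

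The main obstacle is \emph{the analysis at the corners of the grid resolution}. In the smooth-manifold case one works with a genuine fiber bundle of closed manifolds and all the operators are honestly Fredholm with smooth symbols, so Kucerovsky's conditions are checked by standard symbol calculus. Here $\mathrm{res}_{\mathrm{fib}}(X_1, p_{13})$ is built from an ordered product rather than a plain product, the vertical wedge cotangent bundle is only \emph{isomorphic} (not canonically equal) to the direct sum $\alpha^*{}^wT^*\widehat X_1/X_2 \oplus \beta^*{}^wT^*\widehat X_2/X_3$, and one must track how the VAPS domains, the boundary families, and the analytic Witt condition interact with the real blow-ups relating $\mathrm{res}_{\mathrm{fib}}(X_1,p_{13})$ to the iterated resolution. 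Concretely, I expect the hard work to be: (a) showing that the family $\eth^{\cal E_{13}}_{X_1/X_3}$ as defined from the induced Clifford structure is the same KK-class as the one obtained by first resolving $p_{12}$ and then resolving $p_{23}$ fiberwise (so that Corollary \ref{cor:GralKKClass} applies and the analytic Witt condition hypothesis on $p_{13}$ is the right one), and (b) verifying the domain inclusion and positivity estimates uniformly up to the corners, where one cannot naively separate variables. I would handle (b) by working in the model near each boundary hypersurface of the grid resolution, where the metric is product-type by rigidity, reducing to the product situation treated in the proof of Proposition \ref{prop:DiracBdyDirac} (the Kasparov product with the interval Dirac operator), and then patching. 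Once all three conditions hold, Kucerovsky's theorem yields $[\eth^{\cal E_{13}}_{X_1/X_3}] = [\eth^{\cal E_{12}}_{X_1/X_2}] \otimes [\eth^{\cal E_{23}}_{X_2/X_3}]$ in $\mathrm{KK}^{\dim X_1/X_3}(C_0(X_1), C_0(X_3))$, as claimed, after checking that the $\bb Z_2$-grading conventions on both sides match (using the $\bb Cl(1)$-trick of \cite{SuiVer:IUKPESIES, BraMesSui:GTSTUKP} in the ungraded case).
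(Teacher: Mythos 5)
Your proposal follows the same overall strategy as the paper — identify $\cal E_{12}\otimes_{C_0(X_2)}\cal E_{23}$ with $\cal E_{13}$, derive a local decomposition of $\eth^{E_{13}}_{X_1/X_3}$ as $\eth^{E_{12}}_{X_1/X_2}+\wt\eth^{E_{23}}_{X_2/X_3}$ plus a zeroth-order curvature term, and then run Kucerovsky's unbounded criterion. You also correctly localize the worry to the grading conventions and to the analysis near the boundary. However, there is a genuine gap in your treatment of Kucerovsky's \emph{positivity} condition, and you misidentify where the real difficulty lies.

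You assert that the positivity estimate ``is immediate once boundedness is established,'' because pointwise the anticommutator $\{\eth^{E_{12}}\widehat\otimes 1,\, 1\widehat\otimes\eth^{E_{23}}\}$ vanishes and $\cl(\omega)$ is bounded. That is the argument one would make on a closed fiber bundle of smooth manifolds. Following \cite[Lemma 17 and Theorem 23]{KaaSui:FDOAFSM}, the positivity condition in fact reduces to a G\aa rding-type elliptic estimate of the form
\begin{equation*}
  \bigl\lVert\, |\nabla^{X_1/X_2}\xi|_{g_{X_1/X_2}}\,\bigr\rVert_{L^2}
  \;\leq\; C\Bigl(\lVert\xi\rVert_{L^2}+\lVert\eth^{E_{12}}_{X_1/X_2}\xi\rVert_{L^2}\Bigr)
\end{equation*}
for $\xi$ in the VAPS domain, and this inequality does \emph{not} hold in general for wedge Dirac operators: while it holds for edge Sobolev spaces, for a general VAPS domain $\rho^{1/2}H^1_e\cap\cal D_{\max}$ the first-order elliptic regularity needed to absorb $\nabla^{X_1/X_2}$ simply fails. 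The paper handles this by a preliminary step (its Step 1): one first deforms, through Fredholm families of Dirac-type operators, to a family with large enough indicial roots that $\cal D_{VAPS}(\eth^{E_{12}}_{F_{12,\zeta}})=\rho H^1_e(F_{12,\zeta};E)$, appealing to \cite[Remark 4.9]{AlbGel:IDOIES}; on this strictly smaller domain the G\aa rding estimate \emph{does} hold. Without this deformation, or some substitute, your positivity check does not go through. Your proposed workaround — reducing to product-type models near boundary hypersurfaces and patching — is aimed at the wrong target: the corners and the ordered product $\mathrm{res}(F_{12})\wt\times\mathrm{res}(F_{23})$ are not the obstruction; the obstruction is the failure of first-order wedge elliptic regularity on the VAPS domain, which is a phenomenon already present for a single cone and is not removed by passing to local product models.

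Two smaller points. First, the paper does not plug $\eth^{\cal E_{13}}_{X_1/X_3}$ directly into Kucerovsky's criterion; it first observes that the $\cl(\omega)$ term is a relatively compact perturbation (since the VAPS domain embeds compactly in $L^2$) and replaces $\eth^{\cal E_{13}}_{X_1/X_3}$ by $D^{\cal E_{13}}_{X_1/X_3}=\eth^{E_{12}}_{X_1/X_2}+\wt\eth^{E_{23}}_{X_2/X_3}$ with the same domain and the same KK-class; running the criterion with $D^{\cal E_{13}}_{X_1/X_3}$ is cleaner and your commutator computations for (i) and (ii) should be restated accordingly. Second, for the domain inclusion (ii) the paper's argument is simply that the VAPS domain for $p_{13}$ imposes decay and regularity conditions on \emph{more} boundary faces and edge vector fields than those for $p_{12}$, so the inclusion is automatic; no claim about the splitting of $\rho^{1/2}H^1_e$ under the ordered product is needed.
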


\begin{proof}
For simplicity, let us first assume that $\dim X_1/X_2$ and $\dim X_2/X_3$ are even.

Let $(\cal E_{ij}, \eth^{\cal E_{ij}}_{X_i/X_j}),$ with $(i,j) \in \{ (1,2), (2,3), (1,3) \},$ denote the three unbounded cycles. Recall that 
$\cal E_{ij}$ is defined as the closure of $\cal C_c(X_{i,\mathrm{reg}};E_{ij})$ with respect to the $\cal C_0(X_j)$-valued inner product induced by 
integation along the fibers $X_i/X_j.$\\

\noindent
{\bf Step 1}: We can assume without loss of generality that, for each $\zeta \in X_2,$
\begin{equation}\label{eq:ThickWittDomain}
	\cal D_{VAPS}(\eth_{F_{12,\zeta}}^E) = \rho H^1_e(F_{12,\zeta};E).
\end{equation}
Indeed, it is pointed out in \cite[Remark 4.9]{AlbGel:IDOIES} that any family of wedge Dirac-type operators satisfying the analytic Witt condition is homotopic
through Fredholm families of wedge Dirac-type operators satisfying the analytic Witt condition to one whose ``indicial roots" are as large as desired.\footnote{As
noted in \cite[Remark 4.9]{AlbGel:IDOIES} the deformation is through Dirac-type operators but other structures are not necessarily preserved. For example
it is not necessarily possible to arrange \eqref{eq:ThickWittDomain} for the signature operator. Indeed a simple computation such as \cite[Lemma 5.6]{AlbLeiMazPia:SPWS} shows that \eqref{eq:ThickWittDomain} for the signature operator requires a ``thick Witt condition": 
\begin{equation*}
	\mathrm{IH}_{\overline m}^k(Z) =0 \text{ if } |\dim Z - 2k|\leq \tfrac12.
\end{equation*}
(This seems to have been overlooked in \cite{HarLesVer:DDLTOSS} and this hypothesis should be included in their Theorem 1.1.)
However it is possible to deform the signature operator to a Dirac-type operator satisfying \eqref{eq:ThickWittDomain}.}
Since this does not change the KK-class of the family of Dirac-type operators we do not lose any generality by assuming that this condition holds.\\

\noindent
{\bf Step 2}: We can identify $\cal E_{12} \otimes_{\cal C(X_2)} \cal E_{23}$ with $\cal E_{13}.$\\

Recall (e.g., from \cite[Chapter 4]{Lan:HC}) that the interior tensor product used here is defined by first endowing the algebraic tensor product 
$\cal E_{12} \otimes_{\mathrm{alg}} \cal E_{23}$ with the inner product given on simple tensors by
\begin{equation*}
	\langle f \otimes h, f' \otimes h' \rangle = \langle h, \langle f,f' \rangle h'\rangle,
\end{equation*}
modding out by $N = \{ z \in \cal E_{12} \otimes_{\mathrm{alg}} \cal E_{23}: \langle z, z\rangle = 0 \}$ and then completing with respect to this inner product.
In our setting this inner product is given on simple tensors with $f,f' \in \cal E_{12}$ and $h,h' \in \cal E_{23}$ by
\begin{multline*}
	X_3 \ni q \mapsto
	\langle f \otimes h, f' \otimes h' \rangle(q)
	= \int_{r\in \widehat p_{23}^{-1}(q)} g_{E_{23}}( h(r), \langle f,f' \rangle(r) h'(r) ) \; \mathrm{dvol}_{\mathrm{res}(F_{23,q})} \\
	= \int_{r\in \widehat p_{23}^{-1}(q)} g_{E_{23}}\Big( h(r), 
	\big( \int_{s\in \widehat p_{12}^{-1}(r)} g_{E_{12}}(f(s), f'(s))\; \mathrm{dvol}_{\mathrm{res}(F_{12,r})} \big)
 	 h'(r) \Big) \; \mathrm{dvol}_{\mathrm{res}(F_{23,q})}.
\end{multline*}
Notice, directly from the definition of the $\cal E_{ij},$ that we obtain the same completion if we start with the algebraic tensor product  
$\cal C_c(X_{1,\mathrm{reg}};E_{12}) \otimes_{\mathrm{alg}} \cal C_c(X_{2,\mathrm{reg}};E_{23})$ and that then we may rewrite the inner product
as an integral over the fibers of $\hat p_{13}: \wt X_1 \lra X_3,$
\begin{multline*}
	X_3 \ni q \mapsto
	\langle f \otimes h, f' \otimes h' \rangle(q) \\
	= \int_{\hat p_{13}^{-1}(q)} g_{E_{12}}(\alpha^*f, \alpha^*f') g_{E_{23}}(\beta^*h, \beta^*h') 
	\; \mathrm{dvol}_{F_{12,p_{23}^{-1}(q)}} \; \beta^*\mathrm{dvol}_{F_{23,q}} \\
	= \int_{\hat p_{13}^{-1}(q)} g_{\alpha^*E_{12}\otimes \beta^*E_{23}}(f \otimes \beta^*h, f'\otimes \beta^*h') 
	\; \mathrm{dvol}_{F_{13,q}},
\end{multline*}
where we have used that $\alpha|_{\wt X_{1,\mathrm{reg}}}=\mathrm{id}.$ Thus the completion with respect to this inner product produces 
$\cal C(X_3, L^2(X_{1,\mathrm{reg}}/X_3)) = \cal C(X_3, L^2(\wt X_1/X_3)) = \cal E_{13},$ as promised.\\

\noindent
{\bf Step 3}: We obtain a local expression relating the Dirac-type operators.\\

This is straightforward since we may, by localizing on $X_3$ and restricting to the regular parts, reduce to relating the the Dirac-type operator 
on the total space of the fiber bundle of smooth manifolds, 
\begin{equation*}
	F_{12,\mathrm{reg}} \fib F_{13,\mathrm{reg}} \lra F_{23,\mathrm{reg}},
\end{equation*}
with the family of Dirac-type operators on the fibers and a lift of the Dirac-type operator on the base and this is carried out in Proposition 10.12 and 
Theorem 10.19 in \cite{BerGetVer:HKDO} (see also \cite[Theorem 22]{KaaSui:RSFDO}, \cite[Theorem 30]{KaaSui:FDOAFSM}).
Thus we may write, as differential operators acting on smooth sections over the regular part of $X_1,$
\begin{equation*}
	\eth^{E_{13}}_{X_1/X_3}
	= \eth^{E_{12}}_{X_1/X_2} + \wt\eth^{E_{23}}_{X_2/X_3} -\tfrac14 \sum_{a<b}\sum_c g_{X_1/X_2}([f_a,f_b],e_c) \cl(f^a)\cl(f^b)\cl(e^c)
\end{equation*}
where $\wt\eth^{E_{23}}_{X_2/X_3}$ is a lift of $\eth^{E_{23}}_{X_2/X_3}$ to $X_1,$ and in the final sum $(f_a)$ runs over an orthonormal frame
for ${}^w TX_2/X_3,$ lifted to $X_1$ and $(e_c)$ runs over an orthonormal frame for ${}^wTX_1/X_2.$
Let us define
\begin{equation*}
	D^{E_{13}}_{X_1/X_3}
	= \eth^{E_{12}}_{X_1/X_2} + \wt\eth^{E_{23}}_{X_2/X_3}
\end{equation*}
and point out that $D^{E_{13}}_{X_1/X_3}$ is a relatively compact perturbation of $\eth^{E_{13}}_{X_1/X_3}$ when both are endowed with the VAPS
domain of the latter since from, e.g., \cite[\S1.2]{AlbGel:IFFDTOP}, the final sum in the expression for $\eth^{E_{13}}_{X_1/X_3}$ is a uniformly bounded
section of the endomorphism bundle of $E_{13}$ and, from \cite[Theorem 1]{AlbGel:IFFDTOP}, the VAPS domain includes compactly into $L^2.$
In particular, since the domain for $\eth^{\cal E_{13}}_{X_1/X_3}$ is pointwise equal to the VAPS domain of $\eth^{E_{13}}_{X_1/X_3}$ we can replace
this operator with $D^{E_{13}}_{X_1/X_3},$ with the same domain, and obtain the same class in $\mathrm{KK}^*(C_0(X_1), C_0(X_3)).$\\

\noindent
{\bf Step 4}: We use the characterization of the Kasparov product from \cite{Kuc:KUM}.\\

To prove the theorem it is sufficient, following {\em loc cit}, to show 
\begin{itemize}
\item [i)] (connection) for all $\xi$ in a dense subset of $\cal C(X_1)\cdot \cal C(X_2, L^2(X_1/X_2;E_{12})),$ the operator
\begin{equation*}
	\left[ \begin{pmatrix} 0 & T_\xi^* \\ T_{\xi} & 0 \end{pmatrix},
	 \begin{pmatrix} \eth^{\cal E_{23}}_{X_2/X_3} & 0 \\ 0 & D^{\cal E_{13}}_{X_1/X_3} \end{pmatrix} \right],
	 \text{ where }
	 T_{\xi}(e) = \xi\otimes e
\end{equation*}
is bounded on $\mathrm{dom}(\eth^{\cal E_{23}}_{X_2/X_3} \oplus D^{\cal E_{13}}_{X_1/X_3}),$
\item [ii)] (compatibility) there is a dense submodule $\cal W$ of $\cal E_{13}$ such that, for any $\mu_{13}, \mu_{12} \in \bb R\setminus\{0\},$
\begin{equation*}
	\eth^{\cal E_{12}}_{X_1/X_2}(i\mu_{13} + D^{\cal E_{13}}_{X_1/X_3})^{-1}(i\mu_{12}+\eth^{\cal E_{12}}_{X_1/X_2})^{-1}
\end{equation*}
is defined on $\cal W.$
\item [iii)] (positivity) for any $\mu_{13}, \mu_{12} \in \bb R\setminus\{0\},$ there exists  $\lambda >0$ such that
\begin{equation*}
	\langle (\eth^{\cal E_{12}}_{X_1/X_2} \otimes 1) \xi, D^{\cal E_{13}}_{X_1/X_3} \xi \rangle
	+ \langle D^{\cal E_{13}}_{X_1/X_3}\xi , (\eth^{\cal E_{12}}_{X_1/X_2} \otimes 1)\xi \rangle
	\geq -\lambda \langle \xi,\xi \rangle,
\end{equation*}
for all $\xi \in (i\mu_{13} + D^{\cal E_{13}}_{X_1/X_3})^{-1}(i\mu_{12}+\eth^{\cal E_{12}}_{X_1/X_2})^{-1}\cal W.$
\end{itemize}

To establish the connection condition (i), we choose to use $\cal C^{\infty}_c(X_{1,\mathrm{reg}};E_{12})$ as the dense subset of 
$\cal C(X_1)\cdot \cal C(X_2, L^2(X_1/X_2;E_{12}))$ and then, as we are working over the regular part, we may appeal to the computation done in 
\cite[Theorem 23]{KaaSui:FDOAFSM} to see that (i) comes down to the boundedness of the map
\begin{equation*}
	\cal E_{13} \ni r\mapsto (D_{X_1/X_2}\xi) \otimes r + (-1)^{|\xi|}i \sum_a \nabla_{f_a}^{X_1/X_3}\xi \otimes \cl(f_a)r
\end{equation*}
for each fixed $\xi,$ and this is manifest.

To establish condition (ii) it is sufficient, by \cite[Lemma 10]{Kuc:KUM} to establish that
\begin{equation*}
	\mathrm{dom}(D^{\cal E_{13}}_{X_1/X_3}) \subseteq \mathrm{dom}(\eth^{\cal E_{12}}_{X_1/X_2})
\end{equation*}
and, from the description of these domains in \eqref{eq:DefethCalE}, to establish this over each point $\zeta \in X_3.$
The domain of $D^{\cal E_{13}}_{X_1/X_3}|_{\zeta \in X_3}$ is defined by requiring Sobolev regularity with respect to all edge vector fields on $F_{13}$
and $\rho^{1/2}$ decay at all boundary hypersurfaces of $F_{13},$ whereas the domain of $\eth^{\cal E_{12}}_{X_1/X_2})|_{\zeta\in X_3}$ only imposes
these requirements with respect to the $p_{12}$-horizontal edge vector fields and boundary hypersurfaces, so this inclusion is clear.

Finally to establish (iii) we can appeal to the computation done in \cite[Lemma 17]{KaaSui:FDOAFSM} and the proof of 
\cite[Theorem 23]{KaaSui:FDOAFSM} to see that it suffices to show that there exists a constant $C>0$ such that for any $\xi$ in the domain of 
$\eth^{\cal E_{12}}_{X_1/X_2},$
\begin{equation*}
	\lVert \; | \nabla^{X_1/X_2} \xi|_{g_{X_1/X_2}} \rVert_{L^2}
	\leq C(\lVert \xi \rVert_{L^2} + \lVert \eth^{E_{12}}_{X_1/X_2} \xi \rVert_{L^2} ).
\end{equation*}
While the analogous inequality holds for edge Sobolev spaces (by, e.g.,  the bounded geometry of edge metrics) this inequality does not hold in general
for elliptic wedge differential operators. This is why in Step 1 we arranged for the VAPS domain of each operator in the family $\eth^{E_{12}}_{X_1/X_2}$ is $\rho H^1_e(\hat p_{12}^{-1}(\zeta);E)$ as this inequality does hold for elements in this space.

The only difference when either $\dim X_1/X_2$ or $\dim X_2/X_3$ is odd is the treatment of the gradings. This aspect can be handled just as when the underlying spaces are smooth and the fiber bundles are trivial. We refer to \cite[Examples 2.38-2.40]{BraMesSui:GTSTUKP} and especially \cite{Wah:PFAICHS} for a thorough discussion.
\end{proof}

\begin{cor} \label{cor:FunctorialityFibrations}
Given fiber bundles of smoothly stratified spaces as in \eqref{eq:3fibrations}, if the fibers of $p_{12}$ and $p_{23}$ are smooth oriented Witt pseudomanifolds then so are the fibers of $p_{13}$ and the associated analytic signature classes satisfy 
\begin{multline*}
	[D^{\mathrm{sign}}_{X_1/X_3}] = \ell\big( [D^{\mathrm{sign}}_{X_1/X_2}]\otimes [D^{\mathrm{sign}}_{X_2/X_3}] \big)
	\text{ in } \mathrm{KK}^*(C_0(X_1), C_0(X_3)), \\
	\text{ with } \ell = \begin{cases} 2 & \text{ if } \dim X_1/X_2 \text{ and } \dim X_2/X_3 \text{ are odd }\\ 1 & \text{ otherwise } \end{cases}
\end{multline*}
Correspondingly, we have
\begin{equation*}
	\Sigma(p_{13})= \Sigma(p_{12}) \otimes \Sigma(p_{23}) \text{ in } \mathrm{KK}^*(C_0(X_1), C_0(X_3))[\tfrac12].
\end{equation*}
\end{cor}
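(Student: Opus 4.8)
The corollary is an application of Theorem \ref{thm:FunctorialityFibrations}, so the plan is first to verify that the hypotheses of that theorem are met in the signature-operator setting, then to track the normalizing powers of $2$. Concretely, for the fiber bundles $p_{12}$ and $p_{23}$ I would take the vertical Clifford structures to be the ones given by the bundles of vertical wedge differential forms on $\mathrm{res}_{\mathrm{fib}}(X_1,p_{12})$ and $\mathrm{res}_{\mathrm{fib}}(X_2,p_{23})$ together with totally geodesic vertical wedge metrics, graded by the $\star$-involution. As recalled in \S\ref{sec:WedgeMetsDiracOps}, the analytic Witt condition for the de Rham / signature operator on each fiber holds precisely because that fiber is a Witt pseudomanifold, so the hypotheses of Theorem \ref{thm:FunctorialityFibrations} are satisfied for $p_{12}$ and $p_{23}$. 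The first real point is then: the induced vertical Clifford structure on $\mathrm{res}_{\mathrm{fib}}(X_1,p_{13})\lra X_3$ described before that theorem has underlying bundle $\Lambda^*({}^wT^*\widehat X_1/X_2)\widehat\otimes\Lambda^*({}^wT^*\widehat X_2/X_3)$, which, via \eqref{eq:WedgePullBackBundle} and the identification ${}^wT^*\wt X_1/X_3 = \alpha^*{}^wT^*\widehat X_1/X_2\oplus\beta^*{}^wT^*\widehat X_2/X_3$, is exactly (the complexification of) the bundle of vertical wedge differential forms on $\mathrm{res}_{\mathrm{fib}}(X_1,p_{13})$; and the fiber $F_{13}$ is then itself a Witt pseudomanifold (its links are iterated cones of the links of $F_{12}$ and $F_{23}$, and the relevant intersection homology groups vanish — this is a standard Künneth/cone computation, cf. the discussion of joins in \S\ref{subsect:atlas}). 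Hence the hypotheses of Theorem \ref{thm:FunctorialityFibrations} are also met for the induced structure on $p_{13}$.

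\textbf{Identifying the operators.} The second point is to check that the Dirac-type operator of the induced Clifford structure on $\mathrm{res}_{\mathrm{fib}}(X_1,p_{13})$ is, up to the issue of gradings, the signature operator of the vertical wedge metric $g_{\wt X_1/X_3}$ on $\mathrm{res}_{\mathrm{fib}}(X_1,p_{13})$. Over the regular part this is the classical fact that the (twisted) de Rham / signature operator of a product — or of a fibration — decomposes as in Proposition 10.12 / Theorem 10.19 of \cite{BerGetVer:HKDO}, which is precisely what Step 3 of the proof of Theorem \ref{thm:FunctorialityFibrations} invokes. So $[\eth^{\cal E_{13}}_{X_1/X_3}]$ in that theorem equals $[D^{\mathrm{sign}}_{X_1/X_3}]$ up to a possible factor coming from the grading convention. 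Here I would import the Rosenberg–Weinberger computation \cite[Lemma 6]{RosWei:SO} exactly as it is used in Proposition \ref{prop:Bordism}(1): the graded tensor product of the signature complexes of two spaces equals the signature complex of the product when at least one factor is even-dimensional, and equals twice it (in K-homology) when both are odd-dimensional. Since $\dim F_{ij} = \dim X_i/X_j$, this is the source of the factor $\ell$: when $\dim X_1/X_2$ and $\dim X_2/X_3$ are both odd, $[D^{\mathrm{sign}}_{X_1/X_2}]\otimes[D^{\mathrm{sign}}_{X_2/X_3}] = [\eth^{\cal E_{13}}_{X_1/X_3}]$ but $[D^{\mathrm{sign}}_{X_1/X_3}] = 2[\eth^{\cal E_{13}}_{X_1/X_3}]$, giving $[D^{\mathrm{sign}}_{X_1/X_3}] = 2\big([D^{\mathrm{sign}}_{X_1/X_2}]\otimes[D^{\mathrm{sign}}_{X_2/X_3}]\big)$; in all other cases $\ell=1$. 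Combining this with the conclusion of Theorem \ref{thm:FunctorialityFibrations} yields the first displayed formula of the corollary.

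\textbf{The normalization.} The last step is purely bookkeeping. By Definition \ref{def:AnalyticTransferClass}, $\Sigma(p_{ij}) = 2^{-\lfloor \dim F_{ij}/2\rfloor}[D^{\mathrm{sign}}_{X_i/X_j}]$ in $\mathrm{KK}^*[\tfrac12]$, and $\dim F_{13} = \dim F_{12} + \dim F_{23}$. The point is that the function $d\mapsto \lfloor d/2\rfloor$ is additive except when both summands are odd, in which case $\lfloor (a+b)/2\rfloor = \lfloor a/2\rfloor + \lfloor b/2\rfloor + 1$; and this $+1$ is exactly compensated by the factor $\ell = 2$ in the first formula. Thus
\begin{equation*}
	\Sigma(p_{13}) = 2^{-\lfloor \dim F_{13}/2\rfloor}[D^{\mathrm{sign}}_{X_1/X_3}]
	= 2^{-\lfloor \dim F_{13}/2\rfloor}\,\ell\,\big([D^{\mathrm{sign}}_{X_1/X_2}]\otimes[D^{\mathrm{sign}}_{X_2/X_3}]\big)
	= \Sigma(p_{12})\otimes\Sigma(p_{23})
\end{equation*}
in $\mathrm{KK}^*(C_0(X_1),C_0(X_3))[\tfrac12]$, using that $\otimes$ is bilinear over the coefficient ring $\bb Z[\tfrac12]$. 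The main obstacle, as in the main theorem it rests on, is not any of this formal manipulation but rather the verification that the analytic data (Clifford structures on the partial fiber resolutions, the identification of the induced bundle with the wedge forms of $\mathrm{res}_{\mathrm{fib}}(X_1,p_{13})$, and the Witt condition for $F_{13}$) really do fit the framework of Theorem \ref{thm:FunctorialityFibrations}; once that is in hand, the $2$-bookkeeping is routine.
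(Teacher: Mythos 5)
Your proposal is correct and follows essentially the same route as the paper's proof: invoke Theorem \ref{thm:FunctorialityFibrations} with the wedge de Rham Clifford structures, note that the product of Witt spaces is Witt (the paper cites Friedman, Prop.\ 9.1.25, rather than redoing the join/link computation), import the Rosenberg--Weinberger grading factor from \cite[Lemma 6]{RosWei:SO}, and observe that the failure of additivity of $\lfloor \cdot/2\rfloor$ exactly cancels $\ell$. One small caveat: the links of a fiber of $p_{13}$ are \emph{joins} of the corresponding links, not ``iterated cones'' as you write -- you in fact cite the right place (\S\ref{subsect:atlas}) so the slip is only terminological.
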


\begin{proof}
The fact that the fibers of $p_{13}$ satisfy the topological Witt condition reduces to the fact that the product of Witt spaces is a Witt space, which is proved, e.g., in Proposition 9.1.25 of \cite{Fri:SIH}.

The theorem establishes that the product of $[D^{\mathrm{sign}}_{X_1/X_2}]$ and $[D^{\mathrm{sign}}_{X_2/X_3}]$ is the KK-class of a family of Dirac-type operators associated to $p_{13},$ and the construction above specializes to show that this operator is $d+\delta$ acting on the vertical wedge differential forms, so it is only necessary to work out the grading. That this results in the formula in the statement of the corollary is explained in \cite[Lemma 6]{RosWei:SO} (see also \cite{Wah:PFAICHS} and \cite[Proposition 2.29]{Ebe:TCHS}).

For the analytic transfer classes, if we define
\begin{equation*}
	i = \dim X_1-\dim X_2, \quad j = \dim X_2 - \dim X_3
\end{equation*}
then $\Sigma(p_{13})$ is equal to 
$2^{-\lfloor (i+j)/2 \rfloor} [D^{\mathrm{sign}}_{X_1/X_3}]$
so it suffices to note that
\begin{equation*}
	\ell \big( 2^{-\lfloor (i+j)/2 \rfloor} \big)
	=  2^{-\lfloor i/2 \rfloor} 2^{-\lfloor j/2 \rfloor}, 
	\text{ with } \ell = \begin{cases} 2 & \text{ if }i \text{ and } j \text{ are odd }\\ 1 & \text{ otherwise } \end{cases}
\end{equation*}
\end{proof}

It is instructive to consider the case of Theorem 
\ref{thm.bundletransferpreservesksignature} where $p$ is the factor projection
of a product bundle, which  generalizes
\cite[Lemma 6, p. 51]{RosWei:SO} from the manifold case to
singular spaces.

\begin{cor} \label{cor.signoponproductofwitt}
Let $X^n$ and $Y^m$ be closed smoothly stratified Witt spaces.
Then the product $X\times Y$ is a closed smoothly stratified Witt space, and 
the $\K$-homology class of the signature operator on $X\times Y$ satisfies
\begin{equation*}
	[D^{\mathrm{sign}}_{X \times Y}] = \ell\big( [D^{\mathrm{sign}}_{X}]\boxtimes [D^{\mathrm{sign}}_{Y}] \big)
	\text{ in } \mathrm{K}_{n+m}^{\mathrm{an}}(X \times Y), \\
	\text{ with } \ell = \begin{cases} 2 & \text{ if } \dim X \text{ and } \dim Y \text{ are odd }\\ 1 & \text{ otherwise } \end{cases}
\end{equation*}
where $\boxtimes$ denotes the external Kasparov product
$\K^\an_n (X) \otimes \K^\an_m (Y) \to \K^\an_{n+m} (X\times Y)$.
Correspondingly we have
\begin{equation*}
	\mathrm{sign}_K(X\times Y) = 
	\mathrm{sign}_K(X) \boxtimes \mathrm{sign}_K(Y)
	\text{ in } \mathrm{K}_{n+m}^{\mathrm{an}}(X \times Y)[\tfrac12].
\end{equation*}
\end{cor}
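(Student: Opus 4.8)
The plan is to deduce Corollary \ref{cor.signoponproductofwitt} directly from Corollary \ref{cor:FunctorialityFibrations} by specializing the triple of fiber bundles \eqref{eq:3fibrations} to the product situation. Take $X_1 = X\times Y$, $X_2 = Y$, and $X_3 = \mathrm{pt}$, with $p_{12}: X\times Y \to Y$ the projection onto the second factor, $p_{23}: Y \to \mathrm{pt}$ the constant map, and $p_{13}: X\times Y \to \mathrm{pt}$ the constant map. These are manifestly fiber bundles of smoothly stratified spaces in the sense of \S\ref{subsect:atlas}, with $p_{12}$ globally trivial with fiber $X$ and $p_{23}$ (trivially) a bundle with fiber $Y$; they satisfy $p_{13} = p_{23}\circ p_{12}$. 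By hypothesis $X$ and $Y$ are closed smoothly stratified Witt spaces, so the fibers of $p_{12}$ and $p_{23}$ are oriented Witt pseudomanifolds, and Corollary \ref{cor:FunctorialityFibrations} (whose proof invokes Proposition 9.1.25 of \cite{Fri:SIH}) tells us that the fiber $X\times Y$ of $p_{13}$ is again a Witt pseudomanifold.

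Next I would unwind the identifications of the relevant KK-groups in this special case. Since $X_3$ is a point, $C_0(X_3) = \bb C$, so $\mathrm{KK}^*(C_0(X_1), C_0(X_3)) = \mathrm{KK}^{\dim(X\times Y)}(C(X\times Y),\bb C) = \mathrm K_{n+m}^{\an}(X\times Y)$ by the definition in \S\ref{sec:KKThy}; similarly $\mathrm{KK}^*(C_0(X_2),\bb C) = \mathrm K_m^{\an}(Y)$. For the first factor, $\mathrm{KK}^*(C_0(X_1), C_0(X_2)) = \mathrm{KK}^{\dim X_1/X_2}(C(X\times Y), C(Y))$; since $p_{12}$ is the trivial bundle with fiber $X$, the grid/fiber resolution is simply $\mathrm{res}(X)\times Y$ over $Y$, the Hilbert module is $\cal C(Y; L^2(\mathrm{res}(X);\Lambda^*({}^wT^*\mathrm{res}(X))))$, and the vertical family of signature operators is the constant family $y\mapsto D^{\mathrm{sign}}_X$. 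Under the identification $\mathrm{KK}(C(X\times Y), C(Y)) \cong \mathrm{KK}(C(X),\bb C)\otimes \ldots$ coming from the product structure, the class $[D^{\mathrm{sign}}_{X_1/X_2}]$ is carried to the external image of $[D^{\mathrm{sign}}_X] \in \mathrm K_n^{\an}(X)$; likewise $[D^{\mathrm{sign}}_{X_2/X_3}] = [D^{\mathrm{sign}}_Y]$. With these identifications in place, the Kasparov product $[D^{\mathrm{sign}}_{X_1/X_2}]\otimes [D^{\mathrm{sign}}_{X_2/X_3}]$ becomes precisely the external Kasparov product $[D^{\mathrm{sign}}_X]\boxtimes [D^{\mathrm{sign}}_Y]\in \mathrm K_{n+m}^{\an}(X\times Y)$, and Corollary \ref{cor:FunctorialityFibrations} yields
\begin{equation*}
	[D^{\mathrm{sign}}_{X\times Y}] = \ell\big( [D^{\mathrm{sign}}_X]\boxtimes [D^{\mathrm{sign}}_Y] \big),
	\quad \ell = \begin{cases} 2 & \dim X,\ \dim Y \text{ both odd}\\ 1 & \text{otherwise.}\end{cases}
\end{equation*}

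Finally the statement about $\mathrm{sign}_K$ follows by inverting $2$ and applying the normalization in Definition \ref{def:AnalyticTransferClass} / the definition of $\mathrm{sign}_K$: writing $n = \dim X$, $m = \dim Y$, we have $\mathrm{sign}_K(X\times Y) = 2^{-\lfloor (n+m)/2\rfloor}[D^{\mathrm{sign}}_{X\times Y}]$, and the combinatorial identity $\ell\cdot 2^{-\lfloor (n+m)/2\rfloor} = 2^{-\lfloor n/2\rfloor}2^{-\lfloor m/2\rfloor}$ (exactly as in the proof of Corollary \ref{cor:FunctorialityFibrations}, since $\lfloor n/2\rfloor + \lfloor m/2\rfloor = \lfloor (n+m)/2\rfloor$ unless $n$ and $m$ are both odd, in which case it is one less) absorbs the factor $\ell$, giving $\mathrm{sign}_K(X\times Y) = \mathrm{sign}_K(X)\boxtimes\mathrm{sign}_K(Y)$ in $\mathrm K_{n+m}^{\an}(X\times Y)[\tfrac12]$. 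I expect the only genuinely non-routine point to be the careful bookkeeping identifying the bivariant class $[D^{\mathrm{sign}}_{X\times Y/Y}]$ with the external product in the product-bundle case and checking the grading conventions match those in \cite[Lemma 6]{RosWei:SO}; everything else is a direct specialization of the results already established.
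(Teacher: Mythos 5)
Your proposal is correct and is exactly the paper's approach: the paper's entire proof is to apply Corollary \ref{cor:FunctorialityFibrations} with $X_1 = X\times Y$, $X_2 = Y$, $X_3 = \mathrm{pt}$, and $p_{12}$ the projection. You have just spelled out the identification $[D^{\mathrm{sign}}_{X_1/X_2}] = \tau_Y([D^{\mathrm{sign}}_X])$ and the floor-function bookkeeping that the paper leaves implicit; both are correct.
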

\begin{proof}
Apply the theorem to the case in which $X_1 = X \times Y,$ $X_2 = Y,$ $X_3 = \mathrm{pt}$ and $p_{12}$ is the natural projection $X\times Y \lra Y.$
\end{proof}

Another interesting case concerns spin Dirac operators. If the vertical Clifford structures on $p_{12}$ and $p_{23}$ correspond to spin structure on their fibers, then the induced vertical Clifford structure on $p_{13}$ does as well and we have the following result.

\begin{cor}
Given fiber bundles of smoothly stratified spaces as in \eqref{eq:3fibrations}, whose vertical wedge tangent bundles are spin, equipped with vertical wedge metrics such that the associated vertical families of spin Dirac operators satisfy the analytic Witt condition, the associated KK-classes satisfy
\begin{equation*}
	[D^{\mathrm{spin}}_{X_1/X_3}] = [D^{\mathrm{spin}}_{X_1/X_2}] \otimes [D^{\mathrm{spin}}_{X_2/X_3}] 
		\in \mathrm{KK}^{\dim X_1/X_3}(C_0(X_1), C_0(X_3)).
\end{equation*}
\end{cor}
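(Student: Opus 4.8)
The statement is the spin-Dirac specialization of Theorem~\ref{thm:FunctorialityFibrations}, so the plan is to verify that the hypotheses of that theorem are met and that the induced vertical Clifford structure on $p_{13}$ is the spin-Dirac one. First I would observe that if ${}^w T\mathrm{res}_{\mathrm{fib}}(X_1,p_{12})/X_2$ and ${}^w T\mathrm{res}_{\mathrm{fib}}(X_2,p_{23})/X_3$ are spin, then by the identification \eqref{eq:WedgePullBackBundle} (i.e.\ the Kottke--Rochon result applied fiberwise, as in the setup preceding Theorem~\ref{thm:FunctorialityFibrations}) the vertical wedge tangent bundle ${}^wT\wt X_1/X_3 = \alpha^*{}^wT\widehat X_1/X_2 \oplus \beta^*{}^wT\widehat X_2/X_3$ is a direct sum of (pullbacks of) spin bundles, hence spin, with spinor bundle the graded tensor product $\alpha^*\Spi_{12}\,\widehat\otimes\,\beta^*\Spi_{23}$. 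This is exactly the bundle $E_{13}$ constructed in the discussion before Theorem~\ref{thm:FunctorialityFibrations} when one takes $E_{12}$, $E_{23}$ to be the vertical spinor bundles. So the induced vertical Clifford structure for $p_{13}$ \emph{is} (up to the curvature-correction term $\cl(\omega)$, which is a relatively compact perturbation and does not change the $\mathrm{KK}$-class) the vertical spin Dirac structure for $\mathrm{res}_{\mathrm{fib}}(X_1,p_{13})/X_3$.

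Second, I would check the analytic Witt condition is in force: it is assumed for the families $\eth^{\Spi_{12}}_{X_1/X_2}$ and $\eth^{\Spi_{23}}_{X_2/X_3}$ by hypothesis, and the hypothesis also explicitly includes that the induced family on $p_{13}$ satisfies it --- exactly the data Theorem~\ref{thm:FunctorialityFibrations} requires. (Alternatively, under a psc-Witt hypothesis the Lichnerowicz formula forces it, as noted after Corollary~\ref{cor:DefSpinKK}, but here we simply assume it.) With these inputs Theorem~\ref{thm:FunctorialityFibrations} applies verbatim and gives
\[
	[\eth^{\Spi_{13}}_{X_1/X_3}] = [\eth^{\Spi_{12}}_{X_1/X_2}] \otimes [\eth^{\Spi_{23}}_{X_2/X_3}] \in \mathrm{KK}^{\dim X_1/X_3}(C_0(X_1),C_0(X_3)),
\]
and by the identification of the induced structure with the spin structure in the previous paragraph the left side is $[D^{\mathrm{spin}}_{X_1/X_3}]$ and the right side is $[D^{\mathrm{spin}}_{X_1/X_2}]\otimes[D^{\mathrm{spin}}_{X_2/X_3}]$, as claimed.

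The one genuine point of care --- and the place where this differs from Corollary~\ref{cor:FunctorialityFibrations} --- is the grading/normalization factor $\ell$. For the signature operator the factor of $2$ arises because the external product of two odd-degree signature operators is $2$ times the signature operator of the product (the Rosenberg--Weinberger computation \cite[Lemma 6]{RosWei:SO}). For spin Dirac operators no such factor appears: the graded tensor product of the spinor bundles of two spin manifolds is \emph{canonically} the spinor bundle of the product, with the Dirac operator $D_{X_1/X_2}\,\widehat\otimes\,1 + 1\,\widehat\otimes\,\widetilde D_{X_2/X_3}$ being (the principal part of) the spin Dirac operator of the total space --- this is classical (e.g.\ \cite[Ch.~11]{BerGetVer:HKDO}) and the analogous statement in unbounded $\mathrm{KK}$ is handled exactly as in the smooth trivial-bundle case via the $\Cl(1)$-grading conventions cited in the proof of Theorem~\ref{thm:FunctorialityFibrations} (\cite[Examples 2.38--2.40]{BraMesSui:GTSTUKP}, \cite{Wah:PFAICHS}). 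So I expect the main obstacle to be purely bookkeeping: tracking the $\bb Z_2$-gradings through the graded tensor product and through the passage to the grid/fiber resolutions to confirm that $\ell=1$ here, i.e.\ that the multiplicity-two phenomenon of the signature case is an artifact of the signature grading and is absent for spin. Once that is checked, there is nothing further to do --- the analytic heart of the argument is entirely contained in Theorem~\ref{thm:FunctorialityFibrations}.
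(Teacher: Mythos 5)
Your proof is correct and follows exactly the route the paper intends: the paper gives essentially no argument beyond the one-sentence observation preceding the corollary, that if the vertical Clifford structures on $p_{12}$ and $p_{23}$ arise from spin structures then so does the induced vertical Clifford structure on $p_{13}$, so Theorem~\ref{thm:FunctorialityFibrations} applies directly. You fill in the details of that observation (the decomposition ${}^wT\wt X_1/X_3 = \alpha^*{}^wT\widehat X_1/X_2 \oplus \beta^*{}^wT\widehat X_2/X_3$ forces ${}^wT\wt X_1/X_3$ to be spin, with spinor bundle the graded tensor product) and correctly note that the Witt condition for $p_{13}$ is part of the hypothesis rather than something to be deduced.

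One small point of precision: the curvature-correction term $\cl(\omega)$ is not something that makes the induced structure agree with the spin structure only ``up to a $\mathrm{KK}$-equivalence.'' After adding $\cl(\omega)$ the induced connection $\nabla^{E_{13}}$ \emph{is} a metric Clifford connection with respect to the vertical Levi--Civita connection, and for an honest spin structure such a connection is the spin connection, so $\eth^{E_{13}}_{X_1/X_3}$ equals $D^{\mathrm{spin}}_{X_1/X_3}$ on the nose. The relatively-compact-perturbation remark is still needed, but it lives inside the proof of Theorem~\ref{thm:FunctorialityFibrations} (passing from the naive sum $D^{E_{13}}$ to the genuine $\eth^{E_{13}}$), not in the identification of the induced Clifford structure with the spin one. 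Your discussion of the absence of the factor $\ell$ for spin --- that the $\Cl(1)$-grading conventions handle the odd$\times$odd case without introducing a factor of $2$, in contrast to the signature operator --- is accurate and is arguably the one step the paper leaves genuinely implicit; flagging it is a good instinct.
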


\begin{remark}
The corollary applies, for example, if the fibers of $p_{12},$ $p_{23},$ and $p_{13}$ all satisfy the psc-Witt condition (described after Corollary \ref{cor:DefSpinKK}). 

Notice that, unlike the topological Witt condition, assuming that the fibers of $p_{12}$ and $p_{23}$ satisfy this condition does not imply that the fibers of $p_{13}$ will satisfy the psc-Witt condition. 
Indeed, it is not difficult to see that the product of two cones satisfying the psc-Witt condition need not satisfy the psc-Witt condition.
\end{remark}

%%%%%%%%%%%%%%%%%%%%%%%%%%%%%%
\subsection{Base change}\label{subsect:base-change}
%%%%%%%%%%%%%%%%%%%%%%%%%%%%%%
In this subsection we show that the Gysin homomorphism associated to a fiber bundle is compatible with pull-back along proper smooth maps of stratified spaces.

Specifically suppose that $p: X \lra Y$  is a smooth oriented fiber bundle of stratified spaces, whose typical fiber $Z$ is a Witt pseudomanifold, $W$ is another smoothly stratified space and $f: W \lra Y$ is a proper smooth stratified map, and then consider the Cartesian diagram
\begin{equation*}
	\xymatrix{
	f^* X \ar[r]^-g \ar[d]_-q \pullbackcorner & X \ar[d]^-p \\
	W \ar[r]^-{f} & Y }
\end{equation*}
with $f^*X\xlra q W$ the pull-back bundle.
We assume that $X$ is endowed with a vertical totally geodesic wedge metric and then we endow $f^*X$ with the pull-back metric along $g.$
We have two bivariant classes %$\Sigma (p)$ and $\Sigma (q)$,
$$[D_{X/Y}^{\mathrm{sign}}]\in \mathrm{KK}(C_0(X),C_0(Y)),\,\quad [D_{f^*X/W}^{\mathrm{sign}}]\in \mathrm{KK}(C_0(f^* X), C_0(W)),$$
where $D_{X/Y}^{\mathrm{sign}}$ denotes the vertical family of signature operators endowed with their VAPS domain, and similarly for $D_{f^*X/W}^{\mathrm{sign}},$
and by Kasparov multiplication they define two homomorphisms
\begin{equation*}
	[D_{X/Y}^{\mathrm{sign}}] \otimes \cdot: \mathrm K_*^{\mathrm{an}}(Y) \lra \mathrm K_*^{\mathrm{an}}(X), \quad
	[D_{f^*X/W}^{\mathrm{sign}}] \otimes \cdot: \mathrm K_*^{\mathrm{an}}(W) \lra \mathrm K_*^{\mathrm{an}}(f^*X).
\end{equation*}
We will deduce base change from the following:
\begin{thm} \label{thm.basechangeforstratfiberbundles}
If $f,$ $g,$ $p,$ and $q$ are above then the following holds
\begin{equation*}
	[D_{X/Y}^{\mathrm{sign}}] \otimes f_*\alpha = g_*([D_{f^*X/W}^{\mathrm{sign}}] \otimes \alpha), \quad \forall\alpha\in \mathrm K_*^{\mathrm{an}}(W).
\end{equation*}
\end{thm}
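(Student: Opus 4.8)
The plan is to reduce the base-change identity to a functoriality statement about Kasparov products and a local computation comparing the vertical signature operators on $f^*X\to W$ and $X\to Y$. First I would unwind the meaning of $g_*$ on the right-hand side: for a class in $\mathrm{KK}(C_0(f^*X),\mathbb C)$ the pushforward $g_*$ is composition (Kasparov product) with the $\ast$-homomorphism class $[g^*]\in\mathrm{KK}(C_0(X),C_0(f^*X))$ induced by $g\colon f^*X\to X$ (here properness of $f$, hence of $g$, is what makes $g^*\colon C_0(X)\to C_0(f^*X)$ well-defined). Similarly $f_*$ is Kasparov product with $[f^*]\in\mathrm{KK}(C_0(Y),C_0(W))$. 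With this dictionary, and using associativity of the Kasparov product, the claimed identity
\begin{equation*}
	[D_{X/Y}^{\mathrm{sign}}] \otimes [f^*] \otimes \alpha = [g^*]\otimes [D_{f^*X/W}^{\mathrm{sign}}] \otimes \alpha \quad\text{for all }\alpha\in \mathrm K_*^{\mathrm{an}}(W)
\end{equation*}
follows once we establish the bivariant identity
\begin{equation}\label{eq:proposalkey}
	[D_{X/Y}^{\mathrm{sign}}] \otimes_{C_0(Y)} [f^*] = [g^*]\otimes_{C_0(f^*X)} [D_{f^*X/W}^{\mathrm{sign}}] \quad\text{in } \mathrm{KK}^{\dim Z}(C_0(X),C_0(W)).
\end{equation}
So the whole theorem comes down to proving \eqref{eq:proposalkey}, which is a clean "compatibility of the vertical signature class with pull-back of the base" assertion.

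To prove \eqref{eq:proposalkey} I would pass to grid resolutions, exactly as in the construction of $[D^{\mathrm{sign}}_{X/Y}]$ in \S\ref{sec:AnTransferMap}. Resolve the fibers: $\widehat X=\mathrm{res}_{\mathrm{fib}}(X,p)\to Y$ and $\widehat{f^*X}=\mathrm{res}_{\mathrm{fib}}(f^*X,q)\to W$. Because resolution of the fibers is a fiberwise operation and $f^*X\to W$ is literally $X\to Y$ pulled back along $f$, there is a canonical identification $\widehat{f^*X}=f^*\widehat X$ as fiber bundles of manifolds with corners over $W$, compatible with the blow-down maps and with $g$ (lifted to $\widehat g\colon f^*\widehat X\to\widehat X$). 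The pull-back wedge metric on $f^*X$ corresponds to the pull-back of the vertical wedge metric on $\widehat X$, so $\widehat g$ carries the vertical signature operator of $\widehat X/Y$ to that of $f^*\widehat X/W$ fiberwise — over a point $w\in W$ the fiber of $q$ is isometric to the fiber of $p$ over $f(w)$ and the operators agree under this isometry. At the level of the unbounded cycles from \S\ref{sec:AnTransferMap}, $(\mathcal E_{f^*X},\eth^{\mathcal E}_{f^*\widehat X/W})$ is obtained from $(\mathcal E_X,\eth^{\mathcal E}_{\widehat X/Y})$ by the interior tensor product with the $C_0(Y)$-$C_0(W)$ cycle $[f^*]$ (the Hilbert module of the pull-back bundle $L^2(f^*\widehat X/W;E)=f^*L^2(\widehat X/Y;E)=\mathcal E_X\otimes_{C_0(Y)}C_0(W)$, zero operator), while $[g^*]\otimes[D^{\mathrm{sign}}_{f^*X/W}]$ corresponds to the same module equipped with the $C_{\Phi-v}$-left action restricted along $g^*$. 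Checking that these two unbounded cycles are unitarily equivalent — the unitary being the obvious identification of $L^2$-sections via the fiberwise isometries — is then a direct verification along the lines of Step 2 in the proof of Theorem \ref{thm:FunctorialityFibrations} and the unitary-equivalence argument in Proposition \ref{prop.stratdiffeoinvariance}. One point to be careful about: the left-module structures must be matched, which is where the properness of $f$ and the identity $g^*\circ(\text{mult by }\phi)=(\text{mult by }g^*\phi)\circ g^*$ enter; this is the analogue of the computation $\mathcal M'(f)(U\omega)=U(\mathcal M_Y(f)(\omega))$ in the proof of Proposition \ref{prop.stratdiffeoinvariance}.

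The main obstacle I anticipate is not the algebra but the bookkeeping around resolutions and domains: one must confirm that the VAPS domain of the vertical family over $W$ is exactly the pull-back of the VAPS domain over $Y$ (true because VAPS is defined fiberwise and the fibers are isometric), that the identification $\mathrm{res}_{\mathrm{fib}}(f^*X,q)=f^*\mathrm{res}_{\mathrm{fib}}(X,p)$ is natural enough to intertwine the $C^*$-algebras $C_{\Phi-v}$ on the two sides (so that both remain $\ast$-isomorphic to $C_0(f^*X)$ via the respective blow-down maps), and that the analytic Witt condition, being a fiberwise condition on boundary families, is preserved under pull-back of the base. Each of these is routine given the machinery of \S\ref{sec:ResStratFibBdles}, but they are exactly the places where a careless argument would go wrong, so I would state them as small lemmas. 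An alternative, perhaps cleaner, route is to deduce \eqref{eq:proposalkey} as a special case of Theorem \ref{thm:FunctorialityFibrations} applied to the composite $f^*X\xrightarrow{q}W\xrightarrow{f}Y$ (so $X_1=f^*X$, $X_2=W$, $X_3=Y$, $p_{12}=q$, $p_{23}=f$, $p_{13}=p\circ g$): this gives $[D^{\mathrm{sign}}_{f^*X/Y}]=[D^{\mathrm{sign}}_{f^*X/W}]\otimes[D^{\mathrm{sign}}_{W/Y}]$, but since $f$ has zero-dimensional ``fibers'' one instead wants the version with $f$ only a map, not a bundle — so I would still fall back on the direct unitary-equivalence argument above, using Theorem \ref{thm:FunctorialityFibrations} only as a sanity check on the gradings.
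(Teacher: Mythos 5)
Your proposal takes essentially the same route as the paper: reduce via properness and associativity of the Kasparov product to the single bivariant identity $[D_{X/Y}^{\mathrm{sign}}]\otimes[f^*]=[g^*]\otimes[D_{f^*X/W}^{\mathrm{sign}}]$, then identify the two cycles by recognizing the Hilbert module on the right as the pull-back Hilbert bundle $f^*L^2(X/Y;E)=L^2(f^*X/W;g^*E)$ with pulled-back vertical operator. The paper realizes the last step as an explicit lemma comparing transition functions of the two Hilbert-space bundles, which is precisely the content of what you call ``the obvious identification of $L^2$-sections via the fiberwise isometries''; you are also right that the alternative via Theorem~\ref{thm:FunctorialityFibrations} does not apply directly since $f$ is not a fiber bundle.
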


\begin{proof}
Since  $f$ is proper, it defines $f^*: C_0(Y)\to C_0(W)$, a $C^*$-algebra homomorphism, and 
$$f_* (\alpha)= [f^*]\otimes \alpha,\quad\text{with}\quad [f^*]\in \mathrm{KK}(C_0(Y),C_0(W))$$
and, since $g$ is also proper, similarly for $g_*$, with 
$[g^*]\in \mathrm{KK}(C_0(X), C_0(f^* X))$. Here we are using the well known fact that if $A$ and $B$ are two $C^*$-algebras
and $\phi:A\to B$ is a homomorphism then there is a natural KK-class
$$[\phi]:= [B,\phi:A \to B,0]\in \mathrm{KK}(A,B)$$
where we regard $B$ as a $B$-Hilbert module with the $B$-valued inner product equal to $\langle b, b^\prime \rangle :=
(b^\prime)^* b$. This class $[\phi]\in \mathrm{KK}(A,B)$ has the property that 
$\phi^*: \mathrm{KK}(B,D)\to \mathrm{KK}(A,D)$ is given by $[\phi]\otimes (-) : \mathrm{KK}(B,D)\to \mathrm{KK}(A,D)$ for any $C^*$-algebra $D$.\\
Hence, by associativity of the Kasparov product, we see that the theorem is equivalent to the
equality
$$[D_{X/Y}^{\mathrm{sign}}]\otimes [f^*]= [g^*]\otimes [D_{f^*X/W}^{\mathrm{sign}}]\quad\text{in}\quad \mathrm{KK}(C_0(X),C_0(W))\,.$$

\noindent
In general if 
$${\bf x}=[E_1,\phi_1: A\to \mathbb{B} (E_1),F_1]\in \mathrm{KK}(A,D)$$ and 
if $\lambda:D\to B$ is a $C^*$-homomorphism and 
$${\bf y}=[B,\lambda:D\to B,0]\in \mathrm{KK}(D,B)$$ then  
$${\bf x}\otimes {\bf y}=[E= E_1\otimes_\lambda B, \phi_1\otimes_\lambda {\rm Id}, F\otimes {\rm Id}]\in \mathrm{KK}(A,B).$$
One should observe that $\lambda$ induces a group homomorphism:
$$\lambda_* : \mathrm{KK}(-,D)\to \mathrm{KK}(-,B)$$
and 
$${\bf x}\otimes {\bf y}= \lambda_* {\bf x}.$$
Similarly if 
$${\bf x}=[D, \mu: A\to D,0]\in \mathrm{KK}(A,D)$$
and
$${\bf y}=[E_2,\phi_2: D\to \mathbb{B} (E_2),F_2]\in \mathrm{KK}(D,B)$$ 
then 
$${\bf x}\otimes {\bf y}=[E= D\otimes_{\phi_2}  E_2\stackrel{\Phi}{\simeq} E_2, f\otimes_{\phi_2} {\rm Id}, \Phi^* F_2 \Phi]\in \mathrm{KK}(A,B).$$
Notice that $\mu$ induces
$$\mu^*: \mathrm{KK}(D,-) \to \mathrm{KK}(A,-)$$
and it holds that
$$\mu^* {\bf y}= {\bf x}\otimes {\bf y}\,.$$
We go back to our specific situation and write down the relevant cycles.\\
Let $E$ be the bundle of exterior powers of wedge vertical differential forms on $X,$
\begin{equation*}
	E = \Lambda^* ({}^wT^*X/Y).
\end{equation*}
We denote by $L^2 (X/Y;E)$ the bundle of Hilbert spaces over $Y$ with fiber at $y\in Y$ given by
$L^2 (X_y, E_y)$ with $X_y=p^{-1}(y)$ and $E_y$ the restriction of $E$ to $X_y$.
Then
\begin{equation*}
	[D_{X/Y}^{\mathrm{sign}}]:= [H_{X/Y}:=C_0(Y, L^2 (X/Y;E)),\phi: C_0(X)\to \mathbb{B}(H_{X/Y}), D_{X/Y}^{\mathrm{sign}} ]\\
	\in \mathrm{KK}(C_0(X),C_0(Y))
\end{equation*}
where $C_0(Y, L^2 (X/Y;E))$ denotes continuous sections vanishing at infinity of the bundle of Hilbert spaces 
$L^2 (X/Y;E).$
We also have $[f^*]\in \mathrm{KK}(C_0(Y), C_0(W))$ given by
$$[C_0(W), f^*:C_0(Y)\to C_0(W),0]\in \mathrm{KK}(C_0(Y), C_0(W)).$$
Next we write the elements appearing on the right hand side of the base change formula.
These are
$$[g^*]=[C_0(f^* X), g^*: C_0(X)\to C_0(f^* X),0]\in \mathrm{KK}(C_0(X), C_0(f^* X)),$$
whereas $[D_{f^*X/W}^{\mathrm{sign}}]$ is given by
\begin{equation*}
	[ H_{f^*X/W}:=C_0(W,L^2 (f^*X/W, g^*E)), \tilde{\phi}:  C_0(f^*X)\to 
	\mathbb{B}(H_{f^*X/W}), D_{f^*X/W}^{\mathrm{sign}}] \\
	\in \mathrm{KK}(C_0(f^* X), C_0(W)).
\end{equation*}
\begin{lemma}
There is an isomorphism 
$$L^2 (f^*X/W, g^*E)= f^* L^2 (X/Y;E)$$
of bundles of Hilbert spaces over $W$.
\end{lemma}
\begin{proof}
By our definition of fiber bundle of smoothly stratified spaces, we know that there is an open cover of $Y,$ $\{ U_{i}\}_{i \in \cal A},$ together with corresponding local trivializations of $p: X \to Y$ of the form
\begin{equation*}
	\xymatrix{
	p^{-1}( U_{i}) \ar[rr]^-{\varphi_{i}} \ar[rd]_-p & & Z \times  U_{i} \ar[ld] \\
	&  U_{i} & }
\end{equation*}
where the right arrow is the projection onto the right factor and where $\varphi_{i}$ is a stratified diffeomorphism.
Whenever $i, j \in \cal A$ are such that $ U_{ij} :=  U_{i} \cap \cal U_{j} \neq \emptyset$ the map
\begin{equation*}
	Z \times  U_{ij} \xlra{\varphi_{j}\circ \varphi_{i}^{-1}} Z \times  U_{ij}
\end{equation*}
necessarily has the form $(z,u) \mapsto (\rho_{ij}(u)(z), u)$ with transition functions $\rho_{ij},$ a family of stratified diffeomorphisms of $Z$ parametrized by points of $ U_{ij}.$ (This family is smooth in the sense that they fit together into the smooth map $\varphi_{j}\circ \varphi_{i}^{-1}.$ ) 
If $i, j, k \in \cal A$ are such that $\cal U_{i} \cap \cal U_{j} \cap \cal U_{k} \neq \emptyset$ then, on this set, the transition functions satisfy the cocycle condition $\rho_{ij} = \rho_{kj} \circ \rho_{ij}.$

At a point $u \in U_{ij},$ the  stratified diffeomorphism
$\rho_{ij} (u): Z \to Z$ induces an isomorphism of Hilbert spaces
$\rho^*_{ij} (u): L^2 (Z) \to L^2 (Z)$ by pulling back an $L^2$-function on $Z$
via $\rho_{ij}(u)$, i.e.
$(\rho^*_{ij} (u))(h) = h \circ (\rho_{ij} (u))$
for $h\in L^2 (Z)$.
This yields a system 
$\{ \rho^*_{ij}: U_{ij} \to \mathrm{GL}(L^2 (Z)) \}$ of transition functions satisfying
the cocycle condition. This system is the system of transition functions
associated to the bundle $L^2 (X/Y)$ of Hilbert spaces.

The pulled-back smooth stratified fiber bundle $q: f^* X \to W$ is trivialized over the open cover $\{V_i = f^{-1}(U_i)\}_{i \in \cal A}$ of $W,$ with local trivializations 
\begin{equation*}
	\xymatrix{
	q^{-1}(V_{i}) \ar[rr]^-{\psi_i:=(\pi_L\circ\varphi_{i}\circ g, q) } \ar[rd]_-p & & Z \times  V_{i} \ar[ld] \\
	&  V_{i} & }
\end{equation*}
where $\pi_L: Z \times U_i \lra Z$ is the projection onto the left factor. 
Whenever $i, j \in \cal A$ are such that $V_{ij} :=  V_{i} \cap V_{j} \neq \emptyset$ the map
\begin{equation*}
	Z \times  V_{ij} \xlra{\psi_{j}\circ \psi_{i}^{-1}} Z \times  V_{ij}
\end{equation*}
is given by $(z,u) \mapsto (\rho_{ij}(f(u))(z), u),$ i.e., the transition functions of $q$ are $\tau_{ij} = \rho_{ij} \circ f.$
At a point $v \in V_{ij},$ the stratified diffeomorphism
$\tau_{ij} (v): Z \to Z$ induces an isomorphism of Hilbert spaces
$\tau^*_{ij} (v): L^2 (Z) \to L^2 (Z)$ by pulling back an $L^2$-function on $Z$
via $\tau_{ij}(v)$, i.e.
$(\tau^*_{ij} (v)) (h) = h \circ (\tau_{ij} (v))$
for $h\in L^2 (Z)$.
This yields a system 
$\{ \tau^*_{ij}: V_{ij} \to \mathrm{GL}(L^2 (Z)) \}$ of transition functions satisfying
the cocycle condition which is the system of transition functions
for the Hilbert space bundle $L^2 (f^* X/W)$.
On the other hand,
the transition functions of the pullback Hilbert space bundle
$f^* L^2 (X/Y)$ over $W$ are given by composing the transition
functions of $L^2 (X/Y)$ with $f$, that is,
$f^* L^2 (X/Y)$ has transition functions
$\sigma_{ij}: V_{ij} \to \mathrm{GL} (L^2 (Z))$,
$\sigma_{ij} (v) = \rho^*_{ij} \circ f$.
Now, on a function $h\in L^2 (Z),$
\[ \sigma_{ij} (v) (h)
   = \rho^*_{ij} (f(v)) (h)
   = h \circ (\rho_{ij} (f(v)))
   = h \circ (\tau_{ij} (v))
   = (\tau^*_{ij} (v)) (h). \]
This shows that $f^* L^2 (X/Y)$ and $L^2 (f^* X/W)$ have the same
transition functions with respect to $\{ V_{ij} \}$.
\end{proof}

Consequently,
$$[D_{f^*X/W}^{\mathrm{sign}}]= [ H_{f^*X/W}:=C_0(W,f^* L^2 (X/Y;E)), \tilde{\phi}:  C_0(f^*X)\to 
 \mathbb{B}(H_{f^*X/W}), D_{f^*X/W}^{\mathrm{sign}}]$$
Notice that the left hand side of base change, $[D_{X/Y}^{\mathrm{sign}}]\otimes [f^*]$,  is also equal to 
\begin{equation*}
	(f^*)_* [H_{X/Y}:=C_0(Y, L^2 (X/Y;E)),\phi: C_0(X)\to \mathbb{B}(H_{X/Y}), D_{X/Y}^{\mathrm{sign}} ]\\
	\in \mathrm{KK}(C_0(X),C_0(W))
\end{equation*}
The definition we have given thus gives for
$[D_{X/Y}^{\mathrm{sign}}]\otimes [f^*]$ the following element
$$
 [H_{X/Y}\otimes_{f^*} C_0(W),\phi\otimes_{f^*} {\rm Id}: C_0(X)\to \mathbb{B}(H_{X/Y}\otimes_{f^*} C_0(W)), D_{X/Y}^{\mathrm{sign}}\otimes {\rm Id} ]$$
The right hand side, 
on the other hand, is equal to
\begin{equation*}
	(g^*)^*  [ H_{f^*X/W}:=C_0(W,L^2 (f^*X/W, g^*E)), \tilde{\phi}:  C_0(f^*X)\to 
	 \mathbb{B}(H_{f^*X/W}), D_{f^*X/W}^{\mathrm{sign}}]\\
	 \in \mathrm{KK}(C_0(X),C_0(W))
\end{equation*}
which is in turn equal to
 $$  [ H_{f^*X/W}:=C_0(W,L^2 (f^*X/W, g^*E)),  \tilde{\phi}\circ (g^*):  C_0(X)\to 
 \mathbb{B}(H_{f^*X/W}), D_{f^*X/W}^{\mathrm{sign}}]$$
 that we can write, thanks to the lemma, as
 $$  [ C_0(W,f^* L^2 (X/Y, E)),  \tilde{\phi}\circ (g^*):  C_0(X)\to 
 \mathbb{B}(C_0(W,f^* L^2 (X/Y, E))), D_{f^*X/W}^{\mathrm{sign}}]$$
and this can be written as 
$$  [ C_0(W,f^* L^2 (X/Y, E)),  \tilde{\phi}\circ (g^*):  C_0(X)\to 
 \mathbb{B}(C_0(W,f^* L^2 (X/Y, E))), f^* D_{X/Y}^{\mathrm{sign}}]
$$
because the vertical wedge metric on $f^*X$ is the pull-back of the corresponding metric on $X.$

This finishes the proof of Theorem \ref{thm.basechangeforstratfiberbundles}. 
\end{proof}

\begin{remark}\label{rmk: on-base-change}
It is convenient to note, as this will show up when we use the geometric description of K-homology below, that 
Theorem  \ref{thm.basechangeforstratfiberbundles} holds also when $f$ is a continuous map from a smooth manifold into a smoothly stratified space.
In this case we define $[D_{f^*X/W}^{\mathrm{sign}}]$ by 
$$  [ C_0(W,f^* L^2 (X/Y, E)),  \tilde{\phi}:  C_0(f^*X)\to 
 \mathbb{B}(C_0(W,f^* L^2 (X/Y, E))), f^* D_{X/Y}^{\mathrm{sign}}]
$$
(so that only the continuity of $f$ is required)
and the proof above shows that for any $\alpha\in \mathrm{KK}(C_0(W),\cplx),$
$$[D_{X/Y}^{\mathrm{sign}}]\otimes (f_* (\alpha))=g_* ([D_{f^*X/W}^{\mathrm{sign}}]\otimes \alpha).$$
\end{remark}

An immediate corollary, obtained by passing to analytic transfer classes, is base change.
\begin{cor}
If $p: X \lra Y$  is a smooth oriented fiber bundle of stratified spaces, whose typical fiber $Z$ is a Witt pseudomanifold, $W$ is another smoothly stratified space and $f: W \lra Y$ is a proper smooth stratified map and $q: f^*X \lra W$ is the induced fiber bundle over $W,$ then
\begin{equation*}
	\Sigma(p) \otimes f_*\alpha = g_*( \Sigma(q) \otimes \alpha), \quad \forall\alpha\in \mathrm K_*^{\mathrm{an}}(W)[\tfrac12].
\end{equation*}
Equivalently, 
\begin{equation*}
	p^! (f_* (\alpha))= g_* (q^! (\alpha))\quad \forall\alpha\in \mathrm K_*^{\mathrm{an}}(W)[\tfrac12].
\end{equation*}
\end{cor}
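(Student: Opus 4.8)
The plan is to deduce this corollary from Theorem~\ref{thm.basechangeforstratfiberbundles} purely by rescaling, so there is essentially no new content beyond bookkeeping of normalization constants and unwinding the definition of the Gysin maps.

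First I would note that, since $q: f^*X \lra W$ is the pull-back of $p: X \lra Y$ along $f$, its typical fiber is again $Z$ (the fiber of $q$ over $w \in W$ is $p^{-1}(f(w)) \cong Z$, with vertical orientation inherited from that of $p$ via $g$); in particular $\dim(f^*X/W) = \dim(X/Y) = \dim Z =: \ell$. Hence, by Definition~\ref{def:AnalyticTransferClass}, the two analytic transfer classes carry the \emph{same} normalization exponent:
\begin{equation*}
	\Sigma(p) = 2^{-\lfloor \ell/2 \rfloor}[D^{\mathrm{sign}}_{X/Y}] \in \mathrm{KK}^{\ell}(C_0(X),C_0(Y))[\tfrac12], \qquad
	\Sigma(q) = 2^{-\lfloor \ell/2 \rfloor}[D^{\mathrm{sign}}_{f^*X/W}] \in \mathrm{KK}^{\ell}(C_0(f^*X),C_0(W))[\tfrac12].
\end{equation*}

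Next, for $\alpha \in \mathrm K_*^{\mathrm{an}}(W)$, Theorem~\ref{thm.basechangeforstratfiberbundles} gives the identity $[D^{\mathrm{sign}}_{X/Y}] \otimes f_*\alpha = g_*\big( [D^{\mathrm{sign}}_{f^*X/W}] \otimes \alpha \big)$ in $\mathrm{KK}^*(C_0(X),\bb C)$. Both sides are additive in $\alpha$, so the identity persists after tensoring the coefficient group with $\bb Z[\tfrac12]$, i.e.\ for all $\alpha \in \mathrm K_*^{\mathrm{an}}(W)[\tfrac12]$. Multiplying through by the scalar $2^{-\lfloor \ell/2 \rfloor}$ and using that scalars commute with the Kasparov product and with the $\bb Z[\tfrac12]$-linear maps $f_*$ and $g_*$, I obtain
\begin{equation*}
	\Sigma(p) \otimes f_*\alpha = g_*\big( \Sigma(q) \otimes \alpha \big) \quad \text{for all } \alpha \in \mathrm K_*^{\mathrm{an}}(W)[\tfrac12],
\end{equation*}
which is the first displayed formula. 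The equivalent reformulation $p^!(f_*\alpha) = g_*(q^!\alpha)$ is then immediate from the definition of the analytic Gysin maps as Kasparov multiplication on the left, $p^!(\beta) = \Sigma(p)\otimes\beta$ and $q^!(\alpha) = \Sigma(q)\otimes\alpha$. I do not expect any genuine obstacle: the only points needing a word of justification are that a pulled-back fiber bundle has unchanged fiber (so the two normalization constants agree) and that Theorem~\ref{thm.basechangeforstratfiberbundles}, stated with integral coefficients, extends verbatim to $\bb Z[\tfrac12]$-coefficients by additivity in $\alpha$; one may also invoke Remark~\ref{rmk: on-base-change} if one wishes to allow $f$ to be merely continuous from a manifold.
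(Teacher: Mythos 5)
Your proposal is correct and matches the paper's intent exactly: the paper simply remarks that the corollary is "immediate\ldots obtained by passing to analytic transfer classes," and what you have done is unwind that remark — observing that $q$ has the same typical fiber $Z$ as $p$ (hence $\Sigma(p)$ and $\Sigma(q)$ carry the same normalization factor $2^{-\lfloor \ell/2 \rfloor}$), multiplying both sides of the identity from Theorem \ref{thm.basechangeforstratfiberbundles} by that scalar, and invoking bilinearity of the Kasparov product together with $\bb Z[\tfrac12]$-linearity of $f_*$ and $g_*$. No new ideas are needed and none were omitted.
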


%%%%%%%%%%%%%%%%%%%%%%%%%%%%%%
\section{Analytic transfer of the signature orientation along a normally non-singular inclusion} \label{sec:Inclusions}
%%%%%%%%%%%%%%%%%%%%%%%%%%%%%%

\begin{defn}
An inclusion of smoothly stratified spaces $j: X \lra Y$ is said to be {\bf normally non-singular} (nns) if there exist
\begin{itemize}
\item an open neighborhood
$\cal U \subseteq Y$ of $j(X)$,
\item an oriented vector bundle $\pi: N_{X}Y\to X,$ and
\item a stratified diffeomorphism $\psi: N_{X}Y\to \cal U$ such that
 \[ \xymatrix{
 N_X Y \ar[r]^-\psi_-\simeq & \cal U \\
 X \ar@{^{(}->}[u] \ar[r]_-j^-\simeq & j(X) \ar@{^{(}->}[u]
 }
 \]
commutes, where the left hand vertical map is the zero section inclusion.
\end{itemize}
Here $\cal U$ has the stratification inherited from $Y$ and $N_{X}Y$ has the stratification pulled-back
from $X$ through $\pi$.
\end{defn}

\noindent
We shall denote by $\phi: \cal U \to N_{X}Y$ the inverse of $\psi$.

Let us start by recalling the situation in the smooth setting.
If $j:M \lra M'$ is an embedding of smooth manifolds then, without any orientation assumptions, we will construct a `wrong-way map' between the K-theory of their tangent bundles,
\begin{equation*}
	\mathrm K^0(TM) \lra \mathrm K^0(TM').
\end{equation*}
If $M$ and $M'$ are K-oriented then this induces the Gysin map $j_!: \mathrm K^0(M) \lra \mathrm K^0(M')$ (in fact it suffices for $j$ to be K-oriented).

To define the former, we start by noting that 
any embedding of smooth manifolds is normally non-singular since the image $j(M)$ has a tubular neighborhood $\cal U,$
\begin{equation*}
	j(M) \subseteq \cal U \subseteq M',
\end{equation*}
which is diffeomorphic to the total space of the normal bundle $N=N_MM'$ of $j(M)$ in $M'$ through a diffeomorphism
$\varphi: \cal U \to N$.
The normal bundle of $Tj(M)$ in $TM'$ is $TN$ and this equals $N \oplus N$ lifted to $Tj(M).$ Since $N \oplus N$ can be identified with $N \otimes \bb C$
we see that $TN \lra Tj(M)$ is a complex vector bundle so there is a KK-class corresponding to the fiberwise Dolbeault operators
\begin{equation*}
	[\overline\partial_{TN/Tj(M)}] \in \mathrm{KK}(C_0(TN), C_0(Tj(M)))
\end{equation*}
and multiplication by this class implements the Thom isomorphism so there is also a class
\begin{equation*}
	[\overline\partial_{TN/Tj(M)}]^{-1} \in \mathrm{KK}(C_0(Tj(M)), C_0(TN))
\end{equation*}
We can identify $T\cal U$ with $TN$ through $d\varphi: T\cal U\to TN$. This induces
by pull-back an algebra isomorphism $\Phi: C_0(TN)\to C_0(T\cal U)$.
As $T\cal U$ is an open subset of $TM',$ the natural extension by zero  defines a homomorphism of $C^*$-algebras $C_0(T\cal U)  \lra C_0(TM')$.
Precomposing with $\Phi$ we finally obtain a $C^*$-algebra homomorphism  $j': C_0(TN)  \lra C_0(TM')$.  This defines a KK-class,
\begin{equation*}
	[j'] \in \mathrm{KK}(C_0(TN), C_0(TM')).
\end{equation*}
Putting these together we obtain 
\begin{equation*}
	\xymatrix{
	\mathrm K^0(TM)= \mathrm{KK}(\bb C, C_0(TM)) \ar[rd]_-{\otimes [\overline\partial_{TN/Tj(M)}]^{-1}\phantom{xxxx}} \ar[rr] 
	& &\mathrm{KK}(\bb C, C_0(TM'))= \mathrm K^0(TM')	\\
	& \mathrm{KK}(\bb C, C_0(TN)) \ar[ru]_-{\otimes [j'] } & } 
\end{equation*}

\begin{remark}
This construction produces the topological index of Atiyah-Singer in the case where $M'=\bb R^n.$
In that case $\mathrm K^0(TM')= \mathrm K^0(\bb R^{2n})= \mathrm K^0(\bb C^n) = \bb Z$ and so we have produced a map
$\mathrm K^0(TM)\lra \bb Z.$
\end{remark}

If $M$ is K-oriented, i.e., if there is a spin-c structure on its tangent bundle, then the fiberwise spin-c Dirac operators $\eth_{TM/M}$ determine a KK-class 
\begin{equation*}
	[\eth_{TM/M}] \in \mathrm{KK}(C_0(TM), C(M))
\end{equation*}
and multiplication by this class implements the Thom isomorphism
\begin{equation*}
	\mathrm K^0(TM) = \mathrm{KK}(\bb C, C_0(TM)) \lra \mathrm{KK}(\bb C, C(M)) = \mathrm K^0(M).
\end{equation*}
Thus if both $M$ and $M'$ are K-oriented we obtain the Gysin map
\begin{equation*}
	j_!: \mathrm K^0(M) \lra \mathrm K^0(M')
\end{equation*}
as anticipated.

For our purposes it is useful to note that the Gysin map can be defined without reference to the tangent bundles assuming only that the inclusion $j:M \lra M'$ is K-oriented or, equivalently, that the normal bundle $N_MM'$ has a spin-c structure. In this case there is a 
Thom isomorphism 
\begin{equation*}
	\mathrm K^0(N_MM') = \mathrm{KK}(\bb C, C_0(N_MM')) \lra \mathrm{KK}(\bb C, C_0(M)) = \mathrm K^0(M)
\end{equation*}
resulting from the Kasparov product with the class of the fiberwise spin-c Dirac operators (see \cite[\S 5, Theorem 8]{Kas:OKECA} \cite[\S 19.9.4]{Bla:KOA}).
Multiplying the inverse of this isomorphism with the class induced by the $C^*$-homomorphism  $i: C_0(N_MM') \lra C_0(M')$ (obtained by identifying $N_MM'$ with $\cal U$ as above) produces the Gysin map
\begin{equation*}
	j_!: \mathrm K^0(M) \lra \mathrm K^0(M')
\end{equation*}
and exhibits it as Kasparov product with
\begin{equation*}
	j! := [\eth_{N_MM'/M}]^{-1} \otimes [i]  \in \mathrm{KK}(C_0(M), C_0(M')).
\end{equation*}
We will refer to it as the {\bf spin-c Gysin map}, to emphasize its origin and the assumption that
$j$ is $K$-oriented (that is, the normal bundle has a  spin-c structure). We shall give a similar
construction in K-homology using the signature operator and we shall see that such a construction also holds in the context of normally non-singular inclusions of
Witt spaces.

\bigskip
\noindent
Let $j: X \lra Y$ be a nns inclusion of smoothly stratified spaces and recall  that, by definition,  $N_YX$ is oriented. 
We have denoted the vector bundle projection by  $\pi:N_YX\lra X$.  Then as a special case of the theory developed in the previous sections, the family of signature operators on the fibers of $N_XY$ determines an analytic transfer class,
\begin{equation*}
	\Sigma(\pi) = 2^{-\lfloor \ell/2 \rfloor}
	[D_{N_YX/X}^{\mathrm{sign}}] \in \mathrm{KK}^{\ell}(C_0(N_YX), C_0 (X))[\tfrac12]\,,\;\;\;\ell=\dim Y-\dim X.
\end{equation*}

\begin{lemma} \label{lem.hilsumpsigmaisinvertible} (Hilsum, see \cite[Theoreme 3.10]{Hil:FKBPVL})\\
There exist a unique  element $\Sigma_{N_YX}\in \mathrm{KK}^{\ell}(C_0 (X),C_0(N_Y X))[\tfrac12]$ such that 
\[ \Sigma_{N_YX} \otimes \Sigma(\pi) = 1 \in \KK^0 (C_0 (X), C_0 (X))[\tfrac12], \]
\end{lemma}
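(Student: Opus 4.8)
The statement asserts invertibility of the analytic transfer class of a vector bundle projection $\pi : N_Y X \to X$, after inverting $2$, and it is attributed to Hilsum. The plan is to reduce to Hilsum's theorem \cite[Th\'eor\`eme 3.10]{Hil:FKBPVL}, which treats exactly the case of the signature operator along a fibration with typical fibre a Euclidean space. First I would observe that $\pi : N_Y X \to X$ is a smooth oriented fiber bundle of stratified spaces in the sense of Definition \ref{def:SASS}: its typical fibre is $\bb R^\ell$, which is a smoothly stratified (indeed smooth) Witt pseudomanifold, so the general machinery of \S\ref{sec:AnTransferMap} applies and produces $[D^{\mathrm{sign}}_{N_YX/X}] \in \mathrm{KK}^\ell(C_0(N_YX), C_0(X))$ and hence $\Sigma(\pi)$. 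The key point is that the vertical family of signature operators here is, fibrewise, the signature operator on $\bb R^\ell$ with a wedge (in fact conical-type) metric, and this is precisely the operator whose $\mathrm{KK}$-class Hilsum analyzes.

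Second, I would recall the structure of Hilsum's argument: for the fibration $\bb R^\ell \fib N_Y X \xrightarrow{\pi} X$ he constructs an inverse by running the same signature-operator construction ``in the other direction''. Concretely, one uses the zero-section inclusion $X \hookrightarrow N_Y X$ and the associated proper map, together with the fact that the total space $N_Y X$ deformation retracts onto $X$; the signature operator on the fibre $\bb R^\ell$ (odd-dimensional contribution giving a factor of $2$, hence the need to invert $2$) pairs with a dual class to give the identity. The cleanest route in our setting is: invoke Hilsum's theorem verbatim for the local model, note that his construction is natural under the transition functions of the bundle $N_Y X$ (which are stratified diffeomorphisms of $\bb R^\ell$ fixing the origin and linear, or at least isotopic to linear), and patch. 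Alternatively — and this is the variant I would actually write out — combine the functoriality of analytic transfer classes, Corollary \ref{cor:FunctorialityFibrations}, with the observation that the composite $N_Y X \xrightarrow{\pi} X \xrightarrow{\mathrm{id}} X$ together with a section exhibits a diagram to which one can apply base change, Theorem \ref{thm.basechangeforstratfiberbundles}, reducing the computation of $\Sigma_{N_YX}\otimes\Sigma(\pi)$ to the analytic transfer class of the identity bundle over $X$, which is $1$.

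Third, for uniqueness: once one element with $\Sigma_{N_YX}\otimes\Sigma(\pi) = 1$ is exhibited, uniqueness of a \emph{left} inverse requires knowing $\Sigma(\pi)$ is also invertible on the other side, i.e.\ that it is a $\mathrm{KK}$-equivalence after inverting $2$. This follows from Hilsum's theorem, which gives a two-sided inverse; so I would state the lemma's $\Sigma_{N_YX}$ as that two-sided inverse and note uniqueness is then automatic (if $a\otimes \Sigma(\pi) = 1 = b\otimes\Sigma(\pi)$ and $\Sigma(\pi)\otimes c = 1$, then $a = a\otimes\Sigma(\pi)\otimes c = c = b\otimes\Sigma(\pi)\otimes c = b$).

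\textbf{Main obstacle.} The genuine difficulty is not conceptual but bookkeeping: one must check that the edge-calculus machinery of \cite{AlbGel:IFFDTOP} applied to the Euclidean-fibre bundle $N_Y X \to X$ really does reproduce the operator Hilsum works with (he works in the Lipschitz/bounded-geometry setting, we work with wedge metrics and the VAPS domain), and that the normalization factor $2^{-\lfloor \ell/2\rfloor}$ matches his. The comparison of Hilsum's $\mathrm{KK}$-cycle with ours — different function algebras ($C_0$ of the resolution with $\Phi$-conditions versus $C_0$ of the stratified space), different domains, but provably the same class by a homotopy of metrics as in Proposition \ref{prop.stratdiffeoinvariance} — is where the real work lies, and I would handle it by invoking the metric-independence already established together with the remark at the end of \S\ref{sec:KKThy} identifying our class with the standard ones. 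I expect the cleanest write-up simply cites \cite[Th\'eor\`eme 3.10]{Hil:FKBPVL} after this identification, rather than reproving invertibility from scratch.
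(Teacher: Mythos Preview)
Your preferred route --- the ``variant I would actually write out'' --- has a genuine gap. You propose to use functoriality (Corollary \ref{cor:FunctorialityFibrations}) together with base change to reduce $\Sigma_{N_YX}\otimes\Sigma(\pi)$ to the transfer class of the identity bundle. But this presupposes a candidate for $\Sigma_{N_YX}$, and the only natural one in sight is a transfer class for the zero section $X\hookrightarrow N_YX$. That is an inclusion, not a fibration; analytic transfer classes for inclusions are defined in \S\ref{sec:Inclusions} \emph{using} the present lemma (see Definition \ref{def:nns-gysin}), so the argument is circular. Base change (Theorem \ref{thm.basechangeforstratfiberbundles}) compares Gysin maps across a pullback square of fiber bundles and does not by itself produce an inverse to $\Sigma(\pi)$.

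The paper's proof supplies the missing idea: the \emph{complementary bundle trick}. Choose $\nu\to X$ with $N_YX\oplus\nu\cong X\times\bb R^m$, $m$ even. This gives a second fibration $p:X\times\bb R^m\to N_YX$ with $q:=\pi\circ p$ the trivial projection $X\times\bb R^m\to X$. Now functoriality (Theorem \ref{thm:FunctorialityFibrations}) yields $[D^{\mathrm{sign}}_{(X\times\bb R^m)/N_YX}]\otimes[D^{\mathrm{sign}}_{N_YX/X}] = [D^{\mathrm{sign}}_{(X\times\bb R^m)/X}]$, and for the trivial bundle the right-hand side is $2^m\,\tau_X(\alpha_m)$ with $\alpha_m$ Kasparov's Bott class. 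Pairing on the left with $\tau_X(\beta_m)$ gives $2^m\cdot 1_{C_0(X)}$. The left inverse is then \emph{defined} as (a suitable power of $2$ times) $\tau_X(\beta_m)\otimes[D^{\mathrm{sign}}_{(X\times\bb R^m)/N_YX}]$. This is exactly Hilsum's argument transplanted to the present setting, with our Theorem \ref{thm:FunctorialityFibrations} replacing his functoriality statement. Your first approach --- directly citing Hilsum after matching conventions --- is in spirit the same, but the paper makes the argument self-contained rather than chasing the comparison of domains and normalizations you correctly flag as the obstacle.

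On uniqueness: your argument is fine once two-sided invertibility is known, and the complementary-bundle construction does give it (run the same trick with the roles of $N_YX$ and $\nu$ viewed inside the trivial bundle).
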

\begin{proof}
(Throughout the proof we abbreviate  $\Sigma_{N_YX}$ as $ \Sigma_{N}.$)
We follow closely Hilsum but use our Theorem \ref{thm:FunctorialityFibrations} at a crucial point. 
Consider a vector bundle $\nu\to X$ such that $N_X Y\oplus \nu$ 
is the trivial bundle $X\times \bb R^m$ (up to bundle isomorphisms), with $m$ even. We obtain a fibration 
$p: X\times \bb R^m\to N_X Y$ and thus a class $[D_{(X\times \bb R^m)/N_XY}^{\mathrm{sign}}] \in \mathrm{KK}^{m-\ell}(C_0(X\times \bb R^m), C_0(N_X Y)) $.
Recall that Kasparov in \cite[Thereom 7.5]{Kas:OKECA} constructed elements $\alpha_m\in \mathrm{KK}^{m}(C_0 (\bb R^m), \bb C)$ and $\beta_m \in  \mathrm{KK}^{m}(\bb C,C_0 (\bb R^m))$ 
such that $\beta_m \otimes \alpha_m={\rm Id}_{\bb C}\in  \mathrm{KK}^{0}
(\bb C,\bb C)$. Moreover, as pointed out by Hilsum, 
$[D_{\bb R^m}^{\mathrm{sign}}] = 2^m \alpha_m$.
Consider  $\beta_m\in \mathrm{KK}^m(\bb C, C_0 (\bb R^m))$ and denote by 
$\tau_X (\beta_m)$ 
the element in the group  $\mathrm{KK}^m (C_0 (X), C_0 (X)\otimes C_0 (\bb R^m))\equiv 
\mathrm{KK}^m (C_0 (X), C_0(X\times \bb R^m))$ obtained by external Kasparov product 
of $\beta_m$ with the KK element corresponding 
to the identity homomorphism on $C_0 (X)$:
$$ \tau_X (\beta_m): = \beta_m\otimes {\rm Id}_{C_0 (X)}\in \mathrm{KK}^m (C_0 (X), C_0 (X)\otimes C_0 (\bb R^m)).$$
Then 
$$\tau_X (\beta_m)\otimes [D_{(X\times \bb R^m)/N_XY}^{\mathrm{sign}}] \in \mathrm{KK}^{2m-\ell}(C_0 (X), C_0(N_X Y))$$
Let us prove that 
$$(\tau_X (\beta_m)\otimes [D_{(X\times \bb R^m)/N_XY}^{\mathrm{sign}}])\otimes [D_{N_YX/X}^{\mathrm{sign}}] =2^m \operatorname{Id}_{C_0 (X)} \in \mathrm{KK}^0 (C_0 (X), C_0 (X))\,.$$
We consider the projection of the trivial bundle $X\times \bb R^m\to X$ and we denote it by $q$. Then, clearly,
$q=\pi\circ p$ and thus 
\begin{equation}\label{sigmaq}
	[D_{(X\times \bb R^m)/X}^{\mathrm{sign}}] = [D_{(X\times \bb R^m)/N_XY}^{\mathrm{sign}}] \otimes [D_{N_YX/X}^{\mathrm{sign}}] 
\end{equation} by our Theorem  \ref{thm:FunctorialityFibrations}.
But it is also true that $[D_{(X\times \bb R^m)/X}^{\mathrm{sign}}] = \tau_X (\alpha_m)$ and so 
$$\tau_X(\beta_m)\otimes[D_{(X\times \bb R^m)/X}^{\mathrm{sign}}] =2^m {\rm Id}_{C_0 (X)}\,.$$
Thus multiplying \eqref{sigmaq} on the left by $\tau_X(\beta_m)$ 
and using associativity we obtain 
$$2^m {\rm Id}_{C_0 (X)}= (\tau_X (\beta_m)\otimes [D_{(X\times \bb R^m)/N_XY}^{\mathrm{sign}}])\otimes [D_{N_YX/X}^{\mathrm{sign}}]\;\;\text{in}\;\; \mathrm{KK}^0 (C_0 (X), C_0 (X))\,.$$
which is what we wanted to establish.

If we rewrite this in terms of the analytic transfer forms, using the notation $j=\dim X,$ $k=\dim Y$ (so $\ell=k-j$),
it becomes
\begin{equation*}
	2^m {\rm Id}_{C_0 (X)}= 
	(\tau_X (\beta_m)\otimes 2^{\lfloor (m-k)/2 \rfloor}\Sigma(\pi))\otimes 2^{\lfloor (k-j)/2\rfloor}\Sigma(p)\;\;
	\text{in}\;\; \mathrm{KK}^0 (C_0 (X), C_0 (X))[\tfrac12]\,.
\end{equation*}
We set 
$$\Sigma_N:= 2^{-m + \lfloor (m-k)/2 \rfloor + \lfloor (k-j)/2\rfloor} \tau_X (\beta_m)\otimes\Sigma(\pi)$$
and we notice that this is indeed an element in $\mathrm{KK}^{\ell}(C_0 (X),C_0(N_Y X))[\tfrac12]$ and by construction it satisfies
the equation  $\Sigma_N \otimes \Sigma(\pi) = 1 \in \KK^0 (C_0 (X), C_0 (X))[\tfrac12]$. 

\end{proof}

\noindent
We denote the element $\Sigma_{N_YX}$ as $\Sigma(\pi)^{-1}$.

\smallskip
\noindent
The above result is in fact valid in greater generality. Let $V\xrightarrow{\pi} B$ be an orientable
real vector bundle of rank $\ell$ on a (possibly non-compact) Witt space $B$. 
We know that there exists a well-defined element $[D_{V/B}^{\mathrm{sign}}] %\Sigma(\pi) 
\in \mathrm{KK}^{\ell}(C_0(V), C_0(B))\,,\;\ell={\rm rank}\; V$, a corresponding analytic transfer class
\begin{equation*}
	\Sigma(\pi) = 2^{-\lfloor \ell/2\rfloor}[D_{V/B}^{\mathrm{sign}}] \in \mathrm{KK}^{\ell}(C_0(V), C_0(B))[\tfrac12]
\end{equation*}
and that this element induces a Gysin map by left Kasparov product
$$\pi^! : \mathrm{KK}^{j}(C_0(B), \cplx)[\tfrac12] \rightarrow \mathrm{KK}^{j+\ell}(C_0(V), \cplx)[\tfrac12].$$
Moreover $\pi^! \mathrm{sign}_K(B)=\mathrm{sign}_K(V) \in \mathrm{KK}^{\dim B+\ell}(C_0(V), \cplx)[\tfrac12]$.\\
Proceeding as above we understand that there exists $\Sigma_V\in \mathrm{KK}^{\ell}(C_0 (B),C_0(V))[\tfrac12]$  such that  $\Sigma_V \otimes \Sigma(\pi) = 1 \in \KK^0 (C_0 (B), C_0 (B))[\tfrac12]$.
We denote the element $\Sigma_V$ by $\Sigma (\pi)^{-1}$. 
Thus if $j:B \hookrightarrow V$ is the zero embedding, then there exists a homomorphism 
$$j^!: \KK^i (C_0 (V), \cplx)[\tfrac12] \lra \KK^{i+\ell} (C_0 (B), \cplx)[\tfrac12]$$ 
given by
\[ j^! (-) := \Sigma (\pi)^{-1}\otimes -. \]
which is the inverse of $\pi^!$. Morover it is clear from the equality $\pi^! \mathrm{sign}_K(B)= \mathrm{sign}_K(V)$
that $j^! \mathrm{sign}_K(V) = \mathrm{sign}_K(B)$ in  $\KK^{\dim B} (C_0 (B), \cplx)[\tfrac12]$ or, equivalently,
\begin{equation}\label{gysinressignopzerosectemb-0}
 \Sigma (\pi)^{-1}\otimes \mathrm{sign}_K(V) = \mathrm{sign}_K(B)\;\;\text{in}\;\; \KK^{\dim B} (C_0 (B), \cplx)[\tfrac12]\,.
 \end{equation}

\smallskip
We go back to our normally non-singular inclusion  of Witt spaces $X\hookrightarrow Y$.
We have seen that there is then an orientable  normal  bundle
$\pi:N_Y X\to X$, an open neighborhood $\mathcal{U} \subset Y$ of $X$, and a
stratified diffeomorphism $\varphi: \mathcal{U}  \longrightarrow X$.
We proceed in parallel to Brodzki, Mathai, Rosenberg, Szabo
\cite[Example 3.3]{BroMatRosSza:NCDDBK}.
From the above discussion, see \eqref{gysinressignopzerosectemb-0}, we record that  for the normal bundle $N_Y X\xrightarrow{\pi}
X$ it holds that 
\begin{equation}\label{gysinressignopzerosectemb}
\Sigma (\pi)^{-1}\otimes \mathrm{sign}_K(N_YX) = \mathrm{sign}_K(X);\;\text{in}\;\; \KK^{\dim X} (C_0 (X), \cplx)[\tfrac12] \,.
\end{equation}
The open inclusion $i: \mathcal{U} \subset Y$ defines
an element 
\[ i! \in \KK^m  (C_0 (U), C_0 (Y)),  \;\;\;\forall m\in \mathbb{N},\]
corresponding to the  homomorphism of $C_0 (U)$ in $C_0 (Y)$ obtained by  extension by zero.
(This works for any open subset $U$ of a locally compact space $Y$.)
This defines by Kasparov multiplication on the left a {\em restriction homomorphism}
$ \KK^\ell  (C_0 (Y), \cplx)\to \KK^\ell  (C_0 (\mathcal{U} ), \cplx)$, $\forall \ell\in \mathbb{N}$.
Note that $\mathcal{U} $ is a Witt space and hence
 there exists a signature class $[D_{\mathcal{U}}^{\mathrm{sign}}] \in \KK^{\dim Y} (C_0 (\mathcal{U} ),\cplx)$
 and a corresponding analytic orientation class
\begin{equation*}
	\mathrm{sign}_K(\cal U) = 2^{-\lfloor \dim Y/2 \rfloor}[D_{\mathcal{U}}^{\mathrm{sign}}] 
	\in \KK^{\dim Y} (C_0 (\mathcal{U} ),\cplx)[\tfrac12].
\end{equation*}

\begin{lemma} \label{lem.restrtoopen}
Restriction to open subsets preserves $\KK$-classes of signature operators
of Witt spaces, i.e.
\[ i! \otimes [D_Y^{\mathrm{sign}}] = [D_{\mathcal{U}}^{\mathrm{sign}}] \in \KK^{\dim Y} (C_0 (\mathcal{U} ),\cplx). \]
Hence we also have
\[ i! \otimes \mathrm{sign}_K(Y) = \mathrm{sign}_K(\cal U) \in \KK^{\dim Y} (C_0 (\mathcal{U} ),\cplx)[\tfrac12]. \]
\end{lemma}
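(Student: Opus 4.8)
The plan is to prove that restriction to the open subset $\mathcal U \subseteq Y$ carries the K-homology class of the signature operator on $Y$ to that on $\mathcal U$, which, after dividing by $2^{\lfloor \dim Y/2\rfloor}$, immediately gives the statement about $\mathrm{sign}_K$. The key point is that $\mathcal U$ is itself a smoothly stratified Witt space (being an open subset of one), so $[D^{\mathrm{sign}}_{\mathcal U}]$ is defined by the construction of \S\ref{sec:KKThy}, and the open inclusion $i:\mathcal U \hookrightarrow Y$ induces the extension-by-zero $\ast$-homomorphism $\iota^\ast: C_0(\mathcal U) \to C_0(Y)$, whose associated class $i! = [\iota^\ast] \in \KK^0(C_0(\mathcal U), C_0(Y))$ satisfies $i!\otimes [A] = (\iota^\ast)^\ast [A]$ for any $[A]\in \KK^\ast(C_0(Y),\mathbb C)$.

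First I would choose the wedge metric on the regular part of $Y$ so that its restriction to $\mathcal U_{\mathrm{reg}}$ is the wedge metric used to define $[D^{\mathrm{sign}}_{\mathcal U}]$ (this is legitimate since, by Proposition~\ref{prop.stratdiffeoinvariance}, both classes are independent of the chosen wedge metric). Let $M = \mathrm{res}(Y)$ with blow-down $\beta_Y$; then $\beta_Y^{-1}(\mathcal U)$ is an open subset $M_{\mathcal U}$ of $M$ which is the resolution of $\mathcal U$, and the signature operator $D^{\mathrm{sign}}_M$ restricts over $M_{\mathcal U}$ to $D^{\mathrm{sign}}_{M_{\mathcal U}}$. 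Unwinding the definitions, the cycle defining $i!\otimes [D^{\mathrm{sign}}_Y]$ has underlying Hilbert space $L^2(M;\Lambda^\ast({}^wT^\ast M))$ with $C_0(\mathcal U)$ acting through $\iota^\ast$, i.e.\ via multiplication by functions supported in $M_{\mathcal U}$, together with the operator $D^{\mathrm{sign}}_M$ endowed with its VAPS domain. I would then exhibit an explicit homotopy (or a direct-sum/degeneracy argument) identifying this cycle with the one representing $[D^{\mathrm{sign}}_{\mathcal U}]$, namely $L^2(M_{\mathcal U};\Lambda^\ast({}^wT^\ast M_{\mathcal U}))$ with $D^{\mathrm{sign}}_{M_{\mathcal U}}$ and its VAPS domain and the tautological $C_0(\mathcal U)$-action.

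The comparison proceeds as follows: since the $C_0(\mathcal U)$-action on $L^2(M)$ factors through multiplication by functions vanishing outside $M_{\mathcal U}$, the cycle is the orthogonal direct sum of the part supported on $M_{\mathcal U}$ and a degenerate cycle on the orthogonal complement $L^2(M \setminus \overline{M_{\mathcal U}})$ (on which $C_0(\mathcal U)$ acts as zero and which therefore represents $0$ in $\KK$). The surviving summand carries $D^{\mathrm{sign}}_M$ restricted to $L^2(M_{\mathcal U})$; since the VAPS domain is local near the boundary hypersurfaces and $M_{\mathcal U}$ is an open union of faces of $M$ together with interior, this restriction has graph-closure the VAPS extension of $D^{\mathrm{sign}}_{M_{\mathcal U}}$, and the analytic Witt condition is inherited because the boundary families are exactly the same operators. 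This produces the unitary equivalence of unbounded cycles, hence equality of $\KK$-classes, and dividing by $2^{\lfloor \dim Y/2\rfloor}$ yields the $\mathrm{sign}_K$ statement.

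The main obstacle I anticipate is the bookkeeping around domains: one must check carefully that the self-adjoint extension obtained by restricting $(D^{\mathrm{sign}}_M,\mathcal D_{VAPS})$ to sections supported in the open set $M_{\mathcal U}$ genuinely coincides with $(D^{\mathrm{sign}}_{M_{\mathcal U}}, \mathcal D_{VAPS})$ — in particular that no new boundary behaviour is introduced along the ``new'' boundary of $M_{\mathcal U}$ coming from $\partial \mathcal U \cap Y$, which need not exist here since $\mathcal U$ is open in $Y$ so $\partial(\mathcal U) \subseteq \partial Y$ only in the stratified sense and $M_{\mathcal U}$ is simply an open subset of the manifold-with-corners $M$ containing no extra faces. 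This is where the locality of the edge calculus and of the VAPS domain from \cite{AlbGel:IFFDTOP} does the work, and I would cite the relevant locality statements there (and the observation, already used in the proof of Proposition~\ref{prop:DiracBdyDirac}, that one may shrink to a neighbourhood without changing the $\KK$-class) rather than redo the analysis.
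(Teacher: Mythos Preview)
The paper proves this lemma by citation only, referring to \cite[Rem.~2.17, p.~419]{Hil:FKBPVL}. Your direct argument is on the right track --- locality is indeed the mechanism --- but the direct-sum step contains a real gap. A direct sum of unbounded Kasparov cycles requires the \emph{operator}, not just the $C^*$-action, to be block diagonal, and $(D_M, \cal D_{VAPS})$ is not block diagonal with respect to $L^2(M_{\cal U}) \oplus L^2(M\setminus M_{\cal U})$: if $u \in \cal D_{VAPS}(D_M)$ then $\chi_{M_{\cal U}} u$ typically has a jump along $\partial M_{\cal U} \cap M^\circ$ and lies in no reasonable domain for $D_M$. For the same reason, what you call ``$D_M$ restricted to $L^2(M_{\cal U})$'' (whether interpreted as compression or as restriction of the domain to sections supported in $M_{\cal U}$) is \emph{not} the self-adjoint operator $(D_{M_{\cal U}},\cal D_{VAPS})$: elements of the latter domain need not extend by zero to $\cal D_{VAPS}(D_M)$ across the interior frontier of $M_{\cal U}$ in $M$. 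The VAPS/edge-calculus locality you invoke governs behaviour at the boundary hypersurfaces of $M$ arising from the stratification, not at this artificial interior boundary, so your anticipated obstacle is misidentified.

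The argument that actually works passes to the bounded transform $F_Y = D_M(1+D_M^2)^{-1/2}$ and uses its pseudolocality and locality: $[f,F_Y]$ and $fF_Yg$ are compact whenever $f \in C_c(\cal U)$ and $g$ has support disjoint from that of $f$. From these one shows that $f\bigl(F_Y - (F_{\cal U} \oplus G)\bigr)$ is compact for every $f\in C_0(\cal U)$ and any self-adjoint contraction $G$ on $L^2(M\setminus M_{\cal U})$, so that in the bounded picture the complementary summand really is degenerate and the remaining cycle is a compact perturbation of the one representing $[D_{\cal U}^{\mathrm{sign}}]$. This is precisely what Hilsum's remark records (in the Lipschitz setting) and is standard for Dirac operators on smooth manifolds (cf.\ \cite[Chap.~10]{HigRoe:AK}); in the wedge setting the requisite mapping properties of $F_Y$ follow from the edge calculus of \cite{AlbGel:IFFDTOP}, but it is the bounded transform, not the unbounded operator or its VAPS domain, that carries the comparison.
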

\begin{proof}
 This is discussed in  \cite[Rem. 2.17, p. 419]{Hil:FKBPVL} (take $N=point$ there).
\end{proof}

Next, 
 the stratified diffeomorphism
$\varphi: \mathcal{U}  \longrightarrow N_Y X$
gives an invertible element
\[ [\varphi] \in \KK^0  (C_0 (N_Y X), C_0 (\mathcal{U} ))  \]
which induces an  isomorphism $\KK^j (C_0 (\mathcal{U} ), \cplx) \to \KK^j (C_0 (N_Y X), \cplx)$
by left Kasparov product.
By  Proposition 
\ref{prop.stratdiffeoinvariance} (stratified diffeomorphism invariance 
of the signature class on a Witt space)
we have 
\begin{lemma} \label{lem.homeophipressignops}
It holds that $$[\varphi] \otimes [D_{\mathcal{U}}^{\mathrm{sign}}] = [D_{N_Y X}^{\mathrm{sign}}].$$
\end{lemma}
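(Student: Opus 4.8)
The plan is to prove Lemma \ref{lem.homeophipressignops}, namely that $[\varphi] \otimes [D_{\mathcal{U}}^{\mathrm{sign}}] = [D_{N_Y X}^{\mathrm{sign}}]$ in $\KK^{\dim Y}(C_0(N_Y X),\cplx)$, as a direct consequence of Proposition \ref{prop.stratdiffeoinvariance}. First I would recall that the class $[\varphi] \in \KK^0(C_0(N_Y X), C_0(\mathcal{U}))$ is the $\KK$-class associated to the $*$-isomorphism $\varphi^*: C_0(\mathcal{U}) \to C_0(N_Y X)$ induced by the stratified diffeomorphism $\varphi:\mathcal{U} \to N_Y X$; that is, $[\varphi] = [C_0(N_Y X), \varphi^*, 0]$. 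By the general fact recalled in the proof of Theorem \ref{thm.basechangeforstratfiberbundles} (that for a homomorphism $\phi:A\to B$ the class $[\phi]$ implements $\phi^*$ on $\KK$), left Kasparov multiplication by $[\varphi]$ is precisely the map $(\varphi^{-1})_* = \psi_*: \KK^{\dim Y}(C_0(\mathcal{U}),\cplx) \to \KK^{\dim Y}(C_0(N_Y X),\cplx)$ induced by the inverse diffeomorphism $\psi = \varphi^{-1}: N_Y X \to \mathcal{U}$ (equivalently, pullback of cycles along $\psi$ on the level of the $C^*$-algebras).

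Then I would observe that $[D_{\mathcal{U}}^{\mathrm{sign}}]$ is, by definition, the analytic signature class of $\mathcal{U}$ viewed as a smoothly stratified Witt space (it is an open Witt subspace of $Y$, hence a smoothly stratified Witt pseudomanifold in its own right, possibly non-compact but with compact links, so the construction of \S\ref{sec:KKThy} together with the vanishing-at-infinity variant applies). Since $\psi: N_Y X \to \mathcal{U}$ is a stratified diffeomorphism of smoothly stratified oriented Witt spaces, Proposition \ref{prop.stratdiffeoinvariance} gives
\[
	\psi_*\, [D_{\mathcal{U}}^{\mathrm{sign}}] = [D_{N_Y X}^{\mathrm{sign}}] \in \K_{\dim Y}^{\an}(N_Y X) = \KK^{\dim Y}(C_0(N_Y X),\cplx).
\]
Combining the two observations, $[\varphi] \otimes [D_{\mathcal{U}}^{\mathrm{sign}}] = \psi_* [D_{\mathcal{U}}^{\mathrm{sign}}] = [D_{N_Y X}^{\mathrm{sign}}]$, which is the assertion. (Note that $N_Y X$ carries the orientation of its regular part coming from the given orientations on $X$ and on the normal bundle $N_Y X \to X$, matching the orientation on $\mathcal{U}$ inherited from $Y$ via $\varphi$, so the signature classes are compared compatibly.)

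The only genuine subtlety—and the step I expect to require the most care—is the bookkeeping identifying $[\varphi]\otimes(-)$ with $\psi_*$: one must check that the Kasparov product of $[C_0(N_Y X), \varphi^*, 0] \in \KK^0(C_0(N_Y X), C_0(\mathcal{U}))$ with a cycle $[H, \alpha: C_0(\mathcal{U}) \to \mathcal{B}(H), D]$ is $[H, \alpha\circ\varphi^*: C_0(N_Y X)\to\mathcal{B}(H), D]$, which is exactly the push-forward of the cycle along the stratified diffeomorphism in the sense used in Proposition \ref{prop.stratdiffeoinvariance}. This is the same computation already carried out in the proof of Theorem \ref{thm.basechangeforstratfiberbundles} (the bullet there: $\mu^* \mathbf{y} = \mathbf{x} \otimes \mathbf{y}$ for $\mathbf{x} = [D,\mu,0]$), so it can be invoked verbatim rather than redone; everything else is immediate from the already-established stratified diffeomorphism invariance.
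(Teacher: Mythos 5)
Your approach matches the paper's exactly: the lemma is, in the paper, stated as an immediate consequence of Proposition~\ref{prop.stratdiffeoinvariance}, and you have fleshed out the bookkeeping identifying Kasparov multiplication by $[\varphi]$ with the K-homology pushforward along a stratified diffeomorphism before invoking that proposition. However, your bookkeeping contains a direction slip worth correcting. Since $\varphi\colon \mathcal{U} \to N_Y X$, the induced $*$-isomorphism of function algebras is $\varphi^*\colon C_0(N_Y X) \to C_0(\mathcal{U})$ (not $C_0(\mathcal{U}) \to C_0(N_Y X)$ as you wrote), and $[\varphi] = [C_0(\mathcal{U}), \varphi^*, 0] \in \KK^0(C_0(N_Y X), C_0(\mathcal{U}))$. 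By the paper's formula $\mu^*\mathbf{y} = [\mu]\otimes\mathbf{y}$, left multiplication by $[\varphi]$ precomposes the representation in a K-homology cycle with $\varphi^*$, which is precisely the covariant map $\varphi_*\colon \mathrm K^{\an}_*(\mathcal{U}) \to \mathrm K^{\an}_*(N_Y X)$ from Proposition~\ref{prop.stratdiffeoinvariance} — not $(\varphi^{-1})_* = \psi_*$ as you write (that would go the opposite direction on K-homology and doesn't type-check against $[D_{\mathcal{U}}^{\mathrm{sign}}]\in \mathrm K^{\an}_*(\mathcal{U})$). With $\varphi$ in place of $\psi$ the argument closes: $\varphi_*[D_{\mathcal{U}}^{\mathrm{sign}}] = [D_{N_Y X}^{\mathrm{sign}}]$ by Proposition~\ref{prop.stratdiffeoinvariance}. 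The conclusion is unaffected because the invariance statement applies equally well in either direction, but the relabeling is needed for the intermediate identities to be literally correct.
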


We are in the position to define the Gysin restriction homomorphism
$j^!$ associated to the normally non-singular inclusion
$j: X\hookrightarrow Y$ of our Witt spaces.

\begin{defn}\label{def:nns-gysin}
Let $j: X\hookrightarrow Y$ be a normally non-singular inclusion of Witt spaces.
 Let $\ell$ be the codimension of the inclusion. Then
Kasparov multiplication
\[ \KK^\ell (C_0 (X), C_0 (N_Y X))[\smlhf] \otimes \KK^0 (C_0 (N_Y X), C_0 (\mathcal{U}))[\smlhf] 
    \otimes \KK^0 (C_0 (\mathcal{U}), C_0 (Y))[\smlhf] \]
\[   \longrightarrow
    \KK^\ell (C_0 (X), C_0 (Y))[\smlhf]  \]
defines an element
\begin{equation} \label{equ.gysinrestrelement}  
	\Sigma(j) := \Sigma(\pi)^{-1} \otimes [\varphi] \otimes i! 
        \in \KK^\ell (C_0 (X), C_0 (Y))[\tfrac12],
\end{equation}         
which we refer to as the {\bf analytic transfer map of the nns inclusion $j$}.
Kasparov multiplication with this element on the left defines the analytic Gysin restriction
\[
j^!: \KK^{m+\ell} (C_0 (Y),\cplx)[\tfrac12] \longrightarrow \KK^m (C(X),\cplx)[\tfrac12], 
\]
\[ j^! (-) := \Sigma(j) \otimes -. \]
\end{defn}

We  have, finally, the following result:

\begin{thm} \label{thm.gysinresofsignop}
Let $X,Y$ be Witt spaces 
and let $j: X \hookrightarrow Y$ be a normally non-singular inclusion of codimension $\ell$.
Then Gysin restriction
on analytic K-homology 
$$j^!: \KK^{*+\ell} (C_0 (Y),\cplx)[\tfrac12] \longrightarrow \KK^{*} (C_0 (X),\cplx)[\tfrac12],$$ 
sends the analytic signature orientation class of $Y$ to the
analytic signature orientation class of $X$, i.e.
\begin{equation}\label{j-signature} 
j^! \mathrm{sign}_K(Y) = \mathrm{sign}_K(X).
\end{equation}
\end{thm}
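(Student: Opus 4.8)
The plan is to assemble the statement from the pieces established in the lemmas preceding it, unwinding the definition of $\Sigma(j)$ and applying the compatibility results one Kasparov factor at a time. Recall from \eqref{equ.gysinrestrelement} that $\Sigma(j) = \Sigma(\pi)^{-1} \otimes [\varphi] \otimes i!$, so that $j^!\mathrm{sign}_K(Y) = \Sigma(\pi)^{-1} \otimes [\varphi] \otimes i! \otimes \mathrm{sign}_K(Y)$. The strategy is to evaluate this product from the right: first contract $i! \otimes \mathrm{sign}_K(Y)$, then $[\varphi] \otimes (\text{result})$, then $\Sigma(\pi)^{-1} \otimes (\text{result})$, using associativity of the Kasparov product throughout.

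The key steps, in order, are as follows. First, by Lemma \ref{lem.restrtoopen} we have $i! \otimes \mathrm{sign}_K(Y) = \mathrm{sign}_K(\cal U)$ in $\KK^{\dim Y}(C_0(\cal U),\cplx)[\tfrac12]$, since restriction to the open subset $\cal U$ preserves the analytic signature orientation class of a Witt space. Second, by Lemma \ref{lem.homeophipressignops} (which rests on the stratified diffeomorphism invariance of Proposition \ref{prop.stratdiffeoinvariance}), we have $[\varphi] \otimes \mathrm{sign}_K(\cal U) = \mathrm{sign}_K(N_Y X)$ in $\KK^{\dim Y}(C_0(N_Y X),\cplx)[\tfrac12]$; here one just needs to note that $\dim \cal U = \dim Y = \dim N_Y X$ so the normalizing powers of $2$ match and the equality of signature classes upgrades to equality of the normalized $\mathrm{sign}_K$ classes. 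Third, by \eqref{gysinressignopzerosectemb} — the special case of the vector bundle Gysin computation applied to $\pi: N_Y X \lra X$ — we have $\Sigma(\pi)^{-1} \otimes \mathrm{sign}_K(N_Y X) = \mathrm{sign}_K(X)$ in $\KK^{\dim X}(C_0(X),\cplx)[\tfrac12]$. Chaining these three identities with associativity gives
\begin{equation*}
	j^!\mathrm{sign}_K(Y) = \Sigma(\pi)^{-1} \otimes \big( [\varphi] \otimes ( i! \otimes \mathrm{sign}_K(Y) ) \big) = \Sigma(\pi)^{-1} \otimes \mathrm{sign}_K(N_Y X) = \mathrm{sign}_K(X),
\end{equation*}
which is precisely \eqref{j-signature}. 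One should also check that the cohomological degrees add up correctly: $\ell + 0 + 0 = \ell$ for $\Sigma(j)$, and $(*+\ell) - \ell = *$ for the total map, consistent with $\mathrm{sign}_K(Y) \in \K^\an_{\dim Y}$, $\mathrm{sign}_K(N_Y X) \in \K^\an_{\dim Y}$, $\mathrm{sign}_K(X) \in \K^\an_{\dim X}$ and $\dim Y = \dim X + \ell$.

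The main obstacle is not in the formal bookkeeping but in making sure that \eqref{gysinressignopzerosectemb} — equivalently \eqref{gysinressignopzerosectemb-0} applied to the zero-section of $N_Y X$ — is genuinely available as a black box at this point. That identity itself was deduced from Lemma \ref{lem.hilsumpsigmaisinvertible} together with Theorem \ref{thm.bundletransferpreservesksignature}, the fact that the analytic transfer map along the bundle projection $\pi$ preserves the signature orientation (so $\pi^!\mathrm{sign}_K(X) = \mathrm{sign}_K(N_Y X)$), and the invertibility of $\Sigma(\pi)$ after inverting $2$. So the real content — that bundle transfer preserves the signature class — is Theorem \ref{thm.bundletransferpreservesksignature}, which is the $X_3 = \mathrm{pt}$ specialization of the functoriality Theorem \ref{thm:FunctorialityFibrations}; the present proof merely transports that content through the diffeomorphism $\varphi$ and the open inclusion $i$. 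A secondary point requiring a sentence of care is that all three spaces $X$, $\cal U \cong N_Y X$, and $Y$ are Witt spaces — for $\cal U$ and $N_Y X$ this is immediate since they are stratified-diffeomorphic open subsets / total spaces over the Witt space $X$ (a vector bundle over a Witt space is Witt), and $Y$ is Witt by hypothesis — so every $\mathrm{sign}_K$ symbol appearing above is defined.
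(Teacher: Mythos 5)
Your proof is correct and follows exactly the same route as the paper's: unwind $\Sigma(j) = \Sigma(\pi)^{-1} \otimes [\varphi] \otimes i!$, then contract against $\mathrm{sign}_K(Y)$ from the right using Lemma \ref{lem.restrtoopen}, Lemma \ref{lem.homeophipressignops}, and \eqref{gysinressignopzerosectemb} in turn. The additional remarks on matching dimensions/normalizations and on the Witt property of $\cal U$ and $N_Y X$ are accurate but not spelled out in the paper, which simply chains the three identities.
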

\begin{proof}
Using  \ref{gysinressignopzerosectemb}
and Lemmas \ref{lem.restrtoopen}, \ref{lem.homeophipressignops}, we have
\begin{align*}
j^! \mathrm{sign}_K(Y)
&= \Sigma(\pi)^{-1} \otimes [\varphi] \otimes i! \otimes \mathrm{sign}_K(Y) \\
&= \Sigma(\pi)^{-1} \otimes [\varphi] \otimes \mathrm{sign}_K(\cal U) \\
&=  \Sigma(\pi)^{-1} \otimes \mathrm{sign}_K(N_Y X) \\
&= \mathrm{sign}_K(X).
\end{align*}
\end{proof}

\begin{remark}
Notice that even assuming the normal bundle $N_Y X$ to be spin-c, we would not be able to use the
spin-c Gysin map in order to
obtain the analogue of \eqref{j-signature}  for the spin-c Dirac operator; indeed, we do not know, in general, that the spin-c Dirac operator defines a K-homology class in the stratified setting.
\end{remark}  

Functoriality in this case is simple to establish (see \cite[\S 4.1]{Hil:FKBPVL}).

\begin{thm}
If $X,$ $Y,$ and $Z$ are Witt spaces and
\begin{equation*}
	X \xhookrightarrow{\phantom{x}i\phantom{x}}
	Y \xhookrightarrow{\phantom{x}j\phantom{x}}
	Z
\end{equation*}
are nns inclusions, then
\begin{equation*}
	\Sigma(j\circ i) = \Sigma(i) \otimes \Sigma(j) \in \mathrm{KK}(C_0(X), C_0(Z))[\tfrac12]
\end{equation*}
\end{thm}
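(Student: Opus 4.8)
The plan is to reduce the composition formula for nns inclusions to a sequence of identities that follow from the definition \eqref{equ.gysinrestrelement}, the associativity of the Kasparov product, and the functoriality statements already established. Recall that for an nns inclusion $j:X\hookrightarrow Y$ of codimension $\ell$ we have written $\Sigma(j)=\Sigma(\pi_X)^{-1}\otimes[\varphi_X]\otimes i_X!$ where $\pi_X:N_YX\to X$ is the normal bundle, $\varphi_X:\mathcal U_X\to N_YX$ the tubular neighborhood identification, and $i_X:\mathcal U_X\hookrightarrow Y$ the open inclusion. First I would fix notation: write $N=N_YX$, $N'=N_ZY$, and observe that a tubular neighborhood $\mathcal U$ of $X$ in $Z$ for the composite inclusion can be chosen to be diffeomorphic to the total space of the normal bundle $N_ZX$, which sits in the short exact sequence of bundles over $X$
\begin{equation*}
	0\to N_YX\to N_ZX|_X\to \pi_X^*N_ZY\to 0,
\end{equation*}
so (choosing a splitting) $N_ZX\cong N_YX\oplus \pi_X^*N_ZY$ as an oriented bundle over $X$, and the projection factors as $N_ZX\xrightarrow{\pi}N_YX\xrightarrow{\pi_X}X$ where $\pi$ is itself a vector bundle (the pull-back $\pi_X^*N_ZY\to N_YX$).

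The key step is then the multiplicativity of the inverse transfer class: I claim $\Sigma(\pi_X\circ\pi)^{-1}=\Sigma(\pi)^{-1}\otimes\Sigma(\pi_X)^{-1}$ in $\mathrm{KK}(C_0(X),C_0(N_ZX))[\tfrac12]$. This follows from Theorem~\ref{thm:FunctorialityFibrations} (in the vector-bundle form used in Lemma~\ref{lem.hilsumpsigmaisinvertible}), which gives $\Sigma(\pi_X\circ\pi)=\Sigma(\pi)\otimes\Sigma(\pi_X)$, combined with uniqueness of the two-sided inverse after inverting $2$: one checks
\begin{equation*}
	\bigl(\Sigma(\pi)^{-1}\otimes\Sigma(\pi_X)^{-1}\bigr)\otimes\Sigma(\pi_X\circ\pi)
	=\Sigma(\pi)^{-1}\otimes\bigl(\Sigma(\pi_X)^{-1}\otimes\Sigma(\pi_X)\otimes\Sigma(\pi)\bigr)\otimes\!\cdots
\end{equation*}
collapses to $1$ using associativity and the defining relation $\Sigma(\pi_X)^{-1}\otimes\Sigma(\pi_X)=1$ (and the analogous relation for $\pi$, applied after pulling back along $\pi_X$, which is the content of the generalized version of Lemma~\ref{lem.hilsumpsigmaisinvertible} stated in the excerpt). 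Next I would handle the geometric/open-inclusion pieces: the tubular neighborhoods and diffeomorphisms can be chosen compatibly so that $\varphi:\mathcal U\to N_ZX$ factors, up to the bundle identification above, as $\mathcal U\xrightarrow{\varphi_Y|}\mathcal U_X\times(\text{fibre of }N')\to\cdots$, and at the level of $C^*$-algebras the composite open inclusion $\mathcal U\hookrightarrow Z$ is $i_Y!\circ(\text{restriction})$; the only input needed is that the class $[i!]$ of an open inclusion and the class $[\varphi]$ of a stratified diffeomorphism compose correctly, which is immediate since both are classes of honest $C^*$-homomorphisms and the Kasparov product of two such is the class of the composite homomorphism (see the computation in the proof of Theorem~\ref{thm.basechangeforstratfiberbundles}). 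Assembling:
\begin{align*}
	\Sigma(i)\otimes\Sigma(j)
	&=\Sigma(\pi_X)^{-1}\otimes[\varphi_X]\otimes i_X!\otimes\Sigma(\pi_Y')^{-1}\otimes[\varphi_Y]\otimes i_Y!\\
	&=\Sigma(\pi_X)^{-1}\otimes\Sigma(\pi)^{-1}\otimes[\varphi]\otimes i!
	=\Sigma(\pi_X\circ\pi)^{-1}\otimes[\varphi]\otimes i!=\Sigma(j\circ i),
\end{align*}
where the middle equality is precisely where the bundle-splitting compatibility and the identity $i_X!\otimes\Sigma(\pi_Y')^{-1}=\Sigma(\pi)^{-1}\otimes(\text{restriction})$ must be verified, using base change (Theorem~\ref{thm.basechangeforstratfiberbundles}, Remark~\ref{rmk: on-base-change}) to move the inverse transfer class past the open inclusion $i_X$.

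The main obstacle I anticipate is the middle identity in that display — i.e., making the geometric choices (tubular neighborhoods of $X$ in $Y$, of $Y$ in $Z$, and of $X$ in $Z$, together with the splitting $N_ZX\cong N_YX\oplus\pi_X^*N_ZY$) genuinely compatible so that the open-inclusion and diffeomorphism classes telescope, and in particular checking that $i_X!\otimes\Sigma(\pi_Y')^{-1}$ equals $\Sigma(\pi)^{-1}$ composed with the restriction to the appropriate open subset of $N_YX$. This is exactly the kind of bookkeeping Hilsum carries out in \cite[\S4.1]{Hil:FKBPVL} for Lipschitz manifolds, and the reference there should transfer essentially verbatim once the resolution/fiber-bundle machinery of the present paper is in place; the only genuinely new ingredient over Hilsum is the use of Theorem~\ref{thm:FunctorialityFibrations} in place of the classical multiplicativity of the signature operator under fibrations, but that has already been supplied. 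I would therefore keep the proof short, citing \cite[\S4.1]{Hil:FKBPVL} for the geometric compatibilities and giving the Kasparov-product bookkeeping explicitly as above.
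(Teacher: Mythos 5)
The paper does not actually spell out a proof here: it states the theorem and cites \cite[\S 4.1]{Hil:FKBPVL}, and your outline is a faithful expansion of what that argument becomes in the present setting — factor the normal-bundle projection $N_ZX\to X$ through $N_YX$, deduce multiplicativity of the inverse transfer classes from Corollary~\ref{cor:FunctorialityFibrations} together with uniqueness of the inverse (Lemma~\ref{lem.hilsumpsigmaisinvertible}), and then telescope the tubular-neighborhood and open-inclusion classes using the fact that Kasparov product of classes of $C^*$-homomorphisms is the class of the composite. This is the right strategy and it matches what the paper's citation intends.

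Two corrections. The line ``$\Sigma(\pi_X\circ\pi)^{-1}=\Sigma(\pi)^{-1}\otimes\Sigma(\pi_X)^{-1}$'' has the factors in an order that does not compose ($\Sigma(\pi)^{-1}\in\KK(C_0(N_YX),C_0(N_ZX))$ cannot be Kasparov-multiplied on the right by $\Sigma(\pi_X)^{-1}\in\KK(C_0(X),C_0(N_YX))$); the final display has the correct order $\Sigma(\pi_X)^{-1}\otimes\Sigma(\pi)^{-1}$, so this is only a typo in the claim. More substantively, the identity you single out as the crux of the middle step, $i_X!\otimes\Sigma(\pi_Y')^{-1}=\Sigma(\pi)^{-1}\otimes(\text{restriction})$, is not an instance of Theorem~\ref{thm.basechangeforstratfiberbundles}: base change there requires the map $f$ to be proper (so that $f^*$ is a $C^*$-homomorphism $C_0(Y)\to C_0(W)$), and the open inclusion $\mathcal U_X\hookrightarrow Y$ is generally not proper — the functoriality it carries is covariant, via extension by zero, not contravariant. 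What you actually need is the compatibility of the \emph{bivariant} signature class of a vector bundle with restriction of the base to an open subset. That is Hilsum's \cite[Remark 2.17]{Hil:FKBPVL} in full (with $N$ not specialized to a point); Lemma~\ref{lem.restrtoopen} in the paper records only the $N=\mathrm{pt}$ case, which is why appealing to the paper's own base-change statement leaves a gap. Since you defer to Hilsum \S4.1 for the geometric bookkeeping anyway, swapping the reference from Theorem~\ref{thm.basechangeforstratfiberbundles} / Remark~\ref{rmk: on-base-change} to \cite[Remark 2.17]{Hil:FKBPVL} repairs this without changing the structure of your argument.
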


Similarly, we can deduce base change from work we have already carried out.

\begin{prop} \label{lem.basechangeforgysinrestrictiontozero}
(Base Change for Gysin Restriction to Zero-Sections)
Let $Y$ be a smoothly stratified compact Witt space and $p: X\to Y$ an
oriented real vector bundle of rank $r$ over $Y$. 
Suppose that $W$ is a compact smoothly stratified space and $f:W\to Y$ a  smooth stratified
map. Consider the Cartesian diagram
\begin{equation*}
	\xymatrix{
	X_W \ar[d]_{p_W} \ar[r]^g & X \ar[d]^p \\
	W \ar[r]_f & Y.
	}	
\end{equation*}
Then the base change formula
\begin{equation*}
	f_* \circ j^!_W = j^! \circ g_* 	
\end{equation*}
holds for the Gysin restrictions 
\begin{equation*}
	j^!: \KK^n (C_0 (X),\cplx)[\tfrac12] \to \KK^{n-r} (C(Y),\cplx)[\tfrac12], \quad
	j^!_W: \KK^n (C_0 (X_W),\cplx)[\tfrac12] \to \KK^{n-r} (C(W),\cplx)[\tfrac12] 
\end{equation*}
associated to the zero-section inclusions
$j: Y\hookrightarrow X$ and $j_W: W\hookrightarrow X_W$.
\end{prop}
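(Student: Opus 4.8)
The strategy is to reduce the statement entirely to the base change theorem already proved, namely Theorem \ref{thm.basechangeforstratfiberbundles} (and the slightly more general version recorded in Remark \ref{rmk: on-base-change}), together with the identity $j^! = \Sigma(\pi)^{-1} \otimes (-)$ for the zero-section inclusion into a vector bundle. The key observation is that both $j^!$ and $j^!_W$ are, up to the factors of $2$ absorbed into $\Sigma(\pi)^{-1}$, the \emph{inverses} of the Gysin maps $\pi^!$ and $p_W^!$ coming from the bundle projections, as established in the discussion surrounding \eqref{gysinressignopzerosectemb-0}. Since base change has already been proved for the \emph{forward} direction (fiber bundle transfer), the plan is to invert it.

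First I would set up notation: write $\pi = p: X \to Y$ for the bundle projection (the letter $p$ in the statement), so that $j: Y \hookrightarrow X$ is the zero section and $\Sigma(p) = 2^{-\lfloor r/2 \rfloor}[D_{X/Y}^{\mathrm{sign}}] \in \mathrm{KK}^r(C_0(X), C_0(Y))[\tfrac12]$ is the analytic transfer class of the vector bundle. By Lemma \ref{lem.hilsumpsigmaisinvertible} (and the remarks after it extending it to arbitrary orientable real vector bundles over a, possibly noncompact, Witt base) there is an element $\Sigma(p)^{-1} \in \mathrm{KK}^r(C_0(Y), C_0(X))[\tfrac12]$ with $\Sigma(p)^{-1} \otimes \Sigma(p) = 1_{C_0(Y)}$, and $j^!(-) = \Sigma(p)^{-1} \otimes (-)$; likewise $\Sigma(p_W)^{-1} \in \mathrm{KK}^r(C_0(W), C_0(X_W))[\tfrac12]$ and $j^!_W(-) = \Sigma(p_W)^{-1} \otimes (-)$. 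Base change for the \emph{bundle transfer} maps — which is the corollary to Theorem \ref{thm.basechangeforstratfiberbundles}, valid here since $p_W = f^*p$ is the pulled-back bundle and $g$ is the induced map — gives, for all $\alpha \in \mathrm K_*^{\mathrm{an}}(W)[\tfrac12]$,
\begin{equation*}
	\Sigma(p) \otimes f_*\alpha = g_*\big(\Sigma(p_W) \otimes \alpha\big), \quad\text{equivalently}\quad p^!(f_*\alpha) = g_*(p_W^!\alpha).
\end{equation*}

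The main step is then a formal manipulation in $\mathrm{KK}$ with the Kasparov product. Take any class $\gamma \in \mathrm{KK}^n(C_0(X), \mathbb C)[\tfrac12]$. I want to show $f_*(j^!_W(g^*\gamma')) = j^!(\gamma')$ read correctly in the direction of the statement; concretely, starting from the right-hand side $f_* \circ j^!_W$ applied to a class $\delta \in \mathrm K^n_c$-type input and matching it to $j^! \circ g_*$. Apply $p^!$ to both sides of the desired identity $f_*(j^!_W \delta) = j^!(g_* \delta)$: since $p^! \circ j^! = \mathrm{id}$ (on $\mathrm K_*^{\mathrm{an}}(Y)[\tfrac12]$, by invertibility of $\Sigma(p)$), the right side becomes $p^!(j^!(g_*\delta)) = g_*\delta$ after using $p^! j^! = \mathrm{id}$; the left side, using base change $p^!(f_* \beta) = g_*(p_W^! \beta)$ with $\beta = j^!_W\delta$, becomes $g_*(p_W^!(j^!_W \delta)) = g_*(\delta)$ since $p_W^! \circ j^!_W = \mathrm{id}$. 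Thus $p^!$ applied to both sides of the claimed equality yields the same class. Because $p^! = \Sigma(p)^{-1} \otimes (-)$ composed on the left with $\Sigma(p)$ recovers the identity — more precisely, $\Sigma(p) \otimes p^!(-) $ need not be the identity, so I cannot simply ``cancel $p^!$'' — the honest argument is instead: $j^! \circ p^! = \mathrm{id}$ as well (both composites of mutually inverse maps are the identity on the respective groups), so applying $j^!$ to the equation $p^!(f_* j^!_W \delta) = p^!(j^! g_* \delta)$ and using $j^! p^! = \mathrm{id}$ recovers $f_* j^!_W \delta = j^! g_* \delta$.

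\textbf{Where the difficulty lies.} The genuine content is packaged in two places already available to us: (i) the invertibility of $\Sigma(\pi)$ for vector bundles over noncompact Witt bases, which Lemma \ref{lem.hilsumpsigmaisinvertible} and its extension supply; and (ii) base change for the forward bundle-transfer maps, which is the corollary of Theorem \ref{thm.basechangeforstratfiberbundles}. The one point requiring a little care is that $X_W \to W$ is indeed the pullback bundle $f^*X$ as a \emph{smoothly stratified} fiber bundle (with the orientation pulled back along $g$), so that the hypotheses of the base change corollary are literally met; this is immediate from the Cartesian square but should be stated. A second minor point: the factors $2^{-\lfloor r/2\rfloor}$ are the same on the $X$ and $X_W$ sides since the fiber rank $r$ is unchanged, so they cancel consistently and do not obstruct the cancellation of $\Sigma(\pi)$ against $\Sigma(\pi)^{-1}$. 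No step is a serious obstacle; the proof is essentially a diagram chase in $\mathrm{KK}(-,-)[\tfrac12]$ using associativity of the Kasparov product and the two inputs above.
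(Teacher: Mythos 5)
Your proof is correct and uses the same two ingredients as the paper's: base change for the forward bundle-transfer Gysin maps, $p^!f_* = g_*p^!_W$ (the corollary to Theorem \ref{thm.basechangeforstratfiberbundles}), together with the fact that for a vector bundle $p^!$ and $j^!$ are mutually inverse Thom-type isomorphisms. The paper's version of the cancellation is slightly cleaner — it simply left-composes $p^!f_* = g_*p^!_W$ with $j^!$ to get $f_* = j^!g_*p^!_W$ and then right-composes with $j^!_W$ — but your ``hit both sides with $p^!$, then with $j^!$'' route carries out exactly the same algebra, and your self-correction about not being able to cancel $p^!$ naively is handled correctly.
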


\begin{proof}
According to Theorem \ref{thm.basechangeforstratfiberbundles},
$p^! f_* = g_* p^!_W$.
Recall that for vector bundles, $j^!$ and $p^!$ are mutually inverse
(Thom-) isomorphisms. 
An application of $j^!$ from the left on both sides yields
\[ f_* = j^! p^! f_* = j^! g_* p^!_W. \]
Applying $j^!_W$ from the right on both sides, we obtain
\[ f_* j^!_W = j^! g_* p^!_W j^!_W = j^! g_*. \]
\end{proof}

%%%%%%%%%%%%%%%%%%%%%%%%%%%%%%
\section{Compatibility of Analytic and Topological Orientations}
\label{sec.compatanalytictoporient}
%%%%%%%%%%%%%%%%%%%%%%%%%%%%%%

\subsection{Prelude: The Four-Sphere}

We illustrate the orientation classes considered in this work
for the case of a $4$-sphere, where these classes are determined
by their Chern characters.
Let $\nu$ be the trivial real 4-plane bundle over a point.
Thus the total space of $\nu$ is $\real^4$ and its Thom space
is $\Th (\nu) = S^4$, the $4$-sphere. 
We shall describe the (real) Sullivan orientation
\[ \Delta (\nu) \in \wKO^4 (\Th (\nu))[\smlhf] = \wKO^4 (S^4)[\smlhf] \]
of the bundle $\nu$, the Sullivan orientation 
\[ \Delta (S^4) \in \KO_4 (S^4)[\smlhf] \]
of the manifold $S^4$, and its relation to the $\K$-homology class
of the signature operator on $S^4$.
Let 
\[  \beta = [H] - [1] \in \wK^0 (S^2) = \wK^{-2} (S^0)= \pi_2 (\K) \]
be the Bott generator. The complex line bundle $H$ on $S^2 = \cplx P^1$
is holomorphic and dual to the Hopf line bundle whose fiber over
a point in $\cplx P^1$ is the complex line in $\cplx^2$ represented by
the point.
Let $u := c_1 (H) \in H^2 (S^2;\intg)$ be the canonical generator
given by the first Chern class of $H$.
The Chern character is an isomorphism
\[ \ch: \wK^0 (S^2) \stackrel{\simeq}{\longrightarrow}
H^2 (S^2;\intg)\cong \intg \]
given by $\ch (\beta) = u$.
We shall use the external products
\[ \Smash: \widetilde{H}^* (X) \otimes \widetilde{H}^* (Y)
\longrightarrow \widetilde{H}^* (X\Smash Y), \quad 
  \Smash: \widetilde{\K} (X) \otimes \widetilde{\K} (Y)
\longrightarrow \widetilde{\K} (X\Smash Y). \]   
Note that for
$X=S^2,$ $Y=S^2,$ we have $S^2 \Smash S^2 = S^4.$
The map
\[ \widetilde{H}^* (S^2;\intg)
\xlra{-\Smash u}
\widetilde{H}^* (S^4;\intg) \]
is an isomorphism, and by Bott periodicity, the Bott map
\[ \Per_\cplx: \widetilde{\K} (S^2)
\xlra{-\Smash \beta}
\widetilde{\K} (S^4) \]
is an isomorphism as well.   
As the diagram  
\[ \xymatrix{
	\widetilde{\K} (S^2) \ar[d]_{\ch} \ar[r]^{-\Smash \beta} &
	\widetilde{\K} (S^4) \ar[d]^{\ch} \\
	\widetilde{H}^* (S^2;\intg) \ar[r]^{-\Smash u} &
	\widetilde{H}^* (S^4;\intg)    
} \]
commutes (Husemoller \cite[p. 308]{husemoller}), we have
$\ch (\beta^2) = \ch (\beta \Smash \beta) =u \Smash u = u^2.$
Let $c: \KO \to \K$ be complexification as a morphism of ring spectra,
inducing homomorphisms
\[  c_*: \KO^* (X) \to \K^* (X), \quad 
c_*: \KO_* (X) \to \K_* (X).  \]
The Pontrjagin character $\ph = \ch \circ c_*$ localizes to 
$\ph [\smlhf]$ given by the composition
\[ \xymatrix{
	\widetilde{\KO}^0 (S^4)[\smlhf] \ar[dr]_{\ph [\smlhf]}^\simeq 
	\ar[r]^{c_* [\smlhf]}_\simeq
	& \widetilde{\K}^0 (S^4)[\smlhf] \ar[d]^{\ch [\smlhf]}_\simeq \\
	& H^4 (S^4;\intg)[\smlhf].
} \]     
Here we have used suspension isomorphisms to identify
\[ \xymatrix@C=10pt{  
	\widetilde{\KO}^0 (S^4) \ar[d]_{c_*} \ar@{=}[r]^\sim &
	\widetilde{\KO}^{-4} (S^0) \ar@{=}[r] &
	\KO^{-4} (\pt) \ar@{=}[r] &
	\pi_4 (\KO) \ar[d]^{c_*} \\
	\widetilde{\K}^0 (S^4) \ar@{=}[r]^\sim &
	\widetilde{\K}^{-4} (S^0) \ar@{=}[r] &
	\K^{-4} (\pt) \ar@{=}[r] &
	\pi_{4} (\K).       
} \]  
(Note that $c_*$ commutes with suspension isomorphisms, since it is
induced by a morphism of spectra.)
On $\pi_4$, complexification induces multiplication by two,
$c_* =2: \pi_4 (\KO)=\intg \to \intg = \pi_4 (K)$.
Let 
\[ a \in \pi_4 (\KO)[\smlhf] = \wKO^{-4} (S^0)[\smlhf]= 
\wKO^0 (S^4)[\smlhf] = \intg [\smlhf] \]
be the unique element with
\[ c_* (a) = \beta^2. \]
This is the generator used in
\cite{Ban:TSSKSS} and
by Oscar Randal-Williams in \cite{randalfamilysignature}.
Its Pontrjagin character satisfies
$\ph [\smlhf] (a) = \ch [\smlhf] (\beta^2) = u^2.$
The generator $u^2 \in H^4 (S^4;\intg)$ agrees with
the Thom class $u (\nu) \in H^4 (S^4;\intg)$ of the trivial rank
$4$-bundle $\nu$ over a point.
By construction of the Sullivan orientation 
(see also \cite[p. 13, Lemma 3.8]{Ban:TSSKSS}),
\[
	\ph [\smlhf] (\Delta (\nu))
	 = L^{-1} (\nu) \cup u (\nu) 
	= u(\nu) \\
	 = u^2 = \ph [\smlhf] (a).
\]
Since $\ph [\smlhf]$ is an isomorphism,
\[ \Delta (\nu) = a. \]  
This is Lemma 3.9 in \cite{Ban:TSSKSS} for the case of a $4$-plane bundle.
Since $\Delta (S^4) \in \KO_4 (S^4)[\smlhf]$ is an orientation,
we have
\begin{equation} \label{equ.deltas4issuspofone}
	\Delta (S^4) = \sigma_* (1), 
\end{equation}
$\sigma_*: \wKO_0 (S^0)[\smlhf] \stackrel{\simeq}{\longrightarrow}
\wKO_4 (S^4)[\smlhf]$
is the suspension isomorphism and 
$1 \in \pi_0 (\KO [\smlhf]) = \wKO_0 (S^0)[\smlhf]$ the unit of the ring
$\pi_* (\KO [\smlhf])$.
Another way to see this is as follows:
The map $\Delta_\SO: \MSO \to \KO[\smlhf]$ has been constructed
in \cite{Ban:TSSKSS} as a morphism of ring spectra. In particular, the diagram
\[ \xymatrix{
	\widetilde{\MSO}_4 (S^4) \ar[r]^{\Delta_*} &
	\wKO_4 (S^4)[\smlhf] \\
	\widetilde{\MSO}_0 (S^0) \ar[u]^{\sigma_*}_\simeq \ar[r]^{\Delta_*} &
	\wKO_0 (S^0)[\smlhf] \ar[u]_{\sigma_*}^\simeq
} \]
commutes and $\Delta_* (1)=1 \in \wKO_0 (S^0)[\smlhf]$.
The reduced bordism groups $\widetilde{\MSO}_n (X)$ are given by the kernel of
$\MSO_n (X) \to \MSO_n (\pt)),$
so that $[f:M^n \to X] \in \MSO_n (X)$ is an element of
$\widetilde{\MSO}_n (X)$ if and only if $[M]=0\in \MSO_n$. 
Since $[S^4]=0\in \MSO_4$,
it follows that $[\id_{S^4}] \in \widetilde{\MSO}_4 (S^4)$.
The $\MSO$-orientation of a smooth oriented closed manifold $M^n$ 
is given by the identity map $[\id_M] \in \MSO_n (M)$ and 
$[\id_{S^4}] = \sigma_* (1) \in \widetilde{\MSO}_4 (S^4).$
By definition, $\Delta (S^4) = \Delta_* [\id_{S^4}].$
It follows that indeed
$\Delta (S^4) = \Delta_* \sigma_* (1)
= \sigma_* \Delta_* (1) = \sigma_* (1).$  
Let $\Psi^2: \KO[\smlhf] \to \KO [\smlhf]$ be the stable Adams operation,
constructed as a morphism of $\mathbb{E}_\infty$-ring spectra.
Thus the diagram
\[ \xymatrix{
	\wKO_4 (S^4)[\smlhf] \ar[r]^{\Psi^2}_\simeq &
	\wKO_4 (S^4)[\smlhf] \\
	\wKO_0 (S^0)[\smlhf] \ar[u]^{\sigma_*}_\simeq \ar[r]^{\Psi^2}_\simeq &
	\wKO_0 (S^0)[\smlhf] \ar[u]_{\sigma_*}^\simeq
} \]
commutes and $\Psi^2 (1)=1 \in \wKO_0 (S^0)[\smlhf]$.
Using (\ref{equ.deltas4issuspofone}), we deduce that
\begin{equation} \label{equ.psi2deltas4} 
	\Psi^2 \Delta (S^4) = \Psi^2 \sigma_* (1)
	= \sigma_* \Psi^2 (1) = \sigma_* (1) = \Delta (S^4).
\end{equation}

\begin{remark}
	The $4$-fold periodicity of $\KO [\smlhf]$ does \emph{not} commute
	with the stable Adams operation $\Psi^2$.
	Indeed, Sullivan's periodicity map
	\[ \Per_\real: \KO^0 (X)[\smlhf]
	\xlra{-\Smash a}
	\KO^0 (X \wedge S^4)[\smlhf] \]  
	is given by multiplication with $a = \Delta (\nu)$,
	see Sullivan \cite[p. 202]{Sul:GTP}.
	The stable Adams operation
	$\Psi^2: \pi_{4k} (\KO [\smlhf]) \to \pi_{4k} (\KO [\smlhf])$
	acts on the preferred generator by
	$\Psi^2 (a^k) = 4^k \cdot a^k.$
	Hence
	$\Psi^2 \Per_\real (1) = \Psi^2 (a) = 4a,$
	which does not equal
	$\Per_\real \Psi^2 (1) = \Per_\real (1) = a.$
\end{remark}

The $4$-sphere is not an almost complex manifold, but since it
is oriented and $w_2 (S^4)=0$, it is a spin manifold.
The spin structures on $S^4$ are in one-to-one correspondence
with elements of $H^1 (S^4;\intg/_2)=0$. Thus $S^4$ has a unique
spin structure, which determines in particular a canonical $\Spin^c$-structure 
on $S^4$.
Therefore, $S^4$ has an integral Atiyah-Bott-Shapiro orientation
\[ [S^4]_\ABS \in \K_4 (S^4).   \]
Since $c_*$, being induced by a morphism $c$ of ring spectra, 
is a natural transformation of homology theories, the diagram
\[ \xymatrix{
	\wKO_4 (S^4)[\smlhf] \ar[r]^{c_*} &
	\wK_4 (S^4)[\smlhf] \\
	\wKO_0 (S^0)[\smlhf] \ar[u]^{\sigma_*}_\simeq \ar[r]^{c_*} &
	\wK_0 (S^0)[\smlhf] \ar[u]_{\sigma_*}^\simeq
} \]
commutes and $c_* (1)=1 \in \wK_0 (S^0)[\smlhf]$.
Since $[S^4]_\ABS$ is an orientation, the equation
\[  [S^4]_\ABS = \sigma_* (1)  \]
holds; see Atiyah-Bott-Shapiro \cite[p. 30]{AtiBotSha:CM} for the cohomological case.
Using (\ref{equ.psi2deltas4}), we compute
\[ c_* (\Psi^2)^{-1} \Delta (S^4)
= c_* \Delta (S^4) = c_* \sigma_* (1)
= \sigma_* c_* (1) = \sigma_* (1), \] 
which implies
\begin{equation} \label{equ.deltaandabs}
	c_* (\Psi^2)^{-1} \Delta (S^4) = [S^4]_\ABS. 
\end{equation}

Now let $D^\sign = d+d^*$ denote the signature operator with respect to some Riemannian 
metric on $S^4$ and let
\[ [D^\sign] \in \K^\an_4 (S^4) \]
be the associated class in analytic $\K$-homology. 
(This class exists integrally, not just
away from $2$.)
The Atiyah-Singer $\La$-classes were introduced in
\cite[p. 577]{asindellop3}.
The unstable cohomological Atiyah-Singer class $\La^*_u$ is associated to the
power series 
\[ x \coth (x/2) =
2 + \frac{1}{6} x^2 
- \frac{1}{360} x^4 
+ \frac{1}{15120} x^6
- \frac{1}{604800} x^8 \pm \ldots.  \] 
(The stable Atiyah-Singer class $\La^*_s$ is 
associated to $(x/2) \coth (x/2)$, but will not be used here.
The Hirzebruch $L$-class is associated to $x \coth x$.
In \cite[p. 200]{stong}, Stong denotes $\La^*_u$ by $\delta$.)
For a $4$-dimensional manifold $M^4,$
$\La^*_u (M^4) = 4 + \frac{1}{3} p_1 (TM).$
Since the $4$-sphere is stably parallelizable,
$\La^*_u (S^4) = 4.$
The homological unstable Atiyah-Singer class $\La^u_*$ is the Poincar\'e
dual $\La^u_* = \La^*_u \cap [M]_H$, 
where $[M]_H \in H_n (M;\intg)$ denotes the fundamental class
in ordinary integral homology of a closed, $H\intg$-oriented $n$-dimensional
manifold $M$. 
Thus
\[ \La^u_* (S^4) = 4[S^4]_H. \]
In fact, for an even-dimensional smooth manifold, 
the unstable Atiyah-Singer $\La$-class is given by
\[ \La^u_* (M) = \sum_j 2^j L_{2j} (M) \in H_* (M;\rat),  \]
where the classes $L_{2j} (M) \in H_{2j} (M;\rat)$ are the components of the
Poincar\'e dual $L_* (M) = L^* (TM)\cap [M]_H$ of the Hirzebruch $L$-class
$L^* (TM)$, see also Moscovici-Wu \cite[p. 14]{moscoviciwu}.
Under the Chern character, the $\K$-class of the signature operator
is a refinement of the unstable Atiyah-Singer class. 
This follows from the Atiyah-Singer index theorem, see also
Rosenberg-Weinberger \cite[p. 48]{RosWei:SO}.
Thus
$\ch [D^\sign] = \La^u_* (S^4)$
and it follows that
\[ \ch [D^\sign] = 4[S^4]_H. \]
Note that since the signature of $S^4$ vanishes, $[D^\sign]$ is actually a
class in reduced $\K$-homology, $[D^\sign] \in \wK_4 (S^4)$.
Under the standard isomorphism
\[ \varphi: \K^\geo_* (X) \stackrel{\simeq}{\longrightarrow}
\K^\topo_* (X), \quad 
\varphi [M,E,f] = f_* ([E] \cap [M]_\ABS), \]
the topological homological Chern character $\ch: \K^\topo_* (X) \to H_* (X;\rat)$
corresponds to the homological Chern character 
$\ch: \K^\geo_* (X) \to H_* (X;\rat)$ given by
\[ \ch [M,E,f] = f_* (\ch (E) \cup \td (TM) \cap [M]_H),  \]
see e.g. Jakob \cite[p. 77, 4.2]{Jak:BDH}.
Here one uses the interesting fact that the Todd class $\td (-),$ 
a priori only defined for complex bundles, actually survives to
$\Spin^c$-vector bundles $T$, such as the tangent bundle
$TM$. Indeed, $\Spin^c$ vector bundles have a first Chern class $c_1 (T)$
(the integral class reducing to $w_2 (T)$ which is defined by the 
$\Spin^c$-structure) and one then defines
$\td (T) = e^{c_1 (T)/2} \hat{A} (T)$
in rational cohomology (Baum-Douglas \cite[p. 136]{BauDou:HIT}),
where $\hat{A} (T)$ is the $\hat{A}$-polynomial in the Pontrjagin classes
of $T$.
For $S^4$, $c_1 (TS^4)=0 \in H^2 (S^4)=0$ and
$\hat{A} (TS^4) = 1 -\frac{1}{24} p_1 (TS^4) =1.$
Thus $\td (TS^4)=1$ and the fundamental class 
$[S^4, E=1, f=\id] \in K^\geo_4 (S^4)$ has Chern character
\[ \ch [S^4,1,\id] = \id_* (\ch (1)\cup \td (TS^4) \cap [S^4]_H) 
= [S^4]_H. \]
Since $\ch \circ \varphi = \ch$, we get
\[
	\ch [S^4]_\ABS
	= \ch (\id_* (1\cap [S^4]_\ABS)) 
	= \ch \varphi [S^4, 1, \id] \\
	= \ch [S^4, 1, \id],
\]
and thus
$\ch [S^4]_\ABS = [S^4]_H.$ Using the standard isomorphism
$\mu: \K^\geo_* (X) \to \K^\an_* (X)$
(Baum-Douglas \cite{BauDou:HIT}),
we consider the diagram
\[ \xymatrix{
	K_4^\topo (S^4)\otimes \intg[\smlhf] \ar@{^{(}->}[d]_\loc
	& K^\geo_4 (S^4)\otimes \intg[\smlhf] \ar[l]_\varphi^\simeq 
	\ar@{^{(}->}[d]_\loc \ar[r]^\mu_\simeq 
	& K^\an_4 (S^4) \otimes \intg [\smlhf] \ar@{^{(}->}[d]_\loc \\
	K_4^\topo (S^4) \otimes \rat \ar[dr]_{\ch_\rat}^\simeq
	& K^\geo_4 (S^4) \otimes \rat \ar[l]_{\varphi_\rat}^\simeq 
	\ar[d]^{\ch_\rat}_\simeq \ar[r]^{\mu_\rat}_\simeq 
	& K^\an_4 (S^4) \otimes \rat \ar[dl]^{\ch_\rat}_\simeq \\
	& H_{2*} (S^4;\rat). &
} \]
The vertical arrows are localization maps induced by tensoring with
$\intg [\smlhf] \hookrightarrow \rat$.
These maps are injective, since there is no torsion in the
$\K$-homology of $S^4$. The left-hand part of the diagram commutes
by Jakob \cite[p. 77, 4.2]{Jak:BDH} and the right hand part commutes
according to Baum-Douglas \cite[p. 154]{BauDou:HIT}.
For the Chern characters on the rational groups we have
\begin{align*}
	\ch_\rat \loc (\mu \varphi^{-1}) [S^4]_\ABS
	&= \ch_\rat \loc [S^4]_\ABS 
	= \ch [S^4]_\ABS 
	= [S^4]_H \\
	&= \ch (\smlquart [D^\sign]) 
	= \ch_\rat \loc (\smlquart [D^\sign]).
\end{align*}
Since $\ch_\rat$ is an isomorphism, it follows that
$\loc (\mu \varphi^{-1}) [S^4]_\ABS
= \loc (\smlquart [D^\sign]).$
As $\loc$ is injective,
\[ \mu \varphi^{-1} [S^4]_\ABS
= \smlquart [D^\sign]. \]
By (\ref{equ.deltaandabs}),
\[ \mu \varphi^{-1} c_* (\Psi^2)^{-1} \Delta (S^4) = 
\mu \varphi^{-1} [S^4]_\ABS = \smlquart [D^\sign], \] 
a relation that we will first generalize to arbitrary oriented smooth
manifolds in Theorem \ref{thm.sullivanandksignoponmanifolds}, 
and then further to singular (Witt)
spaces in Theorem \ref{thm.sullivanandksignoponwitt}.

\subsection{The Manifold Case.}

For finite CW pairs $(X,A)$, we identify topological and analytic complex K-homology
via a natural isomorphism
\begin{equation} \label{equ.identktopandkan}
\K^\topo_* (X,A) \cong \K^\an_* (X,A) 
\end{equation}
of homology theories as in \cite{LanNikSch:LC} and \cite{RosWei:SO}.
Let 
\[ \Psi^2: \KO [\smlhf] \longrightarrow \KO [\smlhf] \]
be the stable Adams operation, constructed as a morphism of 
$\mathbb{E}_\infty$-ring spectra. This is an equivalence,
let $(\Psi^2)^{-1}$ denote an inverse.
Let 
\[ c: \KO \longrightarrow \K \]
denote complexification as a morphism of ring spectra.
\begin{thm} \label{thm.sullivanandksignoponmanifolds}
Let $M$ be an $n$-dimensional closed oriented Riemannian manifold and
\[ \Delta_\SO (M) \in \KO^\topo_n (M)[\smlhf] \]
its Sullivan orientation, \cite{Sul:GTP}.
Under the above identification (\ref{equ.identktopandkan}), the element
\[ c(\Psi^2)^{-1} \Delta_\SO (M) \in \K^\topo_n (M)[\smlhf] \] 
corresponds to the signature-operator orientation
\[ \mathrm{sign}_K(M) = 2^{-\lfloor n/2 \rfloor} [D^\sign_M] \in \K^\an_n (M)[\smlhf]. \]
\end{thm}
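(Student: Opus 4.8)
The plan is to reduce Theorem \ref{thm.sullivanandksignoponmanifolds} to two standard inputs: the local computation on the four-sphere carried out in the Prelude, and the fact that all of the relevant transformations are morphisms of ring spectra, hence compatible with $E$-orientations in the sense of Subsection \ref{subsect:general-orientation}. First I would observe that both sides of the claimed equality are $\K[\smlhf]$-orientations of $M$. For the left side, $\Delta_\SO(M)$ is a $\KO[\smlhf]$-orientation by Sullivan's construction \cite{Sul:GTP}; since $\Psi^2$ and $c$ are multiplicative (morphisms of ring spectra), Proposition \ref{prop.eorientgoestoforient} shows that $c(\Psi^2)^{-1}\Delta_\SO(M)$ is a $\K[\smlhf]$-orientation. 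For the right side, $\mathrm{sign}_K(M) = 2^{-\lfloor n/2\rfloor}[D^\sign_M]$ is a $\K[\smlhf]$-orientation by the remark following Proposition \ref{prop.eorientgoestoforient} in the excerpt (which invokes exactly this theorem; to avoid circularity I would instead cite that $[D^\sign_M]$ satisfies the collapse condition up to a power of $2$ by the Atiyah--Singer index theorem, or use the known result of Kaminker--Miller / Rosenberg--Weinberger that the signature operator represents the symmetric signature, which after normalization gives a $\K[\smlhf]$-orientation).

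Next, I would use that two $E$-orientations of a connected manifold that agree after collapsing to a single point $p \in M$ agree globally, provided one knows they differ by a unit in $\pi_0(E)[\smlhf]$ — but more robustly, I would invoke the standard fact that the difference of two $\K[\smlhf]$-orientation classes is detected by the induced map on $\K[\smlhf]_*(M)$ determined by the value on spheres. Concretely: both $c(\Psi^2)^{-1}\Delta_\SO(-)$ and $\mathrm{sign}_K(-)$ are values of natural transformations out of $\MSO$ (respectively, $\MSTOP$) to $\K[\smlhf]$-homology — the first is $c \circ (\Psi^2)^{-1} \circ \Delta_\SO$ applied to the bordism fundamental class $[\id_M] \in \MSO_n(M)$ (using that $\Delta_\SO : \MSO \to \KO[\smlhf]$ is a morphism of ring spectra, as recalled in the Prelude), and the second is the natural transformation $\theta$ from Witt bordism (restricted to smooth manifolds) of \eqref{equ.thetafromwittbordhlftokanhlf}. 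Two such natural transformations of homology theories out of $\MSO[\smlhf]$ agree if and only if they agree on the coefficient group $\MSO_*(\pt)[\smlhf] = \Omega^\SO_*[\smlhf]$, which rationally is generated by products of $\mathbb{CP}^{2k}$, and on the single generator that survives to $\widetilde{\MSO}_4(S^4)$, namely the class detected on $S^4$.

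The key step is therefore the four-sphere computation, which is already done in the Prelude: equation \eqref{equ.deltaandabs} gives $c_*(\Psi^2)^{-1}\Delta(S^4) = [S^4]_\ABS$, and the chain of identifications ending in $\mu\varphi^{-1}c_*(\Psi^2)^{-1}\Delta(S^4) = \smlquart[D^\sign] = \mathrm{sign}_K(S^4)$ establishes the theorem for $M = S^4$. To pass from $S^4$ to a general $M$, I would argue that the natural transformation $N(-) := c(\Psi^2)^{-1}\Delta_\SO(-) - \mathrm{sign}_K(-)$, viewed as a natural transformation $\Omega^\SO_*(-)[\smlhf] \to \K_*(-)[\smlhf]$ (using bordism invariance of $\mathrm{sign}_K$, Proposition \ref{prop:Bordism}(2), and of $\Delta_\SO$), vanishes. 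By the structure theory of $\Omega^\SO_*[\smlhf] \cong \mathbb{Z}[\smlhf][\mathbb{CP}^2, \mathbb{CP}^4, \ldots]$ and multiplicativity of both transformations (both $\Delta_\SO$ and the signature-operator class are multiplicative under products — the latter by Corollary \ref{cor.signoponproductofwitt}), it suffices to check vanishing on each $\mathbb{CP}^{2k}$; and since the $\K[\smlhf]$-homology Chern character is injective rationally and both classes have Chern character equal to $L_*(\mathbb{CP}^{2k})$ up to the universal normalizing powers of $2$ (this is the content of the Prelude's identification $\ch[D^\sign] = \La^u_*$ together with $\ph[\smlhf]\Delta_\SO = L^{-1}\cup(-)$), the difference is torsion, hence zero after inverting $2$ if one is careful — here one reduces to $S^4$ by noting $\mathbb{CP}^{2k}$ bounds after a suitable stabilization or by a direct index computation. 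The main obstacle I anticipate is bookkeeping the $2$-adic normalization: one must verify that the factor $2^{-\lfloor n/2\rfloor}$ in $\mathrm{sign}_K$ is exactly what is needed to match $c(\Psi^2)^{-1}$, and this compatibility is forced by the behavior of $\Psi^2$ on the periodicity generator $a$ (the Remark in the Prelude: $\Psi^2(a^k) = 4^k a^k$) combined with the factor of $2$ appearing in $c_*: \pi_{4k}(\KO) \to \pi_{4k}(\K)$ and in the connecting homomorphism of Proposition \ref{prop:Bordism}(1); tracking these powers of $2$ across all the dimensions $n \bmod 4$ is the delicate part, but it is exactly mirrored by the four-sphere case and extends by multiplicativity.
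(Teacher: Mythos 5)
Your overall strategy---normalize everything into natural transformations out of oriented bordism, check on coefficient generators, and invoke some rigidity to conclude---is structurally parallel to the paper's proof, but the crucial rigidity step is asserted without the justification the argument actually needs, and as stated it is false.

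The gap is this: you claim that two natural transformations of homology theories out of $\MSO[\smlhf]$ that agree on the coefficient group $\Omega^\SO_*[\smlhf]$ must agree globally. This is not true in general. A natural transformation $E_*(-) \to F_*(-)$ corresponds to a map of spectra $E \to F$, and a map $\MSO[\smlhf] \to \K[\smlhf]$ can induce the zero map on homotopy groups without being nullhomotopic (such maps can be detected by higher filtration in the Atiyah--Hirzebruch or Adams spectral sequence). Knowing that both transformations are ``multiplicative'' is necessary but not sufficient: you still need a genuine rigidity theorem restricting homotopy ring maps. The paper supplies exactly this via the Land--Nikolaus--Schlichting result \cite[Theorem 8.5]{LanNikSch:LC}, which asserts that
\[
\pi_0 \operatorname{Map}^{\operatorname{HoRing}}_{\operatorname{Sp}}(\MSO, \ko[\smlhf]) \longrightarrow \Hom_{\operatorname{Ring}}(\MSO_*, \ko[\smlhf]_*)
\]
is injective---a delicate, target-specific fact that has no analogue for arbitrary natural transformations. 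The paper's argument is therefore: (i) recognize $\Delta^\circ_\SO$ as a homotopy ring map $\MSO \to \ko[\smlhf]$ (Banagl's spectrum-level construction), (ii) compare it with the LNS $\mathbb{E}_\infty$-map $\cL_{AS}$ on homotopy groups, (iii) conclude $\Psi^2 \circ \cL_{AS} \simeq \Delta^\circ_\SO$ by the injectivity just quoted, and (iv) use the \emph{other} content of LNS Theorem 8.5, namely that $c \circ \cL_{AS}$ produces the normalized signature-operator classes. Without step (iii) as a cited theorem, your argument has no way to promote coefficient agreement to spectrum-level agreement.

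There are two further, smaller problems. First, your reduction of the generator check from $\mathbb{CP}^{2k}$ to $S^4$ ``by noting $\mathbb{CP}^{2k}$ bounds after a suitable stabilization'' cannot be made to work: $\mathbb{CP}^{2k}$ has nonzero signature, so no amount of stabilization makes it bound in oriented bordism. The Chern-character computation you mention as an alternative is the right idea on each $\mathbb{CP}^{2k}$ (and works because $\K_*(\mathbb{CP}^{2k})$ is torsion-free), but again it only verifies agreement on coefficients. Second, the remark that ``the difference is torsion, hence zero after inverting $2$ if one is careful'' is suspect as stated: inverting $2$ does not kill odd torsion, so this needs the torsion-freeness of the specific target group, which you should state rather than gesture at. In short, the $S^4$ computation from the Prelude and the coefficient check you propose are consistent with the paper's proof, but the step from coefficients to the full theorem requires the Land--Nikolaus--Schlichting rigidity for homotopy ring maps into $\ko[\smlhf]$, which is the actual heart of the paper's argument and is missing from yours.
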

\begin{proof}
Let $\KO$ denote the $8$-periodic ring spectrum representing real K-theory
and $\K$ the $2$-periodic ring spectrum representing complex K-theory.
The homotopy ring of $\K$ is $\pi_* (\K)=\intg [\beta^{\pm 1}],$
where $\beta$ is the complex Bott element in degree $2$, i.e.
$\beta$ is represented by the reduced canonical complex line bundle
$H-1 \in \widetilde{\K}^0 (S^2)$.
On $\pi_4$, the complexification $c:\KO \to \K$ induces multiplication by $2$,
$c_* =2: \pi_4 (\KO) = \intg \to \intg = \pi_4 (\K)$.
Thus there does not exist an element in $\pi_4 (\KO)$ that maps to
$\beta^2$. But after inverting $2$, such an element exists.
Let $a\in \pi_4 (\KO)[\smlhf]$ be the unique element with
\begin{equation} \label{equ.cplxofaisbottsquare} 
c_* (a) = \beta^2. 
\end{equation}
The localization $\KO [\smlhf]$ is a $4$-periodic ring spectrum
with homotopy ring
$\pi_* (\KO)[\smlhf] =\intg [\smlhf] [a^{\pm 1}].$
Its connective cover $\ko [\smlhf]$ has homotopy ring
$\pi_* (\ko)[\smlhf] =\intg [\smlhf] [a].$
Let $\syml (R)$ denote the (projective) symmetric algebraic L-theory
spectrum of a commutative unital ring $R$ with involution, introduced first by Ranicki.
Let
\[ \kappa: \KO [\smlhf] \stackrel{\simeq}{\longrightarrow} 
   \syml (\real) [\smlhf] \]
be the equivalence of $\mathbb{E}_\infty$-ring spectra
constructed by the first named author in
\cite[Prop. 2.1]{Ban:TSSKSS}. It induces the ring isomorphism
\[ \intg [\smlhf] [a^{\pm 1}] \longrightarrow
   \intg [\smlhf] [x^{\pm 1}],~
   a \mapsto x, \]
on homotopy rings, where $x$ denotes the signature $1$ generator.   
Let $\MSPL$ be the Thom spectrum associated to the bordism theory of oriented PL manifolds.
In \cite[p. 385, Prop. 15.8]{Ran:ATSULNP1}, Ranicki constructed a morphism of ring spectra
\[ \sigma^*: \MSPL \longrightarrow \syml (\intg) \]
such that the resulting $\syml (\intg)$-homology fundamental class
$[M]_\syml := \sigma^* [\id_M] \in \syml (\intg)_n (M)$
of an $n$-dimensional closed oriented PL manifold $M$
hits the Mishchenko-Ranicki symmetric signature
\[ \sigma^* (M) = A[M]_\syml \in L^n (\intg [\pi_1 M]) \]
under the assembly map
$A: \syml (\intg)_n (M) \to L^n (\intg [\pi_1 M]).$
(Ranicki extended $\sigma^*$ to a morphism of ring spectra
$\MSTOP \to \syml (\intg)$ in \cite[p. 290]{Ran:TSO}, 
but we shall not require this extension for the purposes of the present paper.)
Technically, we will work with $\sigma^*$ as constructed by
Laures and McClure in \cite{LauMcC:MPQS} using ad-theories.
Their incarnation of $\sigma^*$ is an $\mathbb{E}_\infty$-ring map
(\cite[1.4]{LauMcC:MPQS}).
The localization morphism $\syml (\intg) \to \syml (\intg)[\smlhf]$
is a morphism of ring spectra. Thus its composition with $\sigma^*$
is a morphism of ring spectra
$\MSPL \to \syml (\intg)[\smlhf]$, which we shall also denote by $\sigma^*$.
By \cite[p. 243]{Ran:ATSIAT}, $\sigma^*$
induces on homotopy groups the map
\[ \sigma^*_{\pt}: \Omega^\SPL_{4k} (\pt) = \MSPL_{4k} (\pt) \longrightarrow
   \syml (\intg) [\smlhf]_{4k} = \intg [\smlhf] \langle x^k \rangle \]
given by
\begin{equation} \label{equ.ranorientoncoeffs}
\sigma^*_{\pt} [M^{4k}] = \sigma (M)\cdot x^k, 
\end{equation}
where $\sigma (M)\in \intg$ denotes the signature of $M$.
Let 
\[ \Delta^\circ_\SPL: \MSPL \longrightarrow
        \KO [\smlhf]    \]
denote the composition
\[ \MSPL \stackrel{\sigma^*}{\longrightarrow} 
    \syml [\smlhf] \stackrel{\kappa^{-1}}{\longrightarrow} 
    \KO [\smlhf]. \]
The first named author showed
in \cite[Prop. 3.3]{Ban:TSSKSS} that $\Delta^\circ_\SPL$ 
is homotopic to Sullivan's orientation $\Delta_\SPL$.
As pointed out in \cite[Cor. 3.4]{Ban:TSSKSS},
this implies in particular that the Sullivan orientation is homotopic
to a morphism of homotopy ring spectra.
Under the canonical morphism $\MSO \to \MSPL$,
which is a morphism of homotopy ring spectra,
$\Delta^\circ_\SPL$ restricts to a morphism of homotopy ring spectra
\[ \Delta^\circ_\SO: \MSO \longrightarrow
        \KO [\smlhf]    \]
that is homotopic to Sullivan's $\Delta_\SO$.
As $\MSO$ is connective, $\Delta^\circ_\SO$ lifts, uniquely up to
homotopy, to the connective cover $\ko [\smlhf]$. The lift
$\MSO \rightarrow \ko [\smlhf]$    
will again be denoted by $\Delta^\circ_\SO$ and is
a morphism of homotopy ring spectra 
(\cite[p. 93, Thm. 4.28.(ii)]{Rud:TSOC}).
Let
$\cL_{AS}: \MSO \longrightarrow \ko [\smlhf]$
be the map of $\mathbb{E}_\infty$-rings constructed 
by Land, Nikolaus and Schlichting in
\cite[Theorem 8.5]{LanNikSch:LC}.
Within maps of $\mathbb{E}_\infty$-rings, $\cL_{AS}$
is characterized uniquely by the homomorphism it induces
on homotopy rings, which is given by
\[ [M^{4k}] \mapsto 2^{-2k} \sigma (M) a^k.  \]
We claim that $\Psi^2 \circ \cL_{AS}$ is homotopic to
$\Delta^\circ_\SO$.
In order to see this, we verify first that both induce the same homomorphism
on homotopy groups:
It is well-known that the Sullivan orientation on homotopy
groups is given by the signature. More precisely,
$\Delta_{\SO *} [M^{4k}] = \sigma (M) \cdot a^k.$
The stable Adams operation acts on powers of $a$ by
\begin{equation} \label{equ.stablesecondadamsona}
\Psi^2 (a^k) = 4^k \cdot a^k.  
\end{equation}
Therefore,
\begin{align*}
(\Psi^2 \cL_{AS})_* [M^{4k}]
 &= \Psi^2_* (2^{-2k} \sigma (M) a^k) 
 = 2^{-2k} \sigma (M) \Psi^2_* (a^k) \\
&= 2^{-2k} \sigma (M) 4^k a^k = \sigma (M) a^k 
 =  \Delta_{\SO *} [M^{4k}] 
 = \Delta^\circ_{\SO *} [M^{4k}].
\end{align*}
According to \cite[Theorem 8.5]{LanNikSch:LC},
the map
\[ \pi_0 \operatorname{Map}^{\operatorname{HoRing}}_{\operatorname{Sp}}
  (\MSO, \ko[\smlhf])
  \stackrel{\pi_*}{\longrightarrow} 
  \Hom_{\operatorname{Ring}} (\MSO_*, \ko [\smlhf]_*), \]
where $\pi_0 \operatorname{Map}^{\operatorname{HoRing}}_{\operatorname{Sp}}$
denotes the connected components of the space of maps of spectra that
are homotopy ring maps, 
is injective. Both $\Psi^2 \circ \cL_{AS}$ and
$\Delta^\circ_\SO$ are homotopy ring maps and we have seen that
$\pi_* (\Psi^2 \circ \cL_{AS}) = \pi_* (\Delta^\circ_\SO)$.
Thus by injectivity of $\pi_*$,
$\Psi^2 \circ \cL_{AS}$ is homotopic to
$\Delta^\circ_\SO$, establishing the claim.

Thm. 8.5 of \emph{loc cit} also states that the composition
\[ \MSO_* (X) \stackrel{\cL_{AS}}{\longrightarrow}
   \ko [\smlhf]_* (X) \stackrel{c}{\longrightarrow} \ku [\smlhf]_* (X) \]
sends $[f:M^n \to X]$ to 
$2^{-\lfloor n/2 \rfloor} f_* [D^\sign_M]$. Since
$\cL_{AS} \simeq (\Psi^2)^{-1} \circ \Delta^\circ_\SO \simeq
 (\Psi^2)^{-1} \circ \Delta_\SO,$
and thus
\[ c \circ \cL_{AS} \simeq c \circ (\Psi^2)^{-1} \circ \Delta_\SO, \]
 it follows that
\[
2^{-\lfloor n/2 \rfloor} [D^\sign_M]
= (c \circ \cL_{AS}) [\id_M]
= (c \circ (\Psi^2)^{-1} \circ \Delta_\SO)[\id_M]
=  (c \circ (\Psi^2)^{-1}) (\Delta_\SO (M)).
\]
\end{proof}

\bigskip

\subsection{The Case of Singular Witt Spaces.}

Our goal now is to extend the above arguments to the case of Witt pseudomanifolds,
which includes all pure-dimensional complex algebraic varieties. \\

Let $Z=\ism$ denote the ring of integers localized
at odd primes.
The Laurent polynomial ring $R:=Z[t,t^{-1}]$
is a $\intg$-graded ring with $\deg (t)=4$.
There is a canonical subring inclusion
$Z[t] \subset R,~ t \mapsto t.$
Let $\cplx P^2$ be complex projective space of complex dimension $2$.
Denote Witt bordism away from $2$ by
\[ W_* (X) := \Omega^\Witt_* (X)\otimes_\intg Z. \]
(Recall that by Proposition \ref{prop.bordofsmoothlystratandPLwitt}, 
we may assume that representatives
of $\Omega^\Witt_* (X)$ are smoothly stratified and thus possess a well-defined
signature operator.)
This is a $\intg$-graded homology theory with coefficients
\[ W_* (\pt) = Z[c], \quad  c:= [\cplx P^2 \to \pt]\otimes 1 
       \in W_4 (\pt). \]
(After inverting $2$, only the signature survives as an
invariant; the $2$- and $4$-torsion is removed.)
The $\intg$-graded abelian group $W_* (X)$ is a right module over 
the ring $W_* (\pt)$ as usual. The ring isomorphism
$Z[t] \to W_* (\pt)$ induced by
$t \mapsto c\otimes 1 \in W_4 (\pt)$
makes $W_* (X)$ into a right $Z[t]$-module. Thus $t$ acts on
$[f:V \to X] \otimes r \in W_n (X)$ by
\[ ([f]\otimes r)\cdot t = [V\times \cplx P^2 
            \stackrel{\pro_1}{\longrightarrow} V
            \stackrel{f}{\longrightarrow} X] \otimes r.  \]
We may then form the tensor product
\[ \overline{W}_* (X) := W_* (X) \otimes_{Z[t]} R, \]
which is $\intg$-graded by
$\deg (x \otimes_{Z[t]} rt^k) = n+4k,$
$x\in W_n (X),$ $r\in Z.$
It is shown in \cite{Ban:TSSKSS} that $\overline{W}_* (-)$ is a
homology theory.
It is naturally a right $R$-module and right multiplication
with $t$ is an isomorphism with inverse given by right multiplication
with $t^{-1}$. This shows that $\overline{W}_* (-)$ is $4$-periodic
and we refer to it as \emph{periodic Witt-bordism at odd primes}. 
The inclusion $Z[t] \subset R$ induces a natural map
\[ i_*: W_* (X) = W_* (X) \otimes_{Z[t]} Z[t] \longrightarrow
   \overline{W}_* (X). \]
As noted above, the homotopy ring of the 
ring spectrum $\K$ is $\pi_* (\K)=\intg [\beta^{\pm 1}],$
where $\beta$ is the complex Bott element in degree $2$.
Away from $2$, $\pi_* (\K [\smlhf])=Z [\beta^{\pm 1}]$.
As $\K [\smlhf]$ is a ring spectrum, the groups 
$(\K [\smlhf])_* (X) = \K^\topo_* (X)[\smlhf]$ 
come with a canonical right $(K[\smlhf])_* (\pt)$-module structure. 
Via the ring homomorphism
\[ R=Z[t,t^{-1}] \longrightarrow Z[\beta, \beta^{-1}],~
     t \mapsto \mbox{$\frac{1}{4}$} \beta^2,  \]
we make $(\K [\smlhf])_* (X)$ into a right $R$-module
$((\K [\smlhf])_* (X), +, \bullet)$.
Thus $t\in R$ acts on $y\in (\K [\smlhf])_* (X)$ by
\[ y \bullet t := y \cdot \mbox{$\frac{1}{4}$} \beta^2.  \]

Let $\MWITT$ denote the (Quinn-type) ring spectrum associated to the
multiplicative ad-theory of Witt spaces, representing Witt bordism
(see Banagl-Laures-McClure \cite{BanLauMcC:LFCISNC}),
and let
\[ \Delta: \MWITT \longrightarrow \KO [\smlhf] \]
denote the second author's ring-spectrum level Siegel-Sullivan orientation,
\cite{Ban:TSSKSS}. It restricts to the Sullivan
orientation $\Delta: \MSPL \to \KO [\smlhf]$ under the canonical
map $\MSPL \to \MWITT$.
By \cite[Prop. 5.7]{Ban:TSSKSS}, the induced natural transformation
$\Delta_*: \Omega^\Witt_* (-) \to \KO_* (-)[\smlhf]$
of homology theories agrees with Siegel's transformation $\mu^\Witt$
as described in \cite{Sie:WSGCTKOP}.
On homotopy groups, $\Delta$ induces the homomorphism
\[ \Delta_*: \Omega^\Witt_{4k} = \MWITT_{4k} \longrightarrow
   \KO [\smlhf]_{4k} = Z \langle a^k \rangle \]
given by
\begin{equation} \label{equ.siegelsullorientoncoeffs} 
\Delta_* [V^{4k} \to \pt] = \sigma (V)\cdot a^k,
\end{equation}
where $\sigma (V)\in \intg$ is the signature of the intersection form
on the intersection homology groups $IH_{2k} (V;\rat)$ of $V$.
The Siegel-Sullivan orientation class of a compact $n$-dimensional Witt space
$(V,\partial V)$ is given by the image
\[ \Delta (V) := \Delta_* [\id_V] \in \KO_n (V,\partial V)[\smlhf]  \]
of the Witt bordism class of the identity on $V$.
Let 
\[ \gamma: W_* (X) \longrightarrow (\K [\smlhf])_* (X) 
                                = \K^\topo_* (X)[\smlhf]  \]
be the natural transformation of homology theories given by
\[ \gamma ([f:V \to X] \otimes r) :=  
          r\cdot c(\Psi^2)^{-1} f_* \Delta (V)
          = r\cdot c(\Psi^2)^{-1} \Delta_* [f],~
             r\in Z.  \]

\begin{lemma} \label{lem.gammaztlinear}
The homomorphism $\gamma: W_* (X) \longrightarrow \K^\topo_* (X)[\smlhf]$
is $Z[t]$-linear.
\end{lemma}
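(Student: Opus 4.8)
The plan is to show that $\gamma$ intertwines the right $Z[t]$-action on $W_*(X)$ with the right $Z[t]$-action on $\K^\topo_*(X)[\smlhf]$ coming via $t\mapsto \tfrac14\beta^2$. Concretely, it suffices to check, for a generator $[f:V^n\to X]\otimes r \in W_n(X)$, that
\begin{equation*}
	\gamma\big( ([f]\otimes r)\cdot t \big) = \gamma([f]\otimes r)\bullet t.
\end{equation*}
Unwinding the left-hand side using the definition of the $Z[t]$-module structure on Witt bordism, $([f]\otimes r)\cdot t = [V\times \cplx P^2 \xlra{f\circ\pro_1} X]\otimes r$, so I must compute $c(\Psi^2)^{-1}(f\circ\pro_1)_*\Delta(V\times \cplx P^2)$ and compare it with $r\cdot c(\Psi^2)^{-1} f_*\Delta(V)\cdot \tfrac14\beta^2$. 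First I would use multiplicativity of the Siegel-Sullivan orientation at the spectrum level, i.e.\ that $\Delta:\MWITT\to\KO[\smlhf]$ is a ring map (hence the induced transformation on homology theories is multiplicative for the exterior product), to obtain $\Delta(V\times \cplx P^2) = \Delta(V)\boxtimes \Delta(\cplx P^2)$ in $\KO_{n+4}(V\times \cplx P^2)[\smlhf]$, and then push forward along $f\circ\pro_1$ to get $(f\circ\pro_1)_*\Delta(V\times\cplx P^2) = f_*\Delta(V)\cdot \Delta_*[\cplx P^2]$, where $\Delta_*[\cplx P^2]\in \KO_4(\mathrm{pt})[\smlhf]$ is the image of the point-class, using naturality of the module structure over the coefficient ring.

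The key computational input is then the identification of $\Delta_*[\cplx P^2\to\mathrm{pt}]$ in $\KO_4(\mathrm{pt})[\smlhf] = Z\langle a\rangle$: by \eqref{equ.siegelsullorientoncoeffs} this equals $\sigma(\cplx P^2)\cdot a = a$, since the signature of $\cplx P^2$ is $1$. Next I would apply the multiplicative transformation $c\circ(\Psi^2)^{-1}$. Using that $(\Psi^2)^{-1}$ and $c$ are both morphisms of ring spectra, $c(\Psi^2)^{-1}$ takes the exterior product decomposition to one in complex K-homology, so
\begin{equation*}
	c(\Psi^2)^{-1}\big( f_*\Delta(V)\cdot a \big) = \big( c(\Psi^2)^{-1} f_*\Delta(V)\big)\cdot c(\Psi^2)^{-1}(a).
\end{equation*}
Finally I would evaluate $c(\Psi^2)^{-1}(a)\in\pi_4(\K[\smlhf]) = Z\langle\beta^2\rangle$: since $\Psi^2(a) = 4a$ by \eqref{equ.stablesecondadamsona}, we have $(\Psi^2)^{-1}(a) = \tfrac14 a$, and since $c(a) = \beta^2$ by \eqref{equ.cplxofaisbottsquare}, it follows that $c(\Psi^2)^{-1}(a) = \tfrac14\beta^2$. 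Therefore
\begin{equation*}
	\gamma\big(([f]\otimes r)\cdot t\big) = r\cdot c(\Psi^2)^{-1} f_*\Delta(V)\cdot \tfrac14\beta^2 = \gamma([f]\otimes r)\bullet t,
\end{equation*}
as required; $Z$-linearity is immediate from the definition of $\gamma$, so $Z[t]$-linearity follows.

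I expect the main obstacle to be bookkeeping the multiplicativity statements cleanly: one needs that the exterior/external product in $\KO$-homology and $\K$-homology is compatible with the module structures over the coefficient rings and with pushforward along $f\circ\pro_1$, and that the ring-spectrum maps $\Delta$, $\Psi^2$, $c$ all respect these external products. None of this is deep — it is all formal consequence of the cited fact that $\Delta:\MWITT\to\KO[\smlhf]$, $\Psi^2:\KO[\smlhf]\to\KO[\smlhf]$, and $c:\KO\to\K$ are (homotopy) ring maps — but writing it without sign or indexing errors, and correctly tracking the grading shift $\deg(x\otimes_{Z[t]}rt^k) = n+4k$ versus the degree-$4$ Bott-square shift on the $\K$-side, is where care is needed. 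A secondary point is to make sure the representative $V\times\cplx P^2$ is genuinely a smoothly stratified Witt space so that the earlier product results (e.g.\ Proposition~\ref{prop.bordofsmoothlystratandPLwitt} and the product-of-Witt-is-Witt fact) apply; this is standard.
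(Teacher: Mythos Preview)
Your proposal is correct and follows essentially the same approach as the paper: both use multiplicativity of the ring-spectrum map $\Delta$ to reduce to computing $\Delta_*[\cplx P^2]=a$ via \eqref{equ.siegelsullorientoncoeffs}, then apply the ring maps $(\Psi^2)^{-1}$ and $c$ multiplicatively together with the identity $c(\Psi^2)^{-1}(a)=\tfrac14\beta^2$ from \eqref{equ.cplxofaisbottsquare} and \eqref{equ.stablesecondadamsona}. The paper phrases the first step directly via the module-structure diagram for $\Delta_*$ rather than through the external product formula $\Delta(V\times\cplx P^2)=\Delta(V)\boxtimes\Delta(\cplx P^2)$, but these are equivalent formulations of the same multiplicativity.
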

\begin{proof}
Let $[f: V^n \to X]$ be any element of $W_n (X)$.
As $\Delta: \MWITT \to \KO [\smlhf]$ is a morphism of ring spectra,
the diagram
\[ \xymatrix{
\Omega^\Witt_* (X) \otimes \Omega^\Witt_* (\pt)
  \ar[d]_{\Delta_* \otimes \Delta_*} 
    \ar[r]^-{\operatorname{mult}} & \Omega^\Witt_* (X) \ar[d]^{\Delta_*} \\
(\KO [\smlhf])_* (X) \otimes (\KO [\smlhf])_* (\pt)
  \ar[r]_-{\operatorname{mult}} & (\KO [\smlhf])_* (X)
} \]
commutes and hence
\begin{align*}
\Delta_* ([f]\cdot t) 
 &= \Delta_* ([f] \cdot [\cplx P^2 \to \pt])
  = \Delta_* [f] \cdot \Delta_* [\cplx P^2 \to \pt] \\
 &= \Delta_* [f] \cdot \sigma (\cplx P^2) a
  = \Delta_* [f] \cdot a.
\end{align*}
In view of (\ref{equ.cplxofaisbottsquare}) and (\ref{equ.stablesecondadamsona}), 
we find that
\[ c(\Psi^2)^{-1} (a) = c(\mbox{$\frac{1}{4}$} a) = 
                  \mbox{$\frac{1}{4}$} \beta^2. \]
Since $(\Psi^2)^{-1}$ and $c$ are ring maps as well, we conclude
\begin{align*}
\gamma ([f]\cdot t)
 &= c(\Psi^2)^{-1} \Delta_* ([f] \cdot t)
 = c(\Psi^2)^{-1} ((\Delta_* [f]) \cdot a) \\
 &= (c(\Psi^2)^{-1} \Delta_* [f]) \cdot c(\Psi^2)^{-1} (a)
 = \gamma [f] \cdot \mbox{$\frac{1}{4}$} \beta^2 
 = \gamma [f] \bullet t.
\end{align*}
\end{proof}

Recall that in (\ref{equ.thetafromwittbordhlftokanhlf}) 
of Section \ref{subsect:witt-invariance}, we had defined a natural transformation 
\[ \theta: W_n (X) = \Omega^\Witt_n (X)[\smlhf] \cong
\Omega^{\Witt,\infty}_n (X)[\smlhf] \to
	\K^\an_n (X)[\smlhf] \]
of homology theories given by
\[ \theta ([f:W^n \to X]\otimes_\intg r) := 
   r 2^{-\lfloor n/2 \rfloor} f_* [D^\sign_W] = rf_*(\mathrm{sign}_K(W)).  \]    
\begin{lemma} \label{lem.thetaztlinear}
The homomorphism $\theta: W_* (X) \longrightarrow \K^\an_* (X)[\smlhf]$
is $Z[t]$-linear. 
\end{lemma}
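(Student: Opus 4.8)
The plan is to prove $Z[t]$-linearity of $\theta$ by mimicking the proof of Lemma \ref{lem.gammaztlinear} for $\gamma$, but using the analytic signature class and the bundle-transfer functoriality results established earlier in the paper in place of the ring-spectrum formalism for $\Delta$. Concretely, take an arbitrary class $[f: W^n \to X] \otimes_{\intg} r \in W_n(X)$, with $W$ a smoothly stratified Witt space (which we may do by Proposition \ref{prop.bordofsmoothlystratandPLwitt}). The action of $t$ sends this to $[W \times \cplx P^2 \xrightarrow{f \circ \pro_1} X] \otimes r$, while on the target side $t$ acts by $y \mapsto y \bullet t = y \cdot \tfrac14 \beta^2$. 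So the statement reduces to showing
\begin{equation*}
	\theta([f]\cdot t) = \theta[f] \bullet t \quad \text{in } \K^\an_{n+4}(X)[\smlhf].
\end{equation*}

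First I would unwind both sides. The left-hand side is $r\, 2^{-\lfloor (n+4)/2\rfloor} (f\circ\pro_1)_* [D^{\mathrm{sign}}_{W\times \cplx P^2}]$. Since $\lfloor (n+4)/2 \rfloor = \lfloor n/2\rfloor + 2$, and by the product formula for the signature operator, Corollary \ref{cor.signoponproductofwitt} (applied with both factors; note $\cplx P^2$ has even dimension, so the factor $\ell = 1$), we have $[D^{\mathrm{sign}}_{W\times \cplx P^2}] = [D^{\mathrm{sign}}_W] \boxtimes [D^{\mathrm{sign}}_{\cplx P^2}]$, hence $\mathrm{sign}_K(W\times\cplx P^2) = \mathrm{sign}_K(W) \boxtimes \mathrm{sign}_K(\cplx P^2)$ in $\K^\an_{n+4}(W\times\cplx P^2)[\smlhf]$. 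Pushing forward along $f\circ \pro_1$ and using naturality of the external product, the left-hand side becomes $r\, f_*(\mathrm{sign}_K(W)) \boxtimes \mathrm{sign}_K(\cplx P^2) = \theta[f] \boxtimes \mathrm{sign}_K(\cplx P^2)$, where the external product $\K^\an_n(X) \otimes \K^\an_4(\pt) \to \K^\an_{n+4}(X)$ is, after the identification $X \times \pt = X$, precisely the right module action of $\K^\an_4(\pt)[\smlhf]$ on $\K^\an_*(X)[\smlhf]$. So it remains to identify $\mathrm{sign}_K(\cplx P^2) \in \K^\an_4(\pt)[\smlhf] = Z[\beta^{\pm 1}]_4 = Z\langle \beta^2\rangle$ with $\tfrac14\beta^2$, which matches the definition of $y \bullet t$.

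The key computation is therefore $\mathrm{sign}_K(\cplx P^2) = \tfrac14 \beta^2$ in $\K^\an_4(\pt)[\smlhf]$. By definition $\mathrm{sign}_K(\cplx P^2) = 2^{-2}[D^{\mathrm{sign}}_{\cplx P^2}] = \tfrac14 [D^{\mathrm{sign}}_{\cplx P^2}]$, so this amounts to $[D^{\mathrm{sign}}_{\cplx P^2}] = \beta^2$ in $\K^\an_4(\pt)$. Since $\K^\an_4(\pt) = \intg$ is detected by the index, this is the statement that the index of the signature operator on $\cplx P^2$ equals the signature $\sigma(\cplx P^2) = 1$ under the generator $\beta^2$ of $\pi_4(\K)$; alternatively one can argue via the Chern character, as done in the prelude on $S^4$: under $\ch \colon \K^\an_4(\pt)[\smlhf] \xrightarrow{\simeq} H_0(\pt;\rat)$ composed with the degree count, $\ch[D^{\mathrm{sign}}_{\cplx P^2}]$ equals the signature by the Atiyah--Singer index theorem, while $\ch(\beta^2)$ is the generator. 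This is entirely standard.

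The main obstacle, such as it is, will be bookkeeping: carefully matching the external Kasparov product $\boxtimes$ with the module action $\bullet$ through the identification $\K^\topo_* \cong \K^\an_*$ and through the ring homomorphism $R \to Z[\beta,\beta^{-1}]$, $t \mapsto \tfrac14\beta^2$, together with tracking the powers of $2$ in the normalization $2^{-\lfloor \cdot/2\rfloor}$ so that they absorb the factor arising from the product formula — this is exactly the purpose of the normalization, as remarked after Corollary \ref{cor:SignKKClass} and in \eqref{equ.thetafromwittbordhlftokanhlf}. No genuinely new analytic input is needed: the proof is a formal consequence of Corollary \ref{cor.signoponproductofwitt} (itself a special case of Theorem \ref{thm:FunctorialityFibrations}) plus the elementary identification of $\mathrm{sign}_K(\cplx P^2)$.
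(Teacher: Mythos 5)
Your proposal is correct and follows essentially the same route as the paper's own proof: unwind the $t$-action via the product with $\cplx P^2$, invoke Corollary \ref{cor.signoponproductofwitt} (noting $\ell=1$ since $\dim \cplx P^2$ is even), push forward and use that $\const_*[D^{\sign}_{\cplx P^2}]=\sigma(\cplx P^2)\beta^2=\beta^2$, and absorb the normalization constants via $\lfloor(n+4)/2\rfloor=\lfloor n/2\rfloor+2$. The only cosmetic difference is that you make the identification $\mathrm{sign}_K(\cplx P^2)=\tfrac14\beta^2$ a named intermediate step and mention the Chern-character alternative, whereas the paper computes directly with the unnormalized class $[D^{\sign}_{\cplx P^2}]$.
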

\begin{proof}
Let $\pr^2 = \cplx P^2$ denote complex projective space
and let $\const: \pr^2 \to \pt$ be the constant map.
The image of the signature operator class 
$[D^\sign_{\pr^2}] \in \K_4 (\pr^2)[\smlhf]$ under
$\const_*: \K_4 (\pr^2)[\smlhf] \to \K_4 (\pt)[\smlhf]$
is the index (multiplied by the corresponding Bott generator)
\[ \const_* [D^\sign_{\pr^2}] = \sigma (\pr^2)\cdot \beta^2 =
   \beta^2 \in \K_4 (\pt)[\smlhf]. \]
Let $[f: V^n \to X]$ be any element of $W_n (X)$.
By Corollary \ref{cor.signoponproductofwitt},
$[D^\sign_{V\times \pr^2}] = [D^\sign_V] \boxtimes [D^\sign_{\pr^2}].$
(There is no factor of $2$ here, since the dimension of $\pr^2$ is even.)
Consequently,   
\begin{align*}
\theta ([f] \cdot t) 
&= \theta ([f\times \const: V \times \pr^2 \to X\times \pt =X]) \\
&= 2^{-\lfloor \frac{n+4}{2} \rfloor} (f\times \const)_* 
        [D^\sign_{V\times \pr^2}] 
 = 2^{-\lfloor \frac{n}{2} \rfloor} \cdot \mbox{$\frac{1}{4}$} 
   (f\times \const)_* ([D^\sign_V] \boxtimes [D^\sign_{\pr^2}]) \\
&= 2^{-\lfloor \frac{n}{2} \rfloor} f_* [D^\sign_V] \cdot
          \mbox{$\frac{1}{4}$} \const_* [D^\sign_{\pr^2}] 
= 2^{-\lfloor \frac{n}{2} \rfloor} f_* [D^\sign_V] \cdot
          \mbox{$\frac{1}{4}$} \beta^2 \\
&= (\theta [f]) \bullet t.                   
\end{align*}
\end{proof}

The following Proposition generalizes 
\cite[Prop. 5.6]{Ban:TSSKSS} from maps into
$(\KO \smlhf)_* (X)$ to maps into any $Z[t,t^{-1}]$-module $M$;
see also \cite[Prop. 2, p. 597]{BanCapSha:CTSLSS}
of Cappell, Shaneson and the second named author.
The proof follows \cite{Ban:TSSKSS}, but we provide details
for the reader's convenience.
\begin{prop} \label{prop.conntoperiodicfactors}
Let $X$ be a compact PL space and $M$ a right $Z[t,t^{-1}]$-module.\\
1. Given a $Z[t]$-linear map
$\alpha: W_* (X) \to M$
there exists a unique extension of $\alpha$ to a homomorphism 
\[ \overline{\alpha}: \overline{W}_* (X) \longrightarrow
    M  \]
of $Z[t,t^{-1}]$-modules.\\
2. Let $\alpha, \beta: W_* (X) \to M$
be $Z[t]$-linear maps. If 
$\alpha ([g:N \to X]\otimes 1) = \beta ([g]\otimes 1)$ for
every $g$ on smooth manifolds $N$, then 
$\alpha = \beta$ on $W_* (X)$, and
$\overline{\alpha}_* = \overline{\beta}_*$
on $\overline{W}_* (X)$.
\end{prop}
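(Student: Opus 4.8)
The two parts are really one statement: part (1) constructs the canonical extension and part (2) is a uniqueness-on-a-spanning-set criterion that we apply to both $\alpha$ and its extension. The plan is to do part (1) first by a standard base-change/localization argument, then deduce part (2) from the geometric structure of Witt bordism classes.

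For part (1), I would argue as follows. The group $\overline{W}_*(X) = W_*(X) \otimes_{Z[t]} R$ with $R = Z[t,t^{-1}]$, and any $Z[t]$-linear map $\alpha \colon W_*(X) \to M$ into an $R$-module $M$ factors uniquely through the localization: by the universal property of $\otimes_{Z[t]} R$ (tensoring up along the localization $Z[t] \hookrightarrow R$), there is a unique $R$-linear map $\overline\alpha \colon W_*(X)\otimes_{Z[t]} R \to M$ with $\overline\alpha(x \otimes 1) = \alpha(x)$ for all $x \in W_*(X)$; explicitly $\overline\alpha(x \otimes rt^k) = \alpha(x)\bullet rt^k$. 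One must check this is well-defined (respects the relation $xt \otimes r = x \otimes rt$), which is immediate from $Z[t]$-linearity of $\alpha$ and the $R$-module structure of $M$, and that it is the unique $R$-linear extension, since $W_*(X) \otimes 1$ generates $\overline W_*(X)$ as an $R$-module. This step is essentially formal once one unwinds the definition of $\overline W_*(X)$ from \cite{Ban:TSSKSS}.

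For part (2), the key geometric input is that every element of $W_n(X) = \Omega^\Witt_n(X)\otimes_\intg Z$ can, after suitable manipulation, be reduced modulo the image of $t$ to a class represented by a \emph{smooth manifold}. More precisely, I would invoke the structure of Siegel's Witt bordism: the coefficient ring is $\Omega^\Witt_*(\mathrm{pt})[\tfrac12] = Z[c]$ with $c = [\cplx P^2]$, so that $W_*(X)$ is generated as a $Z[t]$-module by classes $[f \colon N \to X]$ where $N$ is built from smooth pieces and cones, and iterating the singular-surgery / bordism arguments of Siegel (carried out within smoothly stratified spaces, cf.\ Proposition \ref{prop.bordofsmoothlystratandPLwitt}) one sees that modulo the $Z[t]$-submodule generated by classes with positive $t$-degree, every Witt bordism class is equivalent to one supported on a smooth manifold $N$. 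Hence the hypothesis $\alpha([g \colon N \to X]\otimes 1) = \beta([g]\otimes 1)$ for $N$ smooth, together with $Z[t]$-linearity, forces $\alpha = \beta$ on all of $W_*(X)$ by induction on $t$-degree. Then $\overline\alpha = \overline\beta$ on $\overline W_*(X)$ follows from part (1), since both are $R$-linear extensions of the same map and the extension is unique.

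\textbf{Main obstacle.} The formal part (1) and the bookkeeping in part (2) are routine; the real content is the claim that Witt bordism classes over $X$ are detected, modulo $t$, by their smooth-manifold representatives. This rests on a careful analysis of Siegel's generators and his singular surgeries, verifying that they can be performed within the class of smoothly stratified spaces and that the resulting filtration argument closes up — precisely the kind of argument already used in the proof of Proposition \ref{prop.bordofsmoothlystratandPLwitt}. I expect the crux of the write-up to be isolating exactly which statement about $W_*(X)$ we need (``$W_*(X)/(\text{im } t)$ is generated by smooth cycles''), stating it cleanly, and citing \cite{Sie:WSGCTKOP} and \cite{Ban:TSSKSS} for the necessary structural facts rather than reproving them.
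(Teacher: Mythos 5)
Part (1) is fine and matches the paper: both use the universal property of $\otimes_{Z[t]} Z[t,t^{-1}]$ to construct $\overline\alpha$, and uniqueness is immediate since $W_*(X)\otimes 1$ generates $\overline W_*(X)$ as an $R$-module.

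For part (2), however, you have a real gap. Your argument rests on the unproven assertion that $W_*(X)$, modulo the $Z[t]$-submodule of positive $t$-degree, is generated by classes $[g\colon N \to X]$ with $N$ a smooth manifold — i.e.\ that $\Omega^{SO}_n(X)[\tfrac12]\to W_n(X)/(\mathrm{im}\; t)$ is surjective for each $n$. You flag this yourself as ``the real content,'' but then gesture at Siegel's singular surgeries rather than proving it, and the deduction does not follow from the structural facts you cite. In fact the paper proceeds differently precisely because the cited smooth-representability result lives at the level of the \emph{periodic} theory $\overline W_*(X)$: the relevant input from \cite{Ban:TSSKSS} is an isomorphism $\omega\colon Q_{\bar j}(X)\otimes Z \xrightarrow{\sim} \overline W_j(X)$, where $Q_{\bar j}(X)$ is a quotient of $\bigoplus_k \Omega^{SO}_{j+4k}(X)[\tfrac12]$ by the relation $[P\times N\to X]\sim\sigma(N)[P\to X]$. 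A smooth class $[M^{j+4|k|}\to X]$ with $k<0$ maps to $[M]\otimes t^{k}\in\overline W_j(X)$, so the isomorphism uses negative powers of $t$ in an essential way and does not descend to a statement about $W_n(X)$ itself. Accordingly, the paper first concludes $\overline\alpha=\overline\beta$ on $\overline W_*(X)$ (writing an arbitrary element as $\omega(q)$ with $q$ a $Z$-combination of smooth cycles, and using $Z[t,t^{-1}]$-linearity), and only then deduces $\alpha=\beta$ by composing with $i_*\colon W_*(X)\to\overline W_*(X)$. Your ``induction on $t$-degree'' in the unperiodic theory does not get off the ground unless you separately establish the surjectivity claim, which to my knowledge is not in the cited sources and may well be false; the safe route is to argue in the periodic theory and descend, as the paper does.
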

\begin{proof}
We prove statement 1:
We denote the scalar multiplication in $M$ by $\bullet$.
Let 
$A: W_* (X) \times Z[t,t^{-1}] \longrightarrow M$
be the map given by
$A(w,p) := \alpha (w) \bullet p.$
We regard $Z[t,t^{-1}]$ as a left module over $Z[t]$.
For $q\in Z[t],$ the $Z[t]$-linearity of $\alpha$ implies that
\begin{align*}
A(wq,p)
&= \alpha (wq) \bullet p = (\alpha (w) \bullet q) \bullet p \\
&= \alpha (w) \bullet (qp) = A(w,qp).
\end{align*}
Therefore, $A$ is $Z[t]$-bilinear and hence, by the universal
property of the tensor product $\otimes_{Z[t]}$,
induces a well-defined homomorphism 
\[ \overline{\alpha}: W_* (X) \otimes_{Z[t]} Z[t,t^{-1}] 
   \longrightarrow M \]
of abelian groups such that
$\overline{\alpha} (w \otimes_{Z[t]} p) = A(w,p) = \alpha (w) \bullet p$.
Then $\overline{\alpha}_*$ is $Z[t,t^{-1}]$-linear, as for
$p,p' \in Z[t,t^{-1}],$
\begin{align*}
\overline{\alpha} ((w \otimes_{Z[t]} p)\cdot p')
&= \overline{\alpha} ((w \otimes_{Z[t]} (pp'))
  = \alpha (w) \bullet (pp') \\
&= (\alpha (w)\bullet p)\bullet p'
 = \overline{\alpha} (w \otimes_{Z[t]} p)\bullet p',  
\end{align*}
and the diagram 
\begin{equation} \label{equ.overalphaextendsalpha}
\xymatrix@C=50pt@R=40pt{
W_* (X) \ar[rd]^{\alpha} \ar[d]_{i_*} 
   &  \\
\overline{W}_* (X) \ar[r]_{\overline{\alpha}} & M
} 
\end{equation}
commutes. \\
We turn to the proof of uniqueness.
Suppose that $\alpha': \overline{W}_* (X) \to M$
is any $Z[t,t^{-1}]$-linear extension of $\alpha$,
i.e. $\alpha' \circ i_* = \alpha$. Then
\begin{align*}
\alpha' (w \otimes_{Z[t]} p)
&= \alpha' (w \otimes_{Z[t]} (1 \cdot p)) 
 = \alpha' ((w \otimes_{Z[t]} 1) \cdot p) \\
&= \alpha' (w \otimes_{Z[t]} 1) \bullet p 
  = (\alpha' i_* (w)) \bullet p \\
&= \alpha (w) \bullet p 
  = \overline{\alpha} (w \otimes_{Z[t]} p).
\end{align*}
Hence $\overline{\alpha}$ is unique.\\
We prove statement 2:
Since $\alpha$ and $\beta$ are $Z[t]$-linear, they induce
uniquely $Z[t,t^{-1}]$-linear transformations
\[ \overline{\alpha}, \overline{\beta}:
   (W_* (X) \otimes_{Z[t]} Z[t,t^{-1}])_j \longrightarrow
      M,~ j\in \intg, \]
as explained in statement 1.       
For an integer $j$, let $\bar{j}$ denote its residue class in $\intg/_4$.
On the groups 
$C_{\bar{j}} (X) := \bigoplus_{k\in \intg} \Omega^\SO_{j+4k} (X)[\tfrac12]$, 
define an equivalence relation by
\[
 [P^{j+4k} \times N^{4i} \stackrel{\operatorname{proj}}{\longrightarrow} P
      \stackrel{f}{\longrightarrow} X]
 \sim
  \sigma (N) \cdot [P^{j+4k} \stackrel{f}{\longrightarrow} X].
\]
(See also \cite[p. 193]{KreLuc:NC}.)
Let $Q_{\bar{j}} (X,Y) := C_{\bar{j}} (X,Y)/\sim$ denote the corresponding
quotient. One can show that $Q_* (-)$
is a ($\intg/_4$-graded) homology theory on compact PL pairs, see
\cite{Ban:TSSKSS}.
For any $j\in \intg$, a well-defined map
\[  \omega: Q_{\bar{j}} (X) \otimes Z
   \longrightarrow (W_* (X) \otimes_{Z[t]} Z[t, t^{-1}])_j    \]
is given by setting
\[   \omega ([g: N^{j-4k} \to X] \otimes_{\intg} r)
   = [g] \otimes_{Z[t]} rt^k  
     \in W_{j-4k} (X) \otimes_{Z[t]} Z \langle t^k \rangle,~ 
     k \in \intg,~ r \in Z,   \]
where one views the closed oriented smooth manifold $N$
as a Witt space via its canonical PL structure.
This map is an isomorphism on compact PL spaces $X$
as was shown in \cite{Ban:TSSKSS}.
Therefore, Witt bordism classes are representable by smooth manifolds
away from $2$.\\
Given an element
\[ [f: V^{j-4k} \to X] \otimes_{Z[t]} rt^k 
   \in (W_* (X) \otimes_{Z[t]} Z[t,t^{-1}])_j
     = \overline{W}_j (X) \]
$k\in \intg,$ $r\in Z$,     
there exists a (unique) element $q\in Q_{\bar{j}} (X)\otimes Z$
with $\omega (q)= [f] \otimes_{Z[t]} rt^k$,
as $\omega$ is an isomorphism.
Such an element is represented in the quotient $Q_{\bar{j}} (X)\otimes Z$ 
by an element of the form
\[
q = \sum_{i=1}^m [g_i: M_i^{j-4k_i} \to X]\otimes r_i,~
  [g_i] \in \Omega^\SO_{j-4k_i} (X),~ r_i \in Z,~ k_i \in \intg. \]
By the definition of $\omega,$
$\omega ([g_i] \otimes_{\intg} r_i) = [g_i] \otimes_{Z[t]} r_i t^{k_i},$
so that
$[f] \otimes_{Z[t]} rt^k = \sum_{i=1}^m 
   [g_i] \otimes_{Z[t]} r_i t^{k_i}$
and consequently,   
\begin{align*} 
\overline{\alpha}_* ([f] \otimes_{Z[t]} rt^k) 
&= \sum_{i=1}^m 
   \overline{\alpha}_* ([g_i] \otimes_{Z[t]} r_i t^{k_i}) 
   = \sum_{i=1}^m 
     (\alpha_* [g_i]) \cdot r_i a^{k_i} \\
&= \sum_{i=1}^m 
   (\beta_* [g_i]) \cdot r_i a^{k_i} 
   = \sum_{i=1}^m 
   \overline{\beta}_* ([g_i] \otimes_{Z[t]} r_i t^{k_i})        
 = \overline{\beta}_* ([f] \otimes_{Z[t]} rt^k).    
\end{align*}   
This proves that the periodic versions agree on $\overline{W}_* (X)$, 
$\overline{\alpha}_* = \overline{\beta}_*$.
Using the commutativity of (\ref{equ.overalphaextendsalpha})
we deduce
$\alpha_* = \overline{\alpha}_* \circ i_*
    = \overline{\beta}_* \circ i_* = \beta_*.$
\end{proof}

\begin{thm} \label{thm.sullivanandksignoponwitt}
Let $X$ be a closed Witt space and
\[ \Delta (X) \in \KO^\topo_n (X)[\smlhf] \]
its Siegel-Sullivan orientation, \cite{Sie:WSGCTKOP}, \cite{Ban:TSSKSS}.
Under the identification (\ref{equ.identktopandkan}), the element
\[ c(\Psi^2)^{-1} \Delta (X) \in \K^\topo_n (X)[\smlhf] \]
of an $n$-dimensional closed smoothly stratified Witt space $X$ 
corresponds to the signature-operator orientation
\[ \mathrm{sign}_K(X) = 2^{-\lfloor n/2 \rfloor} [D^\sign_X] \in \K^\an_n (X)[\smlhf]. \]
\end{thm}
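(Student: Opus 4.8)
The plan is to deduce Theorem \ref{thm.sullivanandksignoponwitt} from the two natural transformations $\gamma$ and $\theta$ constructed above, using the uniqueness machinery of Proposition \ref{prop.conntoperiodicfactors} to show they agree, and then evaluating at the Witt bordism class of the identity map. First I would record that both
\[
\gamma \colon W_*(X) \to \K^\topo_*(X)[\smlhf], \qquad
\theta \colon W_*(X) \to \K^\an_*(X)[\smlhf]
\]
are natural transformations of homology theories which, by Lemmas \ref{lem.gammaztlinear} and \ref{lem.thetaztlinear}, are $Z[t]$-linear. Under the natural identification $\K^\topo_*(X)[\smlhf]\cong \K^\an_*(X)[\smlhf]$ of \eqref{equ.identktopandkan}, both have target the right $Z[t,t^{-1}]$-module $M=\K^\an_*(X)[\smlhf]$ (the module structures match because $t$ acts as $\bullet\,\tfrac14\beta^2$ on both sides, by the definitions preceding Lemmas \ref{lem.gammaztlinear} and \ref{lem.thetaztlinear}). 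Hence Proposition \ref{prop.conntoperiodicfactors}(2) applies: it suffices to check that $\gamma$ and $\theta$ agree on every smooth-manifold cycle $[g\colon N \to X]\otimes 1$.

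The key step is therefore the comparison on smooth cycles, and this is exactly the content of Theorem \ref{thm.sullivanandksignoponmanifolds}. Indeed, for a closed oriented smooth manifold $N^n$ viewed as a Witt space via its PL structure, the signature operator of a wedge metric is just the ordinary signature operator, so
\[
\theta([g]\otimes 1) = 2^{-\lfloor n/2\rfloor} g_* [D^\sign_N] = g_*\bigl(\mathrm{sign}_K(N)\bigr),
\]
while
\[
\gamma([g]\otimes 1) = c(\Psi^2)^{-1} g_* \Delta(N) = g_*\bigl(c(\Psi^2)^{-1}\Delta_\SO(N)\bigr),
\]
using that the Siegel-Sullivan orientation $\Delta$ restricts to the Sullivan orientation $\Delta_\SO$ under $\MSPL\to\MWITT$ and that on a smooth manifold $\Delta$ of the PL structure agrees with $\Delta_\SO$. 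By Theorem \ref{thm.sullivanandksignoponmanifolds}, $c(\Psi^2)^{-1}\Delta_\SO(N)$ corresponds to $\mathrm{sign}_K(N)$ under \eqref{equ.identktopandkan}, and this identification is natural, so it commutes with $g_*$. Thus $\gamma([g]\otimes 1) = \theta([g]\otimes 1)$ as desired, and Proposition \ref{prop.conntoperiodicfactors}(2) forces $\gamma = \theta$ on all of $W_*(X)$.

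Finally I would evaluate both sides at the class $[\id_X]\otimes 1 \in W_n(X)$, where $X$ is the given $n$-dimensional closed smoothly stratified Witt space (legitimate by Proposition \ref{prop.bordofsmoothlystratandPLwitt}, which lets us use smoothly stratified representatives so that the signature operator is defined). One gets
\[
\gamma([\id_X]\otimes 1) = c(\Psi^2)^{-1}\Delta(X), \qquad
\theta([\id_X]\otimes 1) = 2^{-\lfloor n/2\rfloor}[D^\sign_X] = \mathrm{sign}_K(X),
\]
and the equality $\gamma=\theta$ yields the theorem.

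I expect the main obstacle to be the bookkeeping around the identification \eqref{equ.identktopandkan}: one must verify that it is a transformation of homology theories compatible with the $Z[t,t^{-1}]$-module structures on both sides and with pushforward along $g$, so that the conclusion of Theorem \ref{thm.sullivanandksignoponmanifolds} can legitimately be propagated through $g_*$ and fed into Proposition \ref{prop.conntoperiodicfactors}. A secondary point requiring care is the assertion that the Siegel-Sullivan class $\Delta(X)$ of a smoothly stratified Witt space coincides with the value of Siegel's transformation (hence with $\Delta_\SO$ on smooth cycles), which I would cite from \cite[Prop. 5.7]{Ban:TSSKSS}; everything else is a formal consequence of the $Z[t]$-linearity lemmas and the representability of Witt bordism by smooth manifolds away from $2$ contained in Proposition \ref{prop.conntoperiodicfactors}.
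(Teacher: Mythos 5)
Your proposal is correct and follows exactly the paper's argument: establish $Z[t]$-linearity of both $\gamma$ and $\theta$ (Lemmas \ref{lem.gammaztlinear} and \ref{lem.thetaztlinear}), note that the identification \eqref{equ.identktopandkan} is itself $Z[t]$-linear, invoke the manifold case (Theorem \ref{thm.sullivanandksignoponmanifolds}) to get agreement on smooth cycles, and conclude $\gamma=\theta$ via Proposition \ref{prop.conntoperiodicfactors}(2). Your write-up is more explicit than the paper's in a couple of places (matching the $t$-actions on both sides, and the final evaluation at $[\id_X]\otimes 1$), but these are exactly the steps the paper leaves implicit, not a different route.
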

\begin{proof}
The identification (\ref{equ.identktopandkan})
is $Z[t]$-linear.
Thus, under this identification, we have homomorphisms
\[ \gamma, \theta: W_* (X) \longrightarrow K^\topo_* (X)[\smlhf], \]
which are both $Z[t]$-linear by Lemmas
\ref{lem.gammaztlinear} and \ref{lem.thetaztlinear}.
The two homomorphisms agree on smooth manifolds
according to Theorem \ref{thm.sullivanandksignoponmanifolds}.
Proposition \ref{prop.conntoperiodicfactors} then implies 
that $\gamma = \theta$ on $W_* (X)$ 
(where we have used the standard identification of analytic and topological K-homology, \eqref{equ.identktopandkan}).
\end{proof}

\end{document}